\documentclass[10pt]{article}
\usepackage[margin=1in]{geometry}
\usepackage{amsmath,amssymb,amsthm,mathtools,mathrsfs,yhmath}
\usepackage{enumitem}
\usepackage{microtype}
\usepackage{hyperref}
\usepackage{bm}
\usepackage{stmaryrd}
\usepackage{cleveref}
\allowdisplaybreaks
\hypersetup{colorlinks=true, linkcolor=blue, citecolor=blue, urlcolor=blue}

\newtheorem{theorem}{Theorem}[section]
\newtheorem{lemma}{Lemma}[section]
\newtheorem{corollary}{Corollary}[section]
\newtheorem{remark}{Remark}[section]

\newtheorem{proposition}{Proposition}[section]

\newcommand{\sinc}{\mathrm{sinc}}

\newcommand{\supp}{\operatorname{supp}}
\newcommand{\Id}{\mathrm{Id}}

\newcommand{\ainf}{|a|_{\infty}}

\title{Long-time continuum limits for twisted Schr\"{o}dinger lattices}
\author{Brian Choi\thanks{University of Tennessee at Chattanooga, \texttt{brian-choi@utc.edu,choigh@bu.edu}}%
\vspace{-2ex}}
\date{\today \vspace{-1cm}}

\begin{document}
\maketitle

\begin{abstract}
We prove polynomial-in-time Sobolev bounds on the approximation error between the twisted discrete nonlinear Schr\"{o}dinger equation and its magnetic continuum limit. The analysis exploits complete integrability of the continuum flow through its Birkhoff representation to control approximation by a non-integrable Hamiltonian finite-difference lattice. We establish a focusing-defocusing dichotomy in uniform polynomial control: the bounds hold for arbitrary defocusing data and focusing data of sufficiently small mass, while modulational instability and aliasing yield an obstruction for large focusing data. The accompanying algebraic spatial rates are generically optimal in the Baire-category sense. The proof combines Birkhoff coordinates with modified energies for discrete Sobolev growth, while Fourier filtering recovers uniform spacetime estimates below the energy space despite degenerate lattice dispersion.

\end{abstract}

\noindent\textbf{2020 Mathematics Subject Classification.} 65M15, 35Q55,  65M06, 37K10, 37K60.\\
\textbf{Keywords.}
NLS; continuum limit; long-time error analysis;
Birkhoff coordinates; modified energies.

\section{Introduction}

Our central question is whether lattice models provide a long-time
continuum approximation to the magnetic nonlinear Schr\"{o}dinger
equation (NLS)
\begin{equation}\label{main_eq2}
i\partial_tu=-(\partial_x-ia(t))^2u+\mu|u|^2u,
\qquad u:\mathbb{T}_x\times\mathbb{R}_t\rightarrow\mathbb{C},
\qquad a\in L^\infty(\mathbb{R}_t;\mathbb{R}),
\end{equation}
with $u_0=u(\cdot,0)\in H^s(\mathbb{T})$ for $s>0$,
$\mu\in\{\pm1\}$, and $|a|_\infty:=\|a\|_{L^\infty}$.
We establish a long-time dichotomy: uniform polynomial-in-time error
bounds hold in the defocusing case ($\mu=1$) and extend to the focusing
case ($\mu=-1$) for sufficiently small mass, whereas modulational
instability precludes such uniform bounds for general focusing data.

Consider the finite difference semi-discretization
\begin{equation}\label{main_eq1}
i\partial_tu_h=-\Delta_{h,a(t)}u_h
+\mu P_\chi\left(|P_\chi u_h|^2P_\chi u_h\right),
\qquad u_h(0)=P_\chi\Pi_hu_0,
\end{equation}
on the periodic lattice $\mathbb{T}_h$ (see \Cref{background} for
notations), where
\begin{equation*}
\Delta_{h,a(t)}f(x)
=\frac{e^{-iha(t)}f(x+h)-2f(x)+e^{iha(t)}f(x-h)}{h^2},
\qquad x\in\mathbb{T}_h.
\end{equation*}
We define the regularity-dependent Fourier-Galerkin projector $P_\chi$ by
\begin{equation*}
\widehat{P_\chi f}(k)=\chi(k)\widehat f(k),
\qquad
\chi(k)=
\begin{cases}
\mathbf{1}_{E_h}(k),&0<s<1,\\
1,&s\geq1,
\end{cases}
\end{equation*}
where
$E_h=\{k\in\mathbb{T}_h^*:|hk|\leq\Lambda\pi\}$ for $0<\Lambda<\frac{1}{2}.$
For $0<s<1$, let $P_h:=P_\chi$, and hence \eqref{main_eq1} incorporates
Fourier filtering. A time-dependent gauge transformation \eqref{magnetic_conjugacy} conjugates \eqref{main_eq2} to the cubic NLS, whose local well-posedness is established in \cite{BourgainNLS1}. Our main results establish polynomial-in-time bounds for the nonlinear approximation error $e_h(t):=\mathcal{S}_h u_h(t)-u(t)$. More precise estimates, including explicit time exponents and
dependence on $|a|_\infty$, are given in \Cref{sec:polynomial-regimes}.

\begin{theorem}\label{thm:intro-polynomial}
Let $s>0$, $0 \leq \rho < s$, $R > 0$, and let
$\gamma_{s,\rho}=\min\left\{2,\frac{s-\rho}{2}\right\}$.
Let $u, u_h$ be the solutions of \eqref{main_eq2}, \eqref{main_eq1}, respectively, with $\mu=1$ and
$\|u_0\|_{H^s}\leq R$. Then, there exist $p=p(s,\rho)>0$, $c \in (0,1]$, and
$C=C(s,\rho,R,|a|_{\infty})>0$, with $C$ depending additionally on $\Lambda$ if $0 < s < 1$, such that
\begin{equation}\label{eq:intro-polynomial}
    \|e_h(t)\|_{H^\rho(\mathbb{T})}
    \leq
    C h^{\gamma_{s,\rho}}\langle t\rangle^p,
\end{equation}
for all $t \in \mathbb{R}$ and admissible $0 < h \leq c h_0$, where
\begin{equation}\label{eq:phase-margin}
h_0 :=
\begin{cases}
\min\left\{1,\frac{\pi(1/2-\Lambda)}
{2\langle |a|_\infty\rangle}\right\},
& 0<s<1,\\
1, &s \geq 1.
\end{cases}
\end{equation}
The same conclusion holds for $\mu=-1$, provided $\|u_0\|_{L^2}\leq\epsilon_*$, where $0<\epsilon_* \leq \sqrt{\pi}$ is independent of $s,R,h,a$.
\end{theorem}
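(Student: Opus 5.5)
The plan is to first remove the magnetic potential on both sides by gauge transformations, and then run a Duhamel/Gronwall argument for the error in which the integrable continuum flow controls the growth.

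\textbf{Step 1 (gauge removal).} On the continuum side, \eqref{magnetic_conjugacy} writes $u$ as a time-dependent modulated translate of a solution $v$ of the cubic NLS on $\mathbb{T}$, with a phase/shift $A(t)=\int_0^t a$. On the lattice, $\Delta_{h,a(t)}$ is a Fourier multiplier with symbol $-\frac{4}{h^2}\sin^2\!\big(\frac{h(k-a(t))}{2}\big)$. We therefore conjugate \eqref{main_eq1} by the same gauge and compare against the magnetic symbol $-(k-a)^2$. For $0<s<1$, the margin $h\le c h_0$ ensures that $|h(k-a)|\le (\Lambda+\tfrac12)\pi/\!\cdot$ stays strictly inside the principal branch on $E_h$. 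As a result, the shifted symbol remains comparable to $(k-a)^2$ and is non-degenerate there; this is exactly where \eqref{eq:phase-margin} enters.

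\textbf{Step 2 (uniform bounds for the continuum solution).} By complete integrability and the Birkhoff map of Kappeler--Grébert type, $\|v(t)\|_{H^s}\le C(s,R)$ for all $t$. In the defocusing case this holds for every $R$, since the Birkhoff map is global on $L^2$. In the focusing case it holds when $\|u_0\|_{L^2}\le\epsilon_*$, using the known small-mass global Birkhoff coordinates near $0$. Because the gauge is unitary on $H^s$, the same bounds hold for $u$.

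\textbf{Step 3 (discrete Sobolev growth).} For the lattice solution we need $\|u_h(t)\|_{H^\sigma_h}\lesssim \langle t\rangle^{q}$ for some $\sigma$ slightly above the relevant level. For $s\ge1$ this follows from the conserved discrete mass and energy together with a modified energy: we add a trilinear/quadrilinear correction that cancels the non-resonant part of $\frac{d}{dt}\|D_h^\sigma u_h\|^2$, which yields polynomial growth. For $0<s<1$, below the energy space, we use the Fourier filter $P_\chi$. On $E_h$ the lattice dispersion is non-degenerate by Step 1, so periodic Strichartz/Bourgain-type $L^4_{t,x}$ estimates hold uniformly in $h$ on unit time intervals. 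Iterating these with mass conservation gives polynomial bounds.

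\textbf{Step 4 (error equation).} Write $e_h=\mathcal{S}_hu_h-u$. It solves the linear magnetic NLS with forcing made of three pieces:
\begin{itemize}
\item the consistency error $(\Delta_{h,a}$ versus $(\partial_x-ia)^2)$ applied to $u$, which is $O(h^{\min\{2,(s-\rho)/2\}})$ in $H^\rho$ after interpolating the $O(h^2k^4)$ symbol error against $H^s$;
\item the initial projection/aliasing error of $\mathcal{S}_hP_\chi\Pi_h u_0-u_0$, which has the same order, and for $s<1$ also includes the filter tail $\|(1-\mathbf 1_{E_h})u\|_{H^\rho}\lesssim h^{s-\rho}$;
\item the nonlinear difference $|U|^2U-|u|^2u$, which is linear in $e_h$ with coefficients bounded by Steps 2--3.
\end{itemize}
An energy estimate in $H^\rho$ (for $s<1$, $X^{\rho,b}$ on unit intervals) gives $\frac{d}{dt}\|e_h\|_{H^\rho}\le K(t)\|e_h\|+Ch^{\gamma}\langle t\rangle^{q}$. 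A naive Gronwall then produces exponential growth.

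\textbf{Main obstacle.} The hard part is avoiding exponential growth in Step 4. The approach is to linearize about the continuum flow in Birkhoff coordinates. There the flow is a torus rotation whose differential grows at most linearly in $t$ (frequency twist), so the linearized propagator along $v$ is polynomially bounded. Duhamel with this propagator, combined with a bootstrap keeping $\|e_h\|$ small enough that the quadratic remainder is absorbed, gives $\|e_h(t)\|\lesssim h^{\gamma}\langle t\rangle^{p}$ as long as $h^{\gamma}\langle t\rangle^{p'}\ll1$. For larger $t$ the claimed bound is trivial from Steps 2--3 once $C$ and $p$ are enlarged. Checking the tame, polynomially bounded differential of the Birkhoff map at the needed regularity, uniformly on the small-mass ball in the focusing case, is the step most likely to need care.
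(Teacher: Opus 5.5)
There is a genuine gap in your Step 3, which is meant to supply the polynomial discrete Sobolev bounds that the rest of your argument needs. For $0<s<1$, iterating the uniform local theory with mass conservation only gives $\|u_h(t_{j+1})\|_{H^s_h}\le C\|u_h(t_j)\|_{H^s_h}$ with $C=C(s,R)>1$ on each interval of length $\delta(R)$. That is growth like $C^{t/\delta}$, because mass conservation controls $L^2$, not $H^s$. For $s\ge1$, there is no conserved discrete energy. The continuum gauge is a translation by $2\int_0^t a$, which has no lattice counterpart. Moreover, $\omega_{h,a}(k)=\cos(ha)\,\omega_{h,0}(k)+\frac{2}{h^2}(1-\cos(ha))-\frac{2}{h^2}\sin(ha)\sin(hk)$ is not $\omega_{h,0}(k)$ plus an affine function of $k$. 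So conjugating \eqref{main_eq1} by $\mathcal{G}_a$ still leaves a nonautonomous lattice flow, and $a$ is merely $L^\infty$.

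The missing idea is a Bernier-type Fourier-side modified energy $\mathcal{E}=Q+\Lambda_4(m_q)$ built on the \emph{frozen} dispersion $\omega_{h,0}$. With it, the sextic remainder gives additive, lower-order increments per unit time; for $s\le\frac12$ these are bounded by $\|U_h\|_{L^4(I_j\times\mathbb{T})}^4\lesssim1$ through the filtered $L^4$ estimate. The twist defect $D_2(\omega_{h,a}-\omega_{h,0})=O(|\sin(\frac{ha}{2})|\sum_{j}\langle k_j\rangle^2)$ then contributes only a factor $1+Ch|a|_\infty$ per interval. This closes only while $h|a|_\infty t\le1$. For $s\ge1$ it closes while $\beta_h(t)\le\epsilon_R$, via $Y'\lesssim|\sin(\frac{ha}{2})|Y^{3/2}$ for the frozen energy. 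The complementary regime is handled by the Bernstein bound $\|u_h\|_{H^s_h}\lesssim h^{-s}\|u_h\|_{L^2_h}\lesssim(|a|_\infty t)^s$. Also, for $s\ge1$, cancelling only the ``non-resonant part'' is not enough. On the unfiltered lattice, $D_2\omega_{h,0}$ also vanishes on the discrete resonance $k_1+k_2=\pm\frac{\pi}{h}$, with small divisors nearby. The paper gets an $h$-uniform divisor bound by using $q(k)=h^{-2s}f_s(hk)$, whose $D_2q$ vanishes there as well.

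Your consistency claim in Step 4 is also false as stated. Both symbols are $O(k^2)$, and $|\omega_{h,a}(k)-(k-a)^2|\simeq k^2$ when $|hk|\simeq1$. So for $s-\rho<4$ the forcing $[\omega_{h,a}(k)-(k-a)^2]\widehat{U}_h(k)$ can have $H^\rho$ norm of order $h^{s-\rho-2}\|U_h\|_{H^s}$. That is not small for $s-\rho\le2$ and is worse than $h^{(s-\rho)/2}$ otherwise, so no Gronwall or Duhamel estimate driven by it yields $h^{\gamma_{s,\rho}}$.

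The paper instead compares $U_{h,N}=\mathcal{R}_NU_h$ with $u$ for $N=\lceil h^{-1/2}\rceil$. The forcing then becomes $h^2\langle|a|_\infty\rangle^4N^{\max\{4-(s-\rho),0\}}$. The tail $U_h-U_{h,N}$ is never propagated; it is bounded directly by $N^{-(s-\rho)}\|U_h(t)\|_{H^s}$, which again needs the polynomial $H^s$ growth from Step 3. Balancing the two gives exactly $\gamma_{s,\rho}$. For $0<s<1$, the argument closes only in $L^2$, by telescoping local $X^{0,b}$ defects through the tangent cocycle, and reaches $H^\rho$ by interpolating with the $H^s$ growth.

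The remaining ingredients you list do match the paper: linear-in-time tangent bounds from the Birkhoff rotation, a bootstrap absorbing the quadratic remainder, and the trivial large-time regime. There, uniform bounds on $D\mathcal{B}^{\pm1}$ come from compactness of the rotation orbit of the precompact continuum trajectory, rather than from a ball.
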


The phase-weighted nearest-neighbor coupling in \eqref{main_eq1} arises in coupled-mode models for light propagation in twisted
multicore fibers with cores arranged in a ring
\cite{longhi2007light,castro2016light}.
Evanescent overlap couples neighboring guided modes, while twisting
introduces conjugate phase factors in the nearest-neighbor couplings.
The accumulated phases modify interference around the ring,
providing an optical analogue of the Aharonov-Bohm effect and
allowing suppression of optical tunneling
\cite{parto2017topological}.
Unlike simply-connected domains, where a spatially constant magnetic potential can be eliminated by a gauge transformation, the periodic structure of our model preserves $e^{2\pi i a(t)}$ as a nontrivial holonomy when $a(t)\notin\mathbb{Z}$.
Related models have been used to study localized standing waves
and their stability
\cite{ParkerAcevesTwistedFibers,ParkerShenAcevesZweck},
as well as twist-dependent modulation instability
\cite{maitland2019modulation}.
Twist-induced geometric phases and suppression of optical tunneling
have also been observed experimentally \cite{PartoOpticalAB}. Although \eqref{main_eq2} is conjugate to the cubic NLS by \eqref{magnetic_conjugacy}, this time-dependent translation has no direct analogue for \eqref{main_eq1}. Consequently, our analysis must address this nonautonomous lattice dynamics.

Our proof combines Birkhoff coordinates and modified energies to control the propagation and size of the consistency error in the nonlinear comparison. We exploit the complete integrability of cubic NLS through its Birkhoff representation as coordinatewise rotations \cite{grebert2014defocusing,kappeler2009birkhoff}, using their action-dependent frequencies and the regularity of the coordinate maps to derive linear bounds of the tangent flow. A related Birkhoff argument for Strang splitting appears in
\cite{feng2026temporal}. Since the lattice approximation does not preserve this integrable
structure, we use modified energies to control its Sobolev growth. In \cite{Chauleur}, modified energies based on higher time derivatives yield polynomial discrete Sobolev bounds for an exponential-in-time continuum-limit estimate, motivating our optimized error bounds. To accommodate fractional Sobolev regularities, we adapt the
Fourier-side modified-energy construction of \cite{Bernier} to obtain polynomial orbit bounds uniformly in $h$. This approach builds on the continuum Sobolev-growth literature
\cite{bourgain1996growth,staffilani1997growth,planchon2017growth}.

Compactness arguments in \cite{kirkpatrick2013continuum} establish weak convergence from the discrete to the fractional NLS, while dispersive estimates in \cite{hong2019strong} yield strong $L^2$ convergence. In the periodic setting, \cite{HongPeriodicNLS} uses short-time lattice Strichartz estimates to justify the finite difference scheme for the two-dimensional NLS with $H^1$ data. Adapting this framework, \cite{ChoiPeriodicFDNLS} treats the one-dimensional periodic fractional NLS below the energy space, including $\frac{1}{3}<s\leq1$ for the cubic NLS, deriving exponential-in-time, rather than polynomial, error bounds.

Below the energy space, the uniform $H^1$ control underlying our
modified-energy argument for $s\geq1$ is unavailable. For
$0<s<1$, we combine the restriction norm method \cite{BourgainNLS1}
with Fourier filtering \cite{IgnatZuazua} to recover uniform
dispersive estimates on $\mathbb{T}_h$. Restricting the lattice dynamics to $E_h$ excludes degenerate frequencies and recovers the uniform
$L^4$ embedding; see
\Cref{prop:nonauto-L4,prop:L4-failure-half-band}. We adapt the
restriction norm to the nonautonomous propagator, with related
constructions in \cite{CarvajalPantheeScialom}. The periodic
FPU-to-KdV limit \cite{kwak2025periodic} also uses $L^4$ estimates and
normal-form regularization. This low-regularity analysis is relevant
to simulations of random or partially coherent waves in integrable
turbulence, supercontinuum generation, and nonlinear photonic
lattices
\cite{WalczakRandouxSuret,DudleyGentyEggleton,BuljanEtAl}, where
inputs may be represented by finite Fourier sums with randomized
phases \cite{SuretTimeMicroscopy}. For the filtered scheme
\eqref{main_eq1}, our bounds provide quantitative error control in
terms of the spatial resolution and observation time.

To show the necessity of the mass restriction in \Cref{thm:intro-polynomial}, we show in
\Cref{thm:foc-failure} that the uniform polynomial bound
\eqref{eq:intro-polynomial} fails on sufficiently large bounded
data; see \Cref{future_research} for the focusing extension of \Cref{thm:intro-polynomial}. Modulational instability of NLS is classical \cite{akhmediev1986modulation}, while
\cite{weideman1987recurrence} studies instability and aliasing in spatial discretizations, and \cite{ablowitz1993numerical} examines
the amplification of roundoff errors near unstable focusing dynamics. We choose high-frequency perturbations whose continuum evolution remains confined to modulationally stable modes while lattice initialization aliases these perturbations into low-frequency unstable modes. Their amplification
produces an order-one lower bound for the approximation error at times $T_h \simeq |\log(h)|$.

In \Cref{thm:baire-category}, the generic optimality of the spatial
exponent $\gamma_{s,\rho}$ in \Cref{thm:intro-polynomial} is shown
in the Baire-category sense.
Sharpness of approximation rates is studied in
\cite{DickmeisNesselVanWickeren1984,vanWickeren1987}, with
applications to finite difference error bounds in
\cite{EsserGoebbelsNessel1995}.
In nonlinear dispersive PDEs, related Baire-category arguments
appear in studies of unbounded Sobolev trajectories for modified
NLS on $\mathbb{T}^2$ \cite{HaniUnboundedOrbits} and norm inflation
for scaling-supercritical NLS data
\cite{CampsGassotPathological}. We establish generic sharpness for this nonlinear continuum limit. The proof converts the lattice-continuum
phase defect into nonlinear error lower bounds through high-frequency
perturbations, while the initialization error limits the convergence
rate to second order.

\Cref{background} sets the
numerical-analytic framework; \Cref{filtering} develops the
Fourier-filtered uniform local theory; \Cref{subsec:birkhoff}
establishes Birkhoff-coordinate tangent estimates; and
\Cref{sec:modified-energies} derives polynomial discrete Sobolev
bounds via modified energies. \Cref{sec:polynomial-regimes} proves
the polynomial continuum limit through consistency estimates, while
\Cref{obstruction} establishes the focusing obstruction and
Baire-category optimality.

\section{Analytic framework}\label{background}

Let $\mathbb{T} = \mathbb{R}/(2\pi\mathbb{Z})$ identified with $[-\pi,\pi)$. For an admissible mesh size
$h = \frac{\pi}{M}$ with $M \in \mathbb{N}$, define
\begin{equation*}
\mathbb{T}_h = \{hj: -M \leq j \leq M-1\},
\qquad
\mathbb{T}_h^{*} = \{-M,\ldots,M-1\}.
\end{equation*}
In discrete convolutions, frequencies are identified modulo $2M$.

For $f \in L^2(\mathbb{T})$ and $g \in L^2_h := L^2(\mathbb{T}_h)$,
we use the Fourier convention
\begin{equation*}
\widehat{f}(k) = \frac{1}{2\pi}\int_{\mathbb{T}} f(x)e^{-ikx}dx,
\quad k \in \mathbb{Z};
\qquad
\widehat{g}(k) = \frac{h}{2\pi}\sum_{x \in \mathbb{T}_h}g(x)e^{-ikx},
\quad k \in \mathbb{T}_h^{*}.
\end{equation*}
Denote $H^s = H^s(\mathbb{T})$ and $H_h^s = H^s(\mathbb{T}_h)$. The Sobolev norms are
\begin{equation*}
\| f \|_{H^s}^2
= \sum_{k \in \mathbb{Z}}\langle k \rangle^{2s}|\widehat{f}(k)|^2,
\qquad
\| g \|_{H_h^s}^2
= \sum_{k \in \mathbb{T}_h^{*}}\langle k \rangle^{2s}|\widehat{g}(k)|^2, \qquad \langle k \rangle := (1+k^2)^{\frac{1}{2}}.
\end{equation*}
The cell-average discretization and Shannon interpolation are defined by
\begin{equation*}
\Pi_h f(x) = \frac{1}{h}\int_{x-\frac{h}{2}}^{x+\frac{h}{2}}f(y)dy,
\quad x \in \mathbb{T}_h;
\quad
\mathcal{S}_h g(x) = \sum_{k \in \mathbb{T}_h^{*}}\widehat{g}(k)e^{ikx},
\quad x \in \mathbb{T}.
\end{equation*}
The interpolation
satisfies $\mathcal{S}_h g(x) = g(x)$ for $x \in \mathbb{T}_h$ and
identifies $H_h^s$ isometrically with the $\mathbb{T}_h^{*}$-bandlimited
subspace of $H^s$. We use the same notation for the Fourier multipliers
$P_h$ and $\Delta_{h,a(t)}$ on either space.

Let $\omega_{h,a(t)}(k) = \frac{4}{h^2}
\sin^2\left(\frac{h(k-a(t))}{2}\right)$ and define the accumulated phases by
\begin{equation*}
\phi_k(t) = \int_0^t(k-a(\tau))^2d\tau,
\qquad
\phi_{h,k}(t) = \int_0^t\omega_{h,a(\tau)}(k)d\tau.
\end{equation*}
The corresponding linear propagators are
\begin{equation}\label{eq:linear-propagators}
\widehat{S_a(t,t_0)f}(k) = e^{-i(\phi_k(t)-\phi_k(t_0))}\widehat{f}(k), \qquad
\widehat{S_{h,a}(t,t_0)g}(k) = e^{-i(\phi_{h,k}(t)-\phi_{h,k}(t_0))}\widehat{g}(k).
\end{equation}

The magnetic NLS is conjugate to
the standard NLS by translation and phase. Define
\begin{equation}\label{magnetic_conjugacy}
    (\mathcal{G}_a(t)f)(x)
    =
    \exp\left(i\int_0^t a(\tau)^2 d\tau\right)
    f\left(x-2\int_0^t a(\tau)d\tau\right),
\end{equation}
and let $\Phi_a(t,\tau)$ be the magnetic NLS flow satisfying the cocyle property
\begin{equation}\label{cocycle}
\Phi_a(t,\tau) = \Phi_a(t,\sigma)\Phi_a(\sigma,\tau),\qquad t,\tau,\sigma \in \mathbb{R}.  
\end{equation}
With a slight abuse of notation, let $\{\Phi_t\}_{t \in \mathbb{R}}$ denote the NLS flow in $L^2$ for both nonlinearities. Then,
\begin{equation}\label{eq:magnetic-flow-conjugation}
\Phi_a(t,\tau)
    = \mathcal{G}_a(t)^{-1}\Phi_{t-\tau}\mathcal{G}_a(\tau),
\end{equation}
For a bounded interval $I \subseteq \mathbb{R}$, let
\begin{equation*}
\| f \|_{H^b(I)}
= \inf\left\{\| F \|_{H^b(\mathbb{R})}: F|_I = f\right\}.
\end{equation*}
For $u:  I \times \mathbb{T} \rightarrow \mathbb{C}$ and
$U: I \times \mathbb{T}_h \rightarrow \mathbb{C}$, define the restricted Bourgain norms by
\begin{equation*}
\| u \|_{X^{s,b}(I)}^2 = \sum_{k \in \mathbb{Z}}\langle k \rangle^{2s}\| e^{i\phi_k(t)}\widehat{u}(k,t) \|_{H_t^b(I)}^2, \qquad
\| U \|_{X_h^{s,b}(I)}^2 = \sum_{k \in \mathbb{T}_h^{*}}\langle k \rangle^{2s}\| e^{i\phi_{h,k}(t)}\widehat{U}(k,t) \|_{H_t^b(I)}^2.
\end{equation*}
We also use $X_h^{s,b}(I)$ for Shannon lifts. Finally, denote
\begin{equation*}
\mathcal{N}(f) = \mu|f|^2f,
\qquad
\mathcal{N}_h(f) = \mu P_h\left(|P_hf|^2P_hf\right).
\end{equation*}

\begin{remark}\label{rem:caratheodory}
For all $h>0$, the global well-posedness of \eqref{main_eq1} and
\eqref{main_eq1} follows from Carath\'{e}odory's theorem and the conservation of the $L^2_h$-norm. The corresponding solutions are absolutely continuous in time, i.e., $u_h \in AC(\mathbb{R};H^s_h)$, and satisfy
the ODEs a.e. in $t \in \mathbb{R}$.
\end{remark}

\begin{remark}\label{rem:band-properties}
For the solution $u_h$ of \eqref{main_eq1}, let $U_h = \mathcal{S}_h u_h$. If $s<1$, we have
\begin{equation}\label{main_eq12}
i\partial_tU_h = -\Delta_{h,a(t)}U_h+\mathcal{N}_h(U_h),
\qquad
U_h(0) = \mathcal{S}_h P_h\Pi_hu_0,
\end{equation}
since $\mathcal{S}_h P_h\left(|P_hu_h|^2P_hu_h\right)
= P_h\left(|P_hU_h|^2P_hU_h\right)$. Indeed, for $k_0,k_1,k_2,k_3 \in E_h$, the relation
$k_1-k_2+k_3 \equiv k_0 \pmod{2M}$ implies
$k_1-k_2+k_3 = k_0$ in $\mathbb{Z}$ since
$\Lambda < \frac{1}{2}$. Moreover, $\supp\left(\widehat{U}_h(\cdot,t)\right) \subseteq E_h,\ t \in \mathbb{R}$, which follows by applying $\Id-P_h$ to \eqref{main_eq12}.
\end{remark}

\begin{remark}\label{rem:time-reversal}\label{rem:initial-time}
The equations are invariant under the time-reversal symmetry $u_h(t)\mapsto \overline{u_h(-t)},\ a(t)\mapsto -a(-t)$. Thus, if $u_h$ solves \eqref{main_eq1}, then $\overline{u_h(-t)}$ solves the same equation with magnetic potential $-a(-t)$ and complex-conjugated initial data. Moreover, the translation $u_h(t) \mapsto u_h(t+t_0)$, $a(t) \mapsto a(t+t_0)$ preserves both evolution models for any $t_0 \in \mathbb{R}$. Thus, we take $t_0=0$ and treat $t\geq0$ in the arguments below.
\end{remark}

We record uniform bounds for discretization, reconstruction, and free evolution. 
 
\begin{lemma}\label{lem:Jh-bounded}
For any $s > -\frac{1}{2}$,
\begin{equation*}
    \| \mathcal{S}_h P_{\chi} \Pi_h f \|_{H^s} \lesssim_s \| f \|_{H^s}.
\end{equation*}
\end{lemma}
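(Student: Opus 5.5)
Since $\mathcal{S}_h$ identifies $H^s_h$ isometrically with the $\mathbb{T}_h^*$-bandlimited subspace of $H^s$, and $P_\chi$ is a Fourier multiplier with symbol $\chi\in\{0,1\}$, it suffices to show $\|\Pi_h f\|_{H^s_h}\lesssim_s\|f\|_{H^s}$. We compute the discrete Fourier coefficients of $\Pi_h f$ explicitly. Writing $f=\sum_{n\in\mathbb{Z}}\widehat f(n)e^{inx}$, the cell average gives
\begin{equation*}
\Pi_h f(x)=\sum_{n\in\mathbb{Z}}\widehat f(n)\,\sinc\!\left(\tfrac{hn}{2}\right)e^{inx},\qquad \sinc(y)=\tfrac{\sin y}{y}.
\end{equation*}
Restricting to $x\in\mathbb{T}_h$, the exponentials $e^{inx}$ and $e^{ikx}$ coincide when $n\equiv k \pmod{2M}$. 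Using the discrete orthogonality $\frac{h}{2\pi}\sum_{x\in\mathbb{T}_h}e^{i(n-k)x}=\mathbf{1}_{n\equiv k}$, we obtain for $k\in\mathbb{T}_h^*$
\begin{equation*}
\widehat{\Pi_h f}(k)=\sum_{m\in\mathbb{Z}}\widehat f(k+2Mm)\,\sinc\!\left(\tfrac{h(k+2Mm)}{2}\right).
\end{equation*}

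Next we split the sum into the $m=0$ term and the aliased terms $m\neq0$. The $m=0$ term is bounded by $|\widehat f(k)|$, since $|\sinc|\le1$, and contributes at most $\|f\|_{H^s}$. For $m\neq0$ we have $h(k+2Mm)/2=hk/2+\pi m$, so $\sin(h(k+2Mm)/2)=\pm\sin(hk/2)$. Hence
\begin{equation*}
\left|\sinc\!\left(\tfrac{h(k+2Mm)}{2}\right)\right|\le \frac{2}{h|k+2Mm|}\lesssim \frac{1}{|m|},
\end{equation*}
because $|k|\le M$ forces $|k+2Mm|\ge M|m|$. We may also keep the factor $|\sin(hk/2)|\lesssim h\langle k\rangle$ for later use. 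In addition, $\langle k\rangle\lesssim M\lesssim \langle k+2Mm\rangle/|m|$ for $m\neq0$.

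We then apply Cauchy--Schwarz in $m$. For the aliased part, writing $n_m=k+2Mm$,
\begin{equation*}
\langle k\rangle^{s}\Big|\sum_{m\ne0}\widehat f(n_m)\sinc(\cdot)\Big|
\le \Big(\sum_{m\neq0}\langle k\rangle^{2s}\langle n_m\rangle^{-2s}w_m^2\Big)^{1/2}\Big(\sum_{m}\langle n_m\rangle^{2s}|\widehat f(n_m)|^2\Big)^{1/2},
\end{equation*}
where $w_m$ denotes the sinc bound. Summing the square over $k\in\mathbb{T}_h^*$ recovers $\|f\|_{H^s}^2$ from the second factor. So we need $\sup_k\sum_{m\ne0}\langle k\rangle^{2s}\langle n_m\rangle^{-2s}w_m^2<\infty$ uniformly in $h$. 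We treat the sign of $s$ in two cases.

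For $s\ge0$ we use $\langle k\rangle/\langle n_m\rangle\lesssim 1$ together with $w_m\lesssim|m|^{-1}$, so the sum is bounded by $\sum_{m\neq0}m^{-2}<\infty$. For $-\frac12<s<0$, the ratio $\langle k\rangle^{2s}\langle n_m\rangle^{-2s}=(\langle n_m\rangle/\langle k\rangle)^{2|s|}$ can be as large as $(M|m|/\langle k\rangle)^{2|s|}$, which is the main obstacle. Here we use the refined weight $w_m\lesssim \frac{h\langle k\rangle}{h M|m|}\sim \frac{\langle k\rangle}{M|m|}$, which comes from $|\sin(hk/2)|\lesssim h\langle k\rangle$. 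The sum then becomes
\begin{equation*}
\sum_{m\neq0}\left(\frac{M|m|}{\langle k\rangle}\right)^{2|s|}\frac{\langle k\rangle^2}{M^2m^2}
=\left(\frac{\langle k\rangle}{M}\right)^{2-2|s|}\sum_{m\ne0}|m|^{2|s|-2}.
\end{equation*}
The first factor is at most $1$ since $\langle k\rangle\lesssim M$, and the series converges exactly when $|s|<\frac12$. This explains the threshold $s>-\frac12$ and completes the plan.
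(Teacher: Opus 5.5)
Your proposal is correct and follows essentially the same route as the paper: the aliasing formula for $\widehat{\mathcal{S}_h\Pi_h f}(k)$, the identity $|\sin(h(k+2Mm)/2)|=|\sin(hk/2)|\lesssim h|k|$ together with $|k+2Mm|\simeq M|m|$, and Cauchy--Schwarz in the aliasing index, with the threshold $s>-\frac{1}{2}$ coming from $\sum_{m\neq0}|m|^{-2(s+1)}<\infty$. The only cosmetic difference is that the paper uses the refined bound $|s_h(k+2M\ell)|\lesssim h|k|/|\ell|$ for all $s>-\frac{1}{2}$ at once, giving the single weight $|\ell|^{-(s+1)}$, whereas you split by the sign of $s$ and use the cruder bound $w_m\lesssim|m|^{-1}$ when $s\geq0$.
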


\begin{proof}
It can be shown by direct computation that
\begin{equation}\label{eq:Jh-fourier}
 \widehat{\mathcal{S}_h \Pi_h f}(k) = \sum_{\ell \in \mathbb{Z}} s_h(k + 2M \ell)\widehat f(k+2M\ell),
\end{equation}
where the cell-average multiplier is given by
\begin{equation*}
 s_h(\xi) = \sinc\left(\frac{h \xi}{2}\right) = 
 \begin{cases}
 \frac{\sin(h\xi/2)}{h\xi/2},& \xi \neq 0,\\
 1,&\xi = 0.
 \end{cases}
\end{equation*}
Hence it suffices to show uniform boundedness of $\mathcal{S}_h \Pi_h$. For $\ell=0$, $|s_h(k)| \leq 1$ yields the desired bound. For $|\ell| \geq 1$, a direct estimation yields
\begin{equation}\label{lem21_est}
    \frac{\pi |\ell|}{h} \leq |k + 2M\ell| \leq \frac{3\pi |\ell|}{h},\qquad |s_h (k + 2 M \ell)| \lesssim \frac{h |k|}{|\ell|}.
\end{equation}
Since $s + 1 > \frac{1}{2}$, we have
\begin{equation*}
    \frac{\langle k \rangle^s |s_h(k + 2M\ell)|}{\langle k + 2M\ell \rangle^{s}} \lesssim (h \langle k \rangle)^{s+1} \frac{|k|}{\langle k \rangle} |\ell|^{-(s+1)} \lesssim |\ell|^{-(s+1)},
\end{equation*}
and
\begin{equation*}
\begin{split}
\langle k \rangle^s \Bigl|\sum_{|\ell| \geq 1}s_h(k + 2M\ell)\widehat f(k+2M\ell)\Bigr| \lesssim \sum_{|\ell| \geq 1}|\ell|^{-(s+1)}\langle k+2M\ell \rangle^{s}|\widehat f(k+2M\ell)|.    
\end{split}    
\end{equation*}
Taking the Cauchy-Schwarz inequality in $\ell$ and the sum in $\ell^2_{k \in \mathbb{T}_h^{*}}$ yields the proof.
\end{proof}

The Fourier multipliers in \eqref{eq:linear-propagators} have modulus one. Hence
\begin{equation*}
 \|S_a(t,t_0)f\|_{H^s}=\|f\|_{H^s},\qquad
 \|S_{h,a}(t,t_0)g\|_{H_h^s}=\|g\|_{H_h^s}.
\end{equation*}
For $b>\frac{1}{2}$, the time-Sobolev trace estimate applied to the phase-conjugated Fourier coefficients gives
\begin{equation*}
 \|u\|_{C(I;H^s)}\lesssim_b\|u\|_{X^{s,b}(I)},\qquad
 \|U\|_{C(I;H_h^s)}\lesssim_b\|U\|_{X_h^{s,b}(I)}.
\end{equation*}
These statements and the following estimates do not require Fourier filtering.

Assume throughout
\begin{equation*}
\frac{3}{8} < b_0 < \frac{1}{2},
\qquad
\frac{1}{2} < b < 1-b_0,
\qquad
\theta = 1 - b - b_0 > 0.
\end{equation*}

\begin{lemma}\label{lem:linear-estimates}
Let $s \in \mathbb{R},\ |I| \leq 1,\ t_0 \in I$. Then, the linear estimates are
\begin{equation*}
\| S_a(t,t_0)f \|_{X^{s,b}(I)} \lesssim \| f \|_{H^s},\qquad \| S_{h,a}(t,t_0)g \|_{X_h^{s,b}(I)} \lesssim \| g\|_{H_h^s},
\end{equation*}
and the Duhamel estimates are
\begin{align*}
\left\|\int_{t_0}^t S_a(t,\tau)F(\tau)d\tau\right\|_{X^{s,b}(I)}
\lesssim |I|^\theta \| F\|_{X^{s,-b_0}(I)},\qquad \left\|\int_{t_0}^t S_{h,a}(t,\tau)G(\tau)d\tau\right\|_{X_h^{s,b}(I)}
\lesssim |I|^\theta \| G\|_{X_h^{s,-b_0}(I)}.
\end{align*}
The implicit constants may depend on $b,b_0$, and are independent of $h,s,a$.
\end{lemma}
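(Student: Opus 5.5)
The plan is to reduce both estimates to the standard autonomous Bourgain-space lemmas by conjugating with the phases. The definitions of $X^{s,b}(I)$ and $X_h^{s,b}(I)$ are built mode by mode on the phase-conjugated coefficients $e^{i\phi_k(t)}\widehat{u}(k,t)$ and $e^{i\phi_{h,k}(t)}\widehat{U}(k,t)$. In those coordinates both the nonautonomous propagator and the Duhamel operator become trivial, so the whole argument reduces to one-dimensional time-Sobolev estimates applied uniformly in $k$. The continuous and lattice cases are identical once $\phi_k$ is replaced by $\phi_{h,k}$. Nothing about $\phi$ is used beyond its being a real-valued absolutely continuous function of $t$, which gives independence from $h$, $s$ and $a$.

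\emph{Linear estimate.} For $u(t)=S_a(t,t_0)f$ we have $e^{i\phi_k(t)}\widehat{u}(k,t)=e^{i\phi_k(t_0)}\widehat f(k)$, which is constant in $t$. Fix a cutoff $\eta\in C_c^\infty(\mathbb{R})$ with $\eta=1$ on $[-2,2]$. Since $|I|\le1$ and $t_0\in I$, the function $\eta(t-t_0)\,e^{i\phi_k(t_0)}\widehat f(k)$ extends the constant from $I$. Its $H^b_t(\mathbb{R})$ norm is $\|\eta\|_{H^b}|\widehat f(k)|$, independent of $t_0$ by translation invariance. Multiplying by $\langle k\rangle^{2s}$ and summing over $k$ gives the first estimate.

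\emph{Duhamel estimate.} Set $w(t)=\int_{t_0}^tS_a(t,\tau)F(\tau)d\tau$. Then
\begin{equation*}
e^{i\phi_k(t)}\widehat w(k,t)=\int_{t_0}^t g_k(\tau)\,d\tau,\qquad g_k(\tau):=e^{i\phi_k(\tau)}\widehat F(k,\tau).
\end{equation*}
It therefore suffices to prove the scalar bound
\begin{equation*}
\Bigl\|\int_{t_0}^t g\Bigr\|_{H^b(I)}\lesssim|I|^{\theta}\|g\|_{H^{-b_0}(I)},\qquad \theta=1-b-b_0,
\end{equation*}
uniformly in $t_0\in I$ and $|I|\le1$. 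Squaring, weighting by $\langle k\rangle^{2s}$ and summing then yields the result.

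To prove the scalar bound, take an extension $G$ of $g$ with $\|G\|_{H^{-b_0}(\mathbb{R})}\le2\|g\|_{H^{-b_0}(I)}$. Then $\int_{t_0}^tG$ agrees with $\int_{t_0}^tg$ on $I$, and $G$ may be replaced by $\mathbf{1}_I G$. The estimate then follows from two standard facts for $-\tfrac12<-b_0<0<b<1$ and $b+b_0<1$.
\begin{itemize}[leftmargin=2em]
\item The Ginibre--Tsutsumi--Velo/Kenig--Ponce--Vega smoothing estimate: for a smooth cutoff $\psi_T$ at scale $T$ (a bump equal to $1$ on $[t_0,t_0+T]$),
\begin{equation*}
\Bigl\|\psi_T(t)\int_{t_0}^tG\Bigr\|_{H^b}\lesssim T^{1-b-b_0}\|G\|_{H^{-b_0}}.
\end{equation*}
\item Sharp time cutoffs are bounded on $H^{-b_0}$ since $b_0<\tfrac12$.
\end{itemize}
Here $T=|I|$, and translating to $t_0$ costs nothing.

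\emph{Main obstacle.} The only delicate point is this scalar estimate with the exact power $T^{1-b-b_0}$. The constraints $b_0<\tfrac12<b<1-b_0$ are exactly what it requires.

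I would first write $\int_{t_0}^tG=\int\frac{e^{it\lambda}-e^{it_0\lambda}}{i\lambda}\widehat G(\lambda)\,d\lambda$ and split into $|\lambda|\le T^{-1}$ and $|\lambda|>T^{-1}$.
\begin{itemize}[leftmargin=2em]
\item Low frequencies: Taylor-expand the exponential. Each term is $\psi_T(t)(t-t_0)^n$ times a coefficient, and it is bounded by Cauchy--Schwarz using $\int_{|\lambda|\le1/T}\langle\lambda\rangle^{2b_0}d\lambda\lesssim T^{-1-2b_0}$.
\item High frequencies: the $e^{it\lambda}/\lambda$ piece gains $\langle\lambda\rangle^{-1}$ against $\langle\lambda\rangle^{b}\langle\lambda\rangle^{b_0}$, which produces $T^{1-b-b_0}$. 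The constant piece is controlled by $\|\psi_T\|_{H^b}\lesssim T^{1/2-b}$ times $\int_{|\lambda|>1/T}|\widehat G|/|\lambda|\lesssim T^{1/2-b_0}\|G\|_{H^{-b_0}}$.
\end{itemize}
All of this is independent of the phases, hence of $h$, $s$ and $a$, as claimed.
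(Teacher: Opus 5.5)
Your proposal is correct and follows the paper's proof: conjugating each Fourier mode by $e^{i\phi_k(t)}$ (resp.\ $e^{i\phi_{h,k}(t)}$) makes the nonautonomous propagator and Duhamel operator trivial mode by mode, so both estimates reduce to the standard scalar restriction-norm bounds, uniformly in $h$, $s$, $a$. The paper only cites these scalar bounds (Bourgain, Tao), whereas you also sketch the Ginibre--Tsutsumi--Velo argument; there, take the bump equal to $1$ on $[t_0-T,t_0+T]$ rather than $[t_0,t_0+T]$, since $t_0$ need not be the left endpoint of $I$.
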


\begin{proof}
By the definition of the propagator,
\begin{equation*}
e^{i\phi_k(t)}
\widehat{S_a(t,t_0)f}(k)
= e^{i\phi_k(t_0)}\widehat f(k).
\end{equation*}
After conjugating each spatial Fourier mode by its phase, the free evolution, and similarly the Duhamel operator, reduces to the standard restriction-norm estimates \cite{BourgainNLS1,TaoNDE}. The lattice case follows similarly with the sum restricted to $k \in \mathbb{T}_{h}^{*}$.
\end{proof}

\section{Fourier filtering and uniform local theory}\label{filtering}

The main result of this section is the following uniform spacetime estimate used in the regime $0<s<1$. Except for the negative assertion in \Cref{prop:L4-failure-half-band}, assume $0 < h \leq h_0$ and $0 < \Lambda < \frac{1}{2}$.

\begin{proposition}\label{prop:nonauto-L4}
There exists $C = C(\Lambda,b_0)>0$ such that for any spacetime function $U:I\times\mathbb{T}\rightarrow\mathbb{C}$,
\begin{equation}\label{eq:nonauto-L4}
\| P_h U \|_{L^4(I\times\mathbb{T})}
\leq C \| P_h U\|_{X_h^{0,b_0}(I)},
\end{equation}
and,
\begin{equation}\label{eq:L43-dual}
\|U\|_{X^{0,-b_0}(I)}+\|P_h U\|_{X_h^{0,-b_0}(I)}
\leq C
\|U\|_{L^{\frac{4}{3}}(I\times\mathbb{T})}.
\end{equation}
\end{proposition}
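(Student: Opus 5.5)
The plan is to prove \eqref{eq:nonauto-L4} with Bourgain's bilinear counting argument. The single new input is a convexity bound for the lattice symbol on the filtered band, uniform in $t$, and \eqref{eq:L43-dual} will then follow by duality. First, I will remove the restriction to $I$. Given $P_hU$, choose an extension achieving the $X_h^{0,b_0}(I)$ norm up to a factor $2$; it may be taken with Fourier support in $E_h$, since $P_h$ commutes with the construction mode by mode. The $L^4(I\times\mathbb{T})$ norm is bounded by the global one. Splitting $\mathbb{R}$ into unit intervals and using $b_0<\tfrac12$ together with a smooth partition of unity, it suffices to prove \eqref{eq:nonauto-L4} for $U$ supported in a fixed unit time interval.

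\textbf{Key symbol bound.} For $k\in E_h$ and $0<h\le h_0$, the definition \eqref{eq:phase-margin} gives
\[
|h(k-a(t))|\le \Lambda\pi+\tfrac{\pi}{2}\bigl(\tfrac12-\Lambda\bigr)<\tfrac{\pi}{2},
\]
uniformly in $t$. Hence $\partial_k^2\omega_{h,a(t)}(k)=2\cos\bigl(h(k-a(t))\bigr)\ge c(\Lambda)>0$ on the band, and also $\partial_k^2\omega_{h,a(t)}\le 2$. This uniform convexity is exactly what fails without filtering, where the inflection point at $|hk|=\pi/2$ degenerates, as in \Cref{prop:L4-failure-half-band}.

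Next, write $v_k(t)=e^{i\phi_{h,k}(t)}\widehat{U}(k,t)$ and expand
\[
\|U\|_{L^4}^4=\|U^2\|_{L^2_{t,x}}^2=\sum_n\int\Bigl|\sum_{k_1+k_2=n}e^{-i(\phi_{h,k_1}+\phi_{h,k_2})(t)}v_{k_1}(t)v_{k_2}(t)\Bigr|^2dt .
\]
Setting $k_{1,2}=\tfrac n2\pm j$ and factoring out the $j$-independent phase $\phi_{h,n/2}$ (suitably interpreted for $n$ odd), the relative resonance phase has time derivative $g_n(j,t)-g_n(0,t)$, where $g_n(j,t)=\omega_{h,a(t)}(\tfrac n2+j)+\omega_{h,a(t)}(\tfrac n2-j)$. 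By the convexity bound,
\[
c\,|j^2-j'^2|\le |g_n(j,t)-g_n(j',t)|\le 2|j^2-j'^2|,
\]
uniformly in $t$, $h$, $a$.

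\textbf{Dyadic decomposition and counting.} I then decompose each $v_k$ dyadically in temporal frequency, $v_k=\sum_L v_k^{L}$ with $\widehat{v_k^L}$ supported in $|\tau|\sim L$, and aim for the bilinear bound
\[
\|U_{L_1}U_{L_2}\|_{L^2}\lesssim L_{\min}^{1/2}\,L_{\max}^{1/4}\,\|U_{L_1}\|_{X_h^{0,0}}\|U_{L_2}\|_{X_h^{0,0}},
\]
or any bound with total power below $\tfrac38$ per factor. Summing over $L_1,L_2$ then gives the claim for $b_0>\tfrac38$. To get this bound, I localize time to intervals $J$ of length $\sim L_{\max}^{-1}$ and sum over $J$ by almost-orthogonality. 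On each $J$, the $v^{L}$ are essentially constant, and $\phi_{h,k}$ differs from its linearization at the center of $J$ by a $k$-dependent linear-in-time term plus $O(1)$ errors controlled by $\partial_t^2\phi_{h,k}$. After Cauchy--Schwarz in $j$, the contribution for fixed $n$ is bounded by
\[
\#\bigl\{j:\ |g_n(j,t_J)-\sigma|\lesssim L_{\max}\bigr\}\lesssim 1+(L_{\max}/c)^{1/2},
\]
which follows from the two-sided comparison with $j^2$ above. This reproduces Bourgain's count uniformly in $a$ and $h$.

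\textbf{Main obstacle.} The hardest step is this nonautonomous localization. For the lattice there is no gauge transformation removing $a(t)$, and $a$ is only $L^\infty$, so $\partial_t\omega_{h,a(t)}$ need not exist. My first attempt would avoid time derivatives of the symbol entirely. I would run a $TT^*$ argument in $t$ for the family $\{e^{-i\Phi_{n,j}(t)}\}_j$ with $\Phi_{n,j}=\int_0^t(g_n(j,\tau)-g_n(0,\tau))\,d\tau$. Since $\partial_t\Phi_{n,j}$ is only bounded and measurable, I would replace integration by parts with the substitution $s=\Phi_{n,j}(t)-\Phi_{n,j'}(t)$, which is bi-Lipschitz with derivative comparable to $|j^2-j'^2|$ by the displayed bound. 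This gives decay $\langle L^{-1}|j^2-j'^2|\rangle^{-1}$ for the cross terms, which is the needed almost-orthogonality.

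For \eqref{eq:L43-dual}, I handle the continuum term by the gauge transformation $\mathcal{G}_a$ of \eqref{magnetic_conjugacy}. Since $(k-a)^2=k^2-2ka+a^2$, the translation by $2\int_0^t a$ together with the phase maps $X^{0,b}(I)$ isometrically to the unmagnetic Bourgain space and preserves $L^p_{t,x}$ norms. Bourgain's classical estimate $X^{0,b_0}\subset L^4$ for $b_0>\tfrac38$ and duality then give $\|U\|_{X^{0,-b_0}}\lesssim\|U\|_{L^{4/3}}$. For the lattice term, $P_h$ is self-adjoint and commutes with the phase conjugation, so $X_h^{0,-b_0}$ is dual to $X_h^{0,b_0}$ on $E_h$-supported functions, and
\[
\|P_hU\|_{X_h^{0,-b_0}}=\sup\Bigl\{\Bigl|\iint U\,\overline{P_hV}\Bigr|:\ \|P_hV\|_{X_h^{0,b_0}}\le1\Bigr\}\le \|U\|_{L^{4/3}}\sup\|P_hV\|_{L^4},
\]
which is bounded by \eqref{eq:nonauto-L4}. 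The passage to restricted norms on $I$ is handled by the extension argument used at the start.
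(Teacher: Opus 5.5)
\textbf{There is a genuine gap, at exactly the step you flag as the main obstacle.} Your two-sided bound $c|j^2-j'^2|\le|g_n(j,t)-g_n(j',t)|\le 2|j^2-j'^2|$ is correct. The pairwise substitution $s=\Phi_{n,j}(t)-\Phi_{n,j'}(t)$, however, does not produce decay.

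\begin{itemize}
\item The substitution only moves the roughness of $a$ from the phase into the Jacobian $dt/ds=(g_n(j,t)-g_n(j',t))^{-1}$. That Jacobian is merely $L^\infty$ in $s$, so the cross term $\int e^{-is}(F_j\overline{F_{j'}})(t(s))\,\tfrac{dt}{ds}\,ds$ still cannot be integrated by parts.
\item Since $a\in L^\infty$ is arbitrary, you can choose $a$ (piecewise constant) so that this Jacobian takes two values according to the sign of $\cos s$.
\item Then, for $F_j=F_{j'}$ a slowly varying bump, the cross term is a fixed fraction of $\|F_j\overline{F_{j'}}\|_{L^1}$, with no gain in $L^{-1}|j^2-j'^2|$.
\end{itemize}

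So the decay $\langle L^{-1}|j^2-j'^2|\rangle^{-1}$ you need for the Schur/almost-orthogonality step is not uniform in $a$. The convexity bound alone also lets each increment $g_n(j+1,\cdot)-g_n(j,\cdot)$ carry its own time profile, so it gives no common way to straighten the phases. Your fallback, linearizing $\phi_{h,k}$ on intervals of length $L_{\max}^{-1}$, needs $\partial_t^2\phi_{h,k}=\partial_t\omega_{h,a(t)}(k)$, which does not exist, as you note.

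\textbf{The missing idea is an exact product structure of the pair phase, not just a convexity bound.} By the half-angle formula,
$\omega_{h,a(t)}(k)+\omega_{h,a(t)}(n-k)=\frac{4}{h^2}\{1-\cos(h(\frac n2-a(t)))\cos(h(k-\frac n2))\}$.
Hence, for fixed $n$, every $k$-dependent phase equals $\lambda_{h,n,k}\Theta_{h,n}(t)$ with one common clock $\Theta_{h,n}(t)=\int_0^t\cos(h(\frac n2-a(\tau)))\,d\tau$. For $h\le h_0$, its derivative lies in $[\cos(\frac{\pi}{4}+\frac{\Lambda\pi}{2}),1]$.

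The paper then changes variables $\theta=\Theta_{h,n}(t)$ in $\|B_n\|_{L^2(I)}^2$, where $B_n=\widehat{(P_hU)^2}(n,\cdot)$. This works for four reasons:
\begin{itemize}
\item The Jacobian multiplies the nonnegative integrand $|B_n|^2$, so it is simply bounded by a constant and never enters an oscillatory integral.
\item The phases become exactly linear in $\theta$.
\item The $H^{b_0}$ norms of $f_k\circ\Theta_{h,n}^{-1}$ are comparable to those of $f_k$, by bi-Lipschitz invariance of $H^b$ for $0<b<1$ (Slobodeckij seminorm).
\item Bourgain's autonomous argument then applies with the sublevel count $\#\{k:|\lambda_{h,n,k}-\mu|\le L\}\lesssim\langle L\rangle^{1/2}$. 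This count is where uniform concavity on the filtered band, your convexity observation, is actually used.
\end{itemize}
Summing over $n$ gives the $L^4$ bound.

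Your duality argument for the lattice term and your gauge reduction of the continuum term in the dual estimate are fine.
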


A reparametrization of time preserves Sobolev norms up to equivalence.
\begin{lemma}\label{lem:composition}
Let $0<b<1$. Let $\Theta:I\to J$ be increasing, absolutely continuous, and surjective, with $0<c\le \Theta'(t)\le C<\infty$ for a.e. $t \in I$. Then,
\begin{equation*}
\| f\circ\Theta^{-1}\|_{H^b(J)} \simeq_{b,c,C} \| f\|_{H^b(I)}.
\end{equation*}
\end{lemma}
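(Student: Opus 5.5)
We prove the equivalence for the intrinsic Sobolev–Slobodeckij norm on an interval. Once that is done, we transfer it to the restriction norm $\|\cdot\|_{H^b(I)}$ used in the paper. For $0<b<1$ and a bounded interval $I$, the restriction norm is equivalent, with constants depending only on $b$ and a lower bound on $|I|$, to
\begin{equation*}
\|f\|_{L^2(I)}^2+\int_I\int_I\frac{|f(t)-f(t')|^2}{|t-t'|^{1+2b}}\,dt\,dt'.
\end{equation*}
This follows from the existence of a bounded extension operator $H^b(I)\to H^b(\mathbb{R})$, for instance reflection across the endpoints followed by a cutoff. Reflection is bounded on the Gagliardo seminorm for $0<b<1$, which covers the regime $b<1$ we need. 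Since $|J|$ and $|I|$ are comparable ($c|I|\le|J|\le C|I|$), the dependence on the interval length is harmless. If we want constants independent of $|I|$ as well, we use the homogeneous seminorm together with the $L^2$ part, and note that a reflection-type extension has norm uniform in $|I|$ for $|I|\le1$ up to the $L^2$ term. This is the only delicate point, and we would state it as a standard fact, citing e.g. Di Nezza–Palatucci–Valdinoci.

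Next we change variables. Set $g=f\circ\Theta^{-1}$ on $J$, and write $s=\Theta(t)$, $s'=\Theta(t')$, so that $ds=\Theta'(t)\,dt\in[c\,dt,C\,dt]$. The inverse $\Theta^{-1}$ is also absolutely continuous, because $\Theta$ is bi-Lipschitz: $c|t-t'|\le|\Theta(t)-\Theta(t')|\le C|t-t'|$, which follows by integrating $\Theta'$. For the $L^2$ part,
\begin{equation*}
\|g\|_{L^2(J)}^2=\int_I|f(t)|^2\,\Theta'(t)\,dt\simeq\|f\|_{L^2(I)}^2 .
\end{equation*}
For the Gagliardo part,
\begin{equation*}
\int_J\int_J\frac{|g(s)-g(s')|^2}{|s-s'|^{1+2b}}\,ds\,ds'=\int_I\int_I\frac{|f(t)-f(t')|^2}{|\Theta(t)-\Theta(t')|^{1+2b}}\,\Theta'(t)\,\Theta'(t')\,dt\,dt'.
\end{equation*}
The bi-Lipschitz bound makes the kernel comparable to $|t-t'|^{-1-2b}$, with constants $c^{\pm(1+2b)}$ and $C^{\pm(1+2b)}$. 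Together with $c^2\le\Theta'(t)\,\Theta'(t')\le C^2$, this gives the two-sided equivalence.

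Finally, we combine the two steps. Comparing the intrinsic norms on $I$ and $J$ and then passing back to the restriction norms through the extension equivalence yields $\|f\circ\Theta^{-1}\|_{H^b(J)}\simeq_{b,c,C}\|f\|_{H^b(I)}$.

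The main obstacle is the first step: establishing equivalence between the restriction norm and the intrinsic norm with uniform constants. Everything else is an elementary change of variables.
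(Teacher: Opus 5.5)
Your change-of-variables step is correct, but the proposal does not give constants depending only on $b,c,C$. That uniformity is what the lemma asserts, and \Cref{prop:nonauto-L4} relies on it, since it is applied on intervals of arbitrary length $|I|\le1$ with $C=C(\Lambda,b_0)$.

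The step that fails is the transfer between the restriction norm and the intrinsic Slobodeckij norm, which is not uniform as $|I|\to0$. Take $f\equiv1$. Its intrinsic norm is $|I|^{1/2}$. On the other hand, $\|1\|_{H^b(I)}\gtrsim_b|I|^{\frac12-b}$ for $b<\frac12$, by $H^b(\mathbb{R})\hookrightarrow L^{2/(1-2b)}$. For $b>\frac12$, $\|1\|_{H^b(I)}\gtrsim_b1$, by $H^b(\mathbb{R})\hookrightarrow L^\infty$. So any extension operator has norm $A(|I|)\to\infty$.

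Comparability of $|I|$ and $|J|$ does not cancel this loss, because only one direction of each comparison is lossy. Your chain reads
\begin{equation*}
\|f\circ\Theta^{-1}\|_{H^b(J)}\le A(|J|)\,\|f\circ\Theta^{-1}\|_{\mathrm{int},J}\lesssim_{b,c,C}A(|J|)\,\|f\|_{\mathrm{int},I}\le A(|J|)\,\|f\|_{H^b(I)},
\end{equation*}
and the factor $A(|J|)$ is never recovered. Your suggested repair (``uniform in $|I|$ up to the $L^2$ term'') is not a citable standard fact; Di Nezza--Palatucci--Valdinoci give domain-dependent constants. Your route could be rescued by a scale-aware characterization. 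For $b<\frac12$ and $|I|\le1$, rescaling plus the Sobolev embedding give $\|f\|_{H^b(I)}^2\simeq_b[f]_I^2+|I|^{-2b}\|f\|_{L^2(I)}^2$, which is stable under $\Theta$ since $|J|\simeq|I|$. But you would have to prove this, and it fails for $b>\frac12$, where the $|I|^{-2b}$ term overcounts constants.

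The idea that avoids all of this, and which the paper uses, is to extend the \emph{map} rather than the function.
\begin{itemize}
\item Extend $\Theta$ to an increasing bi-Lipschitz $\Psi:\mathbb{R}\to\mathbb{R}$, e.g.\ affine with slope $c$ outside $I$. Then $c|t-t'|\le|\Psi(t)-\Psi(t')|\le C|t-t'|$ for all $t,t'$.
\item If $F\in H^b(\mathbb{R})$ extends $f$, then $F\circ\Psi^{-1}$ extends $f\circ\Theta^{-1}$.
\item Redo your change-of-variables computation on all of $\mathbb{R}$, where the Slobodeckij characterization holds with constants depending only on $b$. This gives $\|F\circ\Psi^{-1}\|_{H^b(\mathbb{R})}\lesssim_{b,c,C}\|F\|_{H^b(\mathbb{R})}$.
\item Taking the infimum over all such $F$ proves one inequality, with no extension operator and no dependence on $|I|$.
\item The reverse inequality follows by the same argument applied to $\Theta^{-1}$, whose derivative lies in $[C^{-1},c^{-1}]$ a.e.
\end{itemize}
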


\begin{proof}
Let $I=[\alpha,\beta]$. Extend $\Theta$ to an increasing bi-Lipschitz map $\Psi:\mathbb{R}\rightarrow\mathbb{R}$ by
\begin{equation*}
\Psi(t)=
\begin{cases}
\Theta(\alpha)+c(t-\alpha),&t<\alpha,\\
\Theta(t),&t\in I,\\
\Theta(\beta)+c(t-\beta),&t>\beta.
\end{cases}    
\end{equation*}
Then, $c|t-s|\leq|\Psi(t)-\Psi(s)|\leq C|t-s|$ for any $t,s \in \mathbb{R}$.

Let $F \in H^b(\mathbb{R})$ and $G=F \circ \Psi^{-1}$. Changing variables, we obtain $\|G\|_{L^2}^2
\leq C\|F\|_{L^2}^2$ and
\begin{equation*}
[G]^2
:= \iint_{\mathbb{R}^2}
\frac{|G(t')-G(s')|^2}
     {|t'-s'|^{1+2b}}
dt' ds' = \iint_{\mathbb{R}^2}
\frac{|F(t)-F(s)|^2\Psi'(t)\Psi'(s)}
     {|\Psi(t)-\Psi(s)|^{1+2b}}
dtds \leq C^2 c^{-(1+2b)}[F]^2.
\end{equation*}
By the Slobodeckij characterization of fractional Sobolev spaces,
\begin{equation*}
\| G \|_{H^b(\mathbb{R})}^2 \simeq_b \| G \|_{L^2(\mathbb{R})}^2 + [G]^2 \lesssim \| F \|_{H^b(\mathbb{R})}^2.
\end{equation*}
If $F|_I=f$, then $(F\circ\Psi^{-1})|_J=f\circ\Theta^{-1}$. Taking the infimum over all extensions $F$ proves the claim. The reverse bound follows similarly by considering
$\Theta^{-1}:J\rightarrow I$.
\end{proof}

The Bourgain counting argument requires a uniform bound on spectral sublevel sets.
\begin{lemma}\label{lem:mag-counting}
For $n\in\mathbb{Z}$, denote 
\begin{equation*}
E_{h,n} = \{k \in E_h: n-k \in E_h\},\qquad \lambda_{h,n,k}=\frac{4}{h^2}\cos\left(h\left(k-\frac{n}{2}\right)\right).  
\end{equation*}
For any $\mu \in \mathbb{R},\ L \geq 0$, there exists $C = C(\Lambda)>0$ such that
\begin{equation*}
\#\{k\in E_{h,n}:|\lambda_{h,n,k}-\mu|\leq L\}
\leq C \langle L \rangle^{\frac{1}{2}}.
\end{equation*}
\end{lemma}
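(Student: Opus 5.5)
The plan is to change variables to $\theta = h(k-\frac n2)$. Then the counting problem becomes counting lattice points, spaced by $h$, in a sublevel set of $\cos$ restricted to an interval on which $\cos$ is uniformly non-degenerate.

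\textbf{Step 1: localize $\theta$.} Take $k\in E_{h,n}$. Then $|hk|\le\Lambda\pi$ and $|h(n-k)|\le\Lambda\pi$. Writing $k-\frac n2=\frac12\bigl(k-(n-k)\bigr)$ gives
\begin{equation*}
|\theta|=\Bigl|h\Bigl(k-\frac n2\Bigr)\Bigr|\le \Lambda\pi<\frac{\pi}{2}.
\end{equation*}
The admissible $\theta$ lie in the progression $h(\mathbb{Z}-\frac n2)$, with spacing $h$. On $[-\Lambda\pi,\Lambda\pi]$ the function $\cos$ is even and strictly concave, with $\cos\theta\ge\cos(\Lambda\pi)>0$. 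In particular there is no degeneracy near $\pm\pi/2$, and this is precisely where the hypothesis $\Lambda<\frac12$ enters. The only critical point is $\theta=0$.

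\textbf{Step 2: the sublevel condition.} The condition $|\lambda_{h,n,k}-\mu|\le L$ reads
\begin{equation*}
|\cos\theta-\mu h^2/4|\le Lh^2/4.
\end{equation*}
By evenness, it suffices to count points with $\theta\in[0,\Lambda\pi]$ and then double. On that interval $\cos$ is decreasing, so the admissible $\theta$ form an interval $J=[\theta_1,\theta_2]$. The number of lattice points in $J$ is at most $|J|/h+1$.

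\textbf{Step 3: bound $|J|$.} Use $1-\cos\theta\simeq\theta^2$ on $[0,\Lambda\pi]$. Set $g(\theta)=1-\cos\theta$, so $g(\theta)\simeq_\Lambda \theta^2$ and $g'(\theta)=\sin\theta\simeq_\Lambda\theta$. Then $J$ is the preimage under $g$ of an interval $[\alpha,\alpha+\delta]$ with $\delta=Lh^2/2$. For $\theta_1<\theta_2$ we have
\begin{equation*}
g(\theta_2)-g(\theta_1)\ge c_\Lambda(\theta_2^2-\theta_1^2)\ge c_\Lambda(\theta_2-\theta_1)^2,
\end{equation*}
using $\theta_2^2-\theta_1^2=(\theta_2-\theta_1)(\theta_2+\theta_1)$ and $\theta_2+\theta_1\ge\theta_2-\theta_1$. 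The left side is at most $\delta$, so $|J|\le C_\Lambda\sqrt{\delta}=C_\Lambda h\sqrt{L}$. The count is therefore at most $2\bigl(C_\Lambda\sqrt L+1\bigr)\le C\langle L\rangle^{1/2}$.

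This square-root bound follows from the lower bound on the convexity of $g$ (equivalently, $\cos''=-\cos$ is bounded away from $0$ on $[-\Lambda\pi,\Lambda\pi]$). It is uniform in $\mu$, $n$ and $h$. Near the turning point one cannot do better than $\sqrt{\delta}$, and away from it the mean value theorem gives a better bound anyway.

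The main obstacle is getting the bound uniform in the parity of $n$ (the half-integer shift), in $\mu$, and in $h$. The argument above handles all three simultaneously because it only uses the spacing $h$ of the progression, the localization $|\theta|\le\Lambda\pi$, and the bound $\cos\ge\cos(\Lambda\pi)$ on that interval.
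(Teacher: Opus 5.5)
Your argument is correct and is essentially the paper's proof. Both pass to the midpoint variable $k-\frac{n}{2}$ (you rescale it by $h$), use $|h(k-\frac{n}{2})|\le\Lambda\pi$ to work where cosine has curvature bounded away from zero, reduce the set to at most two intervals of length $O(\sqrt{L})$ in the unscaled variable, and then count lattice points. The only difference is how each interval is measured: the paper applies the midpoint inequality to the uniformly concave $\psi_h(\xi)=\frac{4}{h^2}\cos(h\xi)$, whereas you use evenness, monotonicity on $[0,\Lambda\pi]$, and $\sin\theta\gtrsim\theta$ anchored at the critical point $\theta=0$ (that last bound holds for any $\Lambda<1$, so your step does not actually need $\Lambda<\frac12$).
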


\begin{proof}
Let $r=k - \frac{n}{2}$. For all $k \in E_{h,n}$, we have $r \in \mathbb{Z}-\frac{n}{2}$ and $|hr|= \frac{h}{2} |k - (n-k)| \leq \Lambda\pi$. Define $\psi_h(\xi) = \frac{4}{h^2}\cos(h\xi)$. Then, $\lambda_{h,n,k}=\psi_h(r)$, and $\psi_h$ is uniformly concave on $[-\frac{\Lambda \pi}{h},\frac{\Lambda \pi}{h}]$ since  
\begin{equation*}
-\psi_h^{\prime\prime}(\xi) = 4\cos(h\xi) \geq 4\cos(\Lambda\pi)>0.
\end{equation*}
Moreover,
\begin{equation*}
    S:=\{\xi \in \mathbb{R}:|\xi| \leq \frac{\Lambda\pi}{h},\ |\psi_h(\xi)-\mu|\leq L\}
\end{equation*}
has at most two connected components. Let $[\alpha,\beta]$ be one such component. Uniform concavity yields
\begin{equation*}
\psi_h \left(\frac{\alpha+\beta}{2}\right)
\geq \frac{\psi_h(\alpha)+\psi_h(\beta)}{2} + \frac{\cos\left(\Lambda \pi\right)}{2}(\beta-\alpha)^2.
\end{equation*}
Since $\psi_h([\alpha,\beta]) \subseteq [\mu-L,\mu+L]$, we have
\begin{equation*}
    |\beta-\alpha| \leq 2\sqrt{\frac{L}{\cos\left(\Lambda \pi\right)}}.
\end{equation*}
Since an interval of length $L_0$ contains at most $1 + L_0$ lattice points, we have
\begin{equation*}
\begin{split}
\#\{k\in E_{h,n}:|\lambda_{h,n,k}-\mu|\leq L\} &= \#\left\{r: r = k - \frac{n}{2} \text{ for some } k \in E_{h,n} \wedge |\psi_h(r)-\mu|\leq L\right\}\\
&\leq 2 \left(1+2\sqrt{\frac{L}{\cos\left(\Lambda \pi\right)}}\right).
\end{split}
\end{equation*}
\end{proof}

Bourgain’s embedding argument \cite{BourgainNLS1} for the quadratic phase extends uniformly to the discrete phase.
\begin{corollary}\label{lem:Bourgain-model}
For $n\in\mathbb{Z}$ and sequences $\{f_{n,k}\}_{k\in E_{h,n}}, \{g_{n,k}\}_{k\in E_{h,n}} \subseteq H^{b_0}(\mathbb{R})$, there exists $C = C(\Lambda,b_0) > 0$ such that
\begin{equation*}
\Bigl\|
\sum_{k\in E_{h,n}} e^{i\lambda_{h,n,k}t}f_{n,k}(t)g_{n,k}(t)
\Bigr\|_{L^2_t}^2
\lesssim_{\Lambda,b_0}
\sum_{k\in E_{h,n}}\| f_{n,k} \|_{H^{b_0}}^2
\| g_{n,k} \|_{H^{b_0}}^2,
\end{equation*}
\end{corollary}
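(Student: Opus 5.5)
Write $F(t)=\sum_{k\in E_{h,n}} e^{i\lambda_{h,n,k}t}F_k(t)$ with $F_k:=f_{n,k}g_{n,k}$. The plan is the standard Bourgain almost-orthogonality argument in the time-frequency variable, with \Cref{lem:mag-counting} supplying the counting bound.

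First, the product estimate. Since $b_0>\frac{3}{8}>\frac14$, the one-dimensional Sobolev product estimate gives $H^{b_0}\cdot H^{b_0}\hookrightarrow H^{\sigma}(\mathbb{R})$ with $\sigma=2b_0-\frac12>\frac14$. Thus $\|F_k\|_{H^{\sigma}}\lesssim\|f_{n,k}\|_{H^{b_0}}\|g_{n,k}\|_{H^{b_0}}$; concretely, this is Hölder with $H^{b_0}\subset L^{p}$, $\frac1p=\frac12-b_0$, followed by the fractional Leibniz rule, or an equivalent Fourier-side convolution bound. Writing $\widehat{F_k}(\tau)$ for the time Fourier transform, Plancherel gives
\begin{equation*}
\|F\|_{L^2_t}^2=\int_{\mathbb{R}}\Bigl|\sum_{k}\widehat{F_k}(\tau-\lambda_{h,n,k})\Bigr|^2d\tau .
\end{equation*}

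Next, I would apply Cauchy--Schwarz with the weight $\langle\tau-\lambda_{h,n,k}\rangle^{\sigma}$:
\begin{equation*}
\Bigl|\sum_k\widehat{F_k}(\tau-\lambda_{h,n,k})\Bigr|^2\le\Bigl(\sum_k\langle\tau-\lambda_{h,n,k}\rangle^{-2\sigma}\Bigr)\Bigl(\sum_k\langle\tau-\lambda_{h,n,k}\rangle^{2\sigma}|\widehat{F_k}(\tau-\lambda_{h,n,k})|^2\Bigr).
\end{equation*}
The first factor is bounded uniformly in $\tau,h,n$ by a dyadic decomposition in $L=2^j$. By \Cref{lem:mag-counting} with $\mu=\tau$, the number of $k$ with $|\lambda_{h,n,k}-\tau|\le 2^{j}$ is at most $C(\Lambda)2^{j/2}$. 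Hence the first factor is
\begin{equation*}
\lesssim\sum_{j\ge0}2^{j/2}2^{-2\sigma j}<\infty,
\end{equation*}
and the series converges precisely because $2\sigma>\frac12$, i.e.\ $b_0>\frac38$. Integrating the second factor in $\tau$ gives $\sum_k\|F_k\|_{H^\sigma}^2$. Combining this with the product estimate yields the claim, with a constant depending only on $\Lambda$ and $b_0$ and uniform in $h$ and $n$.

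The main obstacle is the exponent bookkeeping: the lattice counting gives only the $\langle L\rangle^{1/2}$ growth of \Cref{lem:mag-counting}, and this is exactly what the threshold $b_0>\frac38$ compensates. I would verify the product embedding endpoint carefully. If the direct product estimate is awkward, the fallback is to put $\langle\cdot\rangle^{b_0}$ weights separately on $\widehat f_{n,k}$ and $\widehat g_{n,k}$ and use the convolution inequality $\int\langle\tau_1\rangle^{-2b_0}\langle\tau-\tau_1\rangle^{-2b_0}d\tau_1\lesssim\langle\tau\rangle^{1-4b_0}$, which produces the same $2\sigma=4b_0-1$ decay.
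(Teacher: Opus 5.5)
Your proposal is correct and follows essentially the paper's argument: Plancherel in $t$, a weighted Cauchy--Schwarz, and a dyadic application of \Cref{lem:mag-counting} to bound $\sum_{k}\langle\tau-\lambda_{h,n,k}\rangle^{-(4b_0-1)}$ uniformly, which converges exactly when $b_0>\frac{3}{8}$. The only difference is cosmetic. You first place $f_{n,k}g_{n,k}$ in $H^{2b_0-\frac12}$ and then apply Cauchy--Schwarz in $k$ alone, whereas the paper applies Cauchy--Schwarz jointly in $k$ and the convolution variable $\eta$. That joint step is your stated fallback, and it is also the computation that proves your product estimate, so the two routes give the same bound.
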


\begin{proof}
Let 
\begin{equation*}
B_n(t) = \sum\limits_{k\in E_{h,n}} e^{i\lambda_{h,n,k}t}f_{n,k}(t)g_{n,k}(t).    
\end{equation*}
Taking the Fourier transform in $t$ and applying the Cauchy-Schwarz inequality, we obtain
\begin{equation}\label{Bn}
\begin{split}
|\widehat B_n(\tau)|^2
&\leq M_n(\tau)
\sum_{k\in E_{h,n}}\int
\langle \eta\rangle^{2b_0}|\widehat f_{n,k}(\eta)|^2
\langle\tau-\lambda_{h,n,k}-\eta\rangle^{2b_0}
|\widehat g_{n,k}(\tau-\lambda_{h,n,k}-\eta)|^2 d\eta,
\\
M_n(\tau)
&:=\sum_{k\in E_{h,n}}\int
\frac{d\eta}
{\langle\eta\rangle^{2b_0}\langle\tau-\lambda_{h,n,k}-\eta\rangle^{2b_0}}.    
\end{split}
\end{equation}
The convolution integral estimate yields
\begin{equation*}
M_n(\tau)\lesssim
\sum_{k\in E_{h,n}}
\langle\tau-\lambda_{h,n,k}\rangle^{-(4b_0 - 1)}.
\end{equation*}
By \Cref{lem:mag-counting},
\begin{equation*}
    \sum_{k\in E_{h,n}}
\langle\tau-\lambda_{h,n,k}\rangle^{-(4b_0 - 1)} \lesssim \sum_{j \geq 0}2^{-j(4b_0 - 1)}
\#\{k\in E_{h,n}:|\tau-\lambda_{h,n,k}| \leq 2^{j+1}\} \lesssim \sum_{j \geq 0} 2^{-j(4b_0 - \frac{3}{2})},
\end{equation*}
uniformly in $\tau,n$ where the geometric series converges since $b_0 > \frac{3}{8}$. Integrating in $\tau$ the upper bound of $|\widehat B_n(\tau)|^2$ in \eqref{Bn} yields the desired result.
\end{proof}

\begin{proof}[Proof of \Cref{prop:nonauto-L4}]
For each $k \in E_{h}$, there exists $f_k \in H^{b_0}(\mathbb{R})$ such that $f_k(t) = e^{i\phi_{h,k}(t)}\widehat{P_h U}(k,t)$ for all $t \in I$ and
\begin{equation}\label{beginning}
\left(\sum_{k\in E_h}\| f_k\|_{H^{b_0}(\mathbb{R})}^2\right)^{1/2}
\leq 2\| P_h U\|_{X_h^{0,b_0}(I)}.    
\end{equation}
On $I$,
\begin{equation*}
    B_n(t) := \widehat{(P_h U)^2}(n,t) = \sum_{k\in E_{h,n}}
e^{-i(\phi_{h,k}(t)+\phi_{h,n-k}(t))}f_k(t)f_{n-k}(t).
\end{equation*}
The half-angle formula in trigonometry yields
\begin{equation}\label{pair_phase}
\omega_{h,a(t)}(k)+\omega_{h,a(t)}(n-k)
=\frac{4}{h^2}\left\{1-
\cos\left(h\left(\frac{n}{2}-a(t)\right)\right)
\cos\left(h\left(k-\frac{n}{2}\right)\right)\right\}.
\end{equation}
Define
\begin{equation*}
\Theta_{h,n}(t)
=\int_0^t\cos\left(h\left(\frac{n}{2}-a(\tau)\right)\right) d\tau.
\end{equation*}
Since $k \in E_{h,n}$,
\begin{equation*}
\left|h\left(\frac{n}{2}-a(t)\right)\right|
\leq\Lambda\pi+h \ainf
\leq \frac{\pi}{4} + \frac{\Lambda\pi}{2},
\end{equation*}
and
\begin{equation}\label{deriv_bound} 
0 < \cos\left(\frac{\pi}{4} + \frac{\Lambda\pi}{2}\right)\leq \Theta_{h,n}^{\prime}(t) \leq 1,
\end{equation}
the reparametrization $\Theta_{h,n}:I \rightarrow J_n := \Theta_{h,n}(I)$ is uniformly bi-Lipschitz. Integrating \eqref{pair_phase} yields
\begin{equation*}
B_n(t) = e^{-\frac{4it}{h^2}}
\sum_{k\in E_{h,n}}
e^{i\lambda_{h,n,k}\Theta_{h,n}(t)}f_k(t)f_{n-k}(t).
\end{equation*}
Changing variables $\theta=\Theta_{h,n}(t)$ and considering the Jacobian estimate \eqref{deriv_bound}, we have
\begin{equation}\label{B_estimate}
\| B_n\|_{L^2(I)}^2
\lesssim_\Lambda
\Bigl\|
\sum_{k\in E_{h,n}}e^{i\lambda_{h,n,k}\theta}
f_k\circ\Theta_{h,n}^{-1}(\theta)
f_{n-k}\circ\Theta_{h,n}^{-1}(\theta)
\Bigr\|_{L^2(J_n)}^2.
\end{equation}
There exist some $\widetilde{f}_{n,k},\widetilde{g}_{n,k} \in H^{b_0}(\mathbb{R})$ such that $\widetilde{f}_{n,k}|_{J_n} = f_k\circ\Theta_{h,n}^{-1}$ and $\widetilde{g}_{n,k}|_{J_n}
=f_{n-k}\circ\Theta_{h,n}^{-1}$, and 
\begin{equation}\label{restriction_norm}
\| \widetilde f_{n,k}\|_{H^{b_0}}
\leq 2\| f_k\circ\Theta_{h,n}^{-1}\|_{H^{b_0}(J_n)},\qquad
\| \widetilde g_{n,k}\|_{H^{b_0}}
\leq 2\| f_{n-k}\circ\Theta_{h,n}^{-1}\|_{H^{b_0}(J_n)}.
\end{equation}
By extending $L^2(J_n)$ to $L^2(\mathbb{R})$, applying \Cref{lem:Bourgain-model}, and \eqref{restriction_norm},
\begin{equation*}
\begin{split}
\text{RHS of } \eqref{B_estimate} \leq \Bigl\|
\sum_{k\in E_{h,n}}e^{i\lambda_{h,n,k}\theta}
\widetilde{f}_{n,k}(\theta)\widetilde{g}_{n,k}(\theta)
\Bigr\|_{L^2(\mathbb{R})}^2 &\lesssim \sum_{k\in E_{h,n}}
\| \widetilde{f}_{n,k}\|_{H^{b_0}}^2
\| \widetilde{g}_{n,k}\|_{H^{b_0}}^2\\
&\lesssim \sum_{k\in E_{h,n}}
\| f_k\|_{H^{b_0}(I)}^2
\| f_{n-k}\|_{H^{b_0}(I)}^2. 
\end{split}
\end{equation*}
The Plancherel theorem yields
\begin{equation*}
\begin{split}
\| P_h U\|_{L^4(I \times \mathbb{T})}^4 &= \| (P_h U)^2\|_{L^2(I \times \mathbb{T})}^2 \simeq \sum_{n \in \mathbb{Z}} \| B_n \|_{L^2(I)}^2\\
&\lesssim
\sum_n\sum_{k\in E_{h,n}}
\| f_k\|_{H^{b_0}(I)}^2
\| f_{n-k}\|_{H^{b_0}(I)}^2=
\left(\sum_{k\in E_h}\| f_k\|_{H^{b_0}(I)}^2\right)^2 \lesssim \| P_h U\|_{X_h^{0,b_0}(I)}^4.  
\end{split}
\end{equation*}
where the last inequality is by \eqref{beginning}, and this shows \eqref{eq:nonauto-L4}. By duality and \cite{BourgainNLS1}, the estimate \eqref{eq:L43-dual} follows.
\end{proof}

Without Fourier filtering, the uniform dispersive control fails due to counterexamples motivated by \cite{IgnatZuazua}.
\begin{proposition}\label{prop:L4-failure-half-band}
Let $\Lambda \in [\frac{1}{2},1]$ and $a \equiv 0$. There exists no constant $C>0$, independent of $h$, such that
\begin{equation*}
\| P_h U\|_{L^4(I\times\mathbb{T})}\leq C\| P_h U\|_{X_h^{0,b_0}(I)},
\end{equation*}
holds for all sufficiently small $h$ and spacetime function $U$.
\end{proposition}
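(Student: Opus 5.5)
The plan is to build an explicit family of free lattice waves concentrated at the inflection point of the discrete dispersion relation. With $a\equiv0$ we have $\omega_h(k)=\frac{2}{h^2}(1-\cos(hk))$, so $\omega_h''(k)=2\cos(hk)$ vanishes at $hk=\frac{\pi}{2}$. This frequency lies in (or on the boundary of) $E_h$ exactly when $\Lambda\geq\frac12$. Near it the lattice dispersion is essentially linear, so a wave packet there travels without spreading. The $L^4$ norm then behaves as for a transport equation rather than for Schr\"odinger, which breaks the uniform Strichartz-type bound.

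\textbf{Construction.} Fix $I=[0,1]$ and choose $M$ even, which is harmless along a subsequence of admissible $h$. Set $k_0=M/2$, so that $hk_0=\pi/2$. Take $N=\lfloor c\,h^{-1/3}\rfloor$ with $c$ small, and put
\begin{equation*}
U(t)=S_{h,0}(t,0)g,\qquad \widehat g(k)=\mathbf 1_{\{k_0-N\leq k\leq k_0\}}.
\end{equation*}
Using one-sided frequencies $k\leq k_0$ guarantees $|hk|\leq\frac{\pi}{2}\leq\Lambda\pi$ even in the endpoint case $\Lambda=\frac12$, so $P_hU=U$ and $\|g\|_{L^2}\simeq N^{1/2}$.

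\textbf{Upper bound on the right-hand side.} By \Cref{lem:linear-estimates} with $b>\frac12>b_0$, and monotonicity of $X_h^{0,b}$ norms in $b$, we get
\begin{equation*}
\|P_hU\|_{X_h^{0,b_0}(I)}\lesssim\|g\|_{L^2}\simeq N^{1/2},
\end{equation*}
uniformly in $h$.

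\textbf{Lower bound on the $L^4$ norm.} Taylor expand around $k_0$, using $\omega_h''(k_0)=0$ and $|\omega_h'''|\lesssim h$:
\begin{equation*}
\omega_h(k_0-j)=\omega_h(k_0)-v j+r_j,\qquad v=\omega_h'(k_0)=\tfrac{2}{h},\qquad |r_j|\lesssim h j^3\leq c^3\quad (0\leq j\leq N).
\end{equation*}
Hence, for $t\in I$,
\begin{equation*}
|U(x,t)|=\Bigl|\sum_{j=0}^{N}e^{-ij(x-vt)}e^{-ir_jt}\Bigr|.
\end{equation*}
On the set $|x-vt|\leq\frac{1}{10N}$ (taken mod $2\pi$), every phase $j(x-vt)+r_jt$ lies in a small interval around $0$ once $c$ is small. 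Thus $|U|\gtrsim N$ on a set of $x$-measure $\simeq N^{-1}$ for each $t\in I$. This gives
\begin{equation*}
\|U\|_{L^4(I\times\mathbb T)}^4\gtrsim N^4\cdot N^{-1}=N^3.
\end{equation*}

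\textbf{Conclusion.} Combining the two bounds,
\begin{equation*}
\frac{\|P_hU\|_{L^4}}{\|P_hU\|_{X_h^{0,b_0}}}\gtrsim\frac{N^{3/4}}{N^{1/2}}=N^{1/4}\simeq h^{-1/12}\to\infty \quad\text{as } h\to0,
\end{equation*}
which contradicts any $h$-uniform constant $C$.

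\textbf{Main obstacle.} The key step is controlling the cubic remainder $r_j$ uniformly on the whole packet, and the choice $N\sim h^{-1/3}$ is calibrated exactly for this. One must also make sure the coherent peak is not destroyed by the mod-$2\pi$ transport of $x-vt$. It is not, since the peak simply travels around $\mathbb T$ at speed $v$ and the lower bound holds for every fixed $t$ separately.
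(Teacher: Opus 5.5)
Your proposal is correct and follows essentially the same argument as the paper: a coherent packet of $\simeq h^{-1/3}$ free lattice modes just below the inflection frequency $M/2$, whose cubic Taylor remainder $O(hN^3)$ stays small so the packet is transported at speed $2/h$ without dispersing, giving an $L^4$-to-$X_h^{0,b_0}$ ratio $\gtrsim h^{-1/12}$. The remaining differences are cosmetic: you omit the $N^{-1/2}$ normalization, you bound the $X_h^{0,b_0}$ norm via \Cref{lem:linear-estimates} rather than an explicit time cutoff $\eta$, and you work on all of $I=[0,1]$ rather than on a subinterval $J$.
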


\begin{proof}
Let $k_h = \frac{\pi}{2h} = \frac{M}{2}$, where we assume $M \in 2 \mathbb{N}$, which implies $\omega_{h,0}^{\prime\prime}(k_h)=0$. Let $\xi_h = \lfloor\epsilon h^{-\frac{1}{3}}\rfloor$ where $0 < \epsilon \ll 1$ is to be determined. Let $J$ be a closed interval contained in the interior of $I$, and let $\eta \in C_c^{\infty}(\text{int} (I);\mathbb{R})$ such that $\eta|_{J} \equiv 1$. Define
\begin{equation}\label{coherent_packet}
U_h(x,t)
 = \xi_h^{-\frac{1}{2}}\eta(t)
\sum_{n=1}^{\xi_h}
e^{i(k_h-n)x}e^{-it\omega_{h,0}(k_h-n)}.
\end{equation}
Then,
\begin{equation*}
\| U_h\|_{X_h^{0,b_0}(I)}^2
\leq
\sum_{n=1}^{\xi_h}\xi_h^{-1}\| \eta\|_{H^{b_0}(\mathbb{R})}^2
=\| \eta\|_{H^{b_0}}^2.
\end{equation*}
For $1 \leq n \leq \xi_h$,
\begin{equation*}
\begin{split}
\omega_{h,0}(k_h-n)
&=\frac{4}{h^2} \sin^2\left(\frac{\pi}{4}-\frac{hn}{2}\right)
=\frac{2}{h^2}-\frac{2n}{h}+R_{h},\\
R_{h,n}&:=\frac{2}{h^2}\left(hn-\sin(hn)\right).    
\end{split}
\end{equation*}
The Taylor expansion of $R_{h,n}$ yields $|R_{h,n}|
\leq C h n^3\leq C\epsilon^3$ for some $C>0$. Consequently,
\begin{equation}\label{coherence}
    U_h(x,t)
= \xi_h^{-\frac{1}{2}}\eta(t)e^{ik_hx}e^{-\frac{2it}{h^2}}
\sum_{n=1}^{\xi_h}
e^{-in(x-\frac{2t}{h})}e^{-itR_{h,n}}.
\end{equation}
Let $0 < c_0 \ll 1$ such that $\cos(2c_0)>0$, and let $\epsilon \ll 1$ satisfy
\begin{equation}\label{eps_defined}
C \epsilon^3 \sup_{t\in J}| t |\leq c_0.    
\end{equation}
Consider the spacetime tube
\begin{equation*}
    \mathcal{T}_h = \left\{(t,x)\in J\times\mathbb{T}:
\text{dist}_{\mathbb{T}}\left(x,\frac{2t}{h}\right)\leq\frac{c_0}{\xi_h}
\right\}.
\end{equation*}
For $(t,x) \in \mathcal{T}_h$, let $y=x-\frac{2t}{h}\ (\text{mod } 2\pi)$. Then, by definition of $\mathcal{T}_h$ and \eqref{eps_defined},
\begin{equation}\label{phase_control}
    |ny + t R_{h,n}| \leq 2c_0,
\end{equation}
On $\mathcal{T}_h$, by \eqref{coherence} and \eqref{phase_control},
\begin{equation*}
    |U_h(x,t)| = \xi_h^{-\frac{1}{2}} \eta(t) \left|\sum_{n=1}^{\xi_h}
e^{-in(x-\frac{2t}{h})}e^{-itR_{h,n}}\right| \geq \xi_h^{-\frac{1}{2}} \left|\Re \sum_{n=1}^{\xi_h}
e^{-in(x-\frac{2t}{h})}e^{-itR_{h,n}} \right| \gtrsim \xi_h^{\frac{1}{2}}. 
\end{equation*}
Moreover, $|\mathcal{T}_h|\simeq |J| \xi_h^{-1}$. The proof is complete by showing the divergence
\begin{equation*}
\|U_h\|_{L^4(I\times\mathbb{T})}^4
\geq \int_{\mathcal{T}_h}|U_h(t,x)|^4 dxdt
\gtrsim \xi_h^2\cdot \xi_h^{-1} \simeq h^{-\frac{1}{3}}.
\end{equation*}
\end{proof}

\begin{remark}
The counterexample persists for $a\in L^\infty$. Indeed, replace \eqref{coherent_packet} by
\begin{equation*}
U_h(x,t)
=\xi_h^{-\frac{1}{2}}\eta(t)
\sum_{n=1}^{\xi_h}e^{i(k_h-n)x}e^{-i\phi_{h,k_h-n}(t)}.
\end{equation*}
Then, the upper bound in $X_h^{0,b_0}(I)$ is unchanged. For $1\leq n \leq \xi_h$,
\begin{equation*}
\omega_{h,a(t)}(k_h-n)
=\frac{2}{h^2}-\frac{2n}{h}-\frac{2a(t)}{h}
+O\left(h(n+\ainf)^3\right).
\end{equation*}
Choosing $\epsilon, h$ sufficiently small depending on $\ainf$, the lower bound $\|U_h\|_{L^4(I\times\mathbb{T})}\gtrsim h^{-\frac{1}{12}}$ follows similarly.
\end{remark}

\begin{remark}\label{rem:FPU-comparison}
In the periodic FPUT-to-KdV limit of Kwak--Yang \cite{kwak2025periodic}, diagonalization into acoustic branches and removal of the leading transports yield the residual phase
\begin{equation*}
\omega_{h}(k)=\frac{1}{h^2}\left(k-\frac{2}{h}\sin\frac{hk}{2}\right).
\end{equation*}
Uniformly for $|hk|\leq\pi$,
\begin{equation*}
|\omega_h(k)|\simeq |k|^3,
\qquad |\omega_h'(k)|\simeq |k|^2,
\qquad |\omega_h''(k)|\simeq |k|,
\end{equation*}
and hence the lattice dispersion retains the Airy-type phase geometry unlike the lattice Schr\"odinger dispersion.
\end{remark}

The uniform $L^4$ estimate yields the nonlinear bounds needed for uniform local theory.
\begin{lemma}\label{prop:cubic-estimates}
For $s \geq 0$ and $U,V:I\times\mathbb{T}\rightarrow\mathbb{C}$, the following nonlinear bounds hold uniformly in $h,a,I$:
\begin{equation}\label{eq:cubic-discrete}
\|\mathcal N_h(U)\|_{X_h^{s,-b_0}(I)} \lesssim \|P_hU\|_{X_h^{s,b}(I)} \|P_hU\|_{X_h^{0,b}(I)}^2,
\end{equation}
and
\begin{equation}\label{eq:cubic-diff}
\begin{split}
\|\mathcal N_h(U) &-\mathcal N_h(V)\|_{X_h^{s,-b_0}(I)} \lesssim
\|P_h(U-V)\|_{X_h^{s,b}(I)}
\left(
\|P_hU\|_{X_h^{0,b}(I)}
+\|P_hV\|_{X_h^{0,b}(I)}
\right)^2\\
&+ \|P_h(U-V)\|_{X_h^{0,b}(I)}
\left(\|P_hU\|_{X_h^{s,b}(I)}
+\|P_hV\|_{X_h^{s,b}(I)}\right)\left(\|P_hU\|_{X_h^{0,b}(I)}
+\|P_hV\|_{X_h^{0,b}(I)}\right).
\end{split}
\end{equation}
\end{lemma}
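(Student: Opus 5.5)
The plan is to prove \eqref{eq:cubic-discrete} by duality against the uniform $L^4$ estimate of \Cref{prop:nonauto-L4}, and then obtain \eqref{eq:cubic-diff} from it by multilinearity.

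Write $\mathcal N_h(U)=\mu P_h(|P_hU|^2P_hU)$. Since $P_h$ is a Fourier multiplier with symbol in $\{0,1\}$, it commutes with the weights $\langle k\rangle^s$ and with the phase conjugation. It is therefore bounded by $1$ on $X_h^{s,-b_0}(I)$, and it suffices to bound $\|P_h(F)\|_{X_h^{s,-b_0}(I)}$ with $F=|W|^2W$ and $W=P_hU$. All factors are supported in $E_h$, so by \Cref{rem:band-properties} the discrete convolution is an honest convolution on $\mathbb{Z}$. For $s=0$ we apply \eqref{eq:L43-dual}, then H\"older, then \eqref{eq:nonauto-L4}:
\begin{equation*}
\|P_hF\|_{X_h^{0,-b_0}(I)}\lesssim\|W\|_{L^4(I\times\mathbb{T})}^3\lesssim\|W\|_{X_h^{0,b_0}(I)}^3\le\|W\|_{X_h^{0,b}(I)}^3,
\end{equation*}
where the last step uses $b_0<b$.

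For $s>0$ we distribute the derivative weight. On the Fourier side, with $k_0=k_1-k_2+k_3$ and all $k_j\in E_h$, we have $\langle k_0\rangle^s\lesssim_s\max_j\langle k_j\rangle^s$. We work with the nonnegative majorants $\widetilde W$ and $\widetilde W_s$, whose spacetime Fourier coefficients are the absolute values of those of $W$ and of $\langle\partial_x\rangle^sW$ in the phase-conjugated variables. Pointwise domination of multilinear Fourier sums then gives $\|P_hF\|_{X_h^{s,-b_0}}\lesssim\|P_h(\widetilde W_s\widetilde W\widetilde W)\|_{X_h^{0,-b_0}}$, plus permutations in which the weight sits on a conjugated factor.

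The main obstacle is that the $X_h$ norms are defined through the nonautonomous phases $\phi_{h,k}(t)$, not a fixed space-time Fourier transform. As a result, the majorant trick and the duality in \eqref{eq:L43-dual} must be run in the conjugated variables $f_k(t)=e^{i\phi_{h,k}(t)}\widehat W(k,t)$. I would handle this by taking the absolute values of the time-Fourier transforms of the extensions $f_k$. The $L^4$ proof of \Cref{prop:nonauto-L4} only used the norms $\|f_k\|_{H^{b_0}}$, so the same argument applies verbatim to these majorants; this gives $\|\widetilde W\|_{L^4}\lesssim\|W\|_{X_h^{0,b_0}}$. The $s=0$ duality argument then yields $\|P_hF\|_{X_h^{s,-b_0}}\lesssim\|W\|_{X_h^{s,b}}\|W\|_{X_h^{0,b}}^2$. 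All constants come from \Cref{prop:nonauto-L4} and \Cref{lem:composition}, so they are uniform in $h$, $a$ and $I$.

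For \eqref{eq:cubic-diff}, expand
\begin{equation*}
|A|^2A-|B|^2B=(A-B)|A|^2+B\,\overline{(A-B)}\,A+|B|^2(A-B),
\end{equation*}
with $A=P_hU$ and $B=P_hV$. Apply the trilinear version of the argument above to each term: the $s$-weight lands either on the difference factor, giving the first line of \eqref{eq:cubic-diff}, or on one of the $A,B$ factors, giving the second line, with the remaining factors measured in $X_h^{0,b}$.
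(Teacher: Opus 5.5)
Your $s=0$ chain (\eqref{eq:L43-dual}, H\"older, \eqref{eq:nonauto-L4}, $b_0<b$) is correct and agrees with the paper, and so is your expansion of the cubic difference. The gap is the $s>0$ step, where you assert that pointwise domination of multilinear Fourier sums gives $\|P_hF\|_{X_h^{s,-b_0}}\lesssim\|P_h(\widetilde W_s\widetilde W\widetilde W)\|_{X_h^{0,-b_0}}$ plus permutations.

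Pair $F=W_1\overline{W_2}W_3$ against a test function with conjugated coefficients $g_{k_0}$. In the variables $f^j_k=e^{i\phi_{h,k}}\widehat{W_j}(k,\cdot)$ the pairing is
\begin{equation*}
\sum_{k_1-k_2+k_3=k_0}\int_{\mathbb{R}}e^{-i\Psi_{\mathbf k}(t)}f^1_{k_1}(t)\overline{f^2_{k_2}(t)}f^3_{k_3}(t)\overline{g_{k_0}(t)}\,dt,
\qquad
\Psi_{\mathbf k}=\phi_{h,k_1}-\phi_{h,k_2}+\phi_{h,k_3}-\phi_{h,k_0}.
\end{equation*}
To insert $\langle k_0\rangle^s\lesssim\sum_j\langle k_j\rangle^s$, you must bound this form by the same form with $\widehat{f^j_k},\widehat{g_{k_0}}$ replaced by their absolute values. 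That requires the time-Fourier transform of $e^{-i\Psi_{\mathbf k}}$ to be a nonnegative measure, i.e.\ $\Psi_{\mathbf k}$ must be linear in $t$.

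In the continuum this holds, because the $a$-dependent part $-2k\int_0^ta+\int_0^ta^2$ of $\phi_k$ cancels under the momentum constraint. On the lattice it fails. Since $\omega_{h,a}(k)=\frac{2}{h^2}(1-\cos(h(k-a)))$, $\partial_t\Psi_{\mathbf k}$ is a $\mathbf k$-dependent combination of $\cos(ha(t))$ and $\sin(ha(t))$ that does not reduce to a constant. So the ``majorant'' form is not even real-valued and does not dominate the original one.

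Your obstacle paragraph only shows that \eqref{eq:nonauto-L4} applies to the majorants; that is true but does not address this. The time reparametrization in the proof of \Cref{prop:nonauto-L4} does not help either. It works because the bilinear phase \eqref{pair_phase} depends on $t$ through a single $k$-independent factor, and the quartic phase $\Psi_{\mathbf k}$ has no such structure.

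The missing idea is to distribute the derivative in physical space, slice by slice in time, instead of on the Fourier side. This is what the paper does. Since $P_h$ commutes with $\langle\nabla\rangle^s$, \eqref{eq:L43-dual} gives
\begin{equation*}
\|P_h(W_1\overline{W_2}W_3)\|_{X_h^{s,-b_0}(I)}\lesssim\|\langle\nabla\rangle^s(W_1\overline{W_2}W_3)\|_{L^{4/3}(I\times\mathbb{T})}.
\end{equation*}
Then apply the periodic fractional Leibniz rule in $L^{4/3}_x$ with exponents $L^4\times L^4\times L^4$ at each fixed $t$, use H\"older in $t$, and finish with \eqref{eq:nonauto-L4} for $P_h\langle\nabla\rangle^sU_j$ and $P_hU_\ell$. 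At no point are the nonautonomous phases compared.

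A Littlewood--Paley/paraproduct decomposition would also work for $s>0$: place the weight on the highest dyadic frequency and sum by Schur's test. It too uses only \eqref{eq:L43-dual} and \eqref{eq:nonauto-L4} on frequency-localized pieces and never takes absolute values of Fourier transforms.
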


\begin{proof}
For spacetime functions $U_1,U_2,U_3$,
the dual estimate \eqref{eq:L43-dual}, and the periodic fractional
Leibniz rule \cite[Proposition 1]{benyi2025fractional} yield
\begin{equation*}
\begin{split}
&\left\|P_h\left((P_hU_1)\overline{P_hU_2}(P_hU_3)\right)
\right\|_{X_h^{s,-b_0}(I)} \lesssim
\left\|\langle \nabla \rangle^s
\left((P_hU_1)\overline{P_hU_2}(P_hU_3)\right)
\right\|_{L^{\frac{4}{3}}(I\times\mathbb{T})}\\
&\lesssim \sum_{j=1}^3
\|\langle \nabla\rangle^sP_hU_j\|_{L^4(I\times\mathbb{T})}
\prod_{\substack{1\leq \ell\leq3\\l\neq j}}
\|P_h U_\ell\|_{L^4(I\times\mathbb{T})} \lesssim \sum_{j=1}^3
\|P_hU_j\|_{X_h^{s,b}(I)}
\prod_{\substack{1\leq \ell\leq3\\l\neq j}}
\|P_h U_\ell\|_{X_h^{0,b}(I)},
\end{split}
\end{equation*}
where the last inequality follows from \eqref{eq:nonauto-L4} and $X^{0,b}_{h} \hookrightarrow X^{0,b_0}_{h}$ for $b_0 < b$. Then, \eqref{eq:cubic-discrete} follows by taking $U_1 = U_2 = U_3$, and \eqref{eq:cubic-diff} follows similarly by expanding the cubic difference $\mathcal{N}_h(U) - \mathcal{N}_h(V)$.
\end{proof}

The mass-dependent time of existence $\delta$ is applied to obtain polynomial Sobolev growth in \Cref{sec:polynomial-regimes}.

\begin{corollary}\label{prop:local}
Let $R_0>0$. There exists $\delta=\delta(R_0)>0$ such that, for any $s \geq 0$ and $u_h\in AC(\mathbb{R};H_h^s)$ solving
\eqref{main_eq1} with
$\|u_h(0)\|_{L_h^2}\leq R_0$, we have, uniformly in $h,a$,
\begin{equation}\label{eq:local-flow-bounds}
\|U_h\|_{X_h^{s,b}([0,\delta])}
\lesssim_{s,R_0}
\|u_h(0)\|_{H_h^s}.
\end{equation}
\end{corollary}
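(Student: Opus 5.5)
The plan is a standard contraction argument in $X_h^{s,b}([0,\delta])$ for the Duhamel formulation of \eqref{main_eq12}, carried out first at $s=0$ and then upgraded to $s>0$ by persistence of regularity. Since $\widehat{U}_h$ is supported in $E_h$ (\Cref{rem:band-properties}), we have $P_hU_h=U_h$. We write
\begin{equation*}
U_h(t)=S_{h,a}(t,0)U_h(0)-i\int_0^tS_{h,a}(t,\tau)\mathcal N_h(U_h(\tau))\,d\tau .
\end{equation*}
For $s\geq 1$ we have $\chi\equiv1$ and \Cref{prop:nonauto-L4} is not directly available. In that case I would expect to argue instead via the energy bound, or to note that the corollary is only invoked for $0\le s<1$. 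I treat the filtered case as the main line.

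\textbf{Step 1 ($L^2$ level).} Let $\Gamma(V)$ denote the right-hand side above with $U_h$ replaced by $V$. By \Cref{lem:linear-estimates} and \eqref{eq:cubic-discrete} with $s=0$,
\begin{equation*}
\|\Gamma(V)\|_{X_h^{0,b}([0,\delta])}\le C_0R_0+C_1\delta^\theta\|V\|_{X_h^{0,b}}^3 .
\end{equation*}
By \eqref{eq:cubic-diff}, $\Gamma$ is also Lipschitz with constant $C_1\delta^\theta(2\cdot 2C_0R_0)^2$ on the ball of radius $2C_0R_0$. Choosing $\delta^\theta\ll (C_0R_0)^{-2}$, with constants independent of $h$ and $a$, gives a contraction with a fixed point $V$ satisfying $\|V\|_{X_h^{0,b}}\le 2C_0R_0$. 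The fixed point must be identified with the Carathéodory solution $U_h$. Since everything is finite dimensional, the Duhamel fixed point is an absolutely continuous solution, and ODE uniqueness (the nonlinearity is locally Lipschitz on $H_h^s$) gives $V=U_h$ on $[0,\delta]$.

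\textbf{Step 2 ($s>0$).} Apply \eqref{eq:cubic-discrete} at level $s$ to get
\begin{equation*}
\|U_h\|_{X_h^{s,b}}\le C_0\|u_h(0)\|_{H_h^s}+C_1\delta^\theta\|U_h\|_{X_h^{s,b}}(2C_0R_0)^2 .
\end{equation*}
Shrinking $\delta$ so that $C_1\delta^\theta(2C_0R_0)^2\le\frac12$, which depends only on $R_0$, lets us absorb the last term. This step needs the a priori finiteness of $\|U_h\|_{X_h^{s,b}([0,\delta])}$. For fixed $h$ this holds because the frequency support is finite, so the $H_h^s$ and $L_h^2$ norms are equivalent with an $h$-dependent constant. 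That dependence is harmless, since it only serves to justify the absorption. The result is \eqref{eq:local-flow-bounds} with implicit constant $2C_0$.

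\textbf{Main obstacle.} The key point is uniformity in $h$ and $a$. This rests entirely on \Cref{prop:nonauto-L4} via \Cref{prop:cubic-estimates}, together with the $h$- and $a$-independent constants in \Cref{lem:linear-estimates}. I would check that $\delta$ is independent of $s$: the smallness condition uses only the $X_h^{0,b}$ norms. Only the implicit constant depends on $s$, through the fractional Leibniz constant, as the statement allows.
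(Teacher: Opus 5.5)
Your Step 1 matches the paper's proof: a contraction for the Duhamel map in $X_h^{0,b}([0,\delta])$ via \Cref{lem:linear-estimates} and \Cref{prop:cubic-estimates} with $\delta^\theta\simeq R_0^{-2}$, followed by identification with $U_h$ through ODE uniqueness. Your overall plan of absorbing the nonlinear term at level $s$ using the $X_h^{0,b}$ bound is also the paper's. Your justification that $\|U_h\|_{X_h^{s,b}}$ is a priori finite is correct, and the paper leaves it implicit. Working only with the filtered nonlinearity is also how the paper's proof operates.

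The gap is in Step 2, where you claim that $\delta$ does not depend on $s$. The constant in \eqref{eq:cubic-discrete} at level $s$ is not the $s=0$ constant $C_1$. It carries the fractional Leibniz constant, so it is some $C_1(s)$ that is not bounded uniformly in $s$. Already the interaction $k_1-k_2+k_3=N-(-N)+N=3N$ compares the weight $\langle 3N\rangle^s$ with $\langle N\rangle^s$. The term you absorb is therefore $C_1(s)\delta^\theta(2C_0R_0)^2\|U_h\|_{X_h^{s,b}}$. The absorption condition $C_1(s)\delta^\theta(2C_0R_0)^2\le\frac12$ then forces $\delta=\delta(s,R_0)$, whereas the statement asserts a single $\delta(R_0)$ for all $s\ge0$. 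Your own bookkeeping shows the problem. You end with the $s$-independent constant $2C_0$, so the $s$-dependence you attribute to the implicit constant has in fact been moved into $\delta$.

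The repair is what the paper does.
\begin{enumerate}
\item Keep $\delta=\delta(R_0)$ from Step 1, and choose $\delta_s=\delta_s(s,R_0)\le\delta$ so that the level-$s$ absorption condition holds.
\item Cover $[0,\delta]$ by $O(\delta/\delta_s)$ intervals $J=[t_0,t_1]$. On each $J$, restart the Duhamel estimate at $t_0$, which \Cref{lem:linear-estimates} allows. Together with the restricted $X_h^{0,b}$ bound from Step 1, this gives $\|U_h\|_{X_h^{s,b}(J)}\le 2C_0\|u_h(t_0)\|_{H_h^s}$.
\item Use the embedding $X_h^{s,b}(J)\hookrightarrow C(J;H_h^s)$ to pass the bound to the next interval, losing a fixed factor each time.
\item Glue the finitely many restriction norms.
\end{enumerate}
This yields \eqref{eq:local-flow-bounds} with a constant depending on $s$ and $R_0$, which is exactly what $\lesssim_{s,R_0}$ permits, while $\delta$ depends only on $R_0$.
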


\begin{proof}
Let $\delta \simeq \min\{1, R_0^{-\frac{2}{\theta}}\}$. For $V_h\in X_h^{0,b}([0,\delta])$, define
\begin{equation*}
\begin{split}
B_0 &= \left\{V_h\in X_h^{0,b}([0,\delta]):\|V_h\|_{X_h^{0,b}([0,\delta])} \leq C R_0 \right\},\\
\Gamma_h(V_h)(t)&=S_{h,a}(t,0)U_h(0)-i\int_0^t S_{h,a}(t,\tau)\mathcal N_h(V_h(\tau)) d\tau,    
\end{split}
\end{equation*}
for some $C \gg 1$ depending on the implicit constants in \Cref{lem:linear-estimates}, \Cref{prop:cubic-estimates}. For
$V_h,W_h\in B_0$,
\begin{equation*}
\begin{split}
\|\Gamma_h(V_h)\|_{X_h^{0,b}([0,\delta])}
&\lesssim
\|u_h(0)\|_{L_h^2}
+\delta^\theta
\|V_h\|_{X_h^{0,b}([0,\delta])}^3
\lesssim
R_0+\delta^\theta R_0^3,\\
\|\Gamma_h(V_h)-\Gamma_h(W_h)\|_{X_h^{0,b}([0,\delta])}
&\lesssim
\delta^\theta
\left(
\|V_h\|_{X_h^{0,b}([0,\delta])}
+\|W_h\|_{X_h^{0,b}([0,\delta])}
\right)^2
\|V_h-W_h\|_{X_h^{0,b}([0,\delta])}\\
&\lesssim
\delta^\theta R_0^2
\|V_h-W_h\|_{X_h^{0,b}([0,\delta])},
\end{split}
\end{equation*}
and hence $\Gamma_h$ is a contraction for some $C > 0$ with $U_h = \mathcal{S}_h u_h$ as its solution by the existence and uniqueness of ODEs. Then,
\begin{equation*}
\|U_h\|_{X_h^{0,b}([0,\delta])} \lesssim \|u_h(0)\|_{L_h^2}
+\delta^\theta R_0^2
\|U_h\|_{X_h^{0,b}([0,\delta])},
\end{equation*}
and hence \eqref{eq:local-flow-bounds} at $s=0$ by the definition of $\delta$.

Let $s>0$. For any interval $J=[t_0,t_1]\subseteq[0,\delta]$,
the Duhamel estimate, \Cref{prop:cubic-estimates}, and the $s=0$ estimate yield
\begin{equation*}
\begin{split}
\|U_h\|_{X_h^{s,b}(J)}\lesssim_s \|u_h(t_0)\|_{H_h^s} +|J|^\theta R_0^2 \|U_h\|_{X_h^{s,b}(J)}.
\end{split}
\end{equation*}
Choose $\delta_s=\delta_s(s,R_0)>0$ such that
$\delta_s^\theta R_0^2 \ll 1$. For $|J|\leq\delta_s$, absorb the nonlinear contribution to obtain
\begin{equation*}
\|U_h\|_{X_h^{s,b}(J)} \lesssim_s \|u_h(t_0)\|_{H_h^s}.
\end{equation*}
Cover $[0,\delta]$ by finitely many such intervals. Iterating the
preceding estimate using $X_h^{s,b}(J)\hookrightarrow C(J;H_h^s)$
for $b>\frac{1}{2}$, and then gluing the finitely many restriction
norms, yields \eqref{eq:local-flow-bounds}.
\end{proof}

\section{Birkhoff coordinates and tangent flow}\label{subsec:birkhoff}

We equip $L^2(\mathbb{T})$ with the Birkhoff coordinates given by $\mathcal{B}$, which conjugate the cubic NLS flow to an infinite-dimensional rotation. The chart is global for the defocusing NLS \cite{grebert2014defocusing} and local around the zero solution for the focusing NLS \cite{kappeler2009birkhoff}. Define
\begin{equation*}
H_c^s = H^s \times H^s,
 \qquad
 H_r^s = \{(u,\overline u) : u \in H^s\}, \qquad iH_r^s = \{(u,-\overline u) : u \in H^s\},
\end{equation*}
where we consider $H^s_r,\ i H^s_r$ over $\mathbb{R}$. As real Banach spaces, $H^s_c$ contains two real subspaces, or more precisely, $H^s_c = H^s_r \oplus i H^s_r$. Similarly, define the sequence space
\begin{equation*}
    \mathfrak{h}^s = \left\{ z \in \ell^2(\mathbb{Z};\mathbb{C}):\sum_{n\in\mathbb{Z}}\langle  n \rangle^{2s}|z_n|^2<\infty\right\}
\end{equation*}
and $\mathfrak{h}^s_c,\ \mathfrak{h}^s_r,\ i\mathfrak{h}^s_r$. Define the action
\begin{equation*}
I: \ell^2(\mathbb{Z};\mathbb{C}) \rightarrow \ell^1(\mathbb{Z};\mathbb{R}).\qquad I(z)_n = |z_n|^2,    
\end{equation*}
and recall the Hamiltonian
\begin{equation*}
    H_{\pm} = \frac{1}{2}\int |\nabla u|^2 dx \pm \frac{1}{4} \int |u|^4 dx. 
\end{equation*}

\begin{lemma}\label{birkhoff}
There exists a complex neighborhood $W \subseteq H^0_c$ containing $H^0_r$, on which there is an analytic canonical map $\mathcal{B}: W \rightarrow \mathfrak{h}^{0}_c$. For the defocusing nonlinearity, the global restriction $\mathcal{B}|_{H^s_r}:H^s_r \rightarrow \mathfrak{h}^s_r$ is real bianalytic (the inverse exists and is real-analytic) for any $s \in \{0\} \cup [1,\infty)$. For the focusing nonlinearity, there exists $W_f \subseteq W \cap i H^0_r,\ U_f \subseteq i \mathfrak{h}^0_r$, open in their respective topologies, such that $\mathbf{0} := (0,0) \in W_f$ and the local map $\mathcal{B}: W_f \cap iH^s_r \rightarrow U_f \cap i\mathfrak{h}^s_r$ is real bianalytic for any $s \in \{0\} \cup [1,\infty)$.
\end{lemma}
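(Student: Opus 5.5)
This lemma is essentially a compilation of known results, so the plan is to assemble it from the literature rather than prove anything from scratch. The key is to match each claim to the appropriate theorem and check that the function spaces agree.

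\emph{Defocusing case.} I would invoke Grébert--Kappeler \cite{grebert2014defocusing}. They construct, for the Zakharov--Shabat operator $L(\varphi)=i\begin{pmatrix}1&0\\0&-1\end{pmatrix}\partial_x+\begin{pmatrix}0&\varphi_1\\ \varphi_2&0\end{pmatrix}$, Birkhoff coordinates $z_n^{\pm}$. These are built from the periodic/Dirichlet spectral data: the actions come from the gap lengths via Abelian differentials, and the angles come from the Dirichlet eigenvalues. The construction gives an analytic map on a complex neighborhood $W$ of $H^0_r$ in $H^0_c$. It is canonical with respect to the Poisson bracket $\{F,G\}=-i\int(\partial_{\varphi_1}F\,\partial_{\varphi_2}G-\partial_{\varphi_2}F\,\partial_{\varphi_1}G)$.

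Their main theorem asserts the following for $s$ in the stated range $\{0\}\cup[1,\infty)$, which is where the weighted-space statements are proved in their monograph:
\begin{itemize}[leftmargin=*]
\item the restriction $H^s_r\to\mathfrak h^s_r$ is a real-analytic diffeomorphism;
\item the differential at $0$ is the Fourier transform;
\item $H_+$ becomes a real-analytic function of the actions $I(z)$ alone.
\end{itemize}
What remains is bookkeeping:
\begin{itemize}[leftmargin=*]
\item identify their index convention, with coordinates $(z_n,w_n)$ and $w_n=\overline{z_n}$ on the real subspace, with $\mathfrak h^s_r$;
\item check that real bianalyticity means the inverse is real-analytic, which follows from the inverse function theorem in Banach spaces combined with bijectivity.
\end{itemize}

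\emph{Focusing case.} The real subspace for the focusing NLS is $\varphi_2=-\overline{\varphi_1}$, which is $iH^0_r$ in our notation. Here I would invoke Kappeler--Lohrmann--Topalov--Zung \cite{kappeler2009birkhoff}. They show that the same analytic map $\mathcal B$, restricted near $\mathbf 0$, gives local Birkhoff coordinates: the Zakharov--Shabat spectrum remains simple and real in a small ball, so the construction goes through. Concretely:
\begin{itemize}[leftmargin=*]
\item Since $d\mathcal B(\mathbf 0)$ is the Fourier transform, which maps $iH^s_r$ isomorphically onto $i\mathfrak h^s_r$, I would apply the analytic inverse function theorem at $\mathbf 0$ on $iH^0_r$. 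This produces open sets $W_f\ni\mathbf 0$ and $U_f$.
\item Because $\mathcal B$ commutes with the reality involution, the restriction $W_f\cap iH^s_r\to U_f\cap i\mathfrak h^s_r$ is well defined.
\end{itemize}

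\emph{Main obstacle.} The delicate step is the persistence of local invertibility at higher regularity $s\geq1$ on a neighborhood $W_f$ chosen in the $H^0$ topology, independent of $s$. I would handle it with the known smoothing property that $\mathcal B-\mathcal F$ is one-smoothing (or at least maps $H^s$ to $\mathfrak h^s$ analytically), as proved in the cited works. Given this, injectivity holds on $W_f$ from the $L^2$ statement, and surjectivity onto $U_f\cap i\mathfrak h^s_r$ follows from $\mathcal B^{-1}$ preserving $H^s$ regularity, which is the regularity statement in the references. I would cite these rather than reprove them.
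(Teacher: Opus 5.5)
Your plan matches the paper's proof: cite Grébert--Kappeler for the analytic canonical map on $W$ and for the defocusing statements, cite Kappeler--Lohrmann--Topalov--Zung for the focusing chart at $s=0$ (your inverse-function-theorem derivation at $\mathbf{0}$ is a harmless variant), and upgrade to $s\geq 1$ on the same $H^0$-neighborhood using the one-smoothing of $\mathcal{B}-\mathcal{F}$, which the paper takes from Kappeler--Schaad--Topalov (also its source for the defocusing range $s\in\{0\}\cup[1,\infty)$, rather than the Grébert--Kappeler monograph alone). The paper states two details that you leave implicit: this smoothing result, though stated for defocusing NLS, holds for the complexified map and hence on $W_f\subseteq iH^0_r$; and analyticity of the inverse on $\mathfrak{h}^s$, not just surjectivity via regularity preservation, follows from the identity $\mathcal{B}^{-1}-\mathcal{F}^{-1}=-\mathcal{F}^{-1}(\mathcal{B}-\mathcal{F})\mathcal{B}^{-1}$, which transfers the smoothing and analyticity to $\mathcal{B}^{-1}$.
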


\begin{proof}
    See \cite[Theorem 20.2]{grebert2014defocusing} and \cite[Theorem 2.1]{kappeler2017scattering} for the construction of $\mathcal{B}: W \rightarrow \mathfrak{h}^{0}_c$, its analytic regularity, and canonicity. See \cite[Corollary 1.1]{kappeler2016semilinearity} for the bianalyticity for $s \in \{0\} \cup [1,\infty)$ in the defocusing regime. In the focusing regime, the existence of $W_f$ and bianalyticity of $\mathcal{B}: W_f \rightarrow U_f$ at $s=0$ is given in \cite[Theorem 1.1]{kappeler2009birkhoff}; let $\mathcal{B}: \widetilde{W}_f \rightarrow \widetilde{U}_f$ be biholomorphic complex extensions. The same statement gives diffeomorphism for $s \in [1,\infty)$. For completion, we extend that result to analyticity.

    Let $N = \lfloor s \rfloor$. We claim $\mathcal{B}(\widetilde{W}_f \cap H^s_c) = \widetilde{U}_f \cap \mathfrak{h}^s_c$. For $\mathbf{u} \in \widetilde{W}_f \cap H^s_c \hookrightarrow \widetilde{W}_f \cap H^N_c$, the difference $\mathcal{B}(\mathbf{u}) - \mathcal{F}(\mathbf{u}) \in \mathfrak{h}^{N+1}_{c}$ by \cite[Theorem 1.1]{kappeler2016semilinearity} where $\mathcal{F}$ is the Fourier transform; more precisely, the stated result applies to the complexified $\mathcal{B}$, and therefore is not restricted only to the defocusing NLS. Since $\mathcal{F}$ is an isometry, $\mathcal{B}(\mathbf{u}) \in \mathfrak{h}^{s}_c$. By \cite[Corollary 1.1]{kappeler2016semilinearity}, the inverse difference $\mathcal{B}^{-1} - \mathcal{F}^{-1}$ is also one-smoothing, and hence the claim by a similar reasoning. 
        
    The proof of \cite{kappeler2016semilinearity} yields holomorphicity of $\mathcal{B}-\mathcal{F}$ in the complexified domain, which includes $\widetilde{W}_f$. The holomorphicity of $\mathcal{B}$ follows from the composition of maps
    \begin{equation*}
        \widetilde{W}_f \cap H^s_c \hookrightarrow \widetilde{W}_f \cap H^N_c \xrightarrow[]{\mathcal{B}-\mathcal{F}} \mathfrak{h}^{N+1}_{c} \hookrightarrow \mathfrak{h}^{s}_{c},
    \end{equation*}
    where the inverse argument follows similarly by $\mathcal{B}^{-1} - \mathcal{F}^{-1} = -\mathcal{F}^{-1}(\mathcal{B}-\mathcal{F})\mathcal{B}^{-1}$. 
\end{proof} 

To study the global dynamics of the focusing NLS within a local Birkhoff chart, we shrink $U_f \subseteq i \mathfrak{h}^{0}_{r}$, if necessary. Since there exists $\delta>0$ such that $B_{i\mathfrak{h}_{r}^{0}}(\mathbf{0},\delta) \subseteq U_f$, we may take $U_f = B_{i\mathfrak{h}_{r}^{0}}(\mathbf{0},\delta)$ and $W_f := \mathcal{B}^{-1}(U_f)$. Then, $U_f$ is invariant under coordinate-wise rotation. Denote 
\begin{equation*}
\mathcal{U}_{+}^{1} = \mathfrak{h}^1,\qquad \mathcal{U}_{-}^1 = \{z \in \mathfrak{h}^{1}: (z,-\overline{z}) \in U_f \cap i \mathfrak{h}^{1}_{r}\}.    
\end{equation*}
Identify $H^s$ with $H^s_r$ by $u \mapsto \mathbf{u} = (u,\overline{u})$ in the defocusing nonlinearity, and similarly $H^s$ with $iH^s_r$ by $\mathbf{u} = (u,-\overline{u})$ in the focusing nonlinearity. Under these identifications, a map on $H^s$ uniquely determines, and is uniquely determined by, the corresponding map on $H_r^s$ or $iH_r^s$, according to the sign of nonlinearity; we use the same notation for these equivalent realizations.

\begin{remark}\label{rotation}
The Hamiltonians $\mathscr{H}_{\pm} := H_{\pm} \circ \mathcal{B}^{-1}$ are real-analytic on $\mathcal{U}_{\pm}^{1}$, respectively, and depend only on the actions $I$. With $\omega_n^{\pm}(I) := \partial_{I_n} \mathscr{H}_{\pm}(I)$, Hamilton's equations yield the global flow representation on $\mathcal{U}_{\pm}^{1}$
\begin{equation}\label{eq:birkhoff-flow-real}
R_t = \mathcal{B} \circ \Phi_t \circ \mathcal{B}^{-1}: \mathcal{U}^{1}_{\pm} \rightarrow \mathcal{U}^{1}_{\pm},\qquad (R_t z)_n = e^{-i \omega_n^{\pm}(I(z)) t}z_n,\qquad \forall t \in \mathbb{R}. 
\end{equation}
The claim that $\mathscr{H}_{+}$ depends only on $I$ follows from \cite[Theorem 2.1]{kappeler2016semilinearity}, and the claim for $\mathscr{H}_{-}$, from \cite[Theorem 1.1]{kappeler2009birkhoff}. The regularity claim follows from \Cref{birkhoff}. Since $\mathcal{B}$ is canonical, Hamilton's equations in Birkhoff
coordinates yield
\eqref{eq:birkhoff-flow-real}.
\end{remark}

\begin{remark}
\label{lem:frequency-correction}
For $z\in\mathcal U_\pm^1$, define $\widetilde{\omega}_n^\pm(z) = \omega_n^\pm(I(z))-n^2,\ n \in \mathbb{Z}$. Then, $\widetilde\omega^\pm: \mathcal{U}_{\pm}^1 \rightarrow \ell^\infty(\mathbb Z;\mathbb R)$ is real-analytic. With $W \subseteq H^{0}_c$ as in \Cref{birkhoff}, the holomorphicity of the frequency correction map, defined on $W \cap H^1_c$ taking values in $\ell^{\infty}(\mathbb{Z};\mathbb{C})$, is stated in \cite[Corollary 2.1]{kappeler2017scattering}. By composition with $\mathcal{B}^{-1}$ and the real-valuedness of $\mathscr{H}_{\pm}$, our claim on the regularity of $\widetilde{\omega}^{\pm}$ follows.
\end{remark}

To control the perturbations of the NLS flow, we derive linear estimates on $D \Phi_t$ where $L^2(\mathbb{T};\mathbb{C})$ and $\mathcal{L}(L^2,L^2)$ are considered over $\mathbb{R}$. It follows from \cite{BourgainNLS1} that for all $t \in \mathbb{R}$, the flow $\Phi_t$ is real analytic on $L^2$, and for every $R,T>0$, there exists $M(R,T) < \infty$ such that
\begin{equation*}
 \sup_{\| u \|_{L^2} \leq R,\ |t| \leq T}
 \| D \Phi_t[u] \|_{\mathcal L(L^2,L^2)}
 \leq M.
\end{equation*}
The time of local existence depends on $R$, and by mass conservation, any $T>0$ can be reached by finite iteration. However, this method yields an at-most exponential dependence of $M$ in $T$. Conjugation by $\mathcal{B}$ reduces this to linear dependence.

\begin{lemma}\label{lem:coordinatewise-rotation-compactness}
Let $\rho \geq 0$. If $Z \Subset\mathfrak{h}^\rho$, then $Z \subseteq \Theta_{Z} \Subset \mathfrak{h}^\rho$ where
\begin{equation}\label{eq:coordinatewise-rotation-set}
 \Theta_Z := \left\{ w \in \mathfrak{h}^\rho: \exists z \in Z \text{ such that } I(w) = I(z) \right\}.
\end{equation}
\end{lemma}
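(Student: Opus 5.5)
Here $Z \Subset \mathfrak{h}^\rho$ means that $Z$ is relatively compact in $\mathfrak{h}^\rho$. The inclusion $Z \subseteq \Theta_Z$ is immediate: take $w=z$. The content of the lemma is that $\Theta_Z$ is again relatively compact. I will check this with the standard characterization of relatively compact subsets of weighted $\ell^2$ spaces: a set $A \subseteq \mathfrak{h}^\rho$ is relatively compact if and only if it is bounded and has uniformly small tails, that is,
\begin{equation*}
\lim_{N\to\infty}\sup_{w\in A}\sum_{|n|>N}\langle n\rangle^{2\rho}|w_n|^2 = 0 .
\end{equation*}
This is the Kolmogorov--Riesz (Fr\'echet) criterion in sequence form. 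I would briefly justify it by total boundedness: truncate to $|n|\le N$, where bounded sets in the finite-dimensional space $\mathbb{C}^{2N+1}$ are totally bounded, and control the tail uniformly.

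\textbf{Step 1.} By this criterion, $Z$ is bounded in $\mathfrak{h}^\rho$ and has uniformly small tails.

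\textbf{Step 2.} Observe that both quantities in the criterion depend on $w$ only through its actions. If $I(w)=I(z)$, then $|w_n|=|z_n|$ for every $n$. Hence
\begin{equation*}
\|w\|_{\mathfrak{h}^\rho}=\|z\|_{\mathfrak{h}^\rho},\qquad \sum_{|n|>N}\langle n\rangle^{2\rho}|w_n|^2=\sum_{|n|>N}\langle n\rangle^{2\rho}|z_n|^2 .
\end{equation*}
Taking suprema over $w\in\Theta_Z$ gives
\begin{equation*}
\sup_{w\in\Theta_Z}\|w\|_{\mathfrak{h}^\rho}=\sup_{z\in Z}\|z\|_{\mathfrak{h}^\rho},
\end{equation*}
and likewise the supremum of the tail sums over $\Theta_Z$ equals the supremum of the tail sums over $Z$.

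\textbf{Step 3.} By Steps 1 and 2, $\Theta_Z$ is bounded with uniformly small tails, so the criterion gives $\Theta_Z \Subset \mathfrak{h}^\rho$.

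The only point requiring care is the compactness criterion in Step 1. In particular I need its finite-dimensional reduction: the truncations $\{(w_n)_{|n|\le N}: w\in\Theta_Z\}$ must be bounded in $\mathbb{C}^{2N+1}$. This holds because each coordinate satisfies $|w_n|=|z_n|\le\sup_{z\in Z}\|z\|_{\mathfrak{h}^\rho}$. Given that, every step is elementary.

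If the lemma is later applied in the form where $Z$ is compact and one wants $\Theta_Z$ itself compact, closedness is also easy. Suppose $w^{(j)}\in\Theta_Z$ converge to $w$ in $\mathfrak{h}^\rho$, with corresponding $z^{(j)}\in Z$ satisfying $I(z^{(j)})=I(w^{(j)})$. Pass to a subsequence with $z^{(j)}\to z\in Z$. Since $I$ is continuous coordinatewise, $I(w)=\lim_j I(w^{(j)})=\lim_j I(z^{(j)})=I(z)$, so $w\in\Theta_Z$.
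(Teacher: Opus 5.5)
Your proof is correct, but it takes a different route from the paper.

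\textbf{The paper's route.} It writes $\Theta_Z$ as the image of $Z\times(S^1)^{\mathbb{Z}}$ under the coordinatewise torus action $F(z,\varphi)=(\varphi_n z_n)_n$. It notes that $(S^1)^{\mathbb{Z}}$ is compact by Tychonoff's theorem. It then checks continuity of $F$, with the product topology on the circle factor, via dominated convergence. Compactness of $\Theta_Z$ follows because it is a continuous image of a compact set.

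\textbf{Your route.} You use the Fr\'echet--Kolmogorov criterion in $\mathfrak{h}^\rho$: a set is relatively compact if and only if it is bounded with uniformly small weighted tails. You observe that both quantities depend only on the moduli $|w_n|$, so their suprema over $Z$ and over $\Theta_Z$ coincide.

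\textbf{Comparison.} Your argument is more elementary: it needs neither Tychonoff nor continuity of an infinite-dimensional group action. It also works verbatim when $Z$ is merely relatively compact. Your separate closedness argument then recovers the paper's stronger conclusion that $\Theta_Z$ is compact whenever $Z$ is. By contrast, the paper's argument uses compactness of $Z$ itself, so that $Z\times(S^1)^{\mathbb{Z}}$ is compact; for merely relatively compact $Z$ one would pass to the closure. The paper's route has the advantage of exhibiting $\Theta_Z$ explicitly as the orbit of $Z$ under coordinatewise rotations. That is exactly the structure used immediately afterwards to get $R_t(Z)\subseteq\Theta_Z$.

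\textbf{Minor point.} You only sketch the sufficiency half of the criterion. The necessity half, used in your Step 1, is equally standard: a relatively compact set is totally bounded, and the tail seminorm satisfies the triangle inequality.
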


\begin{proof}
The inclusion $Z \subseteq \Theta_{Z}$ is by definition. Let $S^1$, the group of unit circle, act on $\mathbb{C}$ by multiplication. Define
\begin{equation*}
    F: Z \times (S^1)^{\mathbb{Z}} \rightarrow \mathfrak{h}^{\rho},\qquad (z,\varphi) \mapsto (\varphi_n z_n),\qquad \varphi_n \in S^1.
\end{equation*}
By Tychonoff's theorem, $(S^1)^{\mathbb{Z}}$ is compact. Observing that $F(Z \times (S^1)^{\mathbb{Z}}) = \Theta_Z$ and
\begin{equation*}
    |\varphi^{(j)}_n z^{(j)}_{n} - \varphi_n z_n| \leq |z^{(j)}_n - z_n| + |\varphi^{(j)}_n - \varphi_n| |z_n|
\end{equation*}
whenever $(z^{(j)}, \varphi^{(j)}) \xrightarrow[j \rightarrow \infty]{} (z,\varphi)$, the continuity of $F$ follows by the Dominated Convergence Theorem, and hence the compactness of $\Theta_Z$.
\end{proof}

\begin{proposition}\label{prop:birkhoff-tangent-bounds}
Let $\Phi_t$ denote the defocusing flow. For any $\rho \in \{0\} \cup [1,\infty)$ and $K \Subset H^{\rho}$, there exists $C = C(\rho,K) > 0$ such that 
\begin{equation}\label{eq:compact-birkhoff}
 \sup_{u \in K} \| D \Phi_t [u] f \|_{H^\rho}
 \leq C \langle t \rangle \| f \|_{H^\rho},
 \qquad f \in H^\rho,\ t \in \mathbb{R}.
\end{equation}
If $0 < \rho < 1$ and $K \Subset H^1$, then there exists $C = C(\rho,K) > 0$ such that \eqref{eq:compact-birkhoff} holds in $H^{\rho}$.
\end{proposition}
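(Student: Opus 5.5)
We conjugate by the Birkhoff map and differentiate the explicit rotation. By \Cref{rotation}, $\Phi_t=\mathcal{B}^{-1}\circ R_t\circ\mathcal{B}$, so the chain rule gives
\begin{equation*}
D\Phi_t[u]f = D\mathcal{B}^{-1}[R_t\mathcal{B}u]\,DR_t[\mathcal{B}u]\,D\mathcal{B}[u]f .
\end{equation*}
We plan to bound the three factors separately.

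For the first and third factors we use compactness. By \Cref{birkhoff}, $\mathcal{B}:H^\rho_r\to\mathfrak{h}^\rho_r$ is real bianalytic for $\rho\in\{0\}\cup[1,\infty)$, so $D\mathcal{B}$ and $D\mathcal{B}^{-1}$ are continuous into $\mathcal{L}(H^\rho,\mathfrak{h}^\rho)$ and $\mathcal{L}(\mathfrak{h}^\rho,H^\rho)$. The image $Z=\mathcal{B}(K)$ is compact in $\mathfrak{h}^\rho$. Since $R_t$ preserves the actions, $R_t\mathcal{B}u\in\Theta_Z$ for all $t$, and \Cref{lem:coordinatewise-rotation-compactness} shows $\Theta_Z$ is compact. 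Continuity of $D\mathcal{B}^{-1}$ on $\Theta_Z$ and of $D\mathcal{B}$ on $K$ then bounds both factors uniformly in $u\in K$ and $t$.

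The middle factor is where the linear growth comes from. Write $\omega_n(I)=n^2+\widetilde{\omega}_n(z)$ and differentiate $(R_tz)_n=e^{-i\omega_n(I(z))t}z_n$ in direction $w$:
\begin{equation*}
(DR_t[z]w)_n = e^{-i\omega_n t}\Bigl(w_n - it\, z_n\, D\widetilde{\omega}_n[z]w\Bigr),
\end{equation*}
using that $n^2$ does not depend on $z$. The first term is an isometry in $\mathfrak{h}^\rho$. For the second we need $\|z\,(D\widetilde{\omega}[z]w)\|_{\mathfrak{h}^\rho}\le \|z\|_{\mathfrak{h}^\rho}\|D\widetilde{\omega}[z]w\|_{\ell^\infty}$. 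By \Cref{lem:frequency-correction}, $\widetilde{\omega}$ is real analytic from $\mathfrak{h}^1$ to $\ell^\infty$. Hence $\sup_{z\in Z}\|D\widetilde{\omega}[z]\|$ is finite when $Z$ is compact in $\mathfrak{h}^1$, which holds for $\rho\ge1$, and the bound $C\langle t\rangle$ follows.

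\textbf{Main obstacle: $\rho=0$.} Here $D\widetilde\omega[z]$ must be controlled on $\ell^2$ directions, but $\widetilde{\omega}$ is only known to be analytic on $\mathfrak{h}^1$ in the sense of \Cref{lem:frequency-correction}. I would first try to invoke the extension of the frequency map to $W\cap H^0_c$ (holomorphic into $\ell^\infty$, as in \cite{kappeler2017scattering}). Failing that, I would use that the $\widetilde{\omega}_n$ are functions of the actions $I\in\ell^1$, analytic on $\ell^1$, together with $\|D I[z]w\|_{\ell^1}\le2\|z\|_{\ell^2}\|w\|_{\ell^2}$.

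For $0<\rho<1$ with $K\Subset H^1$, I would interpolate. The bounds at $\rho=0$ and $\rho=1$ both hold for $u\in K$ (compact in $H^1\subset L^2$), and $f\mapsto D\Phi_t[u]f$ is a fixed real-linear operator. Complex interpolation between $\mathcal{L}(L^2)$ and $\mathcal{L}(H^1)$, with $\mathbb{R}$-linearity handled by splitting $f$ and $\bar f$, gives $\|D\Phi_t[u]\|_{\mathcal{L}(H^\rho)}\le C_0^{1-\rho}C_1^\rho\langle t\rangle$.
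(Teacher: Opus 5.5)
Your treatment of $\rho\ge1$, the compactness of $\Theta_Z$ via \Cref{lem:coordinatewise-rotation-compactness}, and the interpolation step for $0<\rho<1$ agree with the paper. The gap is the case $\rho=0$. You correctly identify it as the crux but do not close it, and the interpolation for $0<\rho<1$ depends on it. Both of your proposed routes rest on facts that are not available here. \Cref{lem:frequency-correction} gives $\widetilde\omega$ as analytic only on $\mathfrak{h}^1$; the cited frequency result is stated on $W\cap H^1_c$. So a uniform bound on $\|D\widetilde\omega[z]\|_{\mathfrak{h}^0\to\ell^\infty}$ over $Z$ is not justified. Your fallback, analyticity of $\widetilde\omega$ as a function of $I\in\ell^1$ with locally bounded derivative, is likewise asserted rather than proved, and it is no weaker than what the first route needs. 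If you could cite an $L^2$ extension of $\widetilde\omega$ with locally bounded derivative into $\ell^\infty$, your direct estimate would go through, after a density argument giving the rotation formula on $\mathfrak{h}^0$. As written, however, the step is conditional.

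The idea you are missing is that you never need to control $D\widetilde\omega$ in $\ell^2$ directions: the secular term can be traded for the unit-time linearization. For $z,\zeta\in\mathfrak{h}^1$, evaluate the derivative formula at $t=1$, solve for $-iz_nD\omega_n[z]\zeta$, and substitute back to get
\begin{equation*}
(DR_t[z]\zeta)_n = e^{-i\omega_n(z)t}\zeta_n + t\,e^{-i\omega_n(z)(t-1)}\bigl((DR_1[z]\zeta)_n - e^{-i\omega_n(z)}\zeta_n\bigr).
\end{equation*}
This gives $\|DR_t[z]\|_{\mathfrak{h}^0\to\mathfrak{h}^0}\le 1+|t|\bigl(\|DR_1[z]\|_{\mathfrak{h}^0\to\mathfrak{h}^0}+1\bigr)$. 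The map $R_1=\mathcal{B}\Phi_1\mathcal{B}^{-1}$ is real-analytic on $\mathfrak{h}^0$, because $\Phi_1$ is analytic on $L^2$ by Bourgain's theory and $\mathcal{B}$ is bianalytic at $s=0$. Hence $\sup_{z\in Z}\|DR_1[z]\|$ is finite by compactness. The bound then extends from $\mathfrak{h}^1$ to all $\zeta\in\mathfrak{h}^0$ and all $z\in Z$ by density and continuity of $DR_t[\cdot]$ and $DR_1[\cdot]$ in operator norm. This uses only the fixed-time $L^2$ theory and yields the linear-in-$t$ bound with no $L^2$ regularity of the frequency map.
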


\begin{proof}
Let $Z = \mathcal{B}(K)$ and $\Theta_{Z}$ be defined as \eqref{eq:coordinatewise-rotation-set}. For any $t \in \mathbb{R}$, $R_t(Z) \subseteq \Theta_{Z}$, which follows from the coordinate-wise rotation \eqref{eq:birkhoff-flow-real} at $\rho \geq 1$ and by density at $\rho = 0$. By continuity and compactness,
\begin{equation*}
    \sup_{z \in \Theta_{Z}} \| D\mathcal{B}^{-1} [z]\|_{\mathfrak{h}^{\rho}\rightarrow H^{\rho}} + \sup_{u \in K} \| D\mathcal{B} [u] \|_{H^{\rho}\rightarrow \mathfrak{h}^{\rho}} = O_{\rho,K}(1).
\end{equation*}
Assume $\rho \geq 1$ and let $z, \zeta \in \mathfrak{h}^{\rho}$. Then,
\begin{equation}\label{deriv_rotation}
    (D R_t [z] \zeta)_n = e^{-i \omega_{n}^{+}(z)t} \zeta_n - it e^{-i \omega_{n}^{+}(z)t} z_n D \omega_n^{+} [z] \zeta.
\end{equation}
If we further assume $z \in Z$, then
\begin{equation*}
\begin{split}
\| D R_t [z] \zeta\|_{\mathfrak{h}^{\rho}} &\leq \| \zeta \|_{\mathfrak{h}^{\rho}} + |t| \| z \|_{\mathfrak{h}^{\rho}} \| D \widetilde{\omega}^{+} [z] \zeta\|_{\ell^{\infty}}\\
&\leq \| \zeta \|_{\mathfrak{h}^{\rho}} + C_K |t| \sup_{z^{\prime} \in Z} \| D \widetilde{\omega}^{+} [z^{\prime}]\|_{\mathfrak{h}^{1} \rightarrow \ell^{\infty}} \| \zeta \|_{\mathfrak{h}^{1}} \lesssim \langle t \rangle \| \zeta \|_{\mathfrak{h}^{\rho}},
\end{split}
\end{equation*}
where the second inequality follows by \Cref{lem:frequency-correction} and the last inequality by $\mathfrak{h}^{\rho} \hookrightarrow \mathfrak{h}^{1}$. The claim follows by the chain rule on $\Phi_t = \mathcal{B}^{-1} R_t \mathcal{B}$.

Let $\rho = 0$. By \Cref{birkhoff}, $R_t = \mathcal{B} \circ \Phi_t \circ \mathcal{B}^{-1}: \mathfrak{h}^{0} \rightarrow \mathfrak{h}^{0}$ is well-defined. For $z,\zeta \in \mathfrak{h}^{1}$, consider \eqref{deriv_rotation} at the unit time, subtract $e^{-i \omega_n^{+}} \zeta_n$, and multiply $t e^{-i \omega_n^{+}(z)(t-1)}$ both sides to obtain
\begin{equation*}
    - it e^{-i \omega_{n}^{+}(z)t} z_n D \omega_n^{+} [z] \zeta = t e^{-i \omega_n^{+}(z)(t-1)}\left((D R_1 [z] \zeta)_n - e^{-i \omega_{n}^{+}(z)} \zeta_n \right). 
\end{equation*}
Substituting into \eqref{deriv_rotation}, we have
\begin{equation*}
    (D R_t [z] \zeta)_n = e^{-i \omega_{n}^{+}(z)t} \zeta_n + t e^{-i \omega_n^{+}(z)(t-1)}\left((D R_1 [z] \zeta)_n - e^{-i \omega_{n}^{+}(z)} \zeta_n \right),
\end{equation*}
and hence
\begin{equation}\label{eq:compact-birkhoff1}
    \| D R_t[z] \zeta \|_{\mathfrak{h}^{0}} \leq \left( 1 + |t| \left(\| D R_1[z] \|_{\mathfrak{h}^{0}\rightarrow \mathfrak{h}^{0}} + 1 \right) \right)\| \zeta \|_{\mathfrak{h}^{0}}
\end{equation}
By density argument using $\mathfrak{h}^{1}\hookrightarrow \mathfrak{h}^{0}$, \eqref{eq:compact-birkhoff1} holds for all $\zeta \in \mathfrak{h}^{0}$. By an approximation $\mathfrak{h}^{1} \ni z^{(j)} \rightarrow z \in Z$ in $\mathfrak{h}^{0}$ by finite truncations and the continuity of $DR_t [\cdot],\ DR_1 [\cdot]$ in the operator norm in $\mathfrak{h}^{0}$, \eqref{eq:compact-birkhoff1} passes to the limit, and hence \eqref{eq:compact-birkhoff} at $\rho = 0$. Since a compact subset of $H^1$ is compact in $L^2$, the second statement follows by interpolating at $\rho \in \{0,1\}$.
\end{proof}

\begin{remark}\label{rmk_focusing}
    The derivative estimate for $\Phi_t$ is stated for the defocusing nonlinearity for notational convenience. The corresponding statement holds for the focusing nonlinearity provided the compact subset is compactly contained in the Birkhoff neighborhood $W_f$.
\end{remark}

By conjugacy, \Cref{prop:birkhoff-tangent-bounds} transfers to $\Phi_{a}(t,\tau)$. The result is given for the defocusing flow. 

\begin{proposition}\label{prop:magnetic-conjugation}
For any $\rho\in\{0\}\cup[1,\infty)$ and $K\Subset H^\rho$, there exists
$C=C(\rho,K)>0$ such that
\begin{equation}\label{eq:magnetic-compact-birkhoff}
    \sup_{u\in K}
    \|D\Phi_a(t,\tau)[u]f\|_{H^\rho}
    \leq
    C\langle t-\tau\rangle\|f\|_{H^\rho},
    \qquad f\in H^\rho,\quad t,\tau\in\mathbb{R}.
\end{equation}
If $0<\rho<1$ and $K\Subset H^1$, then there exists $C=C(\rho,K)>0$ such that
\eqref{eq:magnetic-compact-birkhoff} holds in $H^{\rho}$.  All operator constants are independent of $a$.
\end{proposition}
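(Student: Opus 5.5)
The plan is to differentiate the conjugacy \eqref{eq:magnetic-flow-conjugation}, $\Phi_a(t,\tau)=\mathcal{G}_a(t)^{-1}\Phi_{t-\tau}\mathcal{G}_a(\tau)$, and then apply \Cref{prop:birkhoff-tangent-bounds} to the middle factor. For each fixed $t$, the gauge map $\mathcal{G}_a(t)$ is the composition of a spatial translation by $2\int_0^t a$ with multiplication by the unimodular constant $\exp(i\int_0^t a^2)$. It is therefore a real-linear isometry of $H^\rho$ for every $\rho$: on the Fourier side it multiplies $\widehat f(k)$ by a phase. By the chain rule,
\begin{equation*}
D\Phi_a(t,\tau)[u]f=\mathcal{G}_a(t)^{-1}\,D\Phi_{t-\tau}[\mathcal{G}_a(\tau)u]\,\mathcal{G}_a(\tau)f,
\end{equation*}
and hence
\begin{equation*}
\|D\Phi_a(t,\tau)[u]f\|_{H^\rho}=\|D\Phi_{t-\tau}[\mathcal{G}_a(\tau)u]\,\mathcal{G}_a(\tau)f\|_{H^\rho}.
\end{equation*}
Since $\|\mathcal{G}_a(\tau)f\|_{H^\rho}=\|f\|_{H^\rho}$, it remains to bound $D\Phi_{s}[v]$ uniformly over $v$ ranging in the set of gauge-transformed points $\{\mathcal{G}_a(\tau)u:u\in K,\ \tau\in\mathbb{R}\}$.

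The main obstacle is that this set depends on $a$ and $\tau$, while the constant must be independent of $a$. I would handle this with an enlarged compact set. Define
\begin{equation*}
\widetilde K=\{e^{i\alpha}u(\cdot-y):u\in K,\ \alpha\in\mathbb{T},\ y\in\mathbb{T}\}.
\end{equation*}
This is the image of the compact set $K\times\mathbb{T}\times\mathbb{T}$ under the map $(u,\alpha,y)\mapsto e^{i\alpha}u(\cdot-y)$. That map is continuous into $H^\rho$: translations act strongly continuously on $H^\rho(\mathbb{T})$, the estimate is uniform in $u$ on compact sets, and we can use the decomposition $\|\tau_yu-\tau_{y'}u'\|\le\|u-u'\|+\|\tau_y u'-\tau_{y'}u'\|$. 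Hence $\widetilde K\Subset H^\rho$, and $\widetilde K\Subset H^1$ in the case $0<\rho<1$ with $K\Subset H^1$.

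Every gauge-transformed point $\mathcal{G}_a(\tau)u$ with $u\in K$ lies in $\widetilde K$ for every choice of $a$ and $\tau$. Applying \Cref{prop:birkhoff-tangent-bounds} with the compact set $\widetilde K$ and time $t-\tau$ gives
\begin{equation*}
\|D\Phi_{t-\tau}[v]g\|_{H^\rho}\le C(\rho,\widetilde K)\langle t-\tau\rangle\|g\|_{H^\rho},\qquad v\in\widetilde K.
\end{equation*}
The constant $C(\rho,\widetilde K)$ depends only on $\rho$ and $K$, so it is independent of $a$. Both regimes of the statement follow this way: $\rho\in\{0\}\cup[1,\infty)$ with $K\Subset H^\rho$, and $0<\rho<1$ with $K\Subset H^1$, where the second uses the interpolation case of \Cref{prop:birkhoff-tangent-bounds} applied to $\widetilde K\Subset H^1$.

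One technical point concerns $\rho=0$, where $\Phi_t$ is only the $L^2$ flow. Here I would note that \eqref{eq:magnetic-flow-conjugation} holds as an identity of maps on $L^2$ by density and continuity of the flow. Consequently the chain rule is justified, using the real analyticity of $\Phi_t$ on $L^2$ and the fact that $\mathcal{G}_a(t)$ is bounded linear.
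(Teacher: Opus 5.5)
Your proposal is correct and matches the paper's proof: you differentiate the conjugacy \eqref{eq:magnetic-flow-conjugation}, use that $\mathcal{G}_a(t)$ is an $H^\rho$-isometry, and enlarge $K$ to the compact orbit $\widetilde K$ under phase rotations and translations so that $\mathcal{G}_a(\tau)K\subseteq\widetilde K$ for every $a$ and $\tau$. Then \Cref{prop:birkhoff-tangent-bounds} applied to $\widetilde K$ gives an $a$-independent constant, with the $0<\rho<1$ case handled through its second statement. Your explicit justification of the compactness of $\widetilde K$ and of the chain rule at $\rho=0$ fills in details the paper leaves implicit.
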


\begin{proof}
For $K\Subset H^\rho$, define $\widetilde{K} = \left\{e^{i\gamma} u(\cdot-y): u \in K,\ \gamma,y \in \mathbb{T} \right\}$. Then, $\widetilde{K} \Subset H^\rho$ by the continuity of translations and phase rotations, and
\begin{equation*}
    \mathcal{G}_a(\tau) K \subseteq \widetilde{K}.
\end{equation*}
Since $\mathcal{G}_a(t)$ is an isometry on $H^\rho$, \Cref{prop:birkhoff-tangent-bounds} gives
\begin{equation*}
    \| D\Phi_a(t,\tau) [u] f \|_{H^\rho} = \| D \Phi_{t-\tau} [\mathcal{G}_a(\tau)u] \mathcal{G}_a(\tau) f \|_{H^\rho} \leq C \langle t-\tau\rangle\|f\|_{H^\rho}.
\end{equation*}
The case $0<\rho<1$ follows from the second statement of
\Cref{prop:birkhoff-tangent-bounds}.
\end{proof}

For the focusing nonlinearity, further assume $\widetilde{K}$ is identified with a set that is compactly contained in the local Birkhoff neighborhood at the regularity required in \Cref{prop:birkhoff-tangent-bounds}.

\section{Modified energies and polynomial Sobolev growth}\label{sec:modified-energies}

We prove that the discrete Sobolev norms exhibit at most polynomial growth globally in time.
\begin{proposition}
\label{prop:polynomial-orbit-growth}
Let $s>0$, $R>0$, and let
$u_h \in AC(\mathbb{R};H^s_h)$ satisfy
$\| u_h(0) \|_{H_h^s} \leq R$.

\begin{enumerate}[label=\textnormal{(\roman*)}]
\item \label{item:orbit-filtered}
Let $0<s<1$ and let $u_h$ solve
\eqref{main_eq1}. If $h \leq h_0$, with $h_0$
given by \eqref{eq:phase-margin}, then
\begin{equation*}
\begin{split}
 \| u_h(t) \|_{H^s_h}
 &\lesssim_{s,R,\Lambda} 
 \begin{cases}
  \langle t \rangle^{\frac{1}{2}}+(|a|_\infty |t|)^s,
     &0<s \leq \frac{1}{2},\\
  \langle t \rangle^2+(|a|_\infty |t|)^s,
     &\frac{1}{2}<s<1.
 \end{cases}
\end{split}
\end{equation*}

\item \label{item:orbit-full-band}
Let $s \geq 1$ and let $u_h$ solve \eqref{main_eq1}.
\begin{equation}\label{eq:fractional-growth}
 \| u_h(t) \|_{H^s_h} \lesssim_{s,R}
  \langle t \rangle^{\frac{1}{2}\lceil s-1\rceil}
  +(|a|_\infty |t|)^s.
\end{equation}
\end{enumerate}
\end{proposition}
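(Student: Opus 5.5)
The plan is to separate the two sources of growth in the bounds: an \emph{integrable-like} part, handled by modified energies, and a \emph{magnetic drift} part, responsible for the term $(|a|_\infty|t|)^s$. The starting observation is that $-\Delta_{h,a(t)}$ and $-\Delta_{h,0}$ are both Fourier multipliers. Hence the linear flow $S_{h,a}$ preserves every $H^s_h$ norm exactly, and any growth of $\|u_h(t)\|_{H^s_h}$ comes from $\mathcal N_h$ only. I would also use the identity
\begin{equation*}
\omega_{h,a}(k)=\omega_{h,0}(k-a),
\end{equation*}
which says that the lattice symbol is a shift of the autonomous one. This suggests measuring regularity with the moving weight $\langle k-a(t)\rangle^{s}$ instead of $\langle k\rangle^s$. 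The two weights are comparable up to $\langle k\rangle^s\lesssim\langle k-a\rangle^s+|a|_\infty^s$. So if the moving-weight norm grows like $\langle t\rangle^{p}$, the bound for the true norm picks up an additive term of the form $(|a|_\infty|t|)^s$. I expect this to come from the accumulated phase $\int_0^t a$ appearing in the time-dependent translation \eqref{magnetic_conjugacy}, whose lattice analogue is exactly this frequency shift. By \Cref{rem:time-reversal}, it suffices to treat $t\ge 0$.

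\emph{Step 1 (base level).} For $s\ge1$, I would establish a uniform $H^1_h$ bound. The natural quantity is the discrete magnetic energy
\begin{equation*}
\mathcal E_h(t)=\tfrac12\|D_{h,a(t)}u_h\|_{L^2_h}^2+\tfrac{\mu}{4}\|u_h\|_{L^4_h}^4,
\end{equation*}
combined with mass conservation, and I would try to show that it is conserved modulo terms controlled through the frequency shift above. For $0<s<1$ there is no $H^1$ control. There I would use \Cref{prop:local}: on intervals of length $\delta(R_0)$, with $R_0=\|u_h(0)\|_{L^2_h}$ conserved, it gives $X^{s,b}_h$ bounds uniform in $h$ and $a$. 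This step needs $h\le h_0$, since \Cref{prop:nonauto-L4} is what makes the local theory uniform.

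\emph{Step 2 (modified energy).} Following the Fourier-side construction of \cite{Bernier}, I would set
\begin{equation*}
E_s(t)=\|u_h\|_{H^s_h}^2+\mathrm{Re}\sum_{k_1-k_2+k_3=k_0}\frac{\langle k_0\rangle^{2s}-\langle k_1\rangle^{2s}+\langle k_2\rangle^{2s}-\langle k_3\rangle^{2s}}{\Omega_h}\,\widehat u_1\overline{\widehat u_2}\widehat u_3\overline{\widehat u_0},
\end{equation*}
where $\Omega_h=\omega_{h,a}(k_1)-\omega_{h,a}(k_2)+\omega_{h,a}(k_3)-\omega_{h,a}(k_0)$ and $\widehat u_j=\widehat u_h(k_j)$. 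The factorization of $\Omega_h$, of the same type as \eqref{pair_phase}, shows that its dependence on $a$ is only through $\cos(h(\cdot-a))$ factors. On $E_h$ these factors are bounded below, so $\Omega_h$ stays comparable to the continuum resonance $2(k_1-k_2)(k_1-k_0)$, up to the shift by $a$. Differentiating $E_s$ then removes the quartic term and leaves sextic terms. For $s\ge1$ these are bounded using the $H^1_h$ bound; for $0<s<1$ they are bounded in $X^{s,b}_h$ via \Cref{prop:cubic-estimates}. The expected local increment is
\begin{equation*}
E_s(t+\delta)-E_s(t)\lesssim E_s(t)^{\alpha},\qquad \alpha<1.
\end{equation*}
Iterating this in the standard way gives $\langle t\rangle^{1/(1-\alpha)}$ growth. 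With the bookkeeping of \cite{Bernier}, this should produce the exponents $\frac12$ and $2$ in \ref{item:orbit-filtered} and $\frac12\lceil s-1\rceil$ in \eqref{eq:fractional-growth}. For $s\ge1$ I would handle higher $s$ by induction on $\lceil s-1\rceil$.

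\emph{Main obstacle.} The hard step is the time derivative of the correction term. Since $a$ is only $L^\infty$, one cannot differentiate $1/\Omega_h$ in time. I would first try to avoid the issue entirely by writing the correction in the moving frame $k\mapsto k-a(t)$. In that frame $\Omega_h$ is $a$-independent, and all time dependence moves into the shifted weights. Those weights are then compared with $\langle k\rangle^{2s}$ at a cost of $(|a|_\infty t)^{s}$, giving the additive term in the statement. If this does not work, the fallback is to integrate by parts in time against the accumulated phases $\phi_{h,k}$, which are Lipschitz in $t$. The resulting boundary terms would then be absorbed using \Cref{lem:mag-counting}.
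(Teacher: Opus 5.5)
\textbf{There is a genuine gap at the step you flag as the main obstacle, and neither of your remedies closes it.}

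\emph{Why the moving frame fails.} The moving frame does not remove the $a$-dependence from what you differentiate. The sum still runs over the fixed lattice $\mathbb{T}_h^*$. So the weights $\langle k-a(t)\rangle^{2s}$, and the divisor $\Omega_h$ evaluated at $k-a(t)$, depend on $t$ through $a(t)$. For $a\in L^\infty$ the resulting functional need not even be continuous in $t$.

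\emph{Same problem in Step 1 and in the fallback.} In Step 1, $\partial_t$ of the magnetic energy $\frac12\sum_k\omega_{h,a(t)}(k)|\widehat{u}_h(k)|^2+\dots$ contains $\dot a$, which does not exist. The integration-by-parts fallback divides by the time-dependent $\Omega_h(t)$ and must then differentiate it, so it fails for the same reason. Also, \Cref{lem:mag-counting} is a sublevel-set count feeding the $L^4$ estimate; it is not a device for absorbing boundary terms.

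\emph{The additive term is misattributed.} The bound $\langle k\rangle^s\lesssim\langle k-a\rangle^s+|a|_\infty^s$ produces $|a|_\infty^s\|u_h\|_{L^2_h}$, with no factor of $t$. Moreover, $\mathcal{G}_a(t)$ is a translation times a phase, hence an isometry of $H^s$. It is not a frequency shift by $\int_0^t a$.

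\emph{The missing idea: use the autonomous symbol.} Solve the homological equation with $\omega_{h,0}$, i.e. $m_q=\frac{\mu}{2}D_2q/D_2\omega_{h,0}$. Then $\mathcal{E}=Q+\Lambda_4(m_q;\cdot)$ is time independent. By \Cref{lem:finite-cyclic-product-rule}, its derivative consists of the sextic term plus the magnetic defect $\Lambda_4(m_qD_2(\omega_{h,a(t)}-\omega_{h,0});\cdot)$. On $\mathcal{V}_2$ one has $D_2\omega_{h,a}=\frac{8}{h^2}\cos\bigl(\frac{h(k_1+k_2)}{2}-ha\bigr)\sin\bigl(\frac{h(k_1-k_{-1})}{2}\bigr)\sin\bigl(\frac{h(k_1-k_{-2})}{2}\bigr)$. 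Hence the defect has relative size $O(|\sin(ha/2)|)$ (\Cref{lem:case-I-weighted-magnetic}, \Cref{lem:twist-defect}). Gronwall then costs only $e^{Ch|a|_\infty t}$, respectively $e^{C\beta_h(t)}$, which is bounded while $h|a|_\infty t\lesssim 1$.

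\emph{Where $(|a|_\infty|t|)^s$ really comes from.} In the complementary regime $h|a|_\infty t\gtrsim1$ one uses only mass conservation and $|k|\le\pi/h$. This gives $\|u_h(t)\|_{H^s_h}\lesssim h^{-s}\|u_h(0)\|_{L^2_h}\lesssim(|a|_\infty t)^s$. This two-regime dichotomy is the actual source of the additive term.

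\emph{No uniform $H^1_h$ bound.} The dichotomy also shows why Step 1 cannot deliver a uniform $H^1_h$ bound. The autonomous energy $H_{0,h}$ only satisfies $|Y'|\lesssim|\sin(ha/2)|Y^{3/2}$. This gives $H^1_h$ control only while $\beta_h(t)\le\epsilon_R$ (\Cref{cor:prop71}).

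\emph{The $s\ge1$ divisor is singular.} For $s\ge1$ there is no filtering. The cosine factor vanishes on the discrete resonant set $\mathcal{Z}_d$, where $D_2\langle k\rangle^{2s}\neq0$, so your correction term is singular there. The paper instead uses the weight $q=h^{-2s}f_s(hk)$. Its symmetry $f_s(\pi-\theta)=1-f_s(\theta)$ forces $D_2q$ to vanish on $\mathcal{Z}_d$ (\Cref{lem:fractional-bernier-symbol}). With your $a(t)$-dependent divisor, this singular set would also move with $a(t)$.
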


\begin{corollary}\label{cor:prop71}
Let $s \geq 1$ and assume the hypothesis of \Cref{prop:polynomial-orbit-growth}. Define
\begin{equation}\label{eq:beta-defect}
 \beta_h(t) = \int_{-|t|}^{|t|}
 \left|\sin\left(\frac{ha(\tau)}{2}\right)\right| d\tau.
\end{equation}
There exists $\epsilon_R > 0$ such that, whenever
$\beta_h(t) \leq \epsilon_R$, we have
\begin{equation*}
\| u_h(t) \|_{H^s_h} \lesssim_{s,R} \langle t \rangle^{\frac{1}{2}\lceil s-1\rceil}.
\end{equation*}    
\end{corollary}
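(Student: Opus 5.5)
The idea is to go back into the modified-energy argument behind \Cref{prop:polynomial-orbit-growth}\ref{item:orbit-full-band} and locate where the additive term $(|a|_\infty|t|)^s$ comes from. By the half-angle identity used in \eqref{pair_phase}, the lattice symbol splits as
\begin{equation*}
\omega_{h,a(t)}(k)=\cos(ha(t))\,\omega_{h,0}(k)+\frac{2}{h^2}\bigl(1-\cos(ha(t))\bigr)-\frac{2}{h^2}\sin(ha(t))\sin(hk).
\end{equation*}
The second term is a $k$-independent phase and drops out of every Sobolev norm. The factor $\cos(ha(t))$ on the first term can be absorbed by a bi-Lipschitz time reparametrization, as in \Cref{lem:composition} and the proof of \Cref{prop:nonauto-L4}. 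So the only genuinely twist-dependent piece is the odd, transport-like term $\frac{2}{h^2}\sin(ha)\sin(hk)=\frac{4}{h^2}\sin(\frac{ha}{2})\cos(\frac{ha}{2})\sin(hk)$. I expect that in the proof of \Cref{prop:polynomial-orbit-growth} this term is what produces $(|a|_\infty|t|)^s$, because it moves Fourier weight by an accumulated drift and $\langle k\rangle^s\lesssim\langle k-d\rangle^s+\langle d\rangle^s$ is then used.

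The plan is to avoid that crude splitting and track the twist term multiplicatively instead.

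1. Write the modified energy $\mathcal{E}_s(u_h)$ from the proof of \Cref{prop:polynomial-orbit-growth}\ref{item:orbit-full-band} as $\|u_h\|_{H^s_h}^2$ plus the correction terms. Differentiate it along \eqref{main_eq1}. The autonomous part, with $a\equiv0$ after reparametrization, contributes exactly the terms that gave $\langle t\rangle^{\frac12\lceil s-1\rceil}$.
2. The twist term commutes with the linear part, since it is a Fourier multiplier. It therefore enters $\frac{d}{dt}\mathcal{E}_s$ only through the nonlinear correction terms, via the phase mismatch
\begin{equation*}
\sin(ha)\bigl(\sin(hk_1)-\sin(hk_2)+\sin(hk_3)-\sin(hk_0)\bigr)/h^2 .
\end{equation*}
3. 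Bound this mismatch by $C\,|\sin(\tfrac{ha}{2})|$ times quantities already controlled by $\mathcal{E}_s$ and the mass $R$. The aim is a differential inequality of the form
\begin{equation*}
\frac{d}{dt}\mathcal{E}_s\le F_{\mathrm{aut}}(t)+C_R\,\Bigl|\sin\Bigl(\tfrac{ha(t)}{2}\Bigr)\Bigr|\,\mathcal{E}_s .
\end{equation*}
4. Apply Gronwall. This gives a factor $\exp(C_R\beta_h(t))$, which is at most $2$ once $\beta_h(t)\le\epsilon_R:=C_R^{-1}\log2$. What remains is the $a$-free bound $\langle t\rangle^{\frac12\lceil s-1\rceil}$. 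Negative times are handled by \Cref{rem:time-reversal}, which is why $\beta_h$ integrates over $[-|t|,|t|]$.

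The main obstacle is step 3. The factor $h^{-2}$ in front of the mismatch must be cancelled. One power of $h$ comes from the identity $\sin(ha)=2\sin(\frac{ha}{2})\cos(\frac{ha}{2})$. The other must come from the resonance relation $k_1-k_2+k_3=k_0$, which gives
\begin{equation*}
\sin(hk_1)-\sin(hk_2)+\sin(hk_3)-\sin(hk_0)=O\bigl(h^3|k_1-k_2||k_2-k_3||k_1+k_3|\bigr).
\end{equation*}
This leaves derivative losses that must be matched against the $H^1$ control available for $s\ge1$ and against the structure of the correction terms.

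If that cancellation is insufficient, there is a fallback. Conjugate the twist term away by the Fourier multiplier $e^{-i\int_0^t\frac{2}{h^2}\sin(ha)\sin(hk)\,d\tau}$. This preserves $H^s_h$ norms, but it turns the nonlinearity into one with a time-dependent phase whose deviation from unity is $O(\beta_h(t))$ on resonant quadruples. One would then rerun the modified-energy argument perturbatively in $\beta_h$.
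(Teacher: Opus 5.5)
\textbf{Main gap: the missing $H^1$ step.} For $s>1$, your steps 2--4 match the paper's mechanism. The twist enters only through $\Lambda_4(m_qD_2(\omega_{h,a(t)}-\omega_{h,0}))$. Its multiplier is $O(|\sin(\frac{ha}{2})|\sum_j\langle k_j\rangle^2)$ by the exact trigonometric factorization (\Cref{lem:twist-defect}). Gronwall then produces $e^{C\beta_h(t)}$.

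The gap is step 3's claim that this term is bounded by $C_R|\sin(\frac{ha}{2})|\,\mathcal{E}_s$ with $C_R$ depending only on the mass. By \Cref{lem:uniform-cyclic-multilinear}, the twist term is only $\lesssim|\sin(\frac{ha}{2})|\,\|u_h\|_{H^s_h}^2\|u_h\|_{L^2_h}\|u_h\|_{H^1_h}$. Likewise, $\Lambda_4(m_q)$ and $\Lambda_6(S_4m_q)$ are $\lesssim\|u_h\|_{H^{s-1}_h}^2$ only given a uniform $H^1_h$ bound. You invoke ``the $H^1$ control available for $s\geq1$'', but nothing supplies it. For nonconstant $a\in L^\infty$ the lattice Hamiltonian is not conserved. \Cref{prop:polynomial-orbit-growth}\ref{item:orbit-full-band} at $s=1$ gives only $1+|a|_\infty|t|$.

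In fact, the paper deduces \Cref{prop:polynomial-orbit-growth}\ref{item:orbit-full-band} from this corollary. The term $(|a|_\infty|t|)^s$ is just the crude bound $h^{-s}\|u_h\|_{L^2_h}$ in the regime $\beta_h(t)>\epsilon_R$, not a drift effect. So there is no independent modified-energy proof to ``go back into''.

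What is missing is the case $s=1$, which is itself part of the statement. Use the frozen Hamiltonian $H_{0,h}$. By \Cref{lem:finite-cyclic-product-rule}, $i\partial_tH_{0,h}(u_h)=\frac{\mu}{4}\Lambda_4(D_2(\omega_{h,a(t)}-\omega_{h,0});\widehat{u_h})$. With $Y=1+H_{0,h}(u_h)+C_R$, which is coercive over $\|u_h\|_{H^1_h}^2$, this gives $Y'\lesssim_R|\sin(\frac{ha}{2})|\,Y^{3/2}$. This inequality is superlinear, so Gronwall does not apply. Integrating $Y^{-1/2}$ instead gives $Y(t)^{-1/2}\geq Y(0)^{-1/2}-C\beta_h(t)$, which bounds $Y$ only while $C\beta_h(t)<Y(0)^{-1/2}$. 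That is the real origin of $\epsilon_R$. Your choice $\epsilon_R=C_R^{-1}\log2$ is cosmetic: a genuinely linear inequality would need no threshold at all. Only after this step, using that $\beta_h$ is nondecreasing in $|t|$, does your linear Gronwall argument work for $s>1$. An induction on $\lceil s-1\rceil$ then supplies the autonomous growth $\langle t\rangle^{\lceil s-1\rceil}$ of $\|u_h\|_{H^s_h}^2$.

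\textbf{Further points.} The time reparametrization absorbing $\cos(ha(t))$ is both unnecessary and unjustified. For $s\geq1$ there is no restriction on $h|a|_\infty$. Also, $\beta_h(t)\leq\epsilon_R$ controls only the time average of $|\sin(\frac{ha}{2})|$, so $\cos(ha(t))$ need not be bounded below as \Cref{lem:composition} requires. The reparametrization also puts a factor $1/\cos(ha(t))$ on the nonlinearity, which spoils the exact homological cancellation. Instead, keep $\omega_{h,0}$ as the reference dispersion and put all of $\omega_{h,a(t)}-\omega_{h,0}$ into the defect.

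Second, the quadratic part of $\mathcal{E}_s$ cannot literally be $\|u_h\|_{H^s_h}^2$. For the unfiltered scheme, $D_2\omega_{h,0}$ vanishes on the discrete resonance $\mathcal{Z}_d=\{k_1+k_2=\pm\frac{\pi}{h}\}$, where $D_2\langle k\rangle^{2s}\neq0$ in general. The homological equation is then unsolvable. You need an equivalent weight such as $h^{-2s}f_s(hk)$ with $f_s(\pi-\theta)=1-f_s(\theta)$ (\Cref{lem:fractional-bernier-symbol}).

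Finally, a minor point on your power counting: $\sin(ha)=2\sin(\frac{ha}{2})\cos(\frac{ha}{2})$ gains no power of $h$. Both powers come from $\sin\frac{h(k_1-k_2)}{2}\sin\frac{h(k_2-k_3)}{2}$, with $|\sin\frac{h(k_1+k_3)}{2}|\leq1$.
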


\begin{remark}\label{rem:continuum-orbit-bounds}
For $s\geq0$ and $\mu\in\{\pm1\}$, the periodic cubic NLS flow satisfies
\begin{equation}\label{eq:case-I-continuum-orbit}
 \| \Phi_t u_0 \|_{H^s}\lesssim_{s,R}1,
 \qquad
 \| u_0 \|_{H^s}\leq R,\quad t\in\mathbb{R},
\end{equation}
with $\|u_0\|_{L^2}$ sufficiently small, independently of $s,R$, when $\mu=-1$ and $s>1$.
The cases $s \in \{ 0,1 \}$ follow from mass and energy conservation,
using the Gagliardo-Nirenberg inequality in the focusing case.
For $0<s<1$, \eqref{eq:case-I-continuum-orbit} follows from the Besov bounds in \cite[Theorem 4.5]{killip2018low} based on complete integrability. Without complete integrability, the weaker bound
$\| \Phi_t u_0 \|_{H^s}\lesssim_{s,R}\langle t\rangle^{\frac{1}{2}}$
also follows by combining the uniform bounds for
$0\leq s<\frac{1}{2}$ in \cite[Theorem B.1(ii)]{oh2020global}
with the nonlinear smoothing estimates in \cite[Theorem 1.1]{erdougan2019smoothing}. For $s>1$, use
\cite[Corollary 1.1]{kappeler2017scattering} in the defocusing case
and the small-mass Birkhoff argument in \Cref{birkhoff} and \Cref{rotation} in the focusing case. By \eqref{eq:magnetic-flow-conjugation}, the same bounds hold for
the magnetic flow, independently of $|a|_\infty$.
\end{remark}

Motivated by \cite{Bernier}, we construct coercive modified energies that are almost conserved. Define
\begin{equation*}
    Q(z) = \sum_{k \in \mathbb{T}_h^{*}} q(k) |z_k|^2, \qquad q: \mathbb{T}_h^{*} \rightarrow \mathbb{R}.
\end{equation*}
Let $\chi = \mathbf{1}_{E_h}$ for $s < 1$ and $\chi \equiv 1$ for $s \geq 1$. Let $z = \widehat{u}_h$, and therefore $z \in AC(\mathbb{R};L^2(\mathbb{T}_h^{*}))$ satisfies $\supp z(t) \subseteq \{k \in \mathbb{T}_h^{*} : \chi(k) = 1 \}$ and is a solution of
\begin{equation}\label{eq:abstract-projected-cyclic-flow}
 i \partial_t z_k = \omega_{h,a(t)}(k)z_k
 +\mu\chi(k)\sum_{k_1 - k_{-1} + k_2 = k} z_{k_1}\overline{z_{k_{-1}}} z_{k_2}, \qquad \text{ a.e. } t \in \mathbb{R}.
\end{equation}
Differentiating $Q$ in $t$ and applying the cubic flow in \eqref{eq:abstract-projected-cyclic-flow}, a quartic remainder appears. To eliminate this remainder, a higher order term is added to $Q$, which results in a sextic remainder with a magnetic defect.

Let $\ell \in \mathbb{N}$ and $m: (\mathbb{T}_h^{*})^{2\ell} \rightarrow \mathbb{R}$. Let $\Sigma_\ell=\{\pm 1,\ldots,\pm \ell\}$ and $\mathbf{f}=(f_1,\ldots,f_\ell,f_{-1},\ldots,f_{-\ell})$ where $f_{j}:\mathbb{T}_h^{*} \rightarrow \mathbb{C}$ for $j \in \Sigma_{\ell}$. An order $2\ell$ (energy) multilinear form is defined as $\Lambda_{2\ell}$ where
\begin{equation*}
\begin{split}
\mathcal{V}_\ell
 &=\left\{\mathbf{k}=(k_1,\ldots,k_\ell,k_{-1},\ldots,k_{-\ell})\in(\mathbb{T}_h^{*})^{2\ell}:
 \sum_{j=1}^\ell(k_j-k_{-j})=0\right\},\\
 \Lambda_{2\ell}(m;\mathbf{f})
 &=\sum_{\mathbf{k} \in \mathcal{V}_\ell}m(\mathbf k)
 \prod_{j=1}^\ell f_j(k_j)\overline{f_{-j}(k_{-j})}, \qquad \Lambda_{2\ell}(m;f) := \Lambda_{2\ell}(m;f,\ldots,f). 
\end{split}
\end{equation*}
We call the condition in $\mathcal{V}_{\ell}$ the momentum constraint. Define the insertion $S_{2\ell}:L^\infty(\mathcal{V}_\ell)\rightarrow L^\infty(\mathcal{V}_{\ell+1})$ by
\begin{equation*}
\begin{split}
(S_{2\ell}m)(\mathbf{k})
 =&\sum_{j=1}^\ell
 \chi(k_j+k_{\ell+1}-k_{-(\ell+1)}) m(k_1,\ldots,\underbrace{k_j+k_{\ell+1}-k_{-(\ell+1)}},\ldots,k_\ell,
 k_{-1},\ldots,k_{-\ell})\\
 &-\sum_{j=1}^\ell
 \chi(k_{-j}+k_{-(\ell+1)}-k_{\ell+1}) m(k_1,\ldots,k_\ell,
 k_{-1},\ldots,\underbrace{k_{-j}+k_{-(\ell+1)}-k_{\ell+1}},\ldots,k_{-\ell}).    
\end{split}
\end{equation*}
Define $D_\ell:L^\infty(\mathbb{T}_h^{*})\rightarrow L^\infty((\mathbb{T}_h^{*})^{2\ell})$ by
\begin{equation*}
 D_\ell q(\mathbf{k}) = \sum_{j=1}^\ell q(k_j)-q(k_{-j}).
\end{equation*}

\begin{lemma}\label{lem:finite-cyclic-product-rule}
For any $\ell \in \mathbb{N}$, $m \in L^{\infty}(\mathcal{V}_{\ell};\mathbb{R})$, and $z \in AC(\mathbb{R};L^2(\mathbb{T}_h^{*}))$, a solution to \eqref{eq:abstract-projected-cyclic-flow}, we have
\begin{equation*}
i\partial_t \Lambda_{2\ell}(m;z) = \Lambda_{2\ell}(m D_\ell\omega_{h,a(t)};z )
 +\mu\Lambda_{2\ell+2}(S_{2\ell}m; z), \qquad \text{ a.e. } t \in \mathbb{R}.
\end{equation*}
Suppose there exist $q: \mathbb{T}_h^{*} \rightarrow \mathbb{R}$ and $m_q \in L^{\infty}(\mathcal{V}_{2}; \mathbb{R})$ such that
\begin{equation}\label{eq:abstract-homological-equation}
\prod_{j \in \Sigma_2} \chi(k_j)
\left(
m_q( \mathbf{k} ) D_2 \omega_{h,0} ( \mathbf{k} )
- \frac{\mu}{2} D_2 q ( \mathbf{k} ) \right) = 0,
\qquad
\forall\, \mathbf{k}\in\mathcal V_2.
\end{equation}
Then, $\mathcal{E}(z) := Q(z) + \Lambda_{4}(m_q;z)$ satisfies
\begin{equation}\label{eq:abstract-modified-energy-identity}
 i\partial_t \mathcal E(z) = \mu\Lambda_{6}(S_{4}m_q;z) + \Lambda_{4}(m_q D_2(\omega_{h,a(t)}-\omega_{h,0});z),\qquad \text{ a.e. } t \in \mathbb{R}.
\end{equation}
\end{lemma}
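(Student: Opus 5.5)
The plan is a direct computation: differentiate the multilinear form term by term along \eqref{eq:abstract-projected-cyclic-flow}, then combine the result with the derivative of $Q$.

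\textbf{Step 1: differentiating $\Lambda_{2\ell}(m;z)$.} Since $z\in AC(\mathbb{R};L^2(\mathbb{T}_h^*))$ and $\mathbb{T}_h^*$ is finite, $\Lambda_{2\ell}(m;z)$ is a finite sum of products of absolutely continuous scalar functions. Hence it is absolutely continuous, and the product rule holds a.e. in $t$.

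We compute $i\partial_t$ of $\prod_{j}z_{k_j}\overline{z_{k_{-j}}}$ one factor at a time.
\begin{itemize}
\item For an unconjugated factor $z_{k_j}$, substitute $i\partial_t z_{k_j}=\omega_{h,a(t)}(k_j)z_{k_j}+\mu\chi(k_j)\sum z_{k'}\overline{z_{k''}}z_{k'''}$.
\item For a conjugated factor, use $i\partial_t\overline{z_{k}}=-\overline{(i\partial_t z_k)}=-\omega_{h,a(t)}(k)\overline{z_k}-\mu\chi(k)\sum\overline{z_{k'}}z_{k''}\overline{z_{k'''}}$. Here $\omega$ and $\chi$ are real, which is why the sign flips.
\end{itemize}

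\textbf{Linear part.} The linear contributions sum to $\sum_j(\omega(k_j)-\omega(k_{-j}))$ times the product. This gives $\Lambda_{2\ell}(mD_\ell\omega_{h,a(t)};z)$.

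\textbf{Cubic part.} For the $j$-th unconjugated slot, write the inserted cubic term as $z_{k_j'}\overline{z_{k_{-(\ell+1)}}}z_{k_{\ell+1}}$ with $k_j'-k_{-(\ell+1)}+k_{\ell+1}=k_j$. We then relabel $k_j'$ as the new $k_j$ variable, so the old slot becomes $k_j+k_{\ell+1}-k_{-(\ell+1)}$. This is a bijection of summation indices, and it maps $\mathcal V_\ell\times\{\text{cubic constraint}\}$ onto $\mathcal V_{\ell+1}$, because the momentum constraint is preserved. The coefficient becomes $\chi(k_j+k_{\ell+1}-k_{-(\ell+1)})\,m(\dots,k_j+k_{\ell+1}-k_{-(\ell+1)},\dots)$.

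The conjugated slots are handled the same way, with the insertion $k_{-j}+k_{-(\ell+1)}-k_{\ell+1}$ and a minus sign. Together these give exactly $\mu\Lambda_{2\ell+2}(S_{2\ell}m;z)$.

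\textbf{Main point of care: modular frequencies.} The frequencies live in $\mathbb{T}_h^*$ and are identified mod $2M$, so the relabeling must be carried out modulo $2M$. This is consistent with the cyclic convolution in \eqref{eq:abstract-projected-cyclic-flow}, and $m,\chi,\omega_{h,a}$ are evaluated on representatives. I expect bookkeeping of this reindexing, including checking that $\mathcal{V}_\ell$ is understood modulo $2M$, to be the only delicate step.

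\textbf{Step 2: the modified energy.} First note that $Q(z)=\Lambda_2(\tilde q;z)$ with $\tilde q(k_1,k_{-1})=q(k_1)$ on $\mathcal V_1$, where $k_1=k_{-1}$. Applying Step 1 with $\ell=1$:
\begin{itemize}
\item $D_1\omega$ vanishes on $\mathcal V_1$, so the linear term drops out.
\item The cubic term is $\mu\Lambda_4(S_2\tilde q;z)$.
\end{itemize}
Since $z$ is supported where $\chi=1$, we may insert $\prod_{j\in\Sigma_2}\chi(k_j)$ freely in every $\Lambda_4(\cdot\,;z)$. On that support, $S_2\tilde q(\mathbf k)=q(k_1+k_2-k_{-2})-q(k_{-1}+k_{-2}-k_2)$, which by momentum equals $q(k_{-1})-q(k_2)$. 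This is not yet symmetric.

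So we symmetrize. $\Lambda_4(\cdot\,;z)$ is invariant under swapping $k_1\leftrightarrow k_2$ and swapping $k_{-1}\leftrightarrow k_{-2}$. Averaging over these swaps replaces $S_2\tilde q$ by $-\tfrac12 D_2q$. Therefore
\begin{equation*}
i\partial_tQ(z)=-\tfrac{\mu}{2}\Lambda_4(D_2q;z).
\end{equation*}

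Next apply Step 1 with $\ell=2$ to $\Lambda_4(m_q;z)$, and split $\omega_{h,a(t)}=\omega_{h,0}+(\omega_{h,a(t)}-\omega_{h,0})$. This gives
\begin{equation*}
i\partial_t\Lambda_4(m_q;z)=\Lambda_4(m_qD_2\omega_{h,0};z)+\Lambda_4\bigl(m_qD_2(\omega_{h,a(t)}-\omega_{h,0});z\bigr)+\mu\Lambda_6(S_4m_q;z).
\end{equation*}

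Adding the two derivatives, the terms $\Lambda_4(m_qD_2\omega_{h,0};z)$ and $-\tfrac{\mu}{2}\Lambda_4(D_2q;z)$ cancel. This is exactly the homological equation \eqref{eq:abstract-homological-equation}, once the $\chi$-cutoff is inserted using the support of $z$. What remains is \eqref{eq:abstract-modified-energy-identity}.
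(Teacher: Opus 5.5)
Your proposal is correct and follows essentially the same route as the paper, which defers to the direct computation of Lemma 3.1 and Corollary 3 of \cite{Bernier}: the term-by-term product rule, reindexing the inserted cubic terms (modulo $2M$) into $\mathcal{V}_{\ell+1}$, symmetrizing, and cancelling via the homological equation on the support of $z$. One harmless slip: the momentum constraint gives $k_{-1}+k_{-2}-k_2=k_1$, so on the support $S_2\tilde q(\mathbf{k})=q(k_{-1})-q(k_1)$ rather than $q(k_{-1})-q(k_2)$; the $k_1\leftrightarrow k_2$ symmetrization yields the same $-\tfrac{1}{2}D_2q$ either way.
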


\begin{proof}
A straightforward adaptation of Lemma 3.1 and Corollary 3 of \cite{Bernier} yields the statement.
\end{proof}

To control the growth of $\mathcal{E}$, we construct $m_q$ that satisfies the homological equation \eqref{eq:abstract-homological-equation}. Define
\begin{equation}\label{bernier_weight}
    q(k)=
\begin{cases}
\langle k \rangle^{2s},\qquad  & 0<s<1,\\
h^{-2s}f_s(hk),\qquad & s\geq 1, 
\end{cases}; \qquad
f_s(\theta) = \frac{|\sin(\theta/2)|^{2s}}
{|\sin(\theta/2)|^{2s} + |\cos(\theta/2)|^{2s}},\ \theta \in \mathbb{T}.
\end{equation}
On $\mathcal{V}_2$,
\begin{equation}\label{eq:case-I-factorization}
 D_2\omega_{h,0}
 = \frac{8}{h^2}
 \cos\left(\frac{h(k_{-1}+k_{-2})}2\right)
 \sin\left(\frac{h(k_1-k_{-1})}2\right)
 \sin\left(\frac{h(k_1-k_{-2})}2\right),
\end{equation}
and the resonant set is
\begin{equation*}
\begin{split}
\mathcal{Z} &= \{\mathbf{k} \in \mathcal{V}_2:
D_2 \omega_{h,0}(\mathbf{k})=0\} = \mathcal{Z}_c \cup \mathcal{Z}_d,\\
\mathcal{Z}_c &:= \{k_1=k_{-1},\ k_2=k_{-2}\} \cup \{k_1=k_{-2},\ k_{-1}=k_2\},\\
\mathcal{Z}_d
&=\{\mathbf{k}\in\mathcal V_2:k_1+k_2=\pm \frac{\pi}{h}\}.
\end{split}
\end{equation*}
For $s < 1$, since $\chi = \mathbf{1}_{E_h}$, \eqref{eq:abstract-homological-equation} imposes non-trivial restrictions on $\mathcal{V}_{2} \cap E_h^4$ where $E_h^4$ is the four-fold direct product of $E_h$. It follows that $\mathcal{Z}_d \cap E_h^4 = \emptyset$ and
\begin{equation*}
| D_2 \omega_{h,0}(\mathbf{k}) |
 \simeq_\Lambda
 |(k_1-k_{-1})(k_1-k_{-2})|,
 \qquad \mathbf{k} \in \mathcal{V}_2 \cap E_h^4.
\end{equation*}
For $s \geq 1$, the resonance structure includes the genuinely discrete component $\mathcal{Z}_d$.

\subsection{Fourier-filtered modified energies}

The following divisor estimate shows that the quartic homological equation is nonsingular across $\mathcal{Z}$.

\begin{lemma}\label{lem:case-I-divisor}
Let $0<s<1$. Define
\begin{equation*}
    \mathcal{V}_2^{\mathbb{R}} = \{\mathbf{k} = (k_1,k_2,k_{-1},k_{-2}) \in \mathbb{R}^4: \sum_{j=1}^{2} k_j-k_{-j}=0\}.
\end{equation*}
For $\mathbf{k} \in \mathcal{V}_{2}^{\mathbb{R}} \cap [-\frac{\Lambda \pi}{h},\frac{\Lambda \pi}{h}]^4$, define
\begin{equation*}
m_q(\mathbf{k})
=\frac{\mu}{2}
\frac{D_2 q(\mathbf{k})}{D_2\omega_{h,0}(\mathbf{k})},\qquad D_2\omega_{h,0}(\mathbf{k})\neq 0.
\end{equation*}
Then, $m_q$ admits a unique continuous real-valued extension to $\mathcal{V}_{2}^{\mathbb{R}} \cap [-\frac{\Lambda \pi}{h},\frac{\Lambda \pi}{h}]^4$. Restricting this extension to $\mathcal{V}_{2} \cap E_h^4$ and extending to zero on $\mathcal{V}_2 \setminus E_h^{4}$ yields $m_q\in L^\infty(\mathcal{V}_2;\mathbb{R})$ with the norm independent of $h$.
\end{lemma}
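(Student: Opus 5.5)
The plan is to factor both the numerator $D_2q$ and the denominator $D_2\omega_{h,0}$ through the two resonant differences $k_1-k_{-1}$ and $k_1-k_{-2}$. Once these are cancelled, $m_q$ becomes a quotient of two continuous functions. The denominator of that quotient will be bounded below uniformly in $h$ on the filtered box, and the numerator will be bounded by $\sup|q''|$.

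\textbf{Step 1 (parametrization).} On $\mathcal{V}_2^{\mathbb{R}}$ write $x=k_1$, $\alpha=k_1-k_{-1}$, $\beta=k_1-k_{-2}$. The momentum constraint gives $k_2=x-\alpha-\beta$. Then
\begin{equation*}
D_2q(\mathbf{k})=q(x)-q(x-\alpha)-q(x-\beta)+q(x-\alpha-\beta)
=\alpha\beta\int_0^1\!\!\int_0^1 q''(x-t\alpha-r\beta)\,dt\,dr .
\end{equation*}
Here $q(\xi)=\langle\xi\rangle^{2s}$ is extended as a smooth function on $\mathbb{R}$. By \eqref{eq:case-I-factorization},
\begin{equation*}
D_2\omega_{h,0}(\mathbf{k})=2\alpha\beta\,G_h(\mathbf{k}),
\qquad
G_h(\mathbf{k}):=\cos\left(\tfrac{h(k_{-1}+k_{-2})}{2}\right)\sinc\left(\tfrac{h\alpha}{2}\right)\sinc\left(\tfrac{h\beta}{2}\right).
\end{equation*}
The resonant set $\mathcal{Z}\cap[-\frac{\Lambda\pi}{h},\frac{\Lambda\pi}{h}]^4$ is then exactly $\{\alpha\beta=0\}$, i.e.\ $\mathcal{Z}_c$, once we check that $G_h$ does not vanish.

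\textbf{Step 2 (uniform lower bound on $G_h$).} For $\mathbf{k}$ in the box, each of $|h\alpha/2|$, $|h\beta/2|$ and $|h(k_{-1}+k_{-2})/2|$ is at most $\Lambda\pi<\pi/2$. Since $\sinc$ is decreasing on $[0,\pi/2]$ and $\cos$ is decreasing there,
\begin{equation*}
G_h(\mathbf{k})\geq \cos(\Lambda\pi)\,\sinc(\Lambda\pi)^2=:c_\Lambda>0 .
\end{equation*}
This bound is independent of $h$. In particular $G_h$ is continuous and positive on the box, which also confirms $\mathcal{Z}_d\cap E_h^4=\emptyset$.

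\textbf{Step 3 (extension and bound).} Off $\{\alpha\beta=0\}$ the factor $\alpha\beta$ cancels, so
\begin{equation*}
m_q(\mathbf{k})=\frac{\mu}{4}\,\frac{1}{G_h(\mathbf{k})}\int_0^1\!\!\int_0^1 q''(x-t\alpha-r\beta)\,dt\,dr .
\end{equation*}
The right-hand side is continuous and real-valued on the whole box $\mathcal{V}_2^{\mathbb{R}}\cap[-\frac{\Lambda\pi}{h},\frac{\Lambda\pi}{h}]^4$. The nonresonant set is the complement of two hyperplanes in this three-dimensional convex set, so it is dense there, and the continuous extension is therefore unique.

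For the bound, a direct computation gives
\begin{equation*}
q''(\xi)=2s\langle\xi\rangle^{2s-2}+2s(2s-2)\xi^2\langle\xi\rangle^{2s-4},
\end{equation*}
hence $|q''(\xi)|\lesssim_s\langle\xi\rangle^{2s-2}\leq 1$ because $s<1$. This yields $\|m_q\|_{L^\infty}\lesssim_s c_\Lambda^{-1}$, uniformly in $h$. Restricting to the lattice points $\mathcal{V}_2\cap E_h^4$ and setting $m_q=0$ elsewhere keeps the same bound. On $E_h^4$ the homological equation \eqref{eq:abstract-homological-equation} holds: off $\mathcal{Z}$ this is the defining formula for $m_q$, and on $\mathcal{Z}$ both $D_2q$ and $D_2\omega_{h,0}$ vanish. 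Outside $E_h^4$ the product of cutoffs $\prod_{j\in\Sigma_2}\chi(k_j)$ kills the equation.

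\textbf{Main obstacle.} The only point requiring care is uniformity in $h$. This is exactly where the filtering $\Lambda<\tfrac12$ enters: it keeps $h\alpha/2$, $h\beta/2$ and $h(k_{-1}+k_{-2})/2$ away from the degeneracies at $\pm\pi$ and $\pm\pi/2$, where $G_h$ could vanish. The restriction $s<1$ enters to make $q''$ bounded; for $s\geq1$ one would instead need a weight such as the one in \eqref{bernier_weight}.
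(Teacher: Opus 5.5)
Your proof is correct and follows essentially the same route as the paper: you cancel the factor $(k_1-k_{-1})(k_1-k_{-2})$ using the double fundamental-theorem-of-calculus representation of $D_2 q$, bound $|q''|$ uniformly for $s<1$, and bound the reduced denominator below uniformly in $h$ using $\Lambda<\tfrac12$. The only difference is cosmetic. The paper writes the reduced denominator as $\int_0^1\int_0^1\cos\bigl(h(k_2+t(k_1-k_{-1})+r(k_1-k_{-2}))\bigr)\,dt\,dr$ and bounds it below by $\cos(\Lambda\pi)$ via the convex-combination identity. Your closed form $G_h$, taken from \eqref{eq:case-I-factorization}, is the same function, bounded below by the slightly smaller but still $h$-independent constant $\cos(\Lambda\pi)\,\sinc(\Lambda\pi)^2$.
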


\begin{proof}
For $f\in C^2(\mathbb R)$, the momentum constraint and two applications of the Fundamental Theorem of Calculus yield
\begin{equation*}
D_2 f(\mathbf{k}) = (k_1-k_{-1})(k_1-k_{-2}) \int_0^1 \int_0^1
f^{\prime\prime}\left(k_2+t(k_1-k_{-1})+r(k_1-k_{-2})\right)
\,dt\,dr.
\end{equation*}
Since $\omega_{h,0}^{\prime\prime}(k)=2\cos(hk)$, we have
\begin{equation}\label{eq:case-I-divisor-extension}
m_q(\mathbf{k}) = \frac{\mu}{4}
\frac{\int_0^1 \int_0^1
q^{\prime\prime}\big(k_2+t(k_1-k_{-1})+r(k_1-k_{-2})\big) dt dr}{\int_0^1 \int_0^1
\cos \big(h( k_2 + t (k_1-k_{-1}) + r (k_1-k_{-2}))\big) dt dr},\qquad D_2 \omega_{h,0} (\mathbf{k}) \neq 0.
\end{equation}
For $t,r \in [0,1]$, the momentum constraint yields
\begin{equation}\label{convex}
k_2 + t(k_1-k_{-1}) + r(k_1-k_{-2}) = tr k_1 + (1-t)r k_{-1} + (1-t)(1-r)k_2 + t(1-r)k_{-2},   
\end{equation}
which is a convex combination of four frequencies, and hence $\left| h\big(k_2+t(k_1-k_{-1})+r(k_1-k_{-2})\big) \right| \leq \Lambda \pi$ and the integral in the denominator of \eqref{eq:case-I-divisor-extension} is strictly positive. Observing
\begin{equation}\label{q_second}
| q^{\prime\prime}(k) | = | 2s (1 + k^2)^{s-2} \left( 1 + (2s-1) k^2 \right) | \leq 2s (1 + k^2)^{s-1} \leq 2s,
\end{equation}
both integrals are continuous in $\mathbf{k}$ by the Dominated Convergence Theorem, and
\begin{equation}\label{mq_bound}
| m_q( \mathbf{k} ) | \leq \frac{s}{2\cos (\Lambda \pi)},    
\end{equation}
independent of $h$, for all $\mathbf{k}$ in the constraint set. Uniqueness follows from the density of $\{ D_2 \omega_{h,0}(\mathbf{k}) \neq 0 \}$ in $\mathcal{V}_{2}^{\mathbb{R}} \cap [-\frac{\Lambda \pi}{h},\frac{\Lambda \pi}{h}]^4$. Restriction to lattices preserves \eqref{mq_bound}.
\end{proof}

\begin{proposition}\label{prop:case-I-quartic}
Let $0 < s < 1,\ q(\xi) = \langle \xi \rangle^{2s}$, and $m_q \in L^{\infty}(\mathcal{V}_2;\mathbb{R})$ be defined as in \Cref{lem:case-I-divisor}. Assume $f_j,F: \mathbb{T} \rightarrow \mathbb{C}$ are bandlimited in $E_h$ for $j \in \Sigma_{2}$. The following estimates hold uniformly in $h$ with the implicit constants depending on $s,\Lambda,p_j$.
\begin{enumerate}[label=\textnormal{(\roman*)}]
\item If $0 < s \leq \frac{1}{2}$, then
\begin{align}
|\Lambda_4(m_q;\widehat{f}_1,\widehat{f}_2,\widehat{f}_{-1},\widehat{f}_{-2})| &\lesssim \prod_{j \in \Sigma_2} \| f_j\|_{L^2}, \label{eq:case-I-quartic-L2}\\
|\Lambda_4(m_q;\widehat{F},\widehat{f}_2,\widehat{f}_{-1},\widehat{f}_{-2})| &\lesssim \| F\|_{L^{\frac{4}{3}}} \| f_2\|_{L^2} \|f_{-1}\|_{L^2}\| f_{-2}\|_{L^4}. \label{eq:case-I-quartic-mixed}
\end{align}
The same bound holds for any permutation of the integrability indices $(\frac{4}{3},2,2,4)$.
\item If $\frac{1}{2} < s < 1,\ 1 < p_j < \infty$ for $j \in \Sigma_2$, and $\sum_{j \in \Sigma_2} p_j^{-1}=1$, then
\begin{equation}\label{eq:case-I-quartic-general} |\Lambda_4(m_q;\widehat{f}_1,\widehat{f}_2,\widehat{f}_{-1},\widehat{f}_{-2})| \lesssim \prod_{j \in \Sigma_2}\| f_j\|_{L^{p_j}}.
\end{equation}
\end{enumerate}
\end{proposition}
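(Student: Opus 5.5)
The plan is to work from the integral representation \eqref{eq:case-I-divisor-extension} of $m_q$ established in \Cref{lem:case-I-divisor}. After the substitution \eqref{convex}, both the numerator and the denominator are averages over $(t,r)\in[0,1]^2$ of a fixed function evaluated at the convex combination $c_{t,r}\cdot\mathbf{k}=trk_1+(1-t)rk_{-1}+(1-t)(1-r)k_2+t(1-r)k_{-2}$. The basic mechanism is Fourier inversion. If a symbol has the form $g(c_{t,r}\cdot\mathbf{k})$ with $g(\xi)=\int\widehat g(y)e^{iy\xi}dy$, then the phase $e^{iy c_{t,r}\cdot\mathbf{k}}$ factors over the four frequencies. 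So $\Lambda_4$ with this symbol equals $\int\widehat g(y)\int_{\mathbb{T}}\prod_j f_j^{\pm}(x+yc_j)\,dx\,dy$, where each $f_j^{\pm}$ is a translate, or a conjugate of a translate. For any H\"older-admissible exponents $\sum p_j^{-1}=1$, this is bounded by $\|\widehat g\|_{L^1}\prod_j\|f_j\|_{L^{p_j}}$, uniformly in $(t,r)$. The band-limitation to $E_h$ lets us modify $g$ outside $|\xi|\leq\Lambda\pi/h$ freely.

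The main obstacle is the denominator, because $1/\int\!\!\int\cos(h\,c_{t,r}\cdot\mathbf{k})\,dt\,dr$ is not itself an average. I would first replace $\cos(h\xi)$ by an even function $\psi_h(\xi)=\psi(h\xi)$ with $\psi\in C_c^\infty$, equal to $\cos$ on $[-\Lambda\pi,\Lambda\pi]$, and with $\psi$ taking values in $[\cos(\Lambda\pi)/2,1]$ on a slightly larger interval. The convex-combination structure keeps all arguments inside the band, so this change does not alter $m_q$ on $\mathcal{V}_2\cap E_h^4$. Next I would write $1/B=\kappa^{-1}\sum_{n\geq0}(1-B/\kappa)^n$ with $\kappa$ chosen so that $\sup|1-\psi/\kappa|<1$. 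The difficulty is that the Wiener-algebra norm of $1-\psi/\kappa$ may exceed its sup norm. To handle this, I would rather invoke a Wiener--L\'evy type statement. The function $\xi\mapsto 1/\psi(\xi)$, suitably extended, is smooth, and its Fourier transform is integrable with norm independent of $h$ by scaling. The multilinear averages in $(t,r)$ are then handled by writing $1/B$ as a smooth function of the finitely many bounded averages and expanding in products. If this becomes messy, a fallback is to bound the Coifman--Meyer seminorms $|\partial^\alpha_{\mathbf{k}}m_q|\lesssim|\mathbf{k}|^{-|\alpha|}$ directly from \eqref{eq:case-I-divisor-extension}, and then cite the periodic Coifman--Meyer theorem. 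This applies after transference, since $m_q$ extends continuously to $\mathcal{V}_2^{\mathbb{R}}$.

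For part (ii), the numerator involves $q''(\xi)=\partial^2\langle\xi\rangle^{2s}$. Its Fourier transform is a Bessel-type kernel, and it is integrable for $s<1$ because $q''$ decays like $\langle\xi\rangle^{2s-2}$ and is smooth. Combined with the denominator treatment, this yields \eqref{eq:case-I-quartic-general} by H\"older, uniformly in $h$.

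For part (i), the four-$L^2$ bound \eqref{eq:case-I-quartic-L2} is not H\"older-admissible, so I would instead use decay of the symbol. Set $\Delta_1=k_1-k_{-1}$ and $\Delta_2=k_1-k_{-2}$. Since $q'$ is $(2s-1)$-H\"older-bounded for $s\leq\frac12$, bounding $D_2q$ directly by $|q(k_1)-q(k_{-1})|$-type differences should give $|m_q|\lesssim_\Lambda\langle\Delta_{\max}\rangle^{-1}\langle\Delta_{\min}\rangle^{2s-1}\leq\langle\Delta_1\rangle^{-1/2}\langle\Delta_2\rangle^{-1/2}$ roughly. I then parametrize $\mathcal{V}_2$ by $(k_1,\Delta_1,\Delta_2)$. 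Pairing $f_1$ with $f_{-1}$ and $f_2$ with $f_{-2}$ through Cauchy--Schwarz in $k_1$, a Schur test in $(\Delta_1,\Delta_2)$ closes the bound. If the exponent $-\frac12$ only gives a logarithmic loss, I would refine by splitting $|\Delta_1|\sim|\Delta_2|$ dyadically and using the $\langle\Delta_{\max}\rangle^{-1}$ factor. The mixed estimate \eqref{eq:case-I-quartic-mixed} and its permutations would come from interpolating between this $L^2$ bound and the H\"older-type bound of part (ii), which remains valid for $s\leq\frac12$. Alternatively, they follow from applying the translation representation above in the $\Delta$ with the worse decay, and Cauchy--Schwarz in the other.
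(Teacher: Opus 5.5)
The genuine gap is the mixed bound \eqref{eq:case-I-quartic-mixed}. Your frequency-side proof of \eqref{eq:case-I-quartic-L2} is sound, and more elementary than the paper's, but only if you use the sharper weight from the start. For $s\le\frac12$ one has $\|q'\|_{L^\infty}\le 2s$. Since $k_{-2}-k_2=\Delta_1$ and $k_{-1}-k_2=\Delta_2$, this gives $|D_2q|\lesssim\min(|\Delta_1|,|\Delta_2|)$, hence $|m_q|\lesssim\langle\Delta_{\max}\rangle^{-1}$ (using $|D_2\omega_{h,0}|\simeq_\Lambda|\Delta_1\Delta_2|$, $|m_q|\lesssim1$, and continuity at resonances). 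Then Cauchy--Schwarz in $k_1$, summation of the smaller difference over $|\Delta_{\min}|\le|\Delta_{\max}|$, and Cauchy--Schwarz in the larger difference close the bound. Pair $(f_1,f_{-1}),(f_2,f_{-2})$ when $|\Delta_2|\le|\Delta_1|$ and $(f_1,f_{-2}),(f_2,f_{-1})$ otherwise. The product weight $\langle\Delta_1\rangle^{-1/2}\langle\Delta_2\rangle^{-1/2}$ is summable in neither variable, so this refinement is not optional.

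The mixed bound cannot come from interpolation. The reciprocals of $(\frac43,2,2,4)$ sum to $2$, while your endpoints have sum $2$ (only the all-$L^2$ point) or $1$ (H\"older-admissible). A convex combination has sum $2$ only at the all-$L^2$ point. Nor can it come from a pointwise majorant such as $\langle\Delta_{\max}\rangle^{-1}$, which only sees $|\widehat F|,|\widehat f_j|$, whereas the estimate is phase-sensitive. Take $|\widehat F|=L^{-1/4}\mathbf{1}_{[0,L)}$ with Dirichlet phases, so $\|F\|_{L^{4/3}}\simeq1$. Take $|\widehat f_2|=|\widehat f_{-1}|=|\widehat f_{-2}|=L^{-1/2}\mathbf{1}_{[0,L)}$ with random phases on $\widehat f_{-2}$, so $\|f_{-2}\|_{L^4}\simeq1$. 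The positive form is then $\gtrsim L^{1/4}$, while the right-hand side is $O(1)$. Your alternative suggestion is too vague to check.

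What is missing is the paper's use of the $y$-integration. After Fourier inversion, $|\Lambda_4|$ is bounded by averages over $(t,r,\boldsymbol\eta)$ of $\int|K(y)|\int_{\mathbb T}\prod_{j\in\Sigma_2}|g_j(\theta+c_jy)|\,d\theta\,dy$. Here the $g_j$ are translates of the $f_j$, and $K=2s(2s-1)G_{2-2s}+4s(1-s)G_{4-2s}$. For $s\le\frac12$ one has $\sum_n\sup_{[n,n+1]}|K|<\infty$; for $s>\frac12$, $G_{2-2s}$ has a $|y|^{1-2s}$ singularity, which is exactly where the restriction enters. On each $\mathbb T\times[n,n+1]$, the change of variables $(\theta,y)\mapsto(\theta+c_iy,\theta+c_jy)$ bounds $\iint|\varphi(\theta+c_iy)\psi(\theta+c_jy)|\,d\theta\,dy$ by $|c_i-c_j|^{-1}\|\varphi\|_{L^1}\|\psi\|_{L^1}$. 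Applied to the pairs $(1,-1),(2,-2)$ after Cauchy--Schwarz, this gives \eqref{eq:case-I-quartic-L2}. With $A_j=|g_j(\theta+c_jy)|$, apply it after the H\"older factorization $A_1A_2A_{-1}A_{-2}=(A_1^{4/3}A_2^2)^{1/4}(A_1^{4/3}A_{-1}^2)^{1/4}(A_1^{4/3}A_{-2}^4)^{1/4}(A_2^2A_{-1}^2)^{1/4}$. This gives \eqref{eq:case-I-quartic-mixed} with the integrable weight $(rt(1-t)|1-t-r|)^{-1/4}$.

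Your treatment of the denominator, which (ii) and the mixed bound both need, is also not correct as written. The reciprocal of $\iint\cos(h\,c_{t,r}\cdot\mathbf k)\,dt\,dr$ is not an average of functions of single linear forms. So a Wiener--L\'evy argument for the one-variable function $1/\psi$ does not represent it, and, as you suspected, the Neumann series cannot be controlled in the Fourier--Stieltjes norm. The Coifman--Meyer fallback fails outright, because $m_q$ inherits the unit-scale structure of $q$ at zero frequency. For example, at $\mathbf k=(k_1,k_2,k_{-1},k_{-2})=(K,0,K,0)$, vary $k_2$ with $k_{-1},k_{-2}$ fixed. The numerator $\iint q''(c_{t,r}\cdot\mathbf k)\,dt\,dr$ then has third derivative $\approx -q^{(4)}(0)/(4K)=3s(1-s)/K$, not $O(K^{-3})$.

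The fix is the paper's. The reciprocal equals $B(h\mathbf k)$, where $B$ is smooth on the compact set $\mathcal V_2^{\mathbb R}\cap[-\Lambda\pi,\Lambda\pi]^4$, since the cosine average is at least $\cos(\Lambda\pi)$ there by the convex-combination structure. $B$ extends to $C_c^\infty(\mathbb R^4)$. Fourier inversion in $\mathbb R^4$ writes the reciprocal, up to a constant, as $\int\check B(\boldsymbol\eta)e^{-ih\boldsymbol\eta\cdot\mathbf k}\,d\boldsymbol\eta$. This is an $L^1$-average of independent translations of the four functions, with $\|\check B\|_{L^1(\mathbb R^4)}$ independent of $h$. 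With this in place, your argument for (ii) is the paper's.
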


\begin{lemma}\label{lem:case-I-two-slope}
Let $J \subseteq \mathbb{R}$ be a bounded interval and $c_1, c_2 \in \mathbb{R}$ satisfy $0 < |c_1 - c_2||J| < 2\pi$. For all $f,g \in L^1(\mathbb{T})$,
\begin{equation*}
 \int_{\mathbb{T}\times J}|f(\theta + c_1 y) g(\theta + c_2 y)| d\theta dy
 \leq |c_1 - c_2|^{-1} \| f \|_{L^1} \| g \|_{L^1}.
\end{equation*}
\end{lemma}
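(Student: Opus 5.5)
The plan is a linear change of variables followed by Tonelli. Since the integrand is nonnegative, Tonelli lets us integrate over $\mathbb{T}\times J$ in any order. We assume without loss of generality that $f,g\geq0$, replacing them by $|f|,|g|$.

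First substitute $\theta' = \theta + c_2 y$ for fixed $y$. By translation invariance of Lebesgue measure on $\mathbb{T}$, the integral becomes
\begin{equation*}
\int_J\int_{\mathbb{T}} f(\theta' + (c_1-c_2)y)\, g(\theta')\, d\theta'\, dy
= \int_{\mathbb{T}} g(\theta') \left(\int_J f(\theta' + c y)\,dy\right) d\theta',
\qquad c := c_1-c_2 \neq 0 .
\end{equation*}
It then suffices to bound the inner integral uniformly in $\theta'$:
\begin{equation*}
\sup_{\theta'}\int_J f(\theta'+cy)\,dy \leq |c|^{-1}\|f\|_{L^1}.
\end{equation*}

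For this bound, substitute $\sigma = \theta' + c y$, so that $dy = |c|^{-1} d\sigma$. As $y$ ranges over $J$, the variable $\sigma$ ranges over an interval of $\mathbb{R}$ of length $|c||J| < 2\pi$. The periodic function $f$, viewed on $\mathbb{R}$, is therefore integrated over an interval shorter than one period. Such an interval projects injectively, up to a null set, into $\mathbb{T}=\mathbb{R}/2\pi\mathbb{Z}$, so
\begin{equation*}
\int_J f(\theta'+cy)\,dy = |c|^{-1}\int_{\theta'+cJ} f(\sigma)\,d\sigma \leq |c|^{-1}\|f\|_{L^1(\mathbb{T})}.
\end{equation*}

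Substituting this back gives the bound $|c|^{-1}\|f\|_{L^1}\|g\|_{L^1}$, which is the claim.

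The only point that needs care is measurability and the use of Fubini for merely $L^1$ functions. Both are handled by Tonelli, since the integrand is nonnegative and measurable on $\mathbb{T}\times J$. The hypothesis $|c||J|<2\pi$ is exactly what prevents wrap-around, which would otherwise produce a factor $\lceil |c||J|/2\pi\rceil$. No step is a genuine obstacle.
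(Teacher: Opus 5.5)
Your proof is correct and is essentially the paper's argument. The paper applies the single linear change of variables $(\theta,y)\mapsto(\theta+c_1y,\theta+c_2y)$, which is injective on $\mathbb{T}\times J$ because $|c_1-c_2||J|<2\pi$ and has Jacobian of modulus $|c_1-c_2|$, and then uses Fubini; you factor this map into a translation in $\theta$ followed by a one-dimensional substitution in $y$, with the same no-wrap-around condition doing the work.
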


\begin{proof}
The change of variable $(\theta,y) \mapsto (\theta + c_1 y, \theta + c_2 y)$ is bijective onto the range due to $|c_1 - c_2| |J| < 2\pi$ with Jacobian determinant of absolute value $|c_1 - c_2|$, and hence the bound due to the Fubini's theorem.
\end{proof}

\begin{proof}[Proof of \Cref{prop:case-I-quartic}]
For $\alpha>0$, let $G_\alpha = \mathcal{F}_{\mathbb{R}}^{-1} \left( \langle \cdot \rangle^{-\alpha} \right)$. By \cite{aronszajn1961theory}, $G_\alpha \in L^1(\mathbb{R})$ for any $\alpha>0$; for $\alpha > 1$, $G_\alpha$ is bounded and exponentially decaying at infinity. By \eqref{q_second},
\begin{equation*}
K := \mathcal{F}_{\mathbb{R}}^{-1}(q^{\prime\prime})
=2s(2s-1)G_{2-2s}+4s(1-s)G_{4-2s}
\in L^1(\mathbb{R}),
\end{equation*}
and therefore if $s \leq \frac{1}{2}$, then
\begin{equation}\label{eq:case-I-Wiener-kernel}
\sum_{ n \in \mathbb{Z} }\sup_{ y \in [n,n+1] } |K(y)| < \infty.
\end{equation}
As in \eqref{eq:case-I-divisor-extension}, define
\begin{equation*}
b(\mathbf{k})
=\left(
\int_0^1\int_0^1
\cos\bigl(h(k_2+t(k_1-k_{-1})+r(k_1-k_{-2}))\bigr)
dt dr
\right)^{-1},\qquad \mathbf{k} \in \mathcal{V}_2^{\mathbb{R}}
\cap \left[-\frac{\Lambda\pi}{h},\frac{\Lambda\pi}{h}\right]^4.
\end{equation*}
By \eqref{convex}, $b(\mathbf{k}) \leq \frac{1}{\cos(\Lambda \pi)}$ and is smooth. Rescaling $\boldsymbol{\theta} = h \mathbf{k} \in \mathcal{V}_2^{\mathbb{R}}\cap[-\Lambda\pi,\Lambda\pi]^4$, there exists
$B \in C_c^\infty(\mathbb{R}^4;\mathbb{R})$, depending only on $\Lambda$, such that $b(\mathbf{k}) = B(h\mathbf{k})$ for all $\mathbf{k} \in \mathcal{V}_2^{\mathbb{R}} \cap \left[-\frac{\Lambda\pi}{h},\frac{\Lambda\pi}{h}\right]^4$. Let $\check{B} = \mathcal{F}^{-1}B$.

By Fourier inversion,
\begin{equation*}
\begin{split}
B(h\mathbf{k}) = \frac{1}{(2\pi)^4}
\int_{\mathbb{R}^4}\check{B}(\boldsymbol{\eta})
e^{-ih\boldsymbol{\eta}\cdot\mathbf{k}}
d\boldsymbol{\eta},\qquad q^{\prime\prime}(\xi) = \frac{1}{2\pi}
\int_{\mathbb{R}} K(y) e^{-iy\xi} dy,
\end{split}
\end{equation*}
the orthogonality relation
\begin{equation*}
\mathbf{1}_{\{k_1 - k_{-1} + k_2 - k_{-2} = 0 \}} = \frac{1}{2\pi}\int_{\mathbb{T}}
e^{i(k_1+k_2-k_{-1}-k_{-2})\theta} d\theta,
\end{equation*}
and the momentum constraint
\begin{equation*}
k_2+t(k_1-k_{-1})+r(k_1-k_{-2}) = rk_{-1}+(1-t-r)k_2+tk_{-2},
\end{equation*}
we have
\begin{align*}
    &\Lambda_4(m_q;\widehat f_1,\widehat f_2,\widehat f_{-1},\widehat f_{-2}) \nonumber\\
&= \frac{\mu}{4(2\pi)^5} \int_{\mathbb{R}^4}\int_0^1\int_0^1 \int_{\mathbb{R}}\check{B}(\boldsymbol{\eta})K(y)\nonumber\\
&\qquad \cdot \sum_{\mathbf{k} \in E_h^4} \mathbf{1}_{\{k_1 - k_{-1} + k_2 - k_{-2} = 0 \}} e^{-ih\boldsymbol{\eta}\cdot\mathbf{k}}
e^{-iy(rk_{-1}+(1-t-r)k_2+tk_{-2})} \prod_{j=1}^2 \widehat f_j(k_j)\overline{\widehat f_{-j}(k_{-j})}
 dy dt dr d\boldsymbol{\eta}\nonumber\\
&= \frac{\mu}{4(2\pi)^6} \int_{\mathbb{R}^4} \int_0^1\int_0^1 \int_{\mathbb{R}} \int_{\mathbb{T}}\check{B}(\boldsymbol{\eta})K(y)\nonumber\\
&\qquad \cdot (\tau_{h\eta_1}f_1)(\theta) (\tau_{h\eta_2}f_2)\bigl(\theta+(t+r-1)y\bigr) \overline{(\tau_{-h\eta_{-1}}f_{-1})(\theta+ry)}\overline{(\tau_{-h\eta_{-2}}f_{-2})(\theta+ty)} d\theta dy dt dr d\boldsymbol{\eta}, 
\end{align*}
where $(\tau_{a} f)(\theta) := f(\theta - a)$.

For notational convenience, let
\begin{equation*}
\begin{split}
g_j &= \tau_{h\eta_j}f_j,\qquad
g_{-j} = \tau_{-h\eta_{-j}}f_{-j},\qquad j=1,2,\\
c_1 &= 0,\qquad c_2 = t+r-1,\qquad c_{-1} = r,\qquad c_{-2} = t.   
\end{split}
\end{equation*}
Suppose $s \leq \frac{1}{2}$. Let $J_n = [n,n+1]$ for $n \in \mathbb{Z}$. Then for $i \neq j$,
\begin{equation*}
0 < |c_i-c_j||J_n| < 2\pi, \qquad \text{ a.e. } (t,r) \in [0,1] \times [0,1].
\end{equation*}
By the Cauchy-Schwarz inequality, \Cref{lem:case-I-two-slope}, and \eqref{eq:case-I-Wiener-kernel},
\begin{equation*}
\begin{split}
\sum_{n \in \mathbb{Z}}\int_{\mathbb{T} \times J_n} |K(y)| \prod_{j \in \Sigma_2}|g_j(\theta+c_j y)| d\theta dy &\leq \sum_{n \in \mathbb{Z} } \sup_{y \in J_n}|K(y)|
\prod_{j=1}^2 \left( \int_{\mathbb{T}\times J_n}
|g_j(\theta+c_jy)|^2 |g_{-j}(\theta+c_{-j}y)|^2 d\theta dy \right)^{1/2}\\
&\lesssim \bigl(r(1-r)\bigr)^{-\frac{1}{2}}
\prod_{j \in \Sigma_2}\| f_j \|_{L^2}.
\end{split}
\end{equation*}
Observing that $\bigl(r(1-r)\bigr)^{-\frac{1}{2}} \in L^1_{t,r} ([0,1]^2)$ and $\check{B} \in L^1(\mathbb{R}^4)$, the uniform $L^2$ estimate \eqref{eq:case-I-quartic-L2} follows.

For the mixed estimate, let $f_1=F$ and $A_j=|g_j(\theta+c_jy)|$ for $j \in \Sigma_2$. 
By the H\"{o}lder inequality applied to
\begin{equation*}
A_1A_2A_{-1}A_{-2} = (A_1^{4/3}A_2^2)^{1/4}
(A_1^{4/3}A_{-1}^2)^{1/4}
(A_1^{4/3}A_{-2}^4)^{1/4}
(A_2^2A_{-1}^2)^{1/4},
\end{equation*}
we have
\begin{equation*}
\begin{split}
&\int_{\mathbb{T} \times J_n}
\prod_{j \in \Sigma_2}|g_j(\theta+c_jy)| d\theta dy\\
&\leq
\left(
\int_{\mathbb{T} \times J_n} A_1^{\frac{4}{3}}A_2^2 d\theta dy
\right)^{1/4}
\left(
\int_{\mathbb{T}\times J_n} A_1^{\frac{4}{3}}A_{-1}^2 d\theta dy
\right)^{1/4} 
\left(
\int_{\mathbb{T}\times J_n} A_1^{\frac{4}{3}}A_{-2}^4 d\theta dy
\right)^{1/4}
\left(
\int_{\mathbb{T}\times J_n} A_2^2A_{-1}^2 d\theta dy
\right)^{1/4}\\
&\lesssim \left(rt(1-t)|1-t-r|\right)^{-\frac{1}{4}}
\| F \|_{L^{\frac{4}{3}}}
\| f_2 \|_{L^2}
\| f_{-1} \|_{L^2}
\| f_{-2} \|_{L^4},
\end{split}
\end{equation*}
where the last inequality is by \Cref{lem:case-I-two-slope}. Then, \eqref{eq:case-I-quartic-mixed} follows as in the uniform $L^2$ estimate. Permuting the slots in the same factorization proves the claim in generality.

To show $\eqref{eq:case-I-quartic-general}$, first apply the H\"older inequality
\begin{equation*}
\int_{\mathbb{T}}
\prod_{j \in \Sigma_2}|g_j(\theta+c_jy)| d\theta
\leq \prod_{j \in \Sigma_2} \| f_j \|_{L^{p_j}},
\end{equation*}
followed by the remaining integrals using the compactness of $[0,1]^{2}$, $K \in L^1(\mathbb{R})$, and
$\check{B} \in L^1(\mathbb{R}^4)$.
\end{proof}

\begin{corollary}
\label{cor:case-I-sextic-coercivity}
Assume the hypotheses in \Cref{prop:case-I-quartic}. For every $f:\mathbb{T}\rightarrow\mathbb{C}$ with $\supp\widehat{f}\subseteq E_h$, we have
\begin{align}
|\Lambda_{6}(S_{4}m_q;\widehat{f})|
&\lesssim \|f\|_{L^2}^{2}\|f\|_{L^4}^{4},
&&0<s\leq\frac{1}{2},
\label{eq:case-I-sextic-lower}\\
|\Lambda_{6}(S_{4}m_q;\widehat{f})|
&\lesssim \|f\|_{L^6}^{6},
&&\frac{1}{2}<s<1.
\label{eq:case-I-sextic-upper}
\end{align}
For $0<s\leq\frac{1}{2}$, there exist $C_1, C_2>0$ depending on $s,\Lambda$ such that $\widetilde{\mathcal{E}}(\widehat{f}) := \mathcal{E}(\widehat{f})
+C_1\|f\|_{L^2}^{4}$ satisfies
\begin{equation*}
Q(\widehat{f})
\leq \widetilde{\mathcal{E}}(\widehat{f})
\leq Q(\widehat{f}) + C_2 \|f\|_{L^2}^{4}.
\end{equation*}
\end{corollary}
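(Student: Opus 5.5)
The plan is to expand $\Lambda_6(S_4m_q;\widehat f)$ through the definition of the insertion $S_4$. This writes it as a finite signed sum of quartic forms. In each of these forms one slot carries the cubic product $P_h(|f|^2 f)$, or its conjugate analogue, and the remaining three slots carry $f$.

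Concretely, for the $j$-th term of $S_4m_q$ with $j\in\{1,2\}$, set
\begin{equation*}
\kappa=k_j+k_3-k_{-3}.
\end{equation*}
Summing over $(k_j,k_3,k_{-3})$ with $\kappa$ fixed produces $\widehat{F}(\kappa)$, where $F=P_h(f\overline{f}f)$. The factor $\chi(\kappa)$ enforces the band restriction. Because $\Lambda<\frac12$, \Cref{rem:band-properties} guarantees there is no aliasing, so the modular and integer constraints agree. The negative-index terms are handled the same way with $\overline{F}$. This gives
\begin{equation*}
\Lambda_6(S_4m_q;\widehat f)=\sum \pm\,\Lambda_4(m_q;\ldots,\widehat F,\ldots),
\end{equation*}
with $F$ occupying one of the four slots. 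Since $m_q$ is real, conjugation symmetry lets us treat every placement alike.

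For $0<s\le\frac12$, apply the mixed estimate \eqref{eq:case-I-quartic-mixed}, with the permutation chosen so that $F$ sits in the $L^{4/3}$ slot, two copies of $f$ sit in $L^2$ slots, and one copy sits in the $L^4$ slot. This gives
\begin{equation*}
|\Lambda_4|\lesssim\|F\|_{L^{4/3}}\|f\|_{L^2}^2\|f\|_{L^4}.
\end{equation*}
It remains to bound $\|F\|_{L^{4/3}}$. The projector $P_h$ is a sharp Fourier cutoff to an interval, so it is a Dirichlet-type projection. By Riesz's theorem it is bounded on $L^{4/3}(\mathbb{T})$ uniformly in $h$. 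Hence
\begin{equation*}
\|F\|_{L^{4/3}}\lesssim\||f|^3\|_{L^{4/3}}=\|f\|_{L^4}^3,
\end{equation*}
which yields \eqref{eq:case-I-sextic-lower}.

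For $\frac12<s<1$, use \eqref{eq:case-I-quartic-general} with $p=2$ on the $F$ slot and $p=6$ on each of the three $f$ slots; the reciprocals sum to $\frac12+\frac36=1$. The uniform $L^2$ boundedness of $P_h$ gives $\|F\|_{L^2}\le\|f\|_{L^6}^3$, and hence the bound $\|f\|_{L^6}^6$ in \eqref{eq:case-I-sextic-upper}. No Riesz projection bound is needed in this case.

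For the coercivity statement, recall that $\mathcal{E}=Q+\Lambda_4(m_q;\widehat f)$. By \eqref{eq:case-I-quartic-L2},
\begin{equation*}
|\Lambda_4(m_q;\widehat f)|\le C_0\|f\|_{L^2}^4.
\end{equation*}
Choose $C_1=C_0$ and $C_2=2C_0$. Then $\widetilde{\mathcal E}-Q=\Lambda_4+C_0\|f\|_{L^2}^4$ lies in $[0,2C_0\|f\|_{L^2}^4]$, which gives both inequalities.

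The main obstacle is the bookkeeping in the first step, namely checking that $S_4m_q$ collapses exactly into quartic forms in $F$. This requires two things. First, the insertion preserves the momentum constraint $\mathcal V_2$. Second, the filter $\chi$ on the inserted frequency is precisely what turns the inner sum into $\widehat{P_h(|f|^2f)}$. I also need the Riesz $L^{4/3}$ bound for $P_h$ to hold uniformly in $h$; it does, because the operator norm of an interval projection is independent of the interval.
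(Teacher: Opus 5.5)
Your proposal is correct and follows the paper's proof essentially step for step. The paper makes the same four moves:
\begin{itemize}
\item it expands $\Lambda_6(S_4m_q;\widehat f)$ through the insertion into four quartic forms, with signs $+,+,-,-$ and $F=P_h(|f|^2f)$ in one slot;
\item it applies the mixed estimate with exponents $(\frac43,2,2,4)$ when $0<s\le\frac12$, and the general estimate with $(2,6,6,6)$ when $\frac12<s<1$;
\item it uses the uniform $L^p$-boundedness of $P_h$ to bound $\|F\|_{L^{4/3}}$ and $\|F\|_{L^2}$;
\item it gets coercivity from \eqref{eq:case-I-quartic-L2}.
\end{itemize}
Your no-aliasing check via $\Lambda<\frac12$ and the explicit constants $C_1=C_0$, $C_2=2C_0$ only spell out details the paper leaves implicit.
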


\begin{proof}
Let $F=P_h(|f|^2f)$. By definitions of $S_4$ and $\Lambda_{2\ell}$, we have
\begin{equation}\label{eq:case-I-sextic-insertion}
\Lambda_{6}(S_{4}m_q;\widehat{f})
=\Lambda_{4}(m_q;\widehat{F},\widehat{f},
                  \widehat{f},\widehat{f})
+\Lambda_{4}(m_q;\widehat{f},\widehat{F},
                  \widehat{f},\widehat{f})
-\Lambda_{4}(m_q;\widehat{f},\widehat{f},
                   \widehat{F},\widehat{f})
-\Lambda_{4}(m_q;\widehat{f},\widehat{f},
                   \widehat{f},\widehat{F}).
\end{equation}
For $0<s\leq\frac{1}{2}$, apply
\eqref{eq:case-I-quartic-mixed} to each term in
\eqref{eq:case-I-sextic-insertion}, assigning the exponent $\frac{4}{3}$
to $F$ and the exponents $2,2,4$ to the remaining terms. For $\frac{1}{2} < s < 1$, apply
\eqref{eq:case-I-quartic-general}, assigning the exponent $2$
to $F$ and the exponent $6$ to the rest. Our claim follows from the uniform boundedness of $P_h$ on $L^p(\mathbb{T})$ for $p \in (1,\infty)$, and
\begin{equation*}
\| F \|_{L^{\frac{4}{3}}} \lesssim \|f\|_{L^4}^{3},\qquad \|F\|_{L^2}\leq\|f\|_{L^6}^{3}.
\end{equation*}
The coercivity estimate follows from \eqref{eq:case-I-quartic-L2}.
\end{proof}

The magnetic potential perturbs the quartic cancellation by $O(h|a|_\infty)$ .

\begin{lemma}\label{lem:case-I-weighted-magnetic}
Let $0<s<1$ and let $m_q$ be as in \Cref{lem:case-I-divisor}. For every $f:\mathbb{T}\rightarrow\mathbb{C}$ with $\supp\widehat{f}\subseteq E_h$, we have
\begin{equation}\label{eq:case-I-weighted-magnetic}
 \left|\Lambda_4\bigl(m_q D_2( \omega_{h,a(t)} -\omega_{h,0});\widehat{f}\bigr)\right|
 \lesssim_{s,\Lambda} h|a|_{\infty} \|\langle \nabla \rangle^s f\|_{L^4}^2 \|f\|_{L^4}^2,\qquad \text{ a.e. } t \in \mathbb{R}.
\end{equation}
\end{lemma}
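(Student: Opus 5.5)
The plan is to compute $D_2(\omega_{h,a(t)}-\omega_{h,0})$ explicitly and split it into two pieces. Each piece will be a small scalar factor of size $O(h|a|_\infty)$ times $D_2q$ times a harmless smooth symbol of $h\mathbf{k}$, and the resulting quartic forms are then estimated by a fractional Leibniz argument. The starting point is
\begin{equation*}
\omega_{h,a}(k)=\frac{2}{h^2}\bigl(1-\cos(h(k-a))\bigr),
\qquad
\omega_{h,a}(k)-\omega_{h,0}(k)=\frac{2}{h^2}\Bigl((1-\cos(ha))\cos(hk)-\sin(ha)\sin(hk)\Bigr).
\end{equation*}
Since $D_2\omega_{h,0}=-\frac{2}{h^2}D_2\cos(h\cdot)$, this gives
\begin{equation*}
D_2(\omega_{h,a}-\omega_{h,0})=-(1-\cos(ha))\,D_2\omega_{h,0}-\frac{2\sin(ha)}{h^2}\,D_2\sin(h\cdot).
\end{equation*}

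For the first piece, the homological equation \eqref{eq:abstract-homological-equation} gives $m_qD_2\omega_{h,0}=\frac{\mu}{2}D_2q$ on $\mathcal V_2\cap E_h^4$. For the second piece, I apply the double Fundamental Theorem of Calculus representation from the proof of \Cref{lem:case-I-divisor} to both $\sin(h\cdot)$ and $\omega_{h,0}$. Writing $\xi=k_2+t(k_1-k_{-1})+r(k_1-k_{-2})$, this gives
\begin{equation*}
D_2\sin(h\cdot)=-h^2(k_1-k_{-1})(k_1-k_{-2})\iint\sin(h\xi)\,dt\,dr,
\qquad
D_2\omega_{h,0}=2(k_1-k_{-1})(k_1-k_{-2})\iint\cos(h\xi)\,dt\,dr.
\end{equation*}
Combining these with $m_qD_2\omega_{h,0}=\frac{\mu}{2}D_2q$ yields
\begin{equation*}
m_q\,D_2(\omega_{h,a}-\omega_{h,0})=-\frac{\mu}{2}\,D_2q\,\Bigl((1-\cos(ha))-\sin(ha)\,T(h\mathbf{k})\Bigr),
\end{equation*}
where
\begin{equation*}
T(\boldsymbol\theta)=\frac{\iint\sin(\xi(\boldsymbol\theta))\,dt\,dr}{\iint\cos(\xi(\boldsymbol\theta))\,dt\,dr}
\end{equation*}
and $\xi(\boldsymbol\theta)$ is $\xi$ with $\mathbf{k}$ replaced by $\boldsymbol\theta$. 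By the convex-combination identity \eqref{convex}, the denominator is at least $\cos(\Lambda\pi)$ on $[-\Lambda\pi,\Lambda\pi]^4$. As for $B$ in the proof of \Cref{prop:case-I-quartic}, $T$ therefore agrees there with some $\widetilde T\in C_c^\infty(\mathbb R^4)$ depending only on $\Lambda$, and $\check{\widetilde T}\in L^1(\mathbb R^4)$. Both prefactors obey $|1-\cos(ha)|,|\sin(ha)|\le h|a|_\infty$.

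It thus suffices to prove, uniformly in $\boldsymbol\eta\in\mathbb R^4$,
\begin{equation*}
\bigl|\Lambda_4(D_2q;\widehat{g_1},\widehat{g_2},\widehat{g_{-1}},\widehat{g_{-2}})\bigr|\lesssim_s\max_{j}\|\langle\nabla\rangle^sg_j\|_{L^4}\,\max_j\|\langle\nabla\rangle^sg_j\|_{L^4}\,\max_j\|g_j\|_{L^4}^2,
\end{equation*}
more precisely with two factors carrying $\langle\nabla\rangle^s$ and two without. Here $g_{\pm j}$ are the translates $\tau_{\pm h\eta_{\pm j}}f$ produced by expanding $\widetilde T(h\mathbf k)=(2\pi)^{-4}\int\check{\widetilde T}(\boldsymbol\eta)e^{-ih\boldsymbol\eta\cdot\mathbf k}\,d\boldsymbol\eta$, exactly as in the proof of \Cref{prop:case-I-quartic}. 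Translations commute with $\langle\nabla\rangle^s$ and preserve $L^p$ norms, so integrating against $|\check{\widetilde T}|$ closes the argument; the term $1-\cos(ha)$ is the case $\boldsymbol\eta=0$.

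For the reduced bound, I split $D_2q=q(k_1)+q(k_2)-q(k_{-1})-q(k_{-2})$ and treat each summand separately. By Parseval on the momentum-constrained sum, the term with $q(k_1)$ is, up to a constant,
\begin{equation*}
\int_{\mathbb T}\bigl(\langle\nabla\rangle^{2s}g_1\bigr)\,g_2\,\overline{g_{-1}}\,\overline{g_{-2}}\,dx
=\int_{\mathbb T}\bigl(\langle\nabla\rangle^{s}g_1\bigr)\,\langle\nabla\rangle^{s}\bigl(g_2\overline{g_{-1}}\,\overline{g_{-2}}\bigr)\,dx.
\end{equation*}
The periodic fractional Leibniz rule \cite[Proposition 1]{benyi2025fractional} then gives
\begin{equation*}
\bigl\|\langle\nabla\rangle^{s}\bigl(g_2\overline{g_{-1}}\,\overline{g_{-2}}\bigr)\bigr\|_{L^{\frac43}}\lesssim\sum_{j}\|\langle\nabla\rangle^sg_j\|_{L^4}\prod_{\ell\neq j}\|g_\ell\|_{L^4},
\end{equation*}
and H\"older with exponents $(4,\frac43)$ finishes this term; the other three summands are identical. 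When all $g_j$ are translates of $f$, this gives $\|\langle\nabla\rangle^sf\|_{L^4}^2\|f\|_{L^4}^2$, and multiplying by $h|a|_\infty$ yields \eqref{eq:case-I-weighted-magnetic}.

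The main point needing care is the reduction step. The identity $m_qD_2\omega_{h,0}=\frac{\mu}{2}D_2q$ holds only on $E_h^4$, and $\supp\widehat f\subseteq E_h$ is what makes the sum live there. One also has to check that the Fourier-side definition of $T$, with the convex-combination bound on the denominator, really gives a symbol with integrable inverse Fourier transform uniformly in $h$. If the Leibniz rule in $L^{4/3}$ proves awkward, a fallback is to symmetrize $D_2q$ as a commutator and use the integral representation of \Cref{prop:case-I-quartic}. Those bounds, however, only yield $L^2$-type control, so the Leibniz route is the one I would try first.
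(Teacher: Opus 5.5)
Your proposal is correct and follows the paper's proof. In both, the homological equation turns the symbol into $\frac{\mu}{2}D_2q\bigl(\cos(ha)-1+\sin(ha)\,T(h\mathbf k)\bigr)$, the smooth factor is expanded by Fourier inversion into translations, and each $\Lambda_4(D_2q;\cdot)$ is handled by Parseval, splitting $\langle\nabla\rangle^{2s}=\langle\nabla\rangle^s\langle\nabla\rangle^s$ across the pairing, the fractional Leibniz rule and H\"older.

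The only difference is cosmetic. On the constraint set your ratio $T(h\mathbf k)$ is exactly $\tan\bigl(\tfrac h2(k_1+k_2)\bigr)$, which the paper reads off directly from the factorization \eqref{eq:case-I-factorization}. That lets the paper use a one-variable Fourier series $\sum_n\widehat\psi(n)$ with translations of only two slots, instead of your four-variable $C_c^\infty$ extension. Note also that your denominator bound $\iint\cos\xi\,dt\,dr\geq\cos(\Lambda\pi)$ holds on $\mathcal V_2^{\mathbb R}\cap[-\Lambda\pi,\Lambda\pi]^4$, not on the full cube, though the constraint set is all you need.
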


\begin{proof}
Let $a = a(t)$ and $\theta_0 := \frac{h}{2}(k_1+k_2)$. Since $\omega_{h,a}(k) = \omega_{h,0}(k-a)$, \eqref{eq:case-I-factorization} and the homological equation \eqref{eq:abstract-homological-equation} yield
\begin{equation}\label{eq:case-I-magnetic-symbol-simple}
m_q D_2(\omega_{h,a} -\omega_{h,0}) = \frac{\mu}{2} D_2 q\bigl(\cos(ha)-1+\sin(ha)\tan\theta_0\bigr).
\end{equation}
Let $\psi \in C^{\infty}(\mathbb{T})$ such that $\psi(\theta) = \tan \theta$ on $[-\Lambda \pi, \Lambda \pi]$. By Fourier inversion and \eqref{eq:case-I-magnetic-symbol-simple}, we have
\begin{equation*}
\begin{split}
&\Lambda_4\bigl(m_q D_2(\omega_{h,a} -\omega_{h,0});\widehat{f}\bigr)\\
&=\frac{\mu}{2}(\cos(ha)-1)\Lambda_4(D_2q;\widehat{f}) + \frac{\mu}{2}\sin(ha)\sum_{n\in\mathbb{Z}}\widehat{\psi}(n) \Lambda_4\bigl(D_2q;
 \widehat{\tau_{-nh/2}f},\widehat{\tau_{-nh/2}f},\widehat{f},\widehat{f}\bigr).
\end{split}
\end{equation*}
To estimate the first term, it suffices to estimate $\Lambda_4$ with the multiplier $q(k_1)$, which corresponds to the differential operator $\langle \nabla \rangle^{2s}$. Observe that
\begin{equation*}
\begin{split}
|\cos (ha) - 1| \left|\Lambda_4 (q(k_1);\widehat{f})\right| \lesssim h|a|_{\infty} \left|\int_{\mathbb{T}}(\langle\nabla\rangle^{2s}f)\overline{|f|^2 f} dx \right|&= h |a|_{\infty} \left|\int_{\mathbb{T}}(\langle \nabla \rangle^s f) \overline{\langle\nabla\rangle^s(|f|^2 f)}dx\right|\\ &\lesssim_s h |a|_{\infty} \|\langle \nabla \rangle^s f\|_{L^4}^2 \|f\|_{L^4}^2,
\end{split}   
\end{equation*}
where the last inequality is by the fractional Leibniz rule \cite[Proposition 1]{benyi2025fractional}. The second estimation proceeds similarly since translations commute with $\langle \nabla \rangle^{s}$ and preserve $L^4$-norms. More precisely,
\begin{equation*}
\left|\sin(ha)
\sum_{n\in\mathbb{Z}}\widehat{\psi}(n)
\Lambda_4\bigl(D_2q;
 \widehat{\tau_{-nh/2}f},\widehat{\tau_{-nh/2}f},\widehat{f},\widehat{f}\bigr)\right|\lesssim_{s,\Lambda,\psi} h|a|_\infty
\|\langle \nabla \rangle^s f\|_{L^4}^2\|f\|_{L^4}^2,
\end{equation*}
since $\{\widehat{\psi}(n)\}$ is absolutely summable. Combining the two estimates proves
\eqref{eq:case-I-weighted-magnetic}.
\end{proof}

\subsection{Higher-order modified energies}

To solve the homological equation for $s>1$, we relax the regularity in \cite[Lemma 3.2]{Bernier} from $C^{\infty}$ to $C^2$.

\begin{lemma}\label{lem:fractional-bernier-symbol}
Let $s>1$ and recall $f_s,q$ from \eqref{bernier_weight}. Then,
$f_s\in C^2(\mathbb{T})$ and satisfies
\begin{align}
f_s(\theta) &\simeq_s |\theta|^{2s},
&& |f_s^{\prime\prime}(\theta)|\lesssim_s|\theta|^{2s-2},
\label{bernier_bound}\\
f_s(-\theta)&=f_s(\theta),
&& f_s(\pi-\theta)=1-f_s(\theta).
\label{bernier_symmetry}
\end{align}
For all $\mathbf{k}\in\mathcal{V}_2$,
\begin{equation*}
 |D_2 q(\mathbf{k})|
 \lesssim_s
 |D_2\omega_{h,0}(\mathbf{k})|
 \sum_{j\in\Sigma_2}|k_j|^{2s-2}.
\end{equation*}
\end{lemma}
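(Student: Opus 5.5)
The proof splits into three parts: the pointwise properties of $f_s$, the symmetries, and then the divisor bound on $\mathcal{V}_2$, which is the real content.

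\emph{Regularity and pointwise bounds.} Write $\sigma=|\sin(\theta/2)|^{2s}$ and $\gamma=|\cos(\theta/2)|^{2s}$. The denominator $\sigma+\gamma$ is bounded below by a constant $c_s>0$, since $\max(\sigma,\gamma)\geq 2^{-s}$. The map $x\mapsto |x|^{2s}$ is $C^2$ on $\mathbb{R}$ because $2s>2$. Hence $\sigma,\gamma\in C^2(\mathbb{T})$, and $f_s=\sigma/(\sigma+\gamma)$ is $C^2$ by the quotient rule.

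Near $\theta=0$, $\gamma\simeq 1$ and $\sigma\simeq|\theta|^{2s}$, which gives $f_s\simeq|\theta|^{2s}$ on $\mathbb{T}=[-\pi,\pi)$. Away from $0$, both sides of this comparison are of order one.

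For the second derivative, differentiate $f_s$ twice. Each term contains either $\sigma''$, or $\sigma'$ times a bounded quantity, or $\sigma$ times a bounded quantity. Near $0$ these are $O(|\theta|^{2s-2})$, $O(|\theta|^{2s-1})$ and $O(|\theta|^{2s})$ respectively, all controlled by $|\theta|^{2s-2}$ on $\mathbb{T}$. Away from $0$, $|f_s''|$ is bounded and $|\theta|^{2s-2}\gtrsim 1$, so \eqref{bernier_bound} holds everywhere.

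\emph{Symmetries.} Evenness of $f_s$ is immediate. The substitution $\theta\mapsto\pi-\theta$ swaps $|\sin(\theta/2)|$ and $|\cos(\theta/2)|$, which gives $f_s(\pi-\theta)=1-f_s(\theta)$ and proves \eqref{bernier_symmetry}.

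\emph{Divisor bound.} I rescale to $\theta_j=hk_j$, so that $D_2q=h^{-2s}D_2f_s(\boldsymbol\theta)$. I use the factorization \eqref{eq:case-I-factorization}, $D_2\omega_{h,0}=\frac{8}{h^2}\cos(\frac{\theta_{-1}+\theta_{-2}}2)\sin(\frac{\theta_1-\theta_{-1}}2)\sin(\frac{\theta_1-\theta_{-2}}2)$. The target inequality then reduces, after multiplying by $h^{2s}$, to
\begin{equation*}
|D_2f_s(\boldsymbol\theta)|\lesssim\Bigl|\cos\tfrac{\theta_{-1}+\theta_{-2}}2\sin\tfrac{\theta_1-\theta_{-1}}2\sin\tfrac{\theta_1-\theta_{-2}}2\Bigr|\sum_{j\in\Sigma_2}|\theta_j|^{2s-2},
\end{equation*}
for $\theta_j\in[-\pi,\pi)$ satisfying the momentum constraint modulo $2\pi$.

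I split into two regimes.

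\textbf{Regime 1:} $|\cos\frac{\theta_{-1}+\theta_{-2}}2|\geq c$ for a small constant $c$. Here I apply the double fundamental theorem of calculus formula from the proof of \Cref{lem:case-I-divisor}:
\begin{equation*}
D_2f=(\theta_1-\theta_{-1})(\theta_1-\theta_{-2})\iint f''(\text{convex combination}).
\end{equation*}
The intermediate point is a convex combination of the four frequencies, so its modulus is at most $\max_j|\theta_j|$. Consequently the $f''$ factor is bounded by $\sum_j|\theta_j|^{2s-2}$, using $2s-2>0$. Choosing the integer representatives of $\mathbf{k}$ so that the momentum constraint holds exactly, the differences $\theta_1-\theta_{-1}$ and $\theta_1-\theta_{-2}$ are comparable to the corresponding sines, provided these differences stay at most $2\pi-c$ in modulus. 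If one difference is close to $\pm2\pi$, I expect that to force $\theta_{-1}+\theta_{-2}$ near $\pm\pi$, contradicting the regime assumption. Checking this is routine case-work.

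\textbf{Regime 2:} $\theta_{-1}+\theta_{-2}$ is near $\pm\pi$, i.e.\ near the discrete resonance $\mathcal{Z}_d$. Here I use the reflection symmetry $f_s(\pi-\theta)=1-f_s(\theta)$ together with evenness. Replacing $(\theta_2,\theta_{-2})$ by $(\pi-\theta_2,\pi-\theta_{-2})$, or $(-\pi-\theta_2,-\pi-\theta_{-2})$ according to sign, turns $D_2f_s$ into $\pm D_2f_s$ of a reflected configuration. This reflection sends the pair $(\theta_{-1},\theta_{-2})$ to a configuration whose relevant sum lies near $0$. It preserves the momentum constraint modulo $2\pi$ and exchanges the roles of the cosine and sine factors. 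Applying Regime 1 to the reflected configuration then yields the factor $|\cos\frac{\theta_{-1}+\theta_{-2}}2|$ through the reflected difference sine.

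\emph{Main obstacle.} The step I expect to be hardest is this exchange of factors in Regime 2, specifically controlling the weight $|\theta_j|^{2s-2}$ after reflection. Reflected frequencies satisfy $|\pi-\theta|\lesssim 1$, and I will need $\sum_j|\theta_j|^{2s-2}\gtrsim 1$ in this regime. This should hold because $\theta_{-1}+\theta_{-2}\approx\pm\pi$ forces some $|\theta_{-j}|\geq\pi/2-c$. The remaining work is bookkeeping of the sign choices and the wrap-around cases.
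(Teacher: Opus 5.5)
\textbf{There is a genuine gap in the divisor bound.} The regularity, pointwise bounds and symmetries of $f_s$ are handled correctly. The divisor bound, however, fails at the triple resonance $\theta_1=\theta_2=\theta_{-1}=\theta_{-2}=\pm\frac{\pi}{2}$.

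Set $x=\frac{\theta_1-\theta_{-1}}{2}$, $y=\frac{\theta_1-\theta_{-2}}{2}$, $z=\frac{\theta_{-1}+\theta_{-2}}{2}$. A reflection that preserves both momentum and $D_2f_s$ must swap slots, e.g.\ $(\theta_1,\theta_2,\theta_{-1},\theta_{-2})\mapsto(\theta_1,\pi-\theta_{-2},\theta_{-1},\pi-\theta_2)$. The in-place replacement you wrote preserves neither. This reflection acts by $(x,y,z)\mapsto(x,\,z-\frac{\pi}{2},\,y+\frac{\pi}{2})$. The analogous reflection pairing $\theta_1$ with $\theta_{-2}$ instead swaps $|\cos z|$ with $|\sin x|$.

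So reflection only exchanges the cosine factor with one sine factor. The reflected sum is not near $0$ in general, and Regime 1 applies to the reflected configuration only if the exchanged sine is bounded below. Near the triple resonance all three factors are small, and both reflections fix these points modulo $2\pi$. Your two regimes then give at best $|D_2f_s|\lesssim\min\{|\sin x\sin y|,|\sin x\cos z|,|\sin y\cos z|\}$. That is quadratic, whereas the target $|\cos z\sin x\sin y|$ is cubic. For $hk_j=\frac{\pi}{2}+O(h)$ with $x,y,z-\frac{\pi}{2}\simeq h$, this loses a factor $h^{-1}$ in $|D_2q|\lesssim|D_2\omega_{h,0}|\sum_j|k_j|^{2s-2}$. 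The weight issue you flag is harmless, since $\sum_j|\theta_j|^{2s-2}\gtrsim1$ there; the real obstacle is the order of vanishing.

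\textbf{The missing idea is a third-order vanishing argument} that uses the smoothness of $f_s$ near $\pm\frac{\pi}{2}$. The paper isolates exactly this case. It first reduces the regime $\max_j|k_j|>\frac{\pi}{4h}$ to a local estimate $|F|\lesssim|\cos z\sin x\sin y|$ on a compact set. At the triple zero it uses $F\in C^3$ and iterates the FTC three times.

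Concretely, the double FTC formula reads $D_2f_s=xy\,G(x,y,z)$ with $G=\iint_{[-1,1]^2}f_s''(z+ux+vy)\,du\,dv$. Evenness and $f_s(\pi-\theta)=1-f_s(\theta)$ give $f_s''(\pm\frac{\pi}{2}+w)=-f_s''(\pm\frac{\pi}{2}-w)$, hence $G(x,y,\pm\frac{\pi}{2})=0$. One more FTC in $z$ then gives $|G|\lesssim|\cos z|$ near the triple point.

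\textbf{Smaller error in Regime 1.} Keep $\theta_1,\theta_{-1},\theta_{-2}\in[-\pi,\pi)$ and adjust $\theta_2$. A difference near $\pm2\pi$ then does not force $|\cos z|$ to be small. Take $\theta_1=\pi-\epsilon$, $\theta_{-1}=-\pi+\epsilon$, $\theta_{-2}=\frac{\pi}{2}$: here $|\cos z|\approx2^{-1/2}$ and $D_2f_s=O(\epsilon)$, but your FTC bound is of order one. The fix is to choose lifts of $\theta_{-1},\theta_{-2}$ within $\pi$ of $\theta_1$ and define $\theta_2$ by exact momentum. The product $|\cos z\sin x\sin y|$ is invariant under $2\pi$-shifts. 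Whenever a lift differs from its representative in $[-\pi,\pi)$, one has $\max_j|\theta_j|\gtrsim1$, so the weight is still controlled.
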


\begin{proof}
That $f_s\in C^2$ follows from $|\sin(\frac{\cdot}{2})|^{2s},\ |\cos(\frac{\cdot}{2})|^{2s} \in C^2$ and $|\sin(\frac{\theta}{2})|^{2s} + |\cos(\frac{\theta}{2})|^{2s} > 0$ for all $\theta \in \mathbb{T}$. Near $\theta = 0$, direct differentiation yields \eqref{bernier_bound} locally. Since $f_s, |\theta| > 0$ on $[-\pi,\pi]\setminus\{0\}$, compactness away from zero shows \eqref{bernier_bound}. By direct computation, \eqref{bernier_symmetry} follows.

Assume $|k_j|\leq \frac{\pi}{4h}$ for all $j\in\Sigma_2$.
We apply the representation \eqref{eq:case-I-divisor-extension} to the present weight $q$, justified by $f_s \in C^2$. Then,
\begin{equation*}
    |m_q(\mathbf{k})| \lesssim \sup_{t,r \in [0,1]}\left| 
q^{\prime\prime}\big(k_2+t(k_1-k_{-1})+r(k_1-k_{-2})\big) \right| \lesssim_s \sum_{j\in\Sigma_2}|k_j|^{2s-2},
\end{equation*}
where the first inequality is by
\begin{equation*}
\left|\int_0^1 \int_0^1
\cos \big(h( k_2 + t (k_1-k_{-1}) + r (k_1-k_{-2}))\big) dt dr \right| \geq 2^{-\frac{1}{2}},
\end{equation*}
and the last inequality is by \eqref{bernier_bound}.

Alternatively, assume $\max_{j\in\Sigma_2}|k_j|>\frac{\pi}{4h}$. Let $\theta_j=hk_j$ and
\begin{equation*}
 x=\frac{\theta_1-\theta_{-1}}2,\qquad
 y=\frac{\theta_1-\theta_{-2}}2,\qquad
 z=\frac{\theta_{-1}+\theta_{-2}}2,
\end{equation*}
as real coordinates. Then,
\begin{equation*}
 \theta_1=z+x+y,\qquad
 \theta_{-1}=z-x+y,\qquad
 \theta_{-2}=z+x-y,
\end{equation*}
while the momentum constraint gives $\theta_2=z-x-y$ on $\mathbb{T}$.
By periodicity,
\begin{equation*}
\begin{split}
F(x,y,z)&:=h^{2s}D_2q(\mathbf{k})\\
&=f_s(z+x+y)+f_s(z-x-y)-f_s(z-x+y)-f_s(z+x-y),    
\end{split}
\end{equation*}
and \eqref{eq:case-I-factorization}
becomes $|D_2\omega_{h,0}(\mathbf{k})| =\frac{8}{h^2}|\cos (z) \sin (x) \sin (y)|$. 

Since $x,y\in[-\pi,\pi]$, if $\sin x=0$, then
$x\in\{0,\pm \pi\}$.
Then,
\begin{equation*}
    f_s(z+x+y)=f_s(z-x+y),\qquad
f_s(z-x-y)=f_s(z+x-y),
\end{equation*}
and hence $F(x,y,z)=0$. The case $\sin y=0$ follows similarly. If $\cos z=0$, then $z=\pm \frac{\pi}{2}$. By
\eqref{bernier_symmetry} and evenness,
\begin{equation*}
f_s(z+x+y)+f_s(z-x-y)=f_s(z-x+y)+f_s(z+x-y)=1,
\end{equation*}
and therefore $F(x,y,z)=0$.

Let $\mathcal K := \left\{(x,y,z)\in[-\pi,\pi]^3: |z+x+y|,\ |z-x+y|,\ |z+x-y|\leq\pi\right\}$. It remains to show
\begin{equation}\label{eq:fractional-phase-divisor}
|F(x,y,z)|
\lesssim_s
|\cos z\,\sin x\,\sin y|,
\qquad (x,y,z)\in\mathcal K.
\end{equation}
If none of the three factors on the right-hand side vanishes,
\eqref{eq:fractional-phase-divisor} holds locally by continuity.
If at most two factors vanish, then (at most twice) applications of the
Fundamental Theorem of Calculus (FTC), as in \cite[Lemma 3.2]{Bernier},
show that $F$ vanishes to at least the same order as the corresponding
zeros of $\sin x$, $\sin y$, or $\cos z$. Since these zeros are simple
and $f_s\in C^2(\mathbb T)$, the quotient in
\eqref{eq:fractional-phase-divisor} is locally bounded.

It remains to consider the case in which all three factors vanish,
which occurs if and only if
\begin{equation*}
\theta_1=\theta_2=\theta_{-1}=\theta_{-2}
=\pm\frac{\pi}{2}.
\end{equation*}
Since $f_s$ is smooth near $\pm\frac{\pi}{2}$, we have $F\in C^3$
near these points. Iterating the FTC argument from the proof of
\cite[Lemma 3.2]{Bernier} three times yields
\eqref{eq:fractional-phase-divisor} locally.

Patching these local estimates and using the compactness of
$\mathcal K$, \eqref{eq:fractional-phase-divisor} holds uniformly on $\mathcal K$. Consequently,
\begin{equation*}
 |D_2q(\mathbf{k})|
 \lesssim_s h^{2-2s}|D_2\omega_{h,0}(\mathbf{k})|
 \lesssim_s
 \sum_{j\in\Sigma_2}|k_j|^{2s-2}
 |D_2\omega_{h,0}(\mathbf{k})|.
\end{equation*}
\end{proof}

\begin{lemma}\label{lem:uniform-cyclic-multilinear}
Let $\ell \geq 2$ and $r \geq 0$. Then,
\begin{equation*}
 \Lambda_{2\ell}\left(
 \sum_{j\in\Sigma_\ell}\langle k_j\rangle^{2r};
 |\widehat f|\right)
 \lesssim_{\ell,r}
 \|f\|_{H_h^r}^2
 \|f\|_{L^2_h}^{\ell-1}
 \|f\|_{H_h^1}^{\ell-1}.
\end{equation*}
\end{lemma}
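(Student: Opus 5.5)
By symmetry of the form under the relabeling of slots, it suffices to bound the single-weight form in which the weight sits on one slot, say $\langle k_1\rangle^{2r}$. The terms with the weight on $k_j$ or $k_{-j}$ for other $j$ are handled identically, since all slots carry $|\widehat f|$ or its conjugate and the form is real and nonnegative. Set $g=\mathcal{F}^{-1}|\widehat f|$, viewed as a function on $\mathbb{T}$ via the Shannon lift. The map $f\mapsto g$ preserves every $H^\sigma_h$ norm, because it acts only on the moduli of the Fourier coefficients.

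The one subtlety is that the momentum constraint in $\mathcal{V}_\ell$ is taken modulo $2M$. To handle this, I first bound the constraint indicator by a sum over the integer wrap-arounds $\sum_{j}(k_j-k_{-j})=2Mm$ with $|m|\leq \ell$. Each fixed $m$ contributes an integer-constrained multilinear sum of nonnegative terms. That sum equals an integral $\int_{\mathbb{T}} (\langle\nabla\rangle^{2r}g)\,g^{\ell-1}\,\overline{g}^{\ell}\,e^{-i2Mm x}\,dx$ up to constants, and its absolute value is bounded by the same nonnegative sum with the factor $e^{-i2Mmx}$ dropped. So it is enough to bound the positive Fourier-side sum directly.

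Now split the weight as $\langle k_1\rangle^{2r}\lesssim \langle k_1\rangle^{r}\langle k_1\rangle^{r}$, and use the constraint to distribute one factor $\langle k_1\rangle^{r}$. Since $k_1$ is determined by the other $2\ell-1$ frequencies modulo $2M$, and all $|k_j|\leq M$, we get $\langle k_1\rangle^{r}\lesssim_{\ell,r}\sum_{j\neq 1}\langle k_j\rangle^{r}$. This gives terms with exactly two $H^r$-weighted slots. The alternative is to keep $\langle k_1\rangle^{2r}$ on one slot and pair it by Cauchy–Schwarz. I prefer the following direct route. Freeze the slot $k_1$ and a partner slot $k_{-1}$, and write the remaining $2\ell-2$ slots as a convolution. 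The sum is then bounded by
\begin{equation*}
\sum_{k_1,k_{-1}}\langle k_1\rangle^{2r}|\widehat f(k_1)||\widehat f(k_{-1})|\,\bigl(|\widehat f|^{*(\ell-1)}*\overline{|\widehat f|}^{*(\ell-1)}\bigr)(k_{-1}-k_1).
\end{equation*}
Young's inequality gives $\ell^\infty$ control of the convolution of $2\ell-2$ factors by $\|\widehat f\|_{\ell^2}^2\|\widehat f\|_{\ell^1}^{2\ell-4}$, and on the lattice $\|\widehat f\|_{\ell^1}\lesssim\|f\|_{L^2_h}^{1/2}\|f\|_{H^1_h}^{1/2}$ (Gagliardo–Nirenberg on the Fourier side, split at $|k|\sim \|f\|_{H^1}/\|f\|_{L^2}$). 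However, the outer sum with the weight $\langle k_1\rangle^{2r}$ on a single slot is not square-summable in the right way.

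So the main obstacle is placing the weight so that exactly $\|f\|_{H^r_h}^2$ appears. I would first redistribute by $\langle k_1\rangle^{2r}\lesssim \langle k_1\rangle^r\sum_{j\neq1}\langle k_j\rangle^r$, so that each term carries $\langle k_1\rangle^r$ on one slot and $\langle k_j\rangle^r$ on another. Then I apply Cauchy–Schwarz in these two weighted slots and bound the remaining $2\ell-2$ unweighted slots by the $\ell^\infty$ convolution bound above. This yields
\begin{equation*}
\|f\|_{H^r_h}^2\,\|\widehat f\|_{\ell^1}^{2\ell-2}\lesssim \|f\|_{H^r_h}^2\|f\|_{L^2_h}^{\ell-1}\|f\|_{H^1_h}^{\ell-1}.
\end{equation*}
The $\ell^\infty$ bound for the convolution of $2\ell-2$ unweighted factors should be taken as $\|\widehat f\|_{\ell^1}^{2\ell-2}$, so that the two weighted slots are paired by Cauchy–Schwarz. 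All constants depend only on $\ell$ and $r$, uniformly in $h$, because the aliasing count $|m|\leq\ell$ and the frequency redistribution are $h$-independent.
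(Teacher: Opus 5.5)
Your overall plan is the paper's (it defers to Bernier's Lemma 3.3 with Young's inequality on $\mathbb{T}_h^*$). You redistribute the weight through the momentum constraint so that two slots carry $\langle k\rangle^{r}$, put those two slots in $\ell^2$ and the remaining $2\ell-2$ slots in $\ell^1$, and finish with $\|\widehat f\|_{\ell^1}\lesssim\|f\|_{L^2_h}^{1/2}\|f\|_{H^1_h}^{1/2}$. Your redistribution is correct even with aliasing: if the wrap-around index $m\neq0$, then $2M\leq|k_1|+\sum_{j\neq1}|k_j|\leq M+\sum_{j\neq1}|k_j|$.

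\textbf{The gap.} The step that is supposed to produce $\|f\|_{H^r_h}^2$ fails as written. Once you freeze the two weighted slots $k_1,k_j$, the other $2\ell-2$ slots sum to a nonnegative kernel $C$ evaluated at a point of the form $\pm k_1\pm k_j+2Mm$. If you replace $C$ by its $\ell^\infty$ norm, the coupling between $k_1$ and $k_j$ disappears. What remains is $\bigl(\sum_{k}\langle k\rangle^{r}|\widehat f(k)|\bigr)^2$, an $\ell^1$ quantity that Cauchy--Schwarz bounds only by $2M\|f\|_{H^r_h}^2$, which is not uniform in $h$. This is the same obstruction you identified yourself for the single-slot version.

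\textbf{The fix.} You need the $\ell^1$ norm of $C$, not its $\ell^\infty$ norm. With $A(k)=\langle k\rangle^r|\widehat f(k)|$, Schur's test gives
\begin{equation*}
\sum_{k_1,k_j}A(k_1)A(k_j)\,C(k_1-k_j-n)\leq \|A\|_{\ell^2}^2\,\|C\|_{\ell^1},
\end{equation*}
uniformly in the shift $n$, and likewise for $C(n-k_1-k_j)$. Because all factors are nonnegative, $\|C\|_{\ell^1}=\|\widehat f\|_{\ell^1}^{2\ell-2}$ exactly. So your final number is right, but it is attached to the wrong norm of the kernel. Equivalently, apply Young's inequality $\|a_1*\cdots*a_{2\ell}\|_{\ell^\infty}\leq\|a_1\|_{\ell^2}\|a_2\|_{\ell^2}\prod_{i\geq3}\|a_i\|_{\ell^1}$ to the whole product at once, as the paper does. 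Doing this directly on $\mathbb{T}_h^*\cong\mathbb{Z}/2M\mathbb{Z}$ also makes your wrap-around decomposition unnecessary.

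\textbf{A secondary point.} Your claim that each shifted sum (shift $2Mm$) is bounded in absolute value by the unshifted one does not follow. The integrand $(\langle\nabla\rangle^{2r}g)\,g^{\ell-1}\,\overline{g}^{\ell}$ is not pointwise nonnegative, so dropping the unimodular factor $e^{-i2Mmx}$ gives no comparison. You do not need it: the Young/Schur bound above holds at every shift, and there are only $O(\ell)$ shifts.
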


\begin{proof}
This follows from the argument of \cite[Lemma~3.3]{Bernier},
with Young's convolution inequality applied on $\mathbb{T}_h^*$. The same weight-distribution
argument applies to $\langle k\rangle^r$, and the resulting constants are
uniform in $h$.
\end{proof}

\begin{lemma}\label{lem:twist-defect}
For all $\mathbf{k} \in \mathcal{V}_2$,
\begin{equation*}
 \left|D_2(\omega_{h,a(t)}-\omega_{h,0})(\mathbf{k})\right|\lesssim \left|\sin\left(\frac{h a(t)}{2}\right)\right|\sum_{j\in\Sigma_2}\langle k_j\rangle^2,
 \qquad\text{ a.e. }t \in \mathbb{R},
\end{equation*}
with an implicit constant independent of $h$ and $a$.
\end{lemma}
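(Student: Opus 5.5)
We need to bound $D_2(\omega_{h,a}-\omega_{h,0})(\mathbf{k})$ on $\mathcal{V}_2$. The plan is to write the difference of symbols explicitly via trigonometric identities, factor out $\sin(ha/2)$, and then bound what remains by a quadratic weight using the momentum constraint.

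Write $a=a(t)$ and use $\omega_{h,a}(k)=\frac{2}{h^2}(1-\cos(h(k-a)))$. Then
\begin{equation*}
\omega_{h,a}(k)-\omega_{h,0}(k)=\frac{2}{h^2}\bigl(\cos(hk)-\cos(h(k-a))\bigr)=-\frac{4}{h^2}\sin\Bigl(\frac{ha}{2}\Bigr)\sin\Bigl(hk-\frac{ha}{2}\Bigr).
\end{equation*}
Summing over $\Sigma_2$ with signs, set $g(k):=\sin(hk-\frac{ha}{2})$. This gives
\begin{equation*}
D_2(\omega_{h,a}-\omega_{h,0})(\mathbf{k})=-\frac{4}{h^2}\sin\Bigl(\frac{ha}{2}\Bigr)D_2 g(\mathbf{k}).
\end{equation*}
It therefore suffices to prove $|D_2 g(\mathbf{k})|\lesssim h^2\sum_{j}\langle k_j\rangle^2$, uniformly in $a$ and $h$.

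Two regimes arise.

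\emph{Small frequencies.} Apply the second-order FTC representation from the proof of \Cref{lem:case-I-divisor}:
\begin{equation*}
D_2 g(\mathbf{k})=(k_1-k_{-1})(k_1-k_{-2})\int_0^1\!\!\int_0^1 g''(\cdots)\,dt\,dr,
\qquad |g''|\le h^2.
\end{equation*}
Since $|(k_1-k_{-1})(k_1-k_{-2})|\lesssim\sum_j k_j^2$, this yields the claim. The representation holds for real $\mathbf{k}$ on the exact hyperplane $\sum(k_j-k_{-j})=0$.

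\emph{The aliasing issue.} On $\mathcal{V}_2$, the momentum constraint holds only modulo $2M$. Because $g$ is $2\pi/h$-periodic in $k$, we may shift $k_2$ by a multiple of $2M$ without changing $g(k_2)$. This makes the constraint exact over $\mathbb{Z}$, and the FTC formula applies to the shifted vector. The shifted $k_2$ may have size up to $\sim 3M$, but the product $(k_1-k_{-1})(k_1-k_{-2})$ involves only the unshifted $k_1,k_{-1},k_{-2}\in\mathbb{T}_h^*$. Hence its absolute value is at most $\sum_{j}k_j^2$ up to a constant, and the bound $h^2\sum_j\langle k_j\rangle^2$ survives.

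Alternatively, one can avoid FTC entirely. Use the trivial bound $|D_2 g|\le 4$, and note that $4\lesssim h^2\sum_j\langle k_j\rangle^2$ whenever some $|k_j|\gtrsim 1/h$. In the complementary case all $|hk_j|$ are small, no wraparound occurs, and the exact constraint holds.

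The main (minor) obstacle is this modular momentum constraint; I would use the trivial-bound split, as it is the cleanest. Combining the factorization with either regime gives
\begin{equation*}
|D_2(\omega_{h,a}-\omega_{h,0})(\mathbf{k})|\lesssim h^{-2}\,\Bigl|\sin\Bigl(\frac{ha}{2}\Bigr)\Bigr|\,h^2\sum_j\langle k_j\rangle^2,
\end{equation*}
which is the statement of \Cref{lem:twist-defect}, with constants independent of $h$ and $a$, for a.e. $t$.
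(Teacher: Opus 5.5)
Your proof is correct. It reaches the same bound $\lesssim|\sin(ha/2)|\,|k_1-k_{-1}|\,|k_1-k_{-2}|$ as the paper, but by a different route.

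\textbf{The paper's route.} The paper starts from the closed-form factorization \eqref{eq:case-I-factorization} and uses $\omega_{h,a}(k)=\omega_{h,0}(k-a)$. A common shift by $a$ leaves the differences $k_1-k_{-j}$ unchanged, so only the cosine factor changes. The difference is therefore $\frac{8}{h^2}\bigl(\cos\bigl(\frac{h(k_1+k_2)}{2}-ha\bigr)-\cos\bigl(\frac{h(k_1+k_2)}{2}\bigr)\bigr)$ times the two sine factors. The cosine difference is at most $2|\sin(ha/2)|$, and each sine factor is bounded by $\frac{h}{2}|k_1-k_{-j}|$.

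\textbf{Your route.} You pull $\sin(ha/2)$ out of each single-frequency symbol first. Then you bound the second difference of the smooth $2M$-periodic function $g(k)=\sin(hk-\frac{ha}{2})$ using the double-integral FTC representation and $|g''|\le h^2$.

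\textbf{Comparison.} The paper's version is shorter, and it handles aliasing automatically. The factorization involves only $\cos(hk_j)$, so it already holds on the modular set $\mathcal{V}_2$, and the sine factors depend only on the unshifted $k_1,k_{-1},k_{-2}$. Your version must treat the modular momentum constraint explicitly. Both of your fixes are valid:
\begin{itemize}
\item shifting $k_2$ by a multiple of $2M$, which leaves $g(k_2)$ unchanged while the prefactor $(k_1-k_{-1})(k_1-k_{-2})$ is untouched;
\item the trivial-bound split: if all $|hk_j|<\pi/2$, the constraint is exact over $\mathbb{Z}$; otherwise $4\lesssim h^2\sum_j\langle k_j\rangle^2$.
\end{itemize}
In return, your argument uses only smoothness and periodicity of the symbol, not its specific trigonometric product structure. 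It would therefore carry over to other lattice dispersion relations for which no such factorization is available.
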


\begin{proof}
Since $\omega_{h,a}(k)=\omega_{h,0}(k-a)$, the cosine addition formulas with \eqref{eq:case-I-factorization} yield
\begin{equation*}
\begin{split}
 \left| D_2(\omega_{h,a}-\omega_{h,0})(\mathbf k) \right|
 &=\frac{8}{h^2}\left|
 \cos\left(\frac{h(k_{1}+k_{2})}{2}-ha\right) - \cos\left(\frac{h(k_{1}+k_{2})}{2}\right)\right| \cdot \prod_{j=1}^2
 \left|\sin\left(\frac{h(k_1-k_{-j})}{2}\right)\right|\\
 &\lesssim \left|\sin\left(\frac{h a(t)}{2}\right)\right||k_1-k_{-1}|\cdot|k_1-k_{-2}| \lesssim \left|\sin\left(\frac{h a(t)}{2}\right)\right|\sum_{j\in\Sigma_2}\langle k_j\rangle^2.
\end{split}
\end{equation*}
\end{proof}

\subsection{Proof of Proposition~\ref{prop:polynomial-orbit-growth}}

\begin{proof}[Proof of \Cref{prop:polynomial-orbit-growth}\textnormal{(i)}]
If $h |a|_{\infty} t > 1$, then by mass conservation and the Shannon interpolation $U_h = \mathcal{S}_h u_h$,
\begin{equation*}
 \| U_h(t) \|_{H^s} = \|u_h(t)\|_{H_h^s} \lesssim_s h^{-s}\|u_h(0)\|_{L_h^2} \lesssim_{R,s} h^{-s} < (|a|_{\infty} t)^s,
\end{equation*}
and thus it suffices to assume $h |a|_{\infty} t \leq 1$.

Let $\delta = \delta(R,\Lambda)>0$ be the local time of existence in \Cref{prop:local} where we partition $[0,t]$ into intervals $I_j=[t_j,t_{j+1}]$ of length $\delta$ with $0 \leq j \lesssim \frac{t}{\delta}$. By \Cref{prop:local},
\begin{equation}\label{eq:case-I-local-X-bounds}
 \|U_h\|_{X_h^{0,b}(I_j)}\lesssim_{R,\Lambda}1,
 \qquad
 \|U_h\|_{X_h^{s,b}(I_j)}
 \lesssim_{R,\Lambda,s}\|U_h(t_j)\|_{H^s}.
\end{equation}
Let $q,m_q$ be as in \Cref{lem:case-I-divisor}. Applying \Cref{lem:case-I-weighted-magnetic} and \Cref{prop:nonauto-L4}, we have
\begin{equation}\label{eq:case-I-magnetic-increment}
\begin{split}
\int_{I_j}\left|\Lambda_4\left( m_q D_2(\omega_{h,a(\tau)} -\omega_{h,0}); \widehat{U_h(\tau)} \right)\right|d\tau &\lesssim h|a|_\infty \|\langle\nabla\rangle^sU_h\|_{L^4(I_j\times\mathbb{T})}^2 \|U_h\|_{L^4(I_j\times\mathbb{T})}^2\\
&\lesssim h|a|_\infty \|U_h(t_j)\|_{H^s}^2.
\end{split}
\end{equation}

Suppose $0 < s \leq \frac{1}{2}$. By \eqref{eq:case-I-sextic-lower}, \Cref{prop:nonauto-L4}, and \eqref{eq:case-I-local-X-bounds},
\begin{equation}\label{sextic_lower} \int_{I_j}\left|\Lambda_6 (S_4 m_q ; \widehat{U_h(\tau)} ) \right| d\tau \lesssim \| U_h \|_{L^4(I_j \times \mathbb{T})}^4 \lesssim 1.
\end{equation}

Since $\widetilde{\mathcal{E}}$ is coercive and $\partial_t \widetilde{\mathcal{E}} = \partial_t \mathcal{E}$ by \Cref{cor:case-I-sextic-coercivity}, we integrate \eqref{eq:abstract-modified-energy-identity} over $I_j$ and estimate the remainders by \eqref{eq:case-I-magnetic-increment} and \eqref{sextic_lower}, and derive
\begin{equation}\label{eq:case-I-lower-recurrence}
 \widetilde{\mathcal{E}}\bigl(\widehat{U_h(t_{j+1})}\bigr)
 \leq (1 + Ch|a|_\infty) \widetilde{\mathcal{E}} \bigl(\widehat{U_h(t_j)}\bigr)+C,
\end{equation}
for some $C=C(s,R,\Lambda)$. Iterating \eqref{eq:case-I-lower-recurrence} on $[0,t]$ at most $1+\frac{t}{\delta}$ times gives
\begin{align}
 \widetilde{\mathcal{E}}\bigl(\widehat{U_h(t)}\bigr)
 &\leq
 (1+Ch|a|_\infty)^{1+\frac{t}{\delta}}
 \widetilde{\mathcal{E}} \bigl( \widehat{U_h(0)} \bigr)
 +C\sum_{j = 0}^{\lceil \frac{t}{\delta}\rceil}(1+Ch|a|_\infty)^{j}\nonumber\\
 &\leq  e^{C h|a|_\infty (1+\frac{t}{\delta})}
 \left(
 \widetilde{\mathcal{E}}(\widehat{U_h(0)})+C\left(1+\lceil \frac{t}{\delta}\rceil\right)
 \right).\label{energyest_lower}
\end{align}
Since $h|a|_\infty t \leq 1$ by hypothesis and
$h|a|_\infty \lesssim_\Lambda 1$ by $h \leq h_0$, the exponential factor is $O(1)$ independent of $h,a,t$. The $H^s$ bound on the initial data implies $\widetilde{\mathcal{E}}\bigl(\widehat{U_h(0)}\bigr) \lesssim_{s,R,\Lambda} 1$, and therefore
\begin{equation*}
    \| U_h(t) \|_{H^s}^2 \leq \widetilde{\mathcal{E}}\bigl(\widehat{U_h(t)}\bigr) \leq \text{RHS of }\eqref{energyest_lower} \lesssim_{s,R,\Lambda} \langle t \rangle.
\end{equation*}

Consider $\frac{1}{2} < s < 1$. Since $h|a|_\infty\tau \leq 1$ for
$0 \leq \tau \leq t$, the preceding argument implies
\begin{equation*}
    \| U_h(\tau) \|_{H^{\frac{1}{4}}} + \| U_h(\tau) \|_{H^{\frac{1}{3}}} \lesssim \langle \tau \rangle^{\frac{1}{2}}.
\end{equation*}
By \eqref{eq:case-I-quartic-general} and Sobolev embedding,
\begin{equation}\label{eq:case-I-upper-correction-growth}
\left|\Lambda_4(m_q;\widehat{U_h(\tau)})\right|\lesssim \| U_h(\tau) \|_{L^4}^4 \lesssim \| U_h(\tau) \|_{H^{\frac{1}{4}}}^4 \lesssim \langle \tau \rangle^2,
\end{equation}
and similarly by \eqref{eq:case-I-sextic-upper},
\begin{equation}\label{eq:case-I-upper-sextic-growth}
\int_0^t\left|\Lambda_6(S_4m_q;\widehat{U_h(\tau)})\right|d\tau\lesssim \int_0^t \| U_h(\tau) \|_{H^{\frac{1}{3}}}^6d\tau \lesssim \langle t \rangle^4.
\end{equation}
By the definition of $\mathcal{E}$ and
\eqref{eq:case-I-upper-correction-growth}, we have
\begin{equation}\label{energyest_upper}
 \| U_h(\tau) \|_{H^s}^2
 = Q\bigl(\widehat{U_h(\tau)}\bigr)
 \leq \left|\mathcal{E}\bigl(\widehat{U_h(\tau)}\bigr)\right|
 + C\langle \tau \rangle^2,
\end{equation}
and hence it suffices to show $\left|\mathcal{E}\bigl(\widehat{U_h(\tau)}\bigr)\right| \lesssim \langle \tau \rangle^{4}$ by integrating and iterating \eqref{eq:abstract-modified-energy-identity} over $I_j$.

By \eqref{eq:case-I-magnetic-increment}, \eqref{eq:case-I-sextic-upper}, and \eqref{energyest_upper}, we derive the recurrence
\begin{equation}\label{eq:case-I-upper-recurrence}
\left|\mathcal{E}\bigl(\widehat{U_h(t_{j+1})}\bigr)\right|\leq (1+Ch|a|_\infty) \left|\mathcal{E}\bigl(\widehat{U_h(t_j)}\bigr)\right| + C \int_{I_j}\| U_h(\tau) \|_{H^{\frac{1}{3}}}^6 d\tau + Ch|a|_\infty\langle t_j \rangle^2,
\end{equation}
for some $C=C(s,R,\Lambda)>0$. Iterating
\eqref{eq:case-I-upper-recurrence} on $[0,t]$ at most
$1+\frac{t}{\delta}$ times and using
\eqref{eq:case-I-upper-sextic-growth}, we have
\begin{equation*}
\begin{split}
\left| \mathcal{E}\bigl(\widehat{U_h(t)}\bigr) \right| & \leq (1+Ch|a|_\infty)^{1+\lceil\frac{t}{\delta}\rceil} \left(\left| \mathcal{E}\bigl(\widehat{U_h(0)}\bigr) \right| + C\int_0^t \|U_h(\tau)\|_{H^{\frac{1}{3}}}^6 d\tau \right) +Ch|a|_\infty \langle t \rangle^2 \sum_{j=0}^{\lceil \frac{t}{\delta} \rceil} (1+Ch|a|_\infty)^j \\
& \leq e^{Ch|a|_\infty(1+\lceil\frac{t}{\delta}\rceil)} \left( \left| \mathcal{E}\bigl(\widehat{U_h(0)}\bigr) \right| + C \langle t \rangle^4 + \langle t \rangle^2 \right) \lesssim \langle t \rangle^{4},
\end{split}
\end{equation*}
where the last inequality follows from $\left|\mathcal{E}\bigl(\widehat{U_h(0)}\bigr)\right|
\lesssim_{s,R,\Lambda}1$, due to the $H^s$ bound on the initial data and \eqref{eq:case-I-upper-correction-growth}, and the uniform boundedness of the exponential factor, due to $h|a|_\infty t \leq 1,\ h|a|_\infty \lesssim_\Lambda 1$.
\end{proof}

\begin{proof}[Proof of \Cref{cor:prop71}]
Let $s=1$. For
$f:\mathbb{T}_h \rightarrow \mathbb{C}$, recall 
\begin{equation*}
 H_{0,h}(f)
 = \frac{1}{2}\sum_{k \in \mathbb{T}_h^*}
 \omega_{h,0}(k)|\widehat{f}(k)|^2
 +\frac{\mu}{4}\Lambda_4(1;\widehat{f}).
\end{equation*}
In the defocusing case, $H_{0,h}$ directly controls the kinetic
energy; in the focusing case, the discrete
Gagliardo-Nirenberg inequality yields
\begin{equation*}
 \Lambda_4(1;\widehat{f})
 \lesssim
 \| f \|_{L_h^2}^4
 +\| f \|_{L_h^2}^3
 \left(
 \sum_{k \in \mathbb{T}_h^*}
 \omega_{h,0}(k)|\widehat{f}(k)|^2
 \right)^{\frac{1}{2}},
\end{equation*}
and by Young's inequality, for $\mu \in \{\pm 1\}$,
\begin{equation*}
 H_{0,h}(f)
 \geq \frac{1}{4}\sum_{k \in \mathbb{T}_h^*}
 \omega_{h,0}(k)|\widehat{f}(k)|^2
 -C\left(
 \| f \|_{L_h^2}^4+\| f \|_{L_h^2}^6
 \right).
\end{equation*}
By
$\omega_{h,0}(k) \simeq |k|^2$ and mass conservation, there exists $C_R > 0$ such that $Y(t) := 1+H_{0,h}\bigl(u_h(t)\bigr)+C_R$ satisfies
\begin{equation}\label{eq:frozen-H1-coercivity}
 Y(t) \geq 1,
 \qquad
 \| u_h(t) \|_{H_h^1}^2 \lesssim_R Y(t),
 \qquad
 Y(0) \lesssim_R 1.
\end{equation}

It remains to show $Y(t) \lesssim 1$. By \Cref{lem:finite-cyclic-product-rule},
\begin{equation*}
 i\partial_t H_{0,h}\bigl(u_h(t)\bigr)
 = \frac{\mu}{4}\Lambda_4\left(
 D_2(\omega_{h,a(t)}-\omega_{h,0});
 \widehat{u_h(t)}
 \right),
 \qquad \text{ a.e. }t.
\end{equation*}
Applying \Cref{lem:twist-defect} and
\Cref{lem:uniform-cyclic-multilinear} with $\ell=2$, $r=1$,
we obtain
\begin{equation*}
 |Y'(t)|
 \lesssim
 \left|\sin\left(\frac{ha(t)}{2}\right)\right|
 \| u_h(t) \|_{L_h^2}\| u_h(t) \|_{H_h^1}^3 \lesssim_R
 \left|\sin\left(\frac{ha(t)}{2}\right)\right|
 Y(t)^{\frac{3}{2}}.
\end{equation*}
Integrating the resulting inequality for $Y^{-\frac{1}{2}}$
yields

\begin{equation*}
 Y(t)^{-\frac{1}{2}} \geq Y(0)^{-\frac{1}{2}}
 -C\int_0^t \left|\sin\left(\frac{ha(\tau)}{2}\right)\right|d\tau \geq Y(0)^{-\frac{1}{2}}-C\beta_h(t),
\end{equation*}
for some $C=C(R)>0$.
By \eqref{eq:frozen-H1-coercivity}, there exists $\epsilon_R>0$ such that $\beta_h(t) \leq \epsilon_R$ implies
$Y(t) \lesssim Y(0) \lesssim_R 1$.

Let $s>1$. Since $\beta_h(\tau) \leq \beta_h(t)$ for
$0 \leq \tau \leq t$, we have $\| u_h(\tau) \|_{H_h^1} \lesssim_R 1$ on $[0,t]$ by the preceding argument. For the weight $q$ in \eqref{bernier_weight}, the equivalence $\| f \|_{L_h^2}^2+Q(\widehat{f})
 \simeq_s \| f \|_{H_h^s}^2$ follows from \eqref{bernier_bound}. Define
\begin{equation*}
m_q(\mathbf{k})
=
\begin{cases}
\frac{\mu}{2}\frac{D_2q(\mathbf{k})}{D_2\omega_{h,0}(\mathbf{k})},
& \mathbf{k}\in\mathcal{V}_2\setminus\mathcal{Z},\\
0,
& \mathbf{k}\in\mathcal{Z}.
\end{cases}
\end{equation*}
By \Cref{lem:fractional-bernier-symbol},
$D_2q$ vanishes on $\mathcal{Z}$, and hence $m_q$ satisfies
\eqref{eq:abstract-homological-equation}.
Then, the definition of $S_4$ yields
\begin{equation}\label{eq:full-band-divisor-bounds}
 |m_q(\mathbf{k})|
 \lesssim_s \sum_{j \in \Sigma_2}\langle k_j\rangle^{2s-2},
 \qquad
 |S_4m_q(\mathbf{k})|
 \lesssim_s \sum_{j \in \Sigma_3}\langle k_j\rangle^{2s-2}.
\end{equation}

Applying \Cref{lem:uniform-cyclic-multilinear} with $r=s-1,\ \ell=2,3$, and using the $H_h^1$ bound, we obtain
\begin{equation}\label{eq:full-band-lower-order}
 \left| \Lambda_4(m_q;\widehat{u_h(\tau)}) \right|
 +\left| \Lambda_6(S_4m_q;\widehat{u_h(\tau)}) \right|
 \lesssim_{s,R} \| u_h(\tau) \|_{H_h^{s-1}}^2.
\end{equation}
By \eqref{eq:full-band-lower-order}, mass conservation, and the definition of $\mathcal{E}$,
\begin{equation}\label{nrg_comp}
 \| u_h(\tau) \|_{H_h^s}^2
 \lesssim_{s,R}
 1+\left| \mathcal{E}\bigl(\widehat{u_h(\tau)}\bigr) \right|
 +\| u_h(\tau) \|_{H_h^{s-1}}^2,
\end{equation}
while the initial $H_h^s$ bound gives
$\left| \mathcal{E}\bigl(\widehat{u_h(0)}\bigr) \right|
\lesssim_{s,R}1$.

By \Cref{lem:twist-defect},
\eqref{eq:full-band-divisor-bounds}, and Young's inequality for products,
\begin{equation*} \left|m_q D_2(\omega_{h,a(\tau)}-\omega_{h,0})\right|
 \lesssim_s \left|\sin\left(\frac{ha(\tau)}{2}\right)\right|
 \sum_{i,j \in \Sigma_2}
 \langle k_i\rangle^{2s-2}\langle k_j\rangle^2\lesssim \left|\sin\left(\frac{ha(\tau)}{2}\right)\right|
 \sum_{j \in \Sigma_2}\langle k_j\rangle^{2s}.
\end{equation*}
Applying  \Cref{lem:uniform-cyclic-multilinear} with $\ell=2$,
$r=s$ to $m_q D_2(\omega_{h,a(\tau)}-\omega_{h,0})$, the identity \eqref{eq:abstract-modified-energy-identity} with \eqref{eq:full-band-lower-order} yields
\begin{equation}\label{eq:magnetic-energy-derivative}
 \left|
 \partial_\tau\mathcal{E}\bigl(\widehat{u_h(\tau)}\bigr)
 \right|
 \lesssim_{s,R}
 \| u_h(\tau) \|_{H_h^{s-1}}^2
 +\left|\sin\left(\frac{ha(\tau)}{2}\right)\right|
 \| u_h(\tau) \|_{H_h^s}^2,
 \qquad \text{ a.e. }\tau \in [0,t],
\end{equation}
Integrating \eqref{eq:magnetic-energy-derivative} and applying
\eqref{nrg_comp}, the recurrence
\begin{equation*}
\| u_h(t) \|_{H_h^s}^2
 \lesssim_{s,R}
 1+\| u_h(t) \|_{H_h^{s-1}}^2
 +\int_0^t \| u_h(\tau) \|_{H_h^{s-1}}^2d\tau + \int_0^t \left|\sin\left(\frac{ha(\tau)}{2}\right)\right|
 \| u_h(\tau) \|_{H_h^s}^2d\tau
\end{equation*}
follows, from which we induct on $n=\lceil s-1\rceil$.

If $n=1$, then $1<s \leq 2$, and the $H_h^1$ bound controls
$H_h^{s-1}$. If $n \geq 2$, the induction hypothesis at $s-1$
applies since $\| u_h(0) \|_{H_h^{s-1}} \leq R$. Thus,
\begin{equation*}
 \| u_h(t) \|_{H_h^s}^2
 \leq C\langle t\rangle^n
 +C\int_0^t
 \left|\sin\left(\frac{ha(\tau)}{2}\right)\right|
 \| u_h(\tau) \|_{H_h^s}^2d\tau,
\end{equation*}
for some $C=C(s,R)>0$. By the Gr\"onwall inequality,
\begin{equation*}
\| u_h(t) \|_{H_h^s}^2 \leq C\langle t\rangle^n
 e^{C \beta_h(t)} \lesssim_{s,R}\langle t\rangle^n,
\end{equation*}
thereby closing the induction.
\end{proof}

\begin{proof}[Proof of \Cref{prop:polynomial-orbit-growth}\textnormal{(ii)}]
Let $\epsilon_R>0$ be given by \Cref{cor:prop71} and $\beta_h(t)>\epsilon_R$. By the triangle inequality,
\begin{equation*}
h^{-1} \leq \beta_h(t)^{-1} |a|_{\infty} t < \epsilon_R^{-1}|a|_\infty t.
\end{equation*}
Mass conservation and $|k| \leq \frac{\pi}{h}$ on $\mathbb{T}_h^*$ imply
\begin{equation*}
 \| u_h(t) \|_{H_h^s}
 \lesssim h^{-s}\| u_h(0) \|_{L_h^2}
 \lesssim_{s,R} (|a|_\infty t)^s.
\end{equation*}
The claim follows by combining the two regimes, $\beta_h(t) > \epsilon_R$ and $\beta_h(t) \leq \epsilon_R$.
\end{proof}

\section{Proof of the main result}\label{sec:polynomial-regimes}

We prove \Cref{thm:intro-polynomial} with explicit time exponents. Recall $\gamma_{s,\rho} = \min\left\{2,\frac{s-\rho}{2}\right\}$.

\subsection{Consistency estimates}

Fix $\widehat{R}\in C_c^\infty(\mathbb{R})$ with $\widehat{R}=1$ on $|\xi|\leq1$ and $\widehat{R}=0$ on $|\xi|\geq2$, and define
\begin{equation*}
 \widehat{\mathcal{R}_N f}(k)=\widehat{R}\left(\frac{k}{N}\right)\widehat{f}(k), \qquad N\geq1,
\end{equation*}
and
\begin{equation*}
 U_{h,N}=\mathcal{R}_N U_h,\qquad e_{h,N}=U_{h,N}-u.
\end{equation*}
Assume the support condition
\begin{equation}\label{eq:support-condition}
2N\leq
\begin{cases}
 \frac{\Lambda\pi}{h}, & 0<s<1,\\
\frac{\pi}{2h}, & s\geq1.
\end{cases}
\end{equation}
We estimate
\begin{equation*}
 e_h=(U_h-U_{h,N})+e_{h,N}.
\end{equation*}
Applying $\mathcal{R}_N$ to the equation for $U_h=\mathcal{S}_h u_h$ yields 
\begin{equation}\label{eq:approx-equation}
 i\partial_t U_{h,N} = -(\partial_x-ia(t))^2 U_{h,N}
 + \mathcal N(U_{h,N}) + F_{h,N},
\end{equation}
where $F_{h,N} = F_{h,N}^{(1)} + F_{h,N}^{(2)} + F_{h,N}^{(3)}$ and
\begin{equation*}
\begin{split}
 F_{h,N}^{(1)}
 &=\mathcal{R}_N\left[-\Delta_{h,a(t)}+(\partial_x-ia(t))^2\right] U_h,\\
 F_{h,N}^{(2)} &= \mathcal{R}_N\mathcal N(U_h)-\mathcal N(U_{h,N}),\\
 F_{h,N}^{(3)} &=
 \begin{cases}
 0,&0<s<1,\\
 \mu\mathcal{R}_N\left[\mathcal{S}_h(|u_h|^2u_h)-|U_h|^2U_h\right],&s\geq1.
 \end{cases}
\end{split}
\end{equation*}

\begin{lemma}\label{lem:initialization}
Let $0 \leq \rho < s$ and $\| u_0 \|_{H^s} \leq R$. Then,
\begin{equation*}
 \|e_{h,N}(0)\|_{H^\rho}
 \lesssim R\left(
 N^{-(s-\rho)}+h^2N^{\max\{2-(s-\rho),0\}}
 +h^{s+1}N^{\rho+1}\right).
\end{equation*}
Moreover,
\begin{equation}\label{eq:tail-WV}
 \|U_h(t)-U_{h,N}(t)\|_{H^\rho}
 \lesssim N^{-(s-\rho)}\|U_h(t)\|_{H^s}.
\end{equation}
\end{lemma}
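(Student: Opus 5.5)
The plan is to compute everything on the Fourier side, using the aliasing formula \eqref{eq:Jh-fourier} from the proof of \Cref{lem:Jh-bounded}. By definition, $e_{h,N}(0)=\mathcal{R}_N\mathcal{S}_hP_\chi\Pi_hu_0-u_0$. The multiplier $\widehat R(k/N)$ is supported in $|k|\leq 2N$. By the support condition \eqref{eq:support-condition}, every such $k$ lies in $E_h$ when $0<s<1$, and $P_\chi=\Id$ when $s\geq1$. Hence $\chi(k)\widehat R(k/N)=\widehat R(k/N)$, and the filter plays no role. We therefore obtain, for $k\in\mathbb{Z}$,
\begin{equation*}
\widehat{e_{h,N}(0)}(k)=\bigl(\widehat R(k/N)-1\bigr)\widehat{u}_0(k)+\widehat R(k/N)\bigl(s_h(k)-1\bigr)\widehat{u}_0(k)+\widehat R(k/N)\sum_{\ell\neq0}s_h(k+2M\ell)\widehat{u}_0(k+2M\ell).
\end{equation*}
We estimate these three pieces separately in $H^\rho$.

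\emph{Truncation term.} The first piece is supported in $|k|\geq N$, where $\langle k\rangle^{\rho}\leq N^{-(s-\rho)}\langle k\rangle^{s}$. It therefore contributes $\lesssim N^{-(s-\rho)}R$. The identical argument applied to $(1-\widehat R(k/N))\widehat{U_h(t)}(k)$ gives the tail bound \eqref{eq:tail-WV} immediately.

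\emph{Cell-average defect.} For the second piece, Taylor expansion of $\sinc$ gives $|s_h(k)-1|\lesssim h^2k^2$ uniformly, since $|hk|\leq\pi$ here. On $|k|\leq 2N$ we have
\begin{equation*}
\langle k\rangle^\rho h^2k^2\lesssim h^2\langle k\rangle^{2-(s-\rho)}\langle k\rangle^s\lesssim h^2N^{\max\{2-(s-\rho),0\}}\langle k\rangle^s.
\end{equation*}
Summing in $\ell^2_k$ yields the middle term $h^2N^{\max\{2-(s-\rho),0\}}R$.

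\emph{Aliasing term.} This is the only step needing care. By \eqref{lem21_est}, for $|\ell|\geq1$ we have $|k+2M\ell|\simeq |\ell|/h$ and $|s_h(k+2M\ell)|\lesssim h|k|/|\ell|$. The latter holds because $\sin(h(k+2M\ell)/2)=\pm\sin(hk/2)$ and $|hk|\leq\pi/2$. Writing $a_{k,\ell}=\langle k+2M\ell\rangle^s|\widehat u_0(k+2M\ell)|$, we get for $|k|\leq2N$
\begin{equation*}
\langle k\rangle^\rho\Bigl|\sum_{\ell\neq0}s_h(k+2M\ell)\widehat u_0(k+2M\ell)\Bigr|\lesssim h\langle k\rangle^{\rho+1}\sum_{\ell\neq0}|\ell|^{-1}\Bigl(\frac{h}{|\ell|}\Bigr)^{s}a_{k,\ell}\lesssim h^{s+1}N^{\rho+1}\Bigl(\sum_{\ell\neq0}a_{k,\ell}^2\Bigr)^{1/2}.
\end{equation*}
The last inequality uses Cauchy--Schwarz in $\ell$, with $\sum_\ell|\ell|^{-2(s+1)}<\infty$. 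Summing the squares over $k$ and noting that the map $(k,\ell)\mapsto k+2M\ell$ is injective gives $\sum_k\sum_\ell a_{k,\ell}^2\leq\|u_0\|_{H^s}^2\leq R^2$. This produces the last term $h^{s+1}N^{\rho+1}R$.

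Combining the three contributions proves the initialization estimate. The main point to verify is that the support condition makes the filter $P_\chi$ transparent on the range of $\mathcal{R}_N$, and that $|hk|\leq\pi/2$ there so that the bound $|s_h|\lesssim h|k|/|\ell|$ holds. The rest is bookkeeping.
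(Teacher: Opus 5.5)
Your proposal is correct and follows the paper's proof. You use the same three-way split into the truncation piece $(\mathcal{R}_N-\Id)u_0$, the $\ell=0$ cell-average defect controlled by $|s_h(k)-1|\lesssim h^2k^2$, and the $\ell\neq0$ aliasing sum handled by the estimates \eqref{lem21_est} from \Cref{lem:Jh-bounded} with $\langle k\rangle^{\rho}|k|\lesssim N^{\rho+1}$. (The restriction $|hk|\leq\pi/2$ is not actually needed for the aliasing bound, since $|\sin x|\leq|x|$ already gives $|s_h(k+2M\ell)|\leq h|k|/(\pi|\ell|)$ for every $k\in\mathbb{T}_h^{*}$.)
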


\begin{proof}
The support assumptions above imply
$U_{h,N}(0)=\mathcal{R}_N\mathcal S_h\Pi_hu_0$, and
\begin{equation*}
 e_{h,N}(0) =(\mathcal{R}_N - \Id) u_0 +\mathcal{R}_N(\mathcal{S}_h \Pi_h u_0 - u_0).
\end{equation*}
By the Plancherel Theorem, 
\begin{equation*}
 \|(\Id-\mathcal{R}_N)f\|_{H^\rho}
 \lesssim N^{-(s-\rho)} \| f \|_{H^s}.
\end{equation*}
This controls the first term and proves \eqref{eq:tail-WV} likewise by taking $f=U_h(t)$.

For the second term, separate $\ell=0$ from $\ell\neq0$ in
\eqref{eq:Jh-fourier}. The former is controlled by
$|s_h(k)-1|\lesssim h^2k^2$. For the latter, the estimates
in the proof of \Cref{lem:Jh-bounded}, in particular \eqref{lem21_est}, apply with $\langle k\rangle^\rho|k|\lesssim N^{\rho+1}$. Consequently,
\begin{equation*}
 \|\mathcal{R}_N(\mathcal{S}_h\Pi_hu_0-u_0)\|_{H^\rho}
 \lesssim \left( h^2N^{\max\{2-(s-\rho),0\}}
 +h^{s+1}N^{\rho+1} \right)\|u_0\|_{H^s}.
\end{equation*}
\end{proof}

\begin{proposition}\label{prop:approx-equation}
Under the support condition \eqref{eq:support-condition}, the following estimates hold. The constants are independent of $h,a,N,I$.
\begin{enumerate}[label=\textnormal{(\roman*)}]
\item For $0<s<1$, $h\leq h_0$, and $|I|\leq1$,
\begin{equation}\label{eq:case-I-forcing-Xminus}
 \|F_{h,N}\|_{X^{0,-b_0}(I)}
 \lesssim h^2\langle |a|_\infty\rangle^4N^{4-s}
 \|U_h\|_{L^\infty_tH^s_x(I)} +N^{-s}\|U_h\|_{X_h^{s,b}(I)}\|U_h\|_{X_h^{0,b}(I)}^2.
\end{equation}
\item For $s\geq1$ and $0\leq\rho<s$,
\begin{equation*}
\begin{split}
 \|F_{h,N}^{(1)}(t)\|_{H^\rho}
 &\lesssim h^2\langle |a|_\infty\rangle^4
 N^{\max\{4-(s-\rho),0\}}\|U_h(t)\|_{H^s},\\
 \|F_{h,N}^{(2)}(t)\|_{H^\rho}
 &\lesssim N^{-(s-\rho)}\|U_h(t)\|_{H^s}^3,\\
 \|F_{h,N}^{(3)}(t)\|_{H^\rho}
 &\lesssim h^{s-\rho}\|U_h(t)\|_{H^s}^3.
\end{split}
\end{equation*}
\end{enumerate}
\end{proposition}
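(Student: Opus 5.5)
The three forcing terms are estimated separately. The linear consistency term $F^{(1)}_{h,N}$ is a pure Fourier multiplier, the nonlinear terms $F^{(2)}$ and $F^{(3)}$ are multilinear, and the gain in each comes either from the tail bound \eqref{eq:tail-WV} or from aliasing.

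\emph{Symbol estimate for $F^{(1)}$.} By the support condition \eqref{eq:support-condition}, $\mathcal{R}_N$ restricts to $|k|\le 2N$, where $|h(k-a)|\le \pi/2+h|a|_\infty$. There the symbol of $F^{(1)}$ is
\begin{equation*}
\omega_{h,a}(k)-(k-a)^2=\frac{4}{h^2}\sin^2\Bigl(\frac{h(k-a)}{2}\Bigr)-(k-a)^2 .
\end{equation*}
Since $\frac{4}{h^2}\sin^2(h\xi/2)-\xi^2=O(h^2\xi^4)$ uniformly for bounded $h\xi$, we get
\begin{equation*}
|\omega_{h,a}(k)-(k-a)^2|\lesssim h^2(|k|+|a|_\infty)^4\lesssim h^2\langle a\rangle_\infty^4\langle k\rangle^4 .
\end{equation*}
In part (ii), multiplying by $\langle k\rangle^\rho$ and using $\langle k\rangle^{4+\rho}\le N^{\max\{4-(s-\rho),0\}}\langle k\rangle^s$ on $|k|\le 2N$ gives the stated $H^\rho$ bound pointwise in $t$.

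\emph{Part (i), the $F^{(1)}$ contribution.} I embed $L^2_{t,x}(I)$ into $X^{0,-b_0}(I)$ by taking the extension by zero, which is legitimate since $|I|\le1$. Then
\begin{equation*}
\|F^{(1)}\|_{X^{0,-b_0}}\le \|F^{(1)}\|_{L^2_{t,x}}\lesssim h^2\langle a\rangle^4 N^{4-s}\|U_h\|_{L^\infty H^s}.
\end{equation*}

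\emph{$F^{(2)}$ in part (ii).} Write $\mathcal{R}_N\mathcal N(U_h)-\mathcal N(U_{h,N})=\mathcal{R}_N[\mathcal N(U_h)-\mathcal N(U_{h,N})]-(\Id-\mathcal{R}_N)\mathcal N(U_{h,N})$. The first piece is a cubic difference containing a factor $U_h-U_{h,N}$, bounded in $H^\rho$ using the algebra property of $H^s$ ($s\ge1$) together with \eqref{eq:tail-WV}. The second piece is bounded by $N^{-(s-\rho)}\|\mathcal N(U_{h,N})\|_{H^s}\lesssim N^{-(s-\rho)}\|U_h\|_{H^s}^3$. When $\rho>s-1$ the product estimate in $H^\rho$ still holds because $s>1/2$.

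\emph{$F^{(2)}$ in part (i).} Here $P_hU_h=U_h$ and the outer $P_h$ equals $\Id$ on the range of $\mathcal{R}_N$. The same decomposition applies, but now I use the dual estimate \eqref{eq:L43-dual} to move to $L^{4/3}_{t,x}$ and then H\"older with \Cref{prop:nonauto-L4} to get $L^4$ bounds by $X_h^{0,b}$ norms. The difference factor $U_h-U_{h,N}$ is frequency-localized to $|k|\gtrsim N$, so its $X^{0,b}_h$ norm is at most $N^{-s}\|U_h\|_{X^{s,b}_h}$.

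The main obstacle is the tail piece $(\Id-\mathcal{R}_N)\mathcal N(U_{h,N})$ in $X^{0,-b_0}$, where a factor $N^{-s}$ has to be extracted from an output supported at frequencies $\gtrsim N$. I would dominate $\langle k\rangle^{-s}$ applied to the output by the $H^s$-weighted $L^{4/3}$ estimate. Concretely, I bound $\|\langle\nabla\rangle^s\mathcal N\|_{L^{4/3}}$ by the fractional Leibniz rule as in \Cref{prop:cubic-estimates}, then pay $N^{-s}$ for the multiplier $(\Id-\mathcal{R}_N)\langle\nabla\rangle^{-s}$, which is bounded on $X^{0,-b_0}$ with norm $\lesssim N^{-s}$.

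\emph{$F^{(3)}$, aliasing.} This term appears only for $s\ge1$. The Shannon interpolant of the lattice product differs from the continuum product exactly by the modes with $k_1-k_2+k_3=k+2M\ell$, $\ell\neq0$. For such modes $\max_j|k_j|\gtrsim 1/h$, while the output satisfies $|k|\le 2N\le \pi/(2h)$. Placing the weight on the largest input frequency gives $\langle k\rangle^\rho\lesssim h^{s-\rho}\langle k_{\max}\rangle^s$. A Young/Sobolev convolution bound as in \Cref{lem:uniform-cyclic-multilinear} then yields $h^{s-\rho}\|U_h\|_{H^s}^3$.
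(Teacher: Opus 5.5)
Your proposal is correct and follows essentially the same route as the paper: the same Taylor bound on the symbol $\omega_{h,a(t)}(k)-(k-a(t))^2$ for $F^{(1)}_{h,N}$, and the same splitting $F^{(2)}_{h,N}=\mathcal{R}_N\bigl(\mathcal{N}(U_h)-\mathcal{N}(U_{h,N})\bigr)+(\mathcal{R}_N-\Id)\mathcal{N}(U_{h,N})$. For $0<s<1$ this splitting is handled via \eqref{eq:L43-dual}, H\"older, the fractional Leibniz rule and \Cref{prop:nonauto-L4}; for $s\geq1$ it is handled via Sobolev products and \eqref{eq:tail-WV}; an aliasing argument then treats $F^{(3)}_{h,N}$.

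The differences are cosmetic. You apply the multipliers $\mathcal{R}_N$ and $(\Id-\mathcal{R}_N)\langle\nabla\rangle^{-s}$ diagonally on $X^{0,-b_0}$, where boundedness is immediate, rather than on $L^{4/3}$ as the paper does. For $F^{(3)}_{h,N}$ you place the weight on the largest input frequency, whereas the paper bounds the $|n|\gtrsim h^{-1}$ tail of $|U_h|^2U_h$ using the algebra property.
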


\begin{proof}
We first prove \textnormal{(i)}. By Taylor expansion,
\begin{equation}\label{eq:symbol-pointwise}
 |\omega_{h,a(t)}(k)-(k-a(t))^2|
 \lesssim h^2|k-a(t)|^4
 \lesssim h^2\langle |a|_\infty\rangle^4\langle k\rangle^4.
\end{equation}
Since $\mathcal{R}_N$ is supported on $|k|\leq2N$, the embedding $L^2(I\times\mathbb{T})
\hookrightarrow X^{0,-b_0}(I)$ and the H\"older inequality with $|I|\leq1$ imply
\begin{equation*}
 \|F_{h,N}^{(1)}\|_{X^{0,-b_0}(I)}
 \lesssim \|F_{h,N}^{(1)}\|_{L^2(I\times\mathbb{T})} \lesssim h^2\langle |a|_\infty\rangle^4N^{4-s}
 \|U_h\|_{L^\infty_tH^s_x(I)}.
\end{equation*}

For the nonlinear term, decompose
\begin{equation*}
 F_{h,N}^{(2)}
 =\mathcal{R}_N\bigl(\mathcal{N}(U_h)
 -\mathcal{N}(U_{h,N})\bigr)
 +(\mathcal{R}_N-\Id)\mathcal{N}(U_{h,N}).
\end{equation*}
Since $P_hU_h=U_h$, \Cref{prop:nonauto-L4} implies
\begin{equation}\label{prop6.1_est}
\begin{split}
 \|U_h\|_{L^4(I\times\mathbb{T})}
 +\|U_{h,N}\|_{L^4(I\times\mathbb{T})}
 &\lesssim \|U_h\|_{X_h^{0,b}(I)},\\
 \|\langle\nabla\rangle^sU_h\|_{L^4(I\times\mathbb{T})}
 +\|\langle\nabla\rangle^sU_{h,N}\|_{L^4(I\times\mathbb{T})}
 &\lesssim \|U_h\|_{X_h^{s,b}(I)},\\
 \|U_h-U_{h,N}\|_{L^4(I\times\mathbb{T})}
 &\lesssim N^{-s}\|U_h\|_{X_h^{s,b}(I)}.
\end{split}
\end{equation}
The multipliers $\mathcal{R}_N$ and
$N^s(\Id-\mathcal{R}_N)\langle\nabla\rangle^{-s}$
are uniformly bounded on $L^p(\mathbb{T})$ for
$1<p<\infty$. Thus, the periodic
fractional Leibniz estimate
\cite[Proposition 1]{benyi2025fractional} and the $L^4 \times L^4 \times L^4 \rightarrow L^{\frac{4}{3}}$ H\"{o}lder inequality yield
\begin{equation}\label{prop6.1_est2}
\begin{split}
 \|F_{h,N}^{(2)}\|_{L^{\frac{4}{3}}(I\times\mathbb{T})}
 \lesssim{}&
 \|U_h-U_{h,N}\|_{L^4(I\times\mathbb{T})}
 \left(
 \|U_h\|_{L^4(I\times\mathbb{T})}^2
 +\|U_{h,N}\|_{L^4(I\times\mathbb{T})}^2
 \right)\\
 &+N^{-s}
 \|\langle\nabla\rangle^sU_{h,N}\|_{L^4(I\times\mathbb{T})}
 \|U_{h,N}\|_{L^4(I\times\mathbb{T})}^2\\
 \lesssim{}&
 N^{-s}\|U_h\|_{X_h^{s,b}(I)}
 \|U_h\|_{X_h^{0,b}(I)}^2.
\end{split}
\end{equation}
Combining the estimate on $F_{h,N}^{(1)}$, \eqref{prop6.1_est}, and \eqref{prop6.1_est2} with the continuum estimate in \eqref{eq:L43-dual} proves
\eqref{eq:case-I-forcing-Xminus}.

We now prove \textnormal{(ii)}. The estimate on $F_{h,N}^{(1)}$ follows as before from the dispersion relations \eqref{eq:symbol-pointwise}. For $F^{(2)}_{h,N}$, we decompose
\begin{equation*}
 F_{h,N}^{(2)} =\mathcal{R}_N\bigl(\mathcal{N}(U_h) -\mathcal{N}(U_{h,N})\bigr)
 +(\mathcal{R}_N-\Id)\mathcal{N}(U_{h,N}),
\end{equation*}
and the estimation of the second term is immediate by \eqref{eq:tail-WV} and $\| \mathcal{N}(U_{h,N}) \|_{H^{s}} \lesssim \|U_h(t)\|_{H^s}^3$. Then,
\begin{equation*}
 \mathcal{N}(U_h)-\mathcal{N}(U_{h,N})
 =\mu(U_h-U_{h,N})\overline{U_h}U_h
 +\mu U_{h,N}\bigl(\overline{U_h}-\overline{U_{h,N}}\bigr)U_h
 +\mu U_{h,N}\overline{U_{h,N}}(U_h-U_{h,N}).
\end{equation*}
Since $\mathcal{R}_N$ is uniformly bounded, the Sobolev product
estimate applies for $s\geq1$, and \eqref{eq:tail-WV} controls
$U_h-U_{h,N}$ in $H^\rho$, the desired bound follows.

Let $F = |U_h|^2 U_h$. Since $\supp(\widehat F) \subseteq (-3M,3M)$, we have 
\begin{equation*}
\begin{split}
 \widehat{\mathcal{S}_h(|u_h|^2u_h)}(k)
 &=\sum_{\ell\in\mathbb{Z}}\widehat F(k+2M\ell)\\
 &=\widehat F(k-2M)+\widehat F(k)+\widehat F(k+2M),
 \qquad k\in\mathbb{T}_h^*,
\end{split}
\end{equation*}
where the support condition allows $\ell \in \{0,\pm 1\}$. Then,
\begin{equation*}
 \|\mathcal{S}_h(|u_h|^2u_h)-F\|_{H^\rho}^2=
 \sum_{k\in\mathbb{T}_h^*}
 \langle k\rangle^{2\rho}
 \left|\widehat F(k-2M)+\widehat F(k+2M)\right|^2 + \sum_{k\notin\mathbb{T}_h^*}
 \langle k\rangle^{2\rho}|\widehat F(k)|^2 \lesssim \sum_{|n|\gtrsim h^{-1}}
 \langle n\rangle^{2\rho}|\widehat F(n)|^2.
\end{equation*}
The desired bound follows from the uniform boundedness of $\mathcal{R}_N$ and the Sobolev algebra property.
\end{proof}

\begin{corollary}\label{cor:optimized-consistency}
Let $0\leq\rho<s$ and $\|u_0\|_{H^s}\leq R$. For $h > 0$ satisfying \eqref{eq:support-condition}, let $N = \lceil h^{-\frac{1}{2}} \rceil$. Then,
\begin{equation*}
\begin{split}
 \|e_{h,N}(0)\|_{H^\rho}
 &\lesssim h^{\gamma_{s,\rho}},\\
 \|U_h(t)-U_{h,N}(t)\|_{H^\rho}
 &\lesssim h^{\gamma_{s,\rho}}\|U_h(t)\|_{H^s},\\
 \|F_{h,N}(t)\|_{H^\rho}
 &\lesssim h^{\gamma_{s,\rho}}
 \langle |a|_\infty\rangle^4
 \left(1+\|U_h(t)\|_{H^s}^3\right),
\end{split}
\end{equation*}
where the third estimate holds for $s \geq 1$.
\end{corollary}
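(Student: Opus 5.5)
The plan is to substitute $N=\lceil h^{-1/2}\rceil$, so that $N\simeq h^{-1/2}$ and $N^{-\alpha}\simeq h^{\alpha/2}$, into \Cref{lem:initialization} and \Cref{prop:approx-equation}(ii). We then check that each resulting power of $h$ is at least $\gamma_{s,\rho}=\min\{2,\frac{s-\rho}{2}\}$. Since $h\leq1$ under the support condition, every term $h^{\beta}$ with $\beta\geq\gamma_{s,\rho}$ is bounded by $h^{\gamma_{s,\rho}}$. Before doing so we verify that $N$ is admissible in \eqref{eq:support-condition}. This requires $2\lceil h^{-1/2}\rceil\lesssim h^{-1}$, which holds for $h$ small, and that smallness is part of the hypothesis ``$h$ satisfying \eqref{eq:support-condition}''.

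\textbf{Initial error.} By \Cref{lem:initialization}, the first term is $N^{-(s-\rho)}\simeq h^{(s-\rho)/2}$. For the second term, $h^2N^{\max\{2-(s-\rho),0\}}$, there are two cases. If $s-\rho\geq2$, the term equals $h^2$. Otherwise it is $h^{2-(2-(s-\rho))/2}=h^{1+(s-\rho)/2}\leq h^{(s-\rho)/2}$. The third term is $h^{s+1}N^{\rho+1}\simeq h^{s+1-(\rho+1)/2}=h^{(s-\rho)/2+(s+1)/2}\leq h^{(s-\rho)/2}$. Taking the minimum over the three terms gives $h^{\gamma_{s,\rho}}$. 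The tail estimate \eqref{eq:tail-WV} directly gives $N^{-(s-\rho)}\|U_h\|_{H^s}\lesssim h^{(s-\rho)/2}\|U_h\|_{H^s}\leq h^{\gamma_{s,\rho}}\|U_h\|_{H^s}$.

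\textbf{Forcing ($s\geq1$).} For $F^{(1)}_{h,N}$ the bound is $h^2\langle|a|_\infty\rangle^4N^{\max\{4-(s-\rho),0\}}$. When $s-\rho\geq4$ this is $h^2$. When $s-\rho<4$ it is $h^{2-(4-(s-\rho))/2}=h^{(s-\rho)/2}$. In both cases it is $\leq h^{\gamma_{s,\rho}}$. This is the step that fixes the choice of $N$: the exponent $\frac12$ balances the $h^2N^4$ dispersion defect against the $N^{-(s-\rho)}$ truncation, and that balance is what produces $\frac{s-\rho}{2}$. For $F^{(2)}_{h,N}$ we get $N^{-(s-\rho)}\|U_h\|^3_{H^s}\lesssim h^{(s-\rho)/2}\|U_h\|_{H^s}^3$. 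For $F^{(3)}_{h,N}$ we get $h^{s-\rho}\leq h^{(s-\rho)/2}$.

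Summing the three forcing terms and using $\|U\|_{H^s}\leq1+\|U\|_{H^s}^3$ yields the stated third estimate. No genuine obstacle is expected; the only care needed is tracking the $\max\{\cdot,0\}$ cases and the crossover at $s-\rho=4$, where $\gamma_{s,\rho}$ switches from $\frac{s-\rho}{2}$ to $2$.
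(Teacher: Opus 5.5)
Your proposal is correct and matches the paper's intended argument. The paper states this corollary without proof, as the direct substitution of $N=\lceil h^{-1/2}\rceil\simeq h^{-1/2}$ into \Cref{lem:initialization} and \Cref{prop:approx-equation}(ii); each resulting exponent is at least $\gamma_{s,\rho}$, and $h\leq1$ (forced by the support condition since $2N\geq2$) converts this into the stated bounds, exactly as you check case by case.
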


\subsection{Proof of the main theorem}

Throughout the proof, let $0<h\leq c h_0$, where $c\in(0,1]$ is chosen such that \eqref{eq:support-condition} holds for $N=\lceil h^{-1/2}\rceil$ while $h_0$ is given by \eqref{eq:phase-margin}. Assume $\mu=1$ and $\|u_0\|_{H^s}\leq R$. The focusing
extension is given in \Cref{future_research}. Define $\mathcal U(t,\tau)=D\Phi_a(t,\tau)[u(\tau)]$. By \eqref{cocycle},
\begin{equation}\label{eq:case-I-tangent-cocycle}
\mathcal U(t,\tau)\mathcal U(\tau,t_0)=\mathcal U(t,t_0), \qquad t,\tau,t_0 \in \mathbb{R},
\end{equation}
holds. For $\rho=0,\ s>0$ or $0<\rho<s,\ s>1$,
\Cref{rem:continuum-orbit-bounds}, Rellich compactness, and
\Cref{prop:magnetic-conjugation} imply
\begin{equation}\label{eq:comparison-orbit-tangent}
 \|\mathcal{U}(t,\tau)f\|_{H^\rho}
 \lesssim_{R,s,\rho}\langle t-\tau\rangle\|f\|_{H^\rho},
 \qquad f\in H^\rho,\quad t,\tau\in\mathbb{R}.
\end{equation}

\begin{lemma}\label{lem:case-I-local-defect}
Let $0<s<1$ and $\|u_0\|_{H^s}\leq R$. Let $I_j=[t_j,t_{j+1}]$ and define
\begin{equation}\label{eq:case-I-defect-definition}
 d_j(t)=e_{h,N}(t)-\mathcal{U}(t,t_j)e_{h,N}(t_j),
 \qquad t\in I_j.
\end{equation}
There exist $\delta = \delta(R,\Lambda) \in (0,1]$ and $\epsilon(R)>0$ such that if $|I_j| \leq \delta$ and
\begin{equation*}
\|e_{h,N}(t_j)\|_{L^2} +\|F_{h,N}\|_{X^{0,-b_0}(I_j)} \leq\epsilon,    
\end{equation*}
then
\begin{equation}\label{eq:case-I-local-error-X}
\begin{split}
\|e_{h,N}\|_{X^{0,b}(I_j)} &\lesssim_R
 \|e_{h,N}(t_j)\|_{L^2} +\|F_{h,N}\|_{X^{0,-b_0}(I_j)},\\
\|d_j\|_{C(I_j;L^2)} &\lesssim_R \|F_{h,N}\|_{X^{0,-b_0}(I_j)} +\sum_{m=2}^3 \left( \|e_{h,N}(t_j) \|_{L^2} + \|F_{h,N}\|_{X^{0,-b_0}(I_j)} \right)^m.    
\end{split}
\end{equation}
\end{lemma}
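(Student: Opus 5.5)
The plan is to view $e_{h,N}$ as the perturbation of the continuum solution $u$ driven by the forcing $F_{h,N}$ in \eqref{eq:approx-equation}. We then linearize the magnetic flow around $u$ on the short interval $I_j$ and treat everything beyond first order as a multilinear remainder.

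Subtracting \eqref{main_eq2} from \eqref{eq:approx-equation} gives
\begin{equation*}
i\partial_t e_{h,N}=-(\partial_x-ia(t))^2e_{h,N}+\bigl(\mathcal N(u+e_{h,N})-\mathcal N(u)\bigr)+F_{h,N}.
\end{equation*}
In Duhamel form on $I_j$ this reads
\begin{equation*}
e_{h,N}(t)=S_a(t,t_j)e_{h,N}(t_j)-i\int_{t_j}^tS_a(t,\tau)\Bigl(\mathcal N(u+e_{h,N})-\mathcal N(u)+F_{h,N}\Bigr)(\tau)\,d\tau .
\end{equation*}

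First, control $u$ on $I_j$. Mass conservation gives $\|u(t_j)\|_{L^2}\le\|u_0\|_{L^2}\le R$. The continuum analogue of \Cref{prop:local} (the $s=0$ contraction with \Cref{lem:linear-estimates} and the continuum $L^4$ Strichartz bound of \cite{BourgainNLS1}, i.e.\ \eqref{eq:L43-dual} together with its dual) gives $\|u\|_{X^{0,b}(I_j)}\lesssim R$ once $\delta\simeq\min\{1,R^{-2/\theta}\}$.

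Expand the difference of nonlinearities into terms linear, quadratic and cubic in $e_{h,N}$, with coefficients quadratic, linear and constant in $u,\bar u$. Each term is estimated by $L^4\times L^4\times L^4\to L^{4/3}$ and \eqref{eq:L43-dual}, giving bounds by products of $X^{0,b}(I_j)$ norms. Then \Cref{lem:linear-estimates} yields
\begin{equation*}
\|e_{h,N}\|_{X^{0,b}}\lesssim\|e_{h,N}(t_j)\|_{L^2}+\|F_{h,N}\|_{X^{0,-b_0}}+\delta^\theta\bigl(R^2\|e\|+R\|e\|^2+\|e\|^3\bigr).
\end{equation*}
Shrinking $\delta$ absorbs the linear term. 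Smallness of $\epsilon$ closes a continuity (bootstrap) argument in $|I|$, since $t\mapsto\|e_{h,N}\|_{X^{0,b}([t_j,t])}$ is continuous and small at $t=t_j$. This gives the first line of \eqref{eq:case-I-local-error-X}.

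For $d_j$, note that $w(t)=\mathcal U(t,t_j)e_{h,N}(t_j)$ solves the linearized magnetic NLS
\begin{equation*}
i\partial_tw=-(\partial_x-ia)^2w+2\mu|u|^2w+\mu u^2\bar w,\qquad w(t_j)=e_{h,N}(t_j).
\end{equation*}
Justifying this uses the analyticity of the $L^2$ flow from \cite{BourgainNLS1}: $\mathcal U$ is the derivative of the flow map, and the linearized Duhamel equation has a unique $X^{0,b}$ solution by the same contraction.

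Then $d_j$ solves the linearized equation with zero data and forcing
\begin{equation*}
F_{h,N}+\bigl[\mathcal N(u+e)-\mathcal N(u)-D\mathcal N(u)e\bigr],
\end{equation*}
where the bracket is quadratic plus cubic in $e$. The same Duhamel and multilinear estimates, absorbing the linear-in-$d_j$ part by smallness of $\delta$, give
\begin{equation*}
\|d_j\|_{X^{0,b}}\lesssim\|F\|_{X^{0,-b_0}}+R\|e\|_{X^{0,b}}^2+\|e\|_{X^{0,b}}^3 .
\end{equation*}
Inserting the first bound and using $X^{0,b}\hookrightarrow C(I_j;L^2)$ yields the second line.

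The main obstacle is identifying $\mathcal U(t,t_j)$, defined abstractly as $D\Phi_a$, with the $X^{0,b}$ solution of the linearized magnetic equation on $I_j$. For this I would differentiate the continuum Duhamel contraction in the initial data, which is legitimate because the fixed-point map is analytic in $X^{0,b}$. A second care point is that $u$ is only in $H^s$, so every estimate must stay at the $L^2$ level, which the $L^4$ bounds permit.
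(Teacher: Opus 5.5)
Your proposal is correct and follows the paper's proof: Duhamel on $I_j$, the $L^4$ and \eqref{eq:L43-dual} multilinear bounds with $\|u\|_{X^{0,b}(I_j)}\lesssim_R 1$, absorption of the linear term by shrinking $\delta$, and then the same Duhamel estimate for $d_j$, whose forcing is $F_{h,N}$ plus the quadratic-and-cubic part $\mathcal{Q}_u(e_{h,N})$. The only difference is how you close the first bound. You use a continuity argument in the right endpoint, which needs $e_{h,N}\in X^{0,b}(I_j)$ a priori and continuity of the restricted norm; since $b>\frac12$, that norm tends to a quantity comparable to $\|e_{h,N}(t_j)\|_{L^2}$ as $t\to t_j$, not to zero. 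The paper instead runs a contraction in a small ball of $X^{0,b}(I_j)$ and identifies the fixed point with $e_{h,N}$.
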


\begin{proof}
By mass conservation, the continuum local theory \cite{BourgainNLS1}, and \Cref{prop:local}, there exists $\delta=\delta(R,\Lambda)\in(0,1]$ such that, for every $I_j$ with $|I_j|\leq\delta$,
\begin{equation*}
 \|u\|_{X^{0,b}(I_j)}
 +\|U_h\|_{X_h^{0,b}(I_j)}
 \lesssim_{R,\Lambda} 1.
\end{equation*}
Define
\begin{equation*}
\mathcal{Q}_u(v) = 2u|v|^2+\overline{u}\,v^2+|v|^2v.
\end{equation*}
Subtracting \eqref{main_eq2} from \eqref{eq:approx-equation}
yields
\begin{equation}\label{eq:case-I-error-equation}
 i\partial_t e_{h,N} = -(\partial_x-ia(t))^2e_{h,N} + 2|u|^2e_{h,N} + u^2\overline{e_{h,N}} + \mathcal{Q}_u(e_{h,N}) + F_{h,N}.
\end{equation}
By Duhamel formula,
\begin{equation*}
\begin{split}
 e_{h,N}(t) = S_a(t,t_j)e_{h,N}(t_j) - i\int_{t_j}^{t}S_a(t,\tau) \Bigl( 2|u|^2e_{h,N}+u^2\overline{e_{h,N}} + \mathcal{Q}_u(e_{h,N})+F_{h,N} \Bigr)(\tau) d\tau.
\end{split}
\end{equation*}
Using $\|u\|_{X^{0,b}(I_j)} \lesssim R$,
\Cref{lem:linear-estimates}, and \eqref{eq:L43-dual},
we obtain
\begin{equation*}
 \|e_{h,N}\|_{X^{0,b}(I_j)} \lesssim \| e_{h,N} (t_j) \|_{L^2} + \delta^\theta \| F_{h,N} \|_{X^{0,-b_0}(I_j)} + \delta^\theta\left( \|e_{h,N}\|_{X^{0,b}(I_j)} + \|e_{h,N}\|_{X^{0,b}(I_j)}^2 + \|e_{h,N}\|_{X^{0,b}(I_j)}^3 \right),
\end{equation*}
and shrinking $\delta$, if necessary, the linear term is absorbed, and thus
\begin{equation}\label{lem62:duhamel}
 \|e_{h,N}\|_{X^{0,b}(I_j)} \lesssim_R \| e_{h,N} (t_j) \|_{L^2} + \delta^\theta \| F_{h,N} \|_{X^{0,-b_0}(I_j)} + \delta^\theta\left(\|e_{h,N}\|_{X^{0,b}(I_j)}^2 + \|e_{h,N}\|_{X^{0,b}(I_j)}^3 \right).
\end{equation}
Apply a fixed point argument in a closed ball of $X^{0,b}(I_j)$ 
\begin{equation*}
\left\{ v \in X^{0,b}(I_j): \| v\|_{X^{0,b}(I_j)}
\leq C(R) \left(\|e_{h,N}(t_j)\|_{L^2} +\|F_{h,N}\|_{X^{0,-b_0}(I_j)}\right)\right\}.
\end{equation*}
Then, the estimate \eqref{lem62:duhamel} closes for some $\epsilon>0$ that depends on $R$, and the corresponding difference estimate shows that this map is a contraction. The unique fixed point coincides with $e_{h,N}$. The first claim in \eqref{eq:case-I-local-error-X} has been shown.

We show the second claim. Observe that
$w(t):=\mathcal{U}(t,t_j)e_{h,N}(t_j)$ satisfies the linearized NLS
\begin{equation*}
 i\partial_t w
 =-(\partial_x-ia(t))^2w+2|u|^2w+u^2\overline{w},
 \qquad
 w(t_j)=e_{h,N}(t_j).
\end{equation*}
Subtracting the corresponding Duhamel formulas and using
$d_j=e_{h,N}-w$, we obtain
\begin{equation*}
 d_j(t)
 =-i\int_{t_j}^{t}S_a(t,\tau)
 \Bigl(
 2|u|^2d_j+u^2\overline{d_j}
 +\mathcal{Q}_u(e_{h,N})+F_{h,N}
 \Bigr)(\tau)\,d\tau.
\end{equation*}
Applying \Cref{lem:linear-estimates} and the same multilinear
estimates as in the first claim, we have
\begin{equation*}
 \|d_j\|_{X^{0,b}(I_j)}
 \lesssim_R\delta^\theta\Bigl(
 \|d_j\|_{X^{0,b}(I_j)}
 +\|F_{h,N}\|_{X^{0,-b_0}(I_j)}
 +\|e_{h,N}\|_{X^{0,b}(I_j)}^2
 +\|e_{h,N}\|_{X^{0,b}(I_j)}^3
 \Bigr).
\end{equation*}
By the preceding choice of $\delta$, the linear term is absorbed. The second claim follows from the first estimate in \eqref{eq:case-I-local-error-X} and the embedding $X^{0,b}(I_j)\hookrightarrow C(I_j;L^2)$.
\end{proof}

\begin{proof}[Proof of \Cref{thm:intro-polynomial}]
Let $0 \leq t \leq T$. Define
\begin{equation*}
 q_s=
 \begin{cases}
 \frac{1}{2},&0<s\leq\frac{1}{2},\\
 2,&\frac{1}{2}<s<1,
 \end{cases}
 \qquad
 r_s=\frac{1}{2}\lceil s-1\rceil,\quad s\geq1,
\end{equation*}
from \Cref{prop:polynomial-orbit-growth}.

Let $0<s<1$. Choose $\epsilon,\delta$ from
\Cref{lem:case-I-local-defect} and partition $[0,T]$ into intervals of the form $I_j=[t_j,t_{j+1}]$, with $t_0=0$, $|I_j|\leq\delta$,
and at most $\langle T\rangle$ intervals up to a constant depending on $R$. By \Cref{prop:polynomial-orbit-growth} and
\Cref{rem:continuum-orbit-bounds},
\begin{equation}\label{interpol_s_small}
 \|U_h\|_{C([0,T];H^s)}+\|e_h\|_{C([0,T];H^s)}
 \lesssim\langle T\rangle^{q_s}.
\end{equation}
Then, \Cref{prop:local} and \eqref{eq:case-I-forcing-Xminus} imply
\begin{equation*}
 \max_j\|F_{h,N}\|_{X^{0,-b_0}(I_j)}
 \lesssim h^{\frac{s}{2}}\langle T\rangle^{q_s}.
\end{equation*}
Recall $\| e_{h,N}(0)\|_{L^2} \lesssim h^{\frac{s}{2}}$ by \Cref{cor:optimized-consistency}. Denote
\begin{equation*}
 L_1^{(1)}:=C\langle T\rangle^2
 \left(\|e_{h,N}(0)\|_{L^2}
 +\max_j\|F_{h,N}\|_{X^{0,-b_0}(I_j)}\right) \lesssim h^{\frac{s}{2}}\langle T \rangle^{q_s+2},
\end{equation*}
where $C \gg 1$; let $0 < c_0 \ll \epsilon$ where $\epsilon>0$ is from \Cref{lem:case-I-local-defect}, and $C,c_0$ are determined in \eqref{eq:case-I-induction}. If $\langle T\rangle^2L_1^{(1)}>c_0$,
mass conservation implies
\begin{equation*}
 \|e_h\|_{C([0,T];L^2)}\lesssim1
 \lesssim\langle T\rangle^2L_1^{(1)}
 \lesssim h^{\frac{s}{2}}\langle T\rangle^{q_s+4},
\end{equation*}
and it remains to consider $\langle T\rangle^2L_1^{(1)}\leq c_0$.

By \eqref{eq:case-I-tangent-cocycle}, \eqref{eq:case-I-defect-definition}, and induction,
\begin{equation}\label{eq:case-I-telescoping}
 e_{h,N}(t_m)=\mathcal{U}(t_m,0)e_{h,N}(0)
 +\sum_{j<m}\mathcal{U}(t_m,t_{j+1})d_j(t_{j+1}).
\end{equation}
For all $m$, we prove $\|e_{h,N}(t_m)\|_{L^2}\leq2L_1^{(1)}$ by induction. Since $\| e_{h,N}(t_0) \|_{L^2} \leq L_1^{(1)}$, the base case holds. Under the induction
hypothesis, for $j<m$,
\begin{equation*}
 \|e_{h,N}(t_j)\|_{L^2}
 +\|F_{h,N}\|_{X^{0,-b_0}(I_j)}
 \leq3L_1^{(1)}\leq3c_0\leq\epsilon.
\end{equation*}
Applying \Cref{lem:case-I-local-defect} to
the telescoping formula \eqref{eq:case-I-telescoping}, along with the tangent bound \eqref{eq:comparison-orbit-tangent}, we obtain
\begin{equation}\label{eq:case-I-induction}
 \|e_{h,N}(t_m)\|_{L^2}
 \leq L_1^{(1)} + C_1\langle T\rangle^2
 \left((L_1^{(1)})^2+(L_1^{(1)})^3\right)
 \leq\frac{3}{2}L_1^{(1)},
\end{equation}
for some $C_1>0$, where the last inequality holds if $c_0 + c_0^2 \ll_{C,C_1} 1$. This closes the induction.

For any $t$, we may re-partition the interval $[0,T]$ so that $t$ becomes a node, and therefore, the induction conclusion holds for any $t \in [0,T]$ as follows:
\begin{equation*} \|e_{h,N}\|_{C([0,T];L^2)}\leq2L_1^{(1)}.
\end{equation*}
Adding back the tail $U_{h,N}(t) - U_h(t)$, estimating it as \Cref{cor:optimized-consistency}, and combining the two regimes of $\langle T \rangle^{2} L_1^{(1)}$, we have
\begin{equation*}
 \|e_h\|_{C([0,T];L^2)}
 \lesssim h^{\frac{s}{2}}\langle T\rangle^{q_s+4}.
\end{equation*}
Interpolating the estimate above with \eqref{interpol_s_small},
\begin{equation*}
 \|e_h(t)\|_{H^\rho} \lesssim h^{\frac{s-\rho}{2}}
 \langle t\rangle^{q_s+4(1-\frac{\rho}{s})},
 \qquad0\leq\rho<s<1.
\end{equation*}

Let $s\geq1$ and $\rho=0$. Let $\epsilon_{R}>0$ be the constant from \Cref{cor:prop71} such that its conclusion applies at the Sobolev scale $1,s$ whenever $\beta_h(T)\leq\epsilon_R$. If $\beta_h(T) > \epsilon_R$, then $\epsilon_R < \beta_h(T) \lesssim hT$ by \eqref{eq:beta-defect}, and mass conservation implies
\begin{equation*}
 \|e_h\|_{C([0,T];L^2)} \lesssim 1 \lesssim h^2 \langle T \rangle^2 \leq h^{\gamma_{s,0}}\langle T\rangle^{3r_s+2},
\end{equation*}
and henceforth assume $\beta_h(T) \leq \epsilon_{R}$. \Cref{cor:prop71} yields a crude bound:
\begin{equation}\label{eq:small-defect-Hs-L2-proof}
 \|U_h\|_{C([0,T];H^1)} \lesssim 1,
 \qquad \|U_h\|_{C([0,T];H^s)} \lesssim \langle T\rangle^{r_s}.
\end{equation}
Nonlinear variation of constants along $U_{h,N}$ and the mean value theorem imply
\begin{equation}\label{eq:nonlinear-variation}
\begin{split}
 e_{h,N}(t) &= U_{h,N}(t)-u(t)\\
 &=\Phi_a(t,0)[U_{h,N}(0)]-\Phi_a(t,0)[u_0] + \int_0^t D\Phi_a(t,\tau)[U_{h,N}(\tau)]
 \bigl(-iF_{h,N}(\tau)\bigr)d\tau\\
 &=\int_0^1D\Phi_a(t,0)
 [u_0+\vartheta e_{h,N}(0)]e_{h,N}(0) d\vartheta + \int_0^t D\Phi_a(t,\tau)[U_{h,N}(\tau)] \bigl(-iF_{h,N}(\tau)\bigr) d\tau.
\end{split}
\end{equation}
By \eqref{eq:small-defect-Hs-L2-proof} and the uniform boundedness of
$\mathcal{R}_N$ on $H^1$,
\begin{equation*}
 \|U_{h,N}(\tau)\|_{H^1}\lesssim1,
 \qquad
 \|u_0+\vartheta e_{h,N}(0)\|_{H^1}
 =\|(1-\vartheta)u_0+\vartheta U_{h,N}(0)\|_{H^1}=O(1),
\end{equation*}
uniformly for $0\leq\tau\leq T$ and $0\leq\vartheta\leq1$. Thus all base points in \eqref{eq:nonlinear-variation} lie in a fixed
$H^1$ ball, which is precompact in $L^2$. \Cref{prop:magnetic-conjugation} with $\rho=0$ implies
\begin{equation*}
\begin{split}
 \|e_{h,N}(t)\|_{L^2}
 &\lesssim
 \langle t\rangle\|e_{h,N}(0)\|_{L^2}
 +\int_0^t\langle t-\tau\rangle
       \|F_{h,N}(\tau)\|_{L^2} d\tau\\
 &\lesssim h^{\gamma_{s,0}}
 \left(\langle T\rangle
 +\langle T\rangle^{3r_s}
  \int_0^t\langle t-\tau\rangle d\tau\right)
 \lesssim h^{\gamma_{s,0}}\langle T\rangle^{3r_s+2},
\end{split}
\end{equation*}
where the initial-error and consistency estimates follow from \Cref{cor:optimized-consistency}. Finally, since $e_h=e_{h,N}+U_h-U_{h,N}$, the tail estimate in \Cref{cor:optimized-consistency} and \eqref{eq:small-defect-Hs-L2-proof} yield
\begin{equation*}
 \|U_h-U_{h,N}\|_{C([0,T];L^2)}
 \lesssim h^{\gamma_{s,0}}\langle T\rangle^{r_s},
\end{equation*}
and therefore
\begin{equation*}
 \|e_h\|_{C([0,T];L^2)}
 \lesssim h^{\gamma_{s,0}}\langle T\rangle^{3r_s+2}.
\end{equation*}
At $s=1$, \eqref{eq:fractional-growth} and the uniform continuum bound give $\|e_h(t)\|_{H^1}\lesssim\langle t\rangle$, so interpolation yields
\begin{equation*}
 \|e_h(t)\|_{H^\rho}
 \lesssim h^{\frac{1-\rho}{2}}\langle t\rangle^{2-\rho},
 \qquad0<\rho<1.
\end{equation*}
Let $s>1$ and $0<\rho<s$. Since $r_s\leq s$, a coarse Sobolev bound is
\begin{equation}\label{eq:positive-rho-high-error}
 \|U_h\|_{C([0,T];H^s)}+\|e_{h,N}\|_{C([0,T];H^s)}
 \lesssim\langle T\rangle^s.
\end{equation}

Denote
\begin{equation*}
 L_1^{(2)}=Ch^{\gamma_{s,\rho}}\langle T\rangle^{3s+2},
\end{equation*}
where $C \gg 1$ is chosen first and $0 < c_0 \ll 1$ afterward,
with the choices specified in \eqref{eq:positive-rho-low-bootstrap}.
If $\langle T\rangle^{s+2}\sqrt{L_1^{(2)}}>c_0$, then
\begin{equation*}
 \|e_h\|_{C([0,T];H^\rho)}
 \lesssim\langle T\rangle^s
 \lesssim\langle T\rangle^{3s+4}L_1^{(2)}
 \lesssim h^{\gamma_{s,\rho}}\langle T\rangle^{6(s+1)},
\end{equation*}
and it remains to assume
$\langle T\rangle^{s+2}\sqrt{L_1^{(2)}}\leq c_0$.

Since $\mathcal{U}(t,\tau)=D\Phi_a(t,\tau)[u(\tau)]$ is the
propagator for the linearization about $u$, variation of constants
applied to \eqref{eq:case-I-error-equation} yields
\begin{equation}\label{eq:positive-rho-variation}
 e_{h,N}(t)=\mathcal{U}(t,0)e_{h,N}(0)
 +\int_0^t\mathcal{U}(t,\tau)
 \Bigl(-i\bigl[F_{h,N}+\mathcal{Q}_u(e_{h,N})\bigr](\tau)\Bigr)d\tau.
\end{equation}
We argue by bootstrapping, starting from
$\|e_{h,N}(0)\|_{H^\rho}\leq L_1^{(2)}$ by
\Cref{cor:optimized-consistency} and the choice of $C$.
Suppose $\|e_{h,N}\|_{C([0,t];H^\rho)}\leq2L_1^{(2)}$.
Since $2L_1^{(2)}\leq2c_0^2\ll1$, we have
$\|e_{h,N}(\tau)\|_{H^\rho}\leq1$ throughout $[0,t]$.

Choose $\sigma=\frac{s+\rho}{2}\in(\rho,s)$. Since
$\sigma>\frac{1}{2}$, Sobolev multiplication, the uniform continuum
bound in \Cref{rem:continuum-orbit-bounds}, and interpolation yield
\begin{equation*} \|\mathcal{Q}_u(e_{h,N})\|_{H^\rho}\lesssim\|e_{h,N}\|_{H^\rho}\|e_{h,N}\|_{H^\sigma}\left(1+\|e_{h,N}\|_{H^\sigma}\right)\lesssim\langle T\rangle^s\|e_{h,N}\|_{H^\rho}^{\frac{3}{2}},
\end{equation*}
where the last step uses \eqref{eq:positive-rho-high-error},
$\|e_{h,N}\|_{H^\rho}\leq1$, and
$\|e_{h,N}\|_{H^\sigma}
\leq\|e_{h,N}\|_{H^\rho}^{\frac{1}{2}}
\|e_{h,N}\|_{H^s}^{\frac{1}{2}}$.
Applying the tangent bound \eqref{eq:comparison-orbit-tangent}
to \eqref{eq:positive-rho-variation}, together with
\Cref{cor:optimized-consistency}, the bootstrap estimate closes, as given by
\begin{equation}\label{eq:positive-rho-low-bootstrap}
 \|e_{h,N}\|_{C([0,t];H^\rho)}
 \leq L_1^{(2)}
 +C_1\langle T\rangle^{s+2}(2L_1^{(2)})^{\frac{3}{2}}
 \leq\frac{3}{2}L_1^{(2)},
\end{equation}
for some $C_1>0$, where $C\gg1$ absorbs the initial-error and
forcing contributions, and the last inequality follows from the
smallness condition if $c_0\ll_{C_1}1$.

Finally, $e_h=e_{h,N}+U_h-U_{h,N}$, the tail estimate in
\Cref{cor:optimized-consistency} yields
\begin{equation*}
 \|e_h\|_{C([0,T];H^\rho)}
 \lesssim L_1^{(2)}+h^{\gamma_{s,\rho}}\langle T\rangle^s
 \lesssim h^{\gamma_{s,\rho}}\langle T\rangle^{3s+2}.
\end{equation*}
Combining both regimes, we obtain
\begin{equation*}
 \|e_h(t)\|_{H^\rho}
 \lesssim h^{\gamma_{s,\rho}}\langle t\rangle^{6(s+1)},
 \qquad s>1,\quad0<\rho<s.
\end{equation*}
\end{proof}

\section{Obstructions and sharpness}\label{obstruction}

While the focusing nonlinearity restricts uniform polynomial control over large initial data, spatial sharpness governs the algebraic convergence rate for any time $T$.

\subsection{Focusing instability and polynomial bounds}

\begin{proposition}\label{thm:foc-failure}
Let $0\leq\rho<s$, and let $u$ and $u_h$ denote the corresponding solutions of  \eqref{main_eq2} and
\eqref{main_eq1}, respectively, with $\mu=-1$. There exists $R>0$ such that, for any
$\theta>0$ and $p\geq0$, there do not exist $C>0$ and $h_0>0$ for which
\begin{equation}\label{eq:foc-polynomial-bound}
 \| \mathcal{S}_h u_h(t)-u(t) \|_{H^\rho(\mathbb{T})}
 \leq C h^\theta \langle t\rangle^p
\end{equation}
holds for all $0 < h \leq h_0$, $\|u_0\|_{H^s}\leq R$, and $t \in \mathbb{R}$. 
\end{proposition}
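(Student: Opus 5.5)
We argue by contradiction with an explicit family of data, taking $a\equiv0$ (the statement leaves $a$ free; for the magnetic case one conjugates the continuum flow by \eqref{magnetic_conjugacy}, and the lattice phase correction is $O(h|a|_\infty)$ on low modes). The background is the focusing plane wave. Fix $A>0$ with $2A^2>1$. Then the continuum solution with $u_0=A$ is $u_{\rm pw}(t)=Ae^{iA^2t}$. Linearizing \eqref{main_eq2} about it, the mode pair $(n,-n)$ evolves with exponents $\pm\sqrt{n^2(n^2-2A^2)}$. The pair $n=\pm1$ is therefore unstable with rate $\lambda=\sqrt{2A^2-1}>0$, while every $|n|\geq2$ with $n^2>2A^2$ is oscillatory and linearly bounded. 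We take $R:=2(A+1)$, which fixes $R$ independently of $\theta,p$.

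\textbf{The data.} Let $M=\pi/h$ and consider
\begin{equation*}
u_0^h=A+\varepsilon_h e^{i(2M+1)x},\qquad \varepsilon_h=h^{s},
\end{equation*}
so that $\|u_0^h\|_{H^s}\lesssim A+1\leq R$. On the continuum side the perturbation lives at the modulationally stable frequency $2M+1$. On the lattice side, \eqref{eq:Jh-fourier} shows that the cell average aliases it onto mode $k=1$ with amplitude
\begin{equation*}
\varepsilon_h s_h(2M+1)=\varepsilon_h\,\frac{\sin(\pi+h/2)}{\pi+h/2}\simeq -\frac{\varepsilon_h h}{2\pi},
\end{equation*}
and this mode lies in $E_h$, so $P_\chi$ does not remove it. Hence $u_h(0)=A+\eta_h e^{ix}$ on $\mathbb{T}_h$, up to the mean correction, with $|\eta_h|\simeq h^{s+1}$.

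\textbf{Step 1: the continuum solution stays close to the plane wave.} We write $u=u_{\rm pw}+v$ and work in the finite-dimensional invariant set spanned by modes $j(2M+1)$, $j\in\mathbb{Z}$; the cubic interactions close on multiples of $2M+1$. The linear part in modes $\pm(2M+1)$ is bounded uniformly in time. Each higher multiple $j\neq0$ is also stable, being at frequency much larger than $A$. A Gronwall/bootstrap argument in $L^2$, using mass conservation and $H^1$ control from energy conservation, gives $\|v(t)\|_{L^2}\lesssim\varepsilon_h e^{C\varepsilon_h^2 t}$, and this is $\ll1$ up to $t\leq h^{-1}$. In particular it is $\ll1$ on the time $T_h$ below.

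\textbf{Step 2: the lattice solution leaves the plane wave at time $T_h\simeq|\log h|$.} The lattice system \eqref{main_eq1} with $a=0$ preserves the subspace of modes that are multiples of $1$, which is all of them. We restrict instead to the symmetric ansatz that is invariant under $x\mapsto -x$, so that mode $1$ and mode $-1$ are coupled. Linearizing about $Ae^{iA^2t}$, the pair $(1,-1)$ has discrete exponent $\sqrt{\omega_{h,0}(1)(\omega_{h,0}(1)-2A^2)}$. Since $\omega_{h,0}(1)=1+O(h^2)$, this gives an unstable rate $\lambda_h\to\lambda$. We then use a Grenier-type nonlinear instability argument. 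Up to the first time $T_h$ at which the deviation reaches a fixed small $\delta_0=\delta_0(A)$, the deviation equals $\eta_h e^{\lambda_h t}\phi_h$ (with $\phi_h$ the unstable eigenvector, normalized) plus a remainder bounded by $C(\eta_he^{\lambda_ht})^2$. This yields
\begin{equation*}
\|u_h(T_h)-Ae^{iA^2T_h}\|_{L^2_h}\geq\delta_0/2,\qquad T_h=\frac{\log(\delta_0/|\eta_h|)}{\lambda_h}\simeq\frac{s+1}{\lambda}|\log h|.
\end{equation*}
The uniform-in-$h$ constants come from $\omega_{h,0}(k)=k^2+O(h^2k^4)$ on low modes together with mass conservation.

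\textbf{Conclusion and main obstacle.} Combining the two steps and using $H^\rho\supseteq L^2$ control, we get $\|\mathcal{S}_hu_h(T_h)-u(T_h)\|_{H^\rho}\geq\delta_0/4$. Any bound of the form \eqref{eq:foc-polynomial-bound} would instead give $Ch^\theta\langle T_h\rangle^p\lesssim h^\theta|\log h|^p\to0$, a contradiction. The hardest part is Step 2, namely making the nonlinear instability quantitative and uniform in $h$: one must control the remainder from high modes created by cubic interactions and the $O(h^2)$ dispersion mismatch on a time interval of logarithmic length. I would first reduce to a finite Fourier truncation invariant under the symmetric ansatz. Then I would run the bootstrap on $\|\text{dev}(t)-\eta_he^{\lambda_ht}\phi_h\|\leq K(\eta_he^{\lambda_ht})^2$, using a spectral gap of the linearized lattice operator that is uniform for $h\leq h_0$. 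Step 1's uniformity in the huge frequency $2M+1$ is secondary but must also be checked carefully.
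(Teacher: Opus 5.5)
Your mechanism is the paper's. A perturbation $h^s e^{i(2M+m)x}$ at a modulationally stable continuum frequency is aliased by $\Pi_h$ onto an unstable lattice mode $m$ with amplitude $\simeq h^{s+1}$. The lattice solution is then driven off the plane wave by a bootstrap up to $T_h\simeq|\log h|$, and $h^\theta\langle T_h\rangle^p\to0$ gives the contradiction. You diverge on the continuum side, and needlessly. You already observe that the flow preserves Fourier support in $(2M+1)\mathbb{Z}$. This is an infinite-dimensional subspace, not a finite-dimensional one, and the invariance follows from translation invariance and uniqueness for any $a$. That fact alone gives $\widehat{u}(\pm1,t)=0$ for all $t$. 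Hence $\|\mathcal{S}_hu_h(T_h)-u(T_h)\|_{H^\rho}\geq\max_{\pm}|\widehat{u}_h(\pm1,T_h)|$, and your Step 2 already bounds exactly these coefficients from below, since $\phi_h$ lives on the pair $\pm1$. This is how the paper concludes, with no continuum stability estimate at all. Step 1 as sketched would also need repair. Since $\|v(0)\|_{H^1}\simeq h^{s-1}$, for $s\leq\frac12$ the $H^1$ bound does not give the $L^\infty$ smallness your Gronwall step uses. Moreover, the zero mode of the linearization is nilpotent rather than bounded. (A correct version combines mass and energy with $\|u-\widehat{u}(0)\|_{L^2}\leq(2M+1)^{-1}\|\partial_xu\|_{L^2}$ on this subspace, but it is superfluous.)

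As written, Step 2 has two genuine gaps.

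\textbf{The choice of $A$ and of the unstable mode.} With $A$ arbitrary subject to $2A^2>1$, the pair $\pm1$ need not be the most unstable one. The nonlinear remainder populates all lattice modes, and a bootstrap can only propagate it with the full linearized propagator, which grows like $e^{\Lambda_ht}$ with $\Lambda_h$ the maximal rate. When $\Lambda_h\geq2\lambda_h(1)$ (this happens for large $A$, where $\Lambda_h\approx A^2$), the only available bound on the Duhamel term is $\eta_h^2te^{\Lambda_ht}$. At $t=T_h$ this is not small compared with $\delta_0$, so your bound $C(\eta_he^{\lambda_ht})^2$ does not close. Since $R$ is existential, you can take $\frac12<A^2<2$, so that $\pm1$ is the only unstable pair uniformly in small $h$. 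The paper instead keeps $A>2^{-1/2}$ general and aliases onto a maximizer $m_h$ of $\lambda_h$ via $k_h=2M+m_h$. Note also that $\|u_0^h\|_{H^s}\approx(A^2+(2\pi)^{2s})^{1/2}$, so $R$ must depend on $s$.

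\textbf{The potential $a$.} The proposition concerns the given $a\in L^\infty$, and the paper proves it for every such $a$. So $a\equiv0$ does not suffice, and your one-line reduction is inaccurate. On each pair $(\widehat v(k),\overline{\widehat v(-k)})$, the symbol $\omega_{h,a(t)}(k)-\omega_{h,a(t)}(0)$ has an odd transport part $-\frac{2}{h^2}\sin(hk)\sin(ha(t))$. This is $O(|a|_\infty)$ on low modes and $O(|a|_\infty/h)$ at $|k|\sim h^{-1}$. Its even part $\omega_{h,0}(k)\cos(ha(t))$ differs from $\omega_{h,0}(k)$ by up to $O(|a|_\infty^2)$ at high frequencies. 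Neither is an $O(h|a|_\infty)$ perturbation, yet the bootstrap needs a propagator bound on all modes. The missing idea is the paper's:
\begin{itemize}
\item the odd part acts as a scalar multiple of $\Id_2$ on each pair and is removed by an integrating factor;
\item the remaining diagonal term $2\omega_{h,0}(k)\sin^2(ha/2)$ is handled by Duhamel--Gronwall where $\omega_{h,0}(k)\leq3A^2$;
\item where $\omega_{h,0}(k)>3A^2$, it is handled by the quadratic form $\langle H\widehat{Y},\widehat{Y}\rangle$ with $HB_h^0+(B_h^0)^*H=0$.
\end{itemize}
This yields $\|\mathscr{U}_h(t,t_0)\|\leq C_Le^{(\Lambda_h+C_L|a|_\infty^2h^2)(t-t_0)}$, a harmless loss on $[0,T_h]$.

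Finally, the symmetric ansatz under $x\mapsto-x$ is incompatible with data $e^{i(2M+1)x}$, and it is unnecessary. The linearization couples $\widehat v(1)$ with $\overline{\widehat v(-1)}$ automatically, and $(\eta_h,0)^T$ has an unstable component of modulus at least $|\eta_h|/\sqrt2$.
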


Fix $A > 2^{-\frac{1}{2}}$;
all implicit constants may or may not depend on $A$.

\begin{remark}\label{future_research}
The focusing extension relies on $W_f$ being an open neighborhood
of zero in $L^2$. Mass conservation gives $L^2$-stability of zero
for both flows, so sufficiently small $\|u_0\|_{L^2}$ keeps the
continuum and Shannon-lifted discrete orbits in a fixed closed $L^2$-ball contained in $W_f$. The established Sobolev bounds and Rellich-Kondrachov compactness allow the same polynomial approximation argument without smallness of $\|u_0\|_{H^s}$.

The instability construction requires $A>2^{-1/2}$ and
$R\gtrsim A$, so it does not apply to arbitrarily small $H^s$-balls.
It also forces $\epsilon_*\leq\sqrt{\pi}$: if
$\epsilon_*>\sqrt{\pi}$, choose
$2^{-1/2}<A<\epsilon_*/\sqrt{2\pi}$. The $L^2$-ball of radius
$\epsilon_*$ then contains the modulationally unstable plane wave
of amplitude $A$ and, for all sufficiently small $h$, the
perturbations $u_0^{(h)}$ used in the instability construction.
Thus uniform polynomial error bounds cannot hold throughout
that ball. These results do not give a complete focusing-defocusing dichotomy: the intermediate regime between the small-mass extension
and the instability construction remains open.
\end{remark}

Note
\begin{equation*}
    A \exp\left(-i \int_0^t \left(\omega_{h,a(\tau)}(0) - A^2\right) d\tau\right)
\end{equation*}
is a plane-wave solution. Let $u_h(x,t)$ be an exact solution to the magnetic DNLS where 
\begin{equation}\label{gauge}
    u_h(x,t) = \exp\left(-i \int_0^t \left(\omega_{h,a(\tau)}(0) - A^2\right) d\tau\right) ( A + v_h(x,t) ).
\end{equation}
Substituting this ansatz yields
\begin{equation*}
\begin{split}
i\partial_t v_h &= \left(-\Delta_{h,a(t)} - \omega_{h,a(t)}(0) \right)v_h - A^2(v_h + \overline{v_h}) - \mathcal{R}(v_h),\\
    \mathcal{R}(v_h) &= P_\chi \left(2A|v_h|^2+Av_h^2+|v_h|^2v_h\right).    
\end{split}
\end{equation*}
Due to the complex conjugate term, re-write the equation as a system
\begin{equation}\label{full_perturb}
    \partial_t X_h(t) = \mathcal{L}_h(t) X_h(t) + F_h(t),
\end{equation}
where
\begin{equation*}
\begin{split}
X_h &= \begin{pmatrix} v_h & \overline{v_h} \end{pmatrix}^T, \qquad F_h = \begin{pmatrix} i\mathcal{R}(v_h) & -i\overline{\mathcal{R}(v_h)} \end{pmatrix}^T\\
\mathcal{L}_h(t) &= \begin{pmatrix} 
    -i\left(-\Delta_{h,a(t)} - \omega_{h,a(t)}(0) - A^2\right) & iA^2 \\ 
    -iA^2 & i\left(-\Delta_{h,-a(t)} - \omega_{h,a(t)}(0) - A^2\right) 
    \end{pmatrix}.
\end{split}
\end{equation*}

Let $\mathscr{U}_h(t,t_0)$ be the linear propagator associated with the linear system given by
\begin{equation*}
    \partial_t \mathscr{U}_h(t,t_0) = \mathcal{L}_h(t) \mathscr{U}_h(t,t_0), \qquad \mathscr{U}_h(t_0,t_0) = \Id.
\end{equation*}
Since $a \in L^\infty(\mathbb{R};\mathbb{R})$, the coefficients of $\mathcal{L}_h(t)$ are bounded and measurable, and the Carath\'{e}odory argument of \Cref{rem:caratheodory} applies. We consider $\mathscr{U}_h(t,t_0)$ on
$\mathbb{H}_h^1 := H_h^1 \times H_h^1$ endowed with $\|(f,g)\|_{\mathbb{H}_h^1}
:= \left(\|f\|_{H_h^1}^2+\|g\|_{H_h^1}^2\right)^{\frac{1}{2}}$.

\begin{lemma}
\label{lem:foc-linear-evolution-growth}
Let $A > 2^{-\frac{1}{2}}$ and $\omega_{h,0}(k) = \frac{4}{h^2}\sin^2 \left(\frac{hk}{2}\right)$. Define
\begin{equation*}
\lambda_h(k) = \sqrt{\omega_{h,0}(k)(2A^2-\omega_{h,0}(k))},\qquad \Lambda_h = \max_{k\in\mathbb{T}_h^*:\ \omega_{h,0}(k) \in (0,2A^2)}\lambda_h(k).    
\end{equation*}
There exists $C_L > 0$ depending only on $A$ such that, for any $0 < h < 1$ and $t \geq t_0$,
\begin{equation*}
\| \mathscr{U}_h(t,t_0) \|_{\mathbb{H}_h^1 \rightarrow \mathbb{H}_h^1} \leq C_L \exp\left(\left(\Lambda_h + C_L \ainf^{2} h^{2}\right)(t-t_0)\right).
\end{equation*}
\end{lemma}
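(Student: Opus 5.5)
Since $\mathcal{L}_h(t)$ is a Fourier multiplier system, the plan is to block-diagonalize it on the Fourier side and bound each $2\times2$ block uniformly in $k$. For $X=(v,\overline v)$, the Fourier coefficient of $\overline v$ at $k$ is $\overline{\widehat v(-k)}$, and $-\Delta_{h,-a}$ has symbol $\omega_{h,0}(k+a)$. Hence the pair $x_k=(\widehat v(k),\overline{\widehat v(-k)})$ solves $\dot x_k=M_k(t)x_k$ with
\begin{equation*}
M_k=\begin{pmatrix}-i\alpha_k+iA^2 & iA^2\\ -iA^2 & i\beta_k-iA^2\end{pmatrix},\qquad \alpha_k=\omega_{h,0}(k-a)-\omega_{h,0}(a),\quad \beta_k=\omega_{h,0}(k+a)-\omega_{h,0}(a).
\end{equation*}
Because $\langle k\rangle=\langle -k\rangle$, the $\mathbb{H}^1_h$ norm is the $\langle k\rangle$-weighted $\ell^2$ sum over these blocks. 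It therefore suffices to prove $|x_k(t)|\le C_L e^{(\Lambda_h+C_L\ainf^2h^2)(t-t_0)}|x_k(t_0)|$ uniformly in $k$, $h$ and $a$. The solutions are absolutely continuous by \Cref{rem:caratheodory}, so all differentiations below are a.e. in $t$.

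\textbf{Splitting off the phase and isolating the twist.} I split $M_k=-\tfrac{i}{2}(\alpha_k-\beta_k)\Id+B(\gamma_k(t))$, where $B(\gamma)=-i\sigma_3S(\gamma)$ and $S(\gamma)=\begin{pmatrix}\gamma&-A^2\\-A^2&\gamma\end{pmatrix}$. The scalar part commutes with everything and contributes only a unimodular factor, so it can be discarded. By the pair identity \eqref{pair_phase} (cosine addition) and $\omega_{h,0}(a)=\frac{2}{h^2}(1-\cos ha)$, I expect
\begin{equation*}
\tfrac12(\alpha_k+\beta_k)=\cos(ha)\,\omega_{h,0}(k),\qquad \gamma_k(t)=\gamma_k^0+\delta_k(t),
\end{equation*}
where $\gamma_k^0=\omega_{h,0}(k)-A^2$ and $\delta_k=(\cos(ha)-1)\omega_{h,0}(k)$. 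This gives $|\delta_k|\le \tfrac12h^2\ainf^2\omega_{h,0}(k)$. The frozen block $B_k^0=B(\gamma_k^0)$ has eigenvalues $\pm\sqrt{A^4-(\gamma_k^0)^2}=\pm\lambda_h(k)$, and the perturbation is $B(\gamma_k)-B_k^0=-i\delta_k\sigma_3$. Since $a$ is only $L^\infty$, I will never differentiate $\delta_k$ in time; I only use its size.

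\textbf{High modes, $\omega_{h,0}(k)\ge4A^2$.} Here I use the frozen energy $E_k=x^*S(\gamma_k^0)x$, which is conserved by the frozen flow because $B_k^0=-i\sigma_3S$ and $\sigma_3$ is Hermitian. The eigenvalues of $S$ are $\omega_{h,0}(k)$ and $\omega_{h,0}(k)-2A^2\ge\tfrac12\omega_{h,0}(k)$, so $E_k\simeq\omega_{h,0}(k)|x|^2$. Along the true flow,
\begin{equation*}
\dot E_k=x^*\bigl(SP+P^*S\bigr)x=-i\delta_k\,x^*[S,\sigma_3]x,\qquad P=-i\delta_k\sigma_3,
\end{equation*}
and only the off-diagonal $A^2$ part of $S$ survives in the commutator. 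Thus $|\dot E_k|\le2A^2|\delta_k||x|^2\lesssim A^2h^2\ainf^2\,E_k$. The crucial point is that the factor $\omega_{h,0}(k)$ in $\delta_k$ cancels against the coercivity of $E_k$. Gronwall then yields rate $Ch^2\ainf^2\le\Lambda_h+Ch^2\ainf^2$.

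\textbf{Low modes, $\omega_{h,0}(k)<4A^2$.} Since $\omega_{h,0}(k)\gtrsim k^2$, these are finitely many, $|k|\lesssim A$, and all entries of $B_k^0$ are bounded by $C(A)$. First I record $\Lambda_h\ge c(A)>0$: for $h<1$ one has $\omega_{h,0}(1)\in[0.9,1]$, so $\lambda_h(1)^2\ge0.9(2A^2-1)>0$. The identity $e^{B_k^0t}=\cosh(\mu t)\Id+\mu^{-1}\sinh(\mu t)B_k^0$ with $\mu^2=\lambda_h(k)^2$ (possibly $\le0$) gives $\|e^{B_k^0t}\|\le(1+Ct)e^{\lambda_h(k)_+t}$. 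I then split into two cases.
\begin{itemize}
\item If $\lambda_h(k)\le\Lambda_h/2$, this gives $\|e^{B_k^0t}\|\le C e^{\frac34\Lambda_ht}$. Duhamel plus Gronwall with the perturbation of size $Ch^2\ainf^2$ (low modes) gives rate $\frac34\Lambda_h+Ch^2\ainf^2$.
\item If $\lambda_h(k)>\Lambda_h/2\ge c(A)/2$, the eigenvalues $\pm\lambda_h(k)$ are separated by at least $c(A)$. So $B_k^0=V\operatorname{diag}(\lambda,-\lambda)V^{-1}$ with $\|V\|\|V^{-1}\|\le C(A)$. In the norm $|V^{-1}x|$ the frozen growth is exactly $e^{\lambda_h(k)t}\le e^{\Lambda_ht}$, and Gronwall adds $C(A)h^2\ainf^2$.
\end{itemize}

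\textbf{Main obstacle.} The main obstacle is the low-mode near-degenerate regime $\omega_{h,0}(k)\approx2A^2$, where $B_k^0$ approaches a Jordan block and the $(1+Ct)$ factor cannot be removed. This is exactly why the dichotomy above is needed. It relies on the uniform gap $\Lambda_h\ge c(A)$, which is where $A>2^{-1/2}$ and $h<1$ enter. Combining the three cases and summing the blocks with weights $\langle k\rangle^2$ gives the stated bound with $C_L=C_L(A)$.
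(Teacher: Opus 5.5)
Your proposal is correct and follows essentially the same route as the paper: decouple into $2\times2$ Fourier blocks, gauge out the scalar odd part $-\tfrac{i}{2}D_h\Id_2$, split $B_h=B_h^0+\widetilde B_h$ with $\|\widetilde B_h\|\lesssim h^2\ainf^2\omega_{h,0}(k)$, use on high modes the quadratic form $H\propto S(\gamma_k^0)$ annihilated by the frozen flow (your $E_k$ is the paper's $\mathcal{E}$ up to the factor $\omega_{h,0}(k)-A^2$), and use on low modes the explicit $\cosh/\sinh$ exponential with Duhamel--Gronwall. The only difference is your low-mode dichotomy with diagonalization, which is valid but not needed: since $\sinh(\lambda\tau)/\lambda$ is increasing in $\lambda$ and $\Lambda_h\ge\frac{2}{\pi}\sqrt{2A^2-1}$, one gets $\|e^{\tau B_h^0(k)}\|\le C_Ae^{\tau\Lambda_h}$ uniformly over all low modes, which is the bound the paper asserts, and a single Gronwall step then places $C_A$ only in front of the $O(h^2\ainf^2)$ perturbation rate.
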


\begin{proof}
For any $h \leq \frac{\pi}{2}$, $\Lambda_h$ is well-defined since $\omega_{h,0}(\pm 1) \in (0,2A^2)$. Moreover, $\Lambda_h$ satisfies the uniform bound
\begin{equation}\label{gammah}
    \frac{2}{\pi} \sqrt{2A^2 - 1} \leq \Lambda_h \leq A^2,
\end{equation}
where the upper bound follows by maximizing $\sqrt{x(2A^2-x)}$ over $x \in [0,2A^2]$, and the lower bound follows by the Jordan's inequality.

In the Fourier representation, the homogeneous equation of \eqref{full_perturb} is
\begin{equation*}
\partial_t \widehat{X}_h(k,t) = \widehat{\mathcal{L}}_h \widehat{X}_h(k,t),\qquad \widehat{X}_h(k,t) = \begin{pmatrix} \widehat{v}_h(k,t) & \overline{\widehat{v}_h(-k,t)} \end{pmatrix}^T,    
\end{equation*}
where
\begin{equation*}
\widehat{\mathcal{L}}_h(k,t) = \begin{pmatrix} 
-i\left(\eta_{h}(k,t) - A^2\right) & iA^2 \\ 
-iA^2 & i\left(\eta_{h}(-k,t) - A^2\right)
\end{pmatrix},\qquad \eta_{h}(k,t) := \omega_{h,a(t)}(k) - \omega_{h,a(t)}(0).
\end{equation*}
For a general $a(t)$, $\eta_{h}(k,t) \neq \eta_{h}(-k,t)$. To recover symmetry and antisymmetry, define
\begin{equation*}
\begin{split}
\Sigma_{h}(k,t) &= \eta_h(k,t) + \eta_h(-k,t) = 2 \omega_{h,0}(k) \cos (ha(t)),\\
D_{h}(k,t) &= \eta_h(k,t) - \eta_h(-k,t) = -\frac{4}{h^2} \sin(hk) \sin(ha(t)).  
\end{split}
\end{equation*}
Then, $\widehat{\mathcal{L}}_h$ decomposes into an even and odd part
\begin{equation*}
\widehat{\mathcal{L}}_h = B_h - \frac{i}{2} D_h \Id_2,\qquad B_h(k,t) := \begin{pmatrix} 
-i\left(\frac{\Sigma_h(k,t)}{2} - A^2\right) & iA^2 \\ 
-iA^2 & i\left(\frac{\Sigma_h(k,t)}{2} - A^2\right)
\end{pmatrix},   
\end{equation*}
where $\Id_2$ is the identity operator on $\mathbb{C}^2$. The odd part is diagonal, and thus can be gauged out via an integrating factor, and we have
\begin{equation}\label{B_h}
    \partial_t \widehat{Y}_h(k,t) = B_h(k,t) \widehat{Y}_h(k,t),\qquad \widehat{Y}_h(k,t) := \exp \left(\frac{i}{2} \int_{t_0}^{t} D_h(k,\tau) d\tau\right) \widehat{X}_h(k,t). 
\end{equation}
Decompose the matrix action by $B_h$ into
\begin{equation*}
\begin{split}
B_h(k,t) &= B_h^{0}(k) + \widetilde{B}_h(k,t)\\   & :=
\begin{pmatrix} 
-i\left(\omega_{h,0}(k) - A^2\right) & iA^2 \\ 
-iA^2 & i\left(\omega_{h,0}(k) - A^2\right)
\end{pmatrix}
+
\begin{pmatrix} 
2i\omega_{h,0}(k) \sin^2 \left( \frac{ha(t)}{2} \right)& 0 \\ 
0 & -2i\omega_{h,0}(k) \sin^2 \left( \frac{ha(t)}{2} \right)
\end{pmatrix}.
\end{split}
\end{equation*}
Since $(B_h^{0}(k))^2 = \lambda_h(k)^2 \Id_2$, the eigenvalues of $B_h^{0}$ are $\pm \lambda_h(k)$. If $\omega_{h,0}(k) \in (0,2A^2)$, then $\lambda_h(k) > 0$. The matrix exponential can be computed explicitly as
\begin{equation*}
e^{\tau B_h^0(k)}
=
\cosh \left(\lambda_h(k)\tau\right) \Id_2
+
\frac{\sinh \left(\lambda_h(k)\tau\right)}
{\lambda_h(k)}
B_h^0(k),
\end{equation*}
for $\tau \geq 0$. By the bounds on $\Lambda_h$ in \eqref{gammah} and
\begin{equation}\label{B_h_op}
    \| B_h^{0}(k)\|_{\mathbb{C}^2\rightarrow \mathbb{C}^2} = A^2 + |\omega_{h,0}(k) - A^2| \leq 2A^2,
\end{equation}
we have
\begin{equation}\label{exp_bound}
    \| e^{\tau B_h^{0}(k)}\|_{\mathbb{C}^2\rightarrow \mathbb{C}^2} \leq \left(1 + \frac{2\pi A^2}{\sqrt{2A^2 - 1}}\right) e^{\tau \Lambda_h} =: C_A e^{\tau \Lambda_h}.
\end{equation}
A similar analysis in other cases, when $\lambda_h(k) = 0$ or $\lambda_h(k) \in i \mathbb{R}$, yields the same estimate \eqref{exp_bound}. 

To obtain growth estimates in \eqref{B_h}, consider two disjoint cases. First, consider $\omega_{h,0}(k) \leq 3A^2$. From the Duhamel formula,
\begin{equation*}
    |\widehat{Y}_h(k,t)| \leq C_A e^{(t-t_0)\Lambda_h} |\widehat{Y}_h(k,t_0)| + \int_{t_0}^{t} C_A e^{(t-\tau)\Lambda_h}\| \widetilde{B}_h(k,\tau)\|_{\mathbb{C}^2\rightarrow \mathbb{C}^2} |\widehat{Y}_h(k,\tau)| d\tau. 
\end{equation*}
Since $\| \widetilde{B}_h(k,\tau)\|_{\mathbb{C}^2\rightarrow \mathbb{C}^2} = 2 \omega_{h,0}(k) \sin^2 \left(\frac{h a(\tau)}{2}\right) \leq \frac{3}{2} A^2 \ainf^2 h^2$, by the Gronwall inequality,
\begin{equation*}
    |\widehat{Y}_h(k,t)| \leq C_A \exp\left(\left(\Lambda_h + \frac{3}{2} C_A A^2 \ainf^2 h^2\right)(t-t_0)\right)|\widehat{Y}_h(k,t_0)|.
\end{equation*}

The analysis above depends crucially on the upper bound on $\omega_{h,0}$. Now consider $\omega_{h,0}(k) > 3A^2$. Since $B_h^{0}$ is not self-adjoint, its eigenvectors need not be orthogonal, and therefore the standard inner product in $\mathbb{C}^2$ is insufficient to capture our desired exponential bound. Define
\begin{equation*}
H=
\begin{pmatrix} 
1& \frac{A^2}{A^2 - \omega_{h,0}(k)} \\ 
\frac{A^2}{A^2 - \omega_{h,0}(k)} & 1
\end{pmatrix},\qquad \mathcal{E} = \langle H \widehat{Y}_h, \widehat{Y}_h \rangle,
\end{equation*}
which satisfies
\begin{equation}\label{norm_equiv}
H B_h^{0} + (B_h^{0})^{*}H = \mathbf{0},\qquad \frac{1}{2} |\widehat{Y}_h|^2 \leq \mathcal{E} \leq \frac{3}{2} |\widehat{Y}_h|^2,   
\end{equation}
since $\big|\frac{A^2}{A^2 - \omega_{h,0}(k)}\big| < \frac{1}{2}$. Then,
\begin{equation}\label{B_h1}
H \widetilde{B}_h + (\widetilde{B}_h)^* H = \begin{pmatrix} 
0& -\frac{4 i \omega_{h,0}(k) \sin^2 \left(\frac{h a(t)}{2}\right) A^2}{A^2 - \omega_{h,0}(k)} \\ 
\frac{4 i \omega_{h,0}(k) \sin^2 \left(\frac{h a(t)}{2}\right) A^2}{A^2 - \omega_{h,0}(k)} & 0,
\end{pmatrix}
\end{equation}
and the energy method yields
\begin{equation*}
    \frac{d}{dt}\mathcal{E}(t) = \langle (H \widetilde{B}_h + (\widetilde{B}_h)^* H) \widehat{Y}_h,\widehat{Y}_h\rangle \leq \frac{4 \omega_{h,0}(k) \sin^2 \left(\frac{h a(t)}{2}\right) A^2}{\omega_{h,0}(k) - A^2} |\widehat{Y}_h|^2 < 3 A^2 \ainf^2 h^2 \mathcal{E}(t).
\end{equation*}
The first inequality follows from direct computation using \eqref{B_h1}. The second inequality follows from $|\widehat{Y}_h|^2 \leq \left(1-\frac{A^2}{\omega_{h,0}(k) - A^2}\right)^{-1} \mathcal{E}$, a consequence of $1-\frac{A^2}{\omega_{h,0}(k) - A^2}$ being the smallest eigenvalue of $H$, and an estimate $\frac{\omega_{h,0}(k)}{\omega_{h,0}(k) - 2A^2} < 3$, a consequence of $\omega_{h,0}(k) > 3A^2$. Apply the Gronwall inequality to $\mathcal{E}(t)$ and use the norm equivalence \eqref{norm_equiv} to obtain
\begin{equation*}
    |\widehat{Y}_h(k,t)| \leq \sqrt{3}  \exp\left(\frac{3}{2}A^2 \ainf^2 h^2 (t-t_0)\right) |\widehat{Y}_h(k,t_0)|.
\end{equation*}
Then, for any $t \geq t_0$ and $k \in \mathbb{T}_h^{*}$,
\begin{equation*}|\widehat{Y}_h(k,t)| \leq C_L \exp\left(\left(\Lambda_h + C_L \ainf^2 h^2\right)(t-t_0)\right)|\widehat{Y}_h(k,t_0)|,\qquad C_L := 4 A^2 C_{A},
\end{equation*}
and the proof is complete by the definition of $\mathscr{U}_h(t,t_0)$ and $|\widehat{Y}_h(k,t)| = |\widehat{X}_h(k,t)|$ from \eqref{B_h}.    
\end{proof}

While \Cref{lem:foc-linear-evolution-growth} shows an upper bound on the linear propagator, both upper and lower bounds on the nonlinear dynamics are needed. A standard bootstrapping argument shows the exponential bound persists in the nonlinear regime. The lower bound follows from modulational instability and numerical aliasing.

\begin{lemma}
Assume the hypotheses of \Cref{lem:foc-linear-evolution-growth}, and fix $A > 2^{-\frac{1}{2}},\ s >0$. Fix the shorthands for constants $C_L, C_0, C_1$ as
\begin{equation*}
    C_L = 4 A^2 C_A = 4 A^2 \left(1 + \frac{2\pi A^2}{\sqrt{2A^2 - 1}}\right), \quad C_0 = \frac{\pi A}{\sqrt{2}},\quad \| f g \|_{H_h^1} \leq C_1 \| f \|_{H_h^1} \| g \|_{H_h^1},
\end{equation*}
where $C_1$ is the $h$-independent Sobolev algebra constant. Let $\epsilon_0>0$ and let $h>0$ be sufficiently small, with
\begin{align}
    \epsilon_0 &< \min \left\{\frac{1}{2 C_L C_0}, \frac{\sqrt{2A^2 - 1}}{16 \pi C_L^2 C_0 C_1}, \frac{A^2 \sqrt{2A^2-1}}
{24\sqrt{2}\pi C_L^3 C_0^2 C_1}\right\}, \label{epsilon_small_MI}\\
h &< \min\left\{
1, \left(\frac{\epsilon_0}{A}\right)^{\frac{1}{s+1}}, \frac{e}
{8\sqrt{2} (s+1) C_L^3 C_0 \ainf^2}
\right\} \label{h_small_MI}.
\end{align}
Let $m_h \in \mathbb{T}_h^{*} \cap \mathbb{N}$ satisfy $\lambda_h(m_h) = \Lambda_h$ and let $k_h = 2M + m_h$. Then, there exist $u_h \in AC(\mathbb{R};H_h^1)$, a solution of \eqref{main_eq1}, and $c_0 > 0$ depending only on $A$ such that
\begin{equation}\label{lb:MI}
\begin{split}
|\widehat{u}_h(m_h,t)| + |\widehat{u}_h(-m_h,t)| &\geq c_0 e^{\Lambda_h t} |\delta_h|,\qquad t \in [0,T_h],\\
|\widehat{u}_h(m_h,T_h)| + |\widehat{u}_h(-m_h,T_h)| &\geq \frac{c_0 \epsilon_0}{e},
\end{split}
\end{equation}
where
\begin{equation*}
\delta_h = h^s s_h(k_h),\qquad T_h = \frac{1}{\Lambda_h + C_L \ainf^{2} h^{2}} \log\left(\frac{\epsilon_0}{|\delta_h|}\right).
\end{equation*}
\end{lemma}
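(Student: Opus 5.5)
We take the continuum datum $u_0 = A + h^s e^{ik_hx}$, a plane wave plus a high-frequency perturbation at $k_h = 2M+m_h$, whose $H^s$ norm is bounded by some $R \gtrsim A$. Since $\chi\equiv1$ for $s \geq 1$ (for $s<1$ one checks $m_h\in E_h$, because $m_h \simeq 1$), the lattice datum is $u_h(0)=\Pi_h u_0$. By \eqref{eq:Jh-fourier}, $e^{ik_hx}$ aliases onto the mode $m_h$ with cell-average weight $s_h(k_h)$, so
\begin{equation*}
u_h(0) = A + \delta_h e^{im_hx},\qquad \delta_h = h^s s_h(k_h),
\end{equation*}
with $|\delta_h| \simeq h^{s}\cdot h^{-1}|k_h|^{-1}\cdot|\sin(hm_h/2)| \simeq h^{s+1}$. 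In the gauge \eqref{gauge} this means $v_h(0)=\delta_h e^{im_hx}$, and \eqref{h_small_MI} gives $|\delta_h|\lesssim \epsilon_0$. We write $X_h$ by Duhamel for \eqref{full_perturb}:
\begin{equation*}
X_h(t)=\mathscr U_h(t,0)X_h(0)+\int_0^t\mathscr U_h(t,\tau)F_h(\tau)\,d\tau .
\end{equation*}

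The first step is an upper-bound bootstrap. We set $\Gamma_h=\Lambda_h+C_L\ainf^2h^2$ and assume $\|X_h(\tau)\|_{\mathbb H_h^1}\le 2C_LC_0|\delta_h|e^{\Gamma_h\tau}$ on $[0,t]$ with $t\le T_h$, noting that $\|X_h(0)\|_{\mathbb H^1_h}\le C_0|\delta_h|$ since $\langle m_h\rangle \lesssim A$ (here $C_0=\pi A/\sqrt2$). Then $e^{\Gamma_h\tau}|\delta_h|\le\epsilon_0\le 1$ before $T_h$, so the algebra bound gives $\|F_h\|\lesssim C_1(A\|X_h\|^2+\|X_h\|^3)$. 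Using \Cref{lem:foc-linear-evolution-growth}, the nonlinear contribution is bounded by
\begin{equation*}
C_L\int_0^t e^{\Gamma_h(t-\tau)}C_1 C\,(C_LC_0|\delta_h|)^2e^{2\Gamma_h\tau}\,d\tau\lesssim \frac{C_L^3C_0^2C_1}{\Gamma_h}\,|\delta_h|^2e^{2\Gamma_h t},
\end{equation*}
and $|\delta_h|e^{\Gamma_h t}\le\epsilon_0$ together with the lower bound \eqref{gammah} on $\Lambda_h$ and the smallness \eqref{epsilon_small_MI} close the bootstrap with constant strictly less than 2.

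The second step is the lower bound. We project onto the modes $\pm m_h$ and write $\widehat X_h(\pm m_h)$ as the linear part plus a quadratic error. For the linear part we use the explicit formula from the proof of \Cref{lem:foc-linear-evolution-growth}: after the odd-part gauge, $e^{tB_h^0}$ applied to the initial vector $(\delta_h,0)$ has a component along the unstable eigenvector of size at least $c|\delta_h|\cosh(\Lambda_h t)$, since $\cosh+\sinh\cdot B^0/\lambda$ acting on $e_1$ has modulus at least $\tfrac12 e^{\Lambda_h t}\cdot c$ when $\lambda_h=\Lambda_h\simeq 1$. The perturbation $\widetilde B_h$ is $O(\ainf^2h^2)$; treating it by Duhamel/Gronwall over $[0,T_h]$ costs a factor $\exp(C\ainf^2h^2T_h)$. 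Because $T_h\lesssim (s+1)|\log h|$, this factor is $1+o(1)$ by the third condition in \eqref{h_small_MI}. This yields $|\widehat v_h(m_h,t)|+|\widehat v_h(-m_h,t)|\ge 2c_0e^{\Lambda_h t}|\delta_h|$ for the linear part, while the nonlinear error is $\lesssim \epsilon_0\cdot e^{\Gamma_ht}|\delta_h|$, which is absorbable by \eqref{epsilon_small_MI}. The gauge \eqref{gauge} is unimodular and, for $m_h\neq0$, the $A$ term only affects mode $0$, so the bound transfers to $\widehat u_h(\pm m_h)$. At $t=T_h$ we have $e^{\Lambda_hT_h}|\delta_h|=\epsilon_0 e^{-C_L\ainf^2h^2T_h}\ge\epsilon_0/e$ by the $h$ condition, which gives the second line of \eqref{lb:MI}.

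The main obstacle is getting the lower bound on the linear part uniformly in the time-dependent $a$. The commuting-matrix shortcut fails because $\widetilde B_h(t)$ does not commute with $B_h^0$, so a Gronwall argument is needed. We would make it sharp by conjugating to the eigenbasis of $B_h^0$, where the off-diagonal coupling of size $O(\ainf^2h^2)$ perturbs the unstable coordinate only by a multiplicative factor $e^{O(\ainf^2h^2t)}$. Beyond that, the remaining work is bookkeeping of the constants.
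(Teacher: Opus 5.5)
Your proposal is correct and follows essentially the paper's route: a plane wave plus an $h^s$-perturbation at $k_h=2M+m_h$ that $\Pi_h$ aliases to mode $m_h$ with amplitude $\delta_h$, then a Duhamel bootstrap for the upper bound $\|X_h(t)\|_{\mathbb{H}_h^1}\le 2C_LC_0e^{(\Lambda_h+C_L\ainf^2h^2)t}|\delta_h|$ on $[0,T_h]$ via the linear growth lemma, and finally a lower bound along the unstable eigendirection of $B_h^0(m_h)$. In that last step, the $\widetilde B_h$ and nonlinear contributions are shown to be relatively small using the third condition on $h$ (which makes $C_L\ainf^2h^2T_h$ small) and the smallness of $\epsilon_0$. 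The differences are cosmetic. You perturb by $h^se^{ik_hx}$ instead of $h^s\cos(k_hx)$, so the initial vector at $m_h$ is $(\delta_h,0)^T$, whose unstable projection still has size $\frac{A^2}{\sqrt2\,\Lambda_h}|\delta_h|\ge|\delta_h|/\sqrt2$. And where you propose a separate Gronwall/eigenbasis argument for $\widetilde B_h$, the paper applies the projector $Q_h$ to the full forced equation and bounds both the $\widetilde B_h$ and $\widehat F_h$ Duhamel terms additively using the bootstrap upper bound; this is the clean way to make your ``multiplicative factor'' remark precise.
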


\begin{proof}
The proof consists of two steps. First, prepare highly-oscillating initial data $u_0^{(h)}$ on $\mathbb{T}$ that projects to slowly-oscillating discrete data $\Pi_h u_0^{(h)}$ on $\mathbb{T}_h$. Then, derive the lower bound \eqref{lb:MI} using the plane-wave modulational instability that perturbs from these discrete data. Let
\begin{equation}\label{init_MI}
\begin{split}
u_0^{(h)}(x) &= A + h^s \cos\left(k_h x\right),\qquad x \in \mathbb{T},\\
\Pi_h u_{0}^{(h)}(x) &= A + \delta_h \cos\left(m_h x\right),\qquad x \in \mathbb{T}_h.
\end{split}
\end{equation}
For $0<s<1$, $\pm m_h\in E_h$ for all sufficiently small $h$,
and hence $P_\chi\Pi_hu_0^{(h)}=\Pi_hu_0^{(h)}$. A uniform bound $1 \leq | m_h| < \frac{\pi}{\sqrt{2}}A$ follows from $\omega_{h,0}(m_h) \in (0,2A^2)$, and this implies
\begin{equation}\label{deltah}
\frac{2}{3\pi^2} h^{s+1} \leq |\delta_h| \leq 2^{-\frac{3}{2}}A h^{s+1}.   
\end{equation}
Let $\gamma_h = \Lambda_h + C_L \ainf^{2} h^{2}$. Apply the Duhamel formula to \eqref{full_perturb} to obtain
\begin{equation}\label{MI:duhamel}
    X_h(t) = \mathscr{U}_h(t,0) X_h(0) + \int_0^t \mathscr{U}_h(t,\tau) F_h(\tau) d\tau,
\end{equation}
where $X_h(0) = \delta_h \cos(m_h \cdot) \begin{pmatrix} 1 & 1 \end{pmatrix}^T$, and therefore $\| X_h(0) \|_{\mathbb{H}_h^1} \leq C_0 |\delta_h|$ where we may assume $C_0 \geq 1$. The linear estimate
\begin{equation*}
    \| \mathscr{U}_h(t,0) X_h(0) \|_{\mathbb{H}_h^1} \leq C_L C_0 e^{\gamma_h t} |\delta_h|
\end{equation*}
follows by \Cref{lem:foc-linear-evolution-growth}. Since $\mathbb{H}_h^1$ is an algebra, there exists $C_1 > 0$ such that
\begin{equation*}
    \| F_h (\tau) \|_{\mathbb{H}_h^1} \leq C_1 \| X_h (\tau) \|_{\mathbb{H}_h^1}^2 (1+\| X_h (\tau) \|_{\mathbb{H}_h^1}).
\end{equation*}
To implement a standard bootstrapping argument, assume there exists a maximal $0 < T^{*} \leq T_h$ where we assume, for any $t \in [0,T^{*}]$,
\begin{equation*}
\| X_h(t) \|_{\mathbb{H}_h^1} \leq 2 C_L C_0 e^{\gamma_h t} |\delta_h|.    
\end{equation*}
Then, $T^{*} = T_h$ follows from the integral term in \eqref{MI:duhamel} by taking $\epsilon_0$ sufficiently small (the second condition in \eqref{epsilon_small_MI}), since 
\begin{equation*}
\begin{split}
    \left\| \int_0^t \mathscr{U}_h(t,\tau) F_h(\tau)\right\|_{\mathbb{H}_h^1} &\leq \int_0^t C_L e^{\gamma_h(t-\tau)} \| F_h(\tau) \|_{\mathbb{H}_h^1} d\tau\\
    &\leq \int_0^t C_L C_1 e^{\gamma_h (t-\tau)} \| X_h(\tau) \|_{\mathbb{H}_h^1}^2 (1+\| X_h(\tau) \|_{\mathbb{H}_h^1}) d\tau \leq \frac{1}{2}C_L C_0 e^{\gamma_h t} |\delta_h|.
\end{split}
\end{equation*}

Apply the transformation \eqref{B_h} to the main equation \eqref{full_perturb} and consider the Fourier component at $k = m_h$
\begin{equation}\label{eq:gauged-forced-evolution}
\partial_t \widehat{Y}_h(m_h,t)
=
B_h(m_h,t)\widehat{Y}_h(m_h,t)
+
e^{
\frac{i}{2}\int_0^t D_h(m_h,t^{\prime}) dt^{\prime}
}
\widehat{F}_h(m_h,t).
\end{equation}
Recall $B_h(m_h,t) = B_h^{0}(m_h) + \widetilde{B}_h(m_h,t)$, and let
\begin{equation}\label{Q_h}
Q_h = \frac{1}{2\Lambda_h}
\left(B_h^{0}(m_h)+\Lambda_h \Id_2\right)    
\end{equation}
denote the projector onto the unstable eigenspace of $B_h^{0}(m_h)$. Apply $Q_h$ to \eqref{eq:gauged-forced-evolution} and consider the Duhamel integral equation on $[0,T_h]$
\begin{equation*}
\begin{split}
&Q_h\widehat{Y}_h(m_h,t)
=
e^{\Lambda_h t}Q_h\widehat{Y}_h(m_h,0)
\\
&+
\int_0^t
e^{\Lambda_h(t-\tau)}
Q_h\widetilde{B}_h(m_h,\tau)
\widehat{Y}_h(m_h,\tau)d\tau
+
\int_0^t
e^{\Lambda_h(t-\tau)}
e^{
\frac{i}{2}\int_0^\tau D_h(m_h,t^{\prime}) dt^{\prime}
}
Q_h\widehat{F}_h(m_h,\tau) d\tau.
\end{split}
\end{equation*}
Recalling $\widehat{Y}_h(m_h,0) = \frac{\delta_h}{2} \begin{pmatrix} 1 & 1 \end{pmatrix}^T$, we have
\begin{equation*}
\begin{split}
\left|
Q_h
\begin{pmatrix}
1\\
1
\end{pmatrix}
\right|^2
&=
\frac{
\Lambda_h^2+\left(2A^2-\omega_{h,0}(m_h)\right)^2
}{2\Lambda_h^2}
\geq \frac{1}{2},
\end{split}
\end{equation*}
and hence,
\begin{equation}\label{lb_1}
\left|
e^{\Lambda_h t}Q_h\widehat{Y}_h(m_h,0)
\right|
\geq
\frac{e^{\Lambda_h t} |\delta_h|}{2\sqrt{2}}.
\end{equation}
By \eqref{Q_h}, \eqref{B_h_op}, and the lower bound in \eqref{gammah}, we have
\begin{equation*}
\|Q_h\|_{\mathbb{C}^2\rightarrow\mathbb{C}^2}
\leq
\frac{1}{2\Lambda_h}
\left(
\|B_h^{0}(m_h)\|_{\mathbb{C}^2\rightarrow\mathbb{C}^2}
+\Lambda_h
\right) \leq C_A,
\end{equation*}
and since $\widetilde{B}_h(m_h,\tau)$ is diagonal and $\omega_{h,0}(m_h)<2A^2$, we have
\begin{equation*}
\|\widetilde{B}_h(m_h,\tau)\|_{\mathbb{C}^2\rightarrow\mathbb{C}^2}
=
2\omega_{h,0}(m_h)
\sin^2\left(\frac{ha(\tau)}{2}\right) \leq A^2 \ainf^2 h^2.
\end{equation*}
Altogether, along with $|\widehat{Y}_h(m_h,\tau)| \leq 2 C_L C_0 e^{\gamma_h \tau} |\delta_h|$,
\begin{equation}\label{lb_2}
\left|
\int_0^t e^{\Lambda_h(t-\tau)}
Q_h\widetilde{B}_h(m_h,\tau)\widehat{Y}_h(m_h,\tau)d\tau
\right|
\leq
\frac{C_L C_0}{2}e^{\Lambda_h t}|\delta_h|
\left(e^{C_L\ainf^2h^2t}-1\right)
\leq
\frac{e^{\Lambda_h t}|\delta_h|}{8\sqrt{2}}.
\end{equation}
Indeed, by restrictions on $|\delta_h|, \epsilon_0$ in \eqref{deltah}, \eqref{epsilon_small_MI}, respectively, and $\frac{C_L}{\Lambda_h} \leq C_L^2$,
\begin{equation*}
C_L \ainf^2 h^2 t \leq
C_L^2 \ainf^2 h^2 \log\left(\frac{\epsilon_0}{|\delta_h|}\right) \leq
C_L^2 \ainf^2 h^2 (s+1) \log\left(\frac{1}{h}\right) \leq 1,
\end{equation*}
and hence $e^{C_L\ainf^2h^2t}-1 \leq 2 C_L\ainf^2h^2t$, and the bound on the Duhamel term follows from \eqref{h_small_MI}.

The first condition of \eqref{epsilon_small_MI} implies
\begin{equation*}
|\widehat{F}_h(m_h,\tau)|
\leq
8 C_L^2 C_0^2 C_1 e^{2\gamma_h\tau} |\delta_h|^2.
\end{equation*}
Recalling $\|Q_h\|_{\mathbb{C}^2\rightarrow\mathbb{C}^2} \leq C_A = \frac{C_L}{4A^2}$ and $e^{C_L \ainf^2 h^2 t} \leq 1 + 2 C_L \ainf^2 h^2 t \leq 3$, the third condition implies
\begin{equation}\label{lb_3}
\left|
\int_0^t
e^{\Lambda_h(t-\tau)}
e^{\frac{i}{2}\int_0^\tau D_h(m_h,t^{\prime})dt^{\prime}}
Q_h\widehat{F}_h(m_h,\tau)d\tau
\right| \leq \frac{3\pi C_L^3 C_0^2 C_1}
{A^2\sqrt{2A^2-1}}
\epsilon_0 e^{\Lambda_h t}|\delta_h|
\leq
\frac{e^{\Lambda_h t}|\delta_h|}{8\sqrt{2}}.
\end{equation}

Since $P_\chi u_h=u_h$ along the flow, we have $P_\chi v_h=v_h$. By the change of variable \eqref{gauge}, we have
\begin{equation*}
|\widehat{u}_h(m_h,t)| + |\widehat{u}_h(-m_h,t)| \geq |\widehat{Y}_h(m_h,t)| \gtrsim |Q_h \widehat{Y}_h(m_h,t)| \gtrsim e^{\Lambda_h t} |\delta_h|,\qquad t \in [0,T_h],   
\end{equation*}
where the lower bound follows from \eqref{lb_1}, \eqref{lb_2}, \eqref{lb_3}. To show the endpoint lower bound, use $\frac{C_L}{\Lambda_h} \leq C_L^2$ and the third condition in \eqref{h_small_MI} to obtain
\begin{equation*}
C_L \ainf^2 h^2 T_h \leq (s+1) C_L^2 \ainf^2 h^2  \log\left(\frac{1}{h}\right) \leq \frac{(s+1) C_L^2 \ainf^2 h}{e} < 1,
\end{equation*}
and hence,
\begin{equation*}
|\widehat{u}_h(m_h,T_h)|
+
|\widehat{u}_h(-m_h,T_h)|
\geq c_0 e^{\Lambda_hT_h}|\delta_h| \geq
\frac{c_0\epsilon_0}{e}
\end{equation*}
\end{proof}

\begin{proof}[Proof of \Cref{thm:foc-failure}]
Define $u(x,0) = u_{0}^{(h)}(x)$ by \eqref{init_MI}. Then, $\| u_0^{(h)} \|_{H^s} \simeq_s \langle A \rangle$. Let $R \gtrsim_{s} \langle A \rangle$ independent of $h$. Suppose there exists $h_0 > 0$ such that \eqref{eq:foc-polynomial-bound} holds for some $C>0$. Then,
\begin{equation}\label{contradiction_MI}
    \text{RHS of } \eqref{eq:foc-polynomial-bound} = C h^{\theta} \langle T_h \rangle^{p} \xrightarrow[h \rightarrow 0]{} 0.
\end{equation}
Moreover, $u_0^{(h)}$ is invariant under $x \mapsto x + \frac{2\pi}{k_h}$, and hence so is $u(\cdot,t)$ for all $t \in \mathbb{R}$ by the (spatial) translation invariance and uniqueness of \eqref{main_eq2}, and therefore $\text{supp}\left(\widehat{u}(t)\right) \subseteq k_h \mathbb{Z}$. The desired contradiction is obtained by
\begin{equation*}
    \| \mathcal{S}_h u_h(T_h) - u(T_h) \|_{H^{\rho}}  \gtrsim |\widehat{u_h}(m_h,T_h)| + |\widehat{u_h}(-m_h,T_h)| \geq \frac{c_0 \epsilon_0}{e},
\end{equation*}
using the lower bound \eqref{lb:MI} and taking $h$ sufficiently small in \eqref{contradiction_MI}.
\end{proof}

\subsection{Genericity of spatial optimality}\label{sec:baire-sharpness}

For $f\in H^s$, let $u(t;f), u_h(t;f)$ denote the solutions of \eqref{main_eq2}, \eqref{main_eq1}, respectively, and let $U_h(t;f)=\mathcal{S}_h u_h(t;f)$.

\begin{proposition}\label{thm:baire-category}
Let $0 \leq \rho < s$. There exists a dense $G_\delta$ set $\mathcal{G}\subseteq H^s$ such that
\begin{equation}\label{eq:generic-sharpness-all-s}
 \varlimsup_{h\rightarrow 0}
 h^{-(\gamma_{s,\rho} + \epsilon)}
 \sup_{|t|\leq T} \| e_h(t;f) \|_{H^\rho} = \infty,
\end{equation}
for every $f\in\mathcal{G}$, $T>0$, and $\epsilon>0$. Furthermore if $s-\rho\geq4$, then for every non-constant $f\in H^s$
and $T>0$, there exists $k_0 \in \mathbb{Z} \setminus \{0\}$, depending only on $f$, such that
\begin{equation}\label{eq:initialization-saturation-lower}
 \varliminf_{h \rightarrow 0}
 h^{-2}\sup_{|t|\leq T}\|e_h(t;f)\|_{H^\rho} \geq \frac{k_0^2\langle k_0\rangle^\rho}{24}|\widehat f(k_0)| > 0.
\end{equation}
\end{proposition}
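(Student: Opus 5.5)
The plan is to prove \eqref{eq:initialization-saturation-lower} first, since it uses only the data at $t=0$, and then obtain \eqref{eq:generic-sharpness-all-s} by a Baire argument whose density step relies on the lattice--continuum phase defect.

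\emph{Second assertion ($s-\rho\geq4$).} Here $s\geq1$, so $\chi\equiv1$ and $e_h(0;f)=\mathcal{S}_h\Pi_hf-f$. Since $\sup_{|t|\leq T}\|e_h(t;f)\|_{H^\rho}\geq\|e_h(0;f)\|_{H^\rho}$, it suffices to bound the error at $t=0$. Because $f$ is non-constant, there is $k_0\neq0$ with $\widehat f(k_0)\neq0$. By \eqref{eq:Jh-fourier}, for $h$ small,
\begin{equation*}
\widehat{e_h(0)}(k_0)=\bigl(s_h(k_0)-1\bigr)\widehat f(k_0)+\sum_{\ell\neq0}s_h(k_0+2M\ell)\widehat f(k_0+2M\ell).
\end{equation*}
Taylor expansion of $\sinc$ gives $1-s_h(k_0)=\frac{h^2k_0^2}{24}+O(h^4k_0^4)$. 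For the aliasing sum I use \eqref{lem21_est}: $|s_h(k_0+2M\ell)|\lesssim h|k_0|/|\ell|$ and $\langle k_0+2M\ell\rangle\gtrsim|\ell|/h$. Cauchy--Schwarz against $\|f\|_{H^s}$ then bounds the sum by $o(h^{s+1})=o(h^2)$, where the $o$ comes from the vanishing $H^s$ tail beyond frequency $\pi/h$. Keeping only the mode $k_0$ in the $H^\rho$ norm yields \eqref{eq:initialization-saturation-lower}, with the constant $\frac{k_0^2\langle k_0\rangle^\rho}{24}|\widehat f(k_0)|$.

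\emph{Baire argument.} For $j,m,n\in\mathbb{N}$ define
\begin{equation*}
G_{j,m,n}=\Bigl\{f\in H^s:\exists\,h\in(0,\tfrac1n)\text{ admissible with }h^{-(\gamma_{s,\rho}+\frac1m)}\sup_{|t|\leq 1/j}\|e_h(t;f)\|_{H^\rho}>n\Bigr\},
\end{equation*}
and set $\mathcal{G}=\bigcap_{j,m,n}G_{j,m,n}$. Monotonicity in $T$ and in $\epsilon$ shows that $\mathcal{G}$ satisfies \eqref{eq:generic-sharpness-all-s} for every $T>0$ and $\epsilon>0$.

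Openness: for fixed $h$, the map $f\mapsto\sup_{|t|\leq T}\|e_h(t;f)\|_{H^\rho}$ is continuous on $H^s$. On the continuum side this follows from continuous dependence on $H^s$ data over compact time intervals (Bourgain local theory together with the global bounds of \Cref{rem:continuum-orbit-bounds}). On the lattice side the flow is a locally Lipschitz finite-dimensional ODE composed with the bounded map of \Cref{lem:Jh-bounded}. A strict inequality for one fixed $h$ is therefore an open condition.

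\emph{Density, the main obstacle.} Given $g\in H^s$ and $\eta>0$, I would take $f=g+\eta K^{-s}e^{iKx}$ with $K$ large, which lies within $\eta$ of $g$ in $H^s$. I then pair $K$ with a mesh $h$ satisfying $h^2K^4T\simeq1$, i.e.\ $K\simeq(h^2T)^{-1/4}$; if $s-\rho\geq4$ one can instead take $K$ fixed relative to $h$ and use the $t=0$ mechanism above. By \eqref{eq:symbol-pointwise}, the linear phases at frequency $K$ then differ by
\begin{equation*}
\phi_K(t)-\phi_{h,K}(t)\simeq h^2K^4t,
\end{equation*}
which is of order one at some $|t|\leq T$. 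The $K$-th Fourier coefficient of $e_h$ should therefore have size $\gtrsim\eta K^{-s}$, so $\|e_h(t)\|_{H^\rho}\gtrsim\eta K^{\rho-s}\simeq\eta h^{(s-\rho)/2}$, which exceeds $n\,h^{\gamma_{s,\rho}+1/m}$ once $h$ is small.

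The hard step is justifying that the high mode evolves essentially linearly in both models up to a common phase. I would write both solutions as their evolution from $g$ plus a perturbation. The high mode interacts with the bulk only through the phase modulation $2\|u\|_{L^2}^2$, which agrees in the two models up to the $O(h^{\gamma_{s,\rho}})$ error already controlled by \Cref{thm:intro-polynomial}. The remaining nonresonant contributions to mode $K$ are quadratic in the perturbation or carry a bulk amplitude decaying in $K$, and I would bound them by a short-time Duhamel estimate (via \Cref{lem:linear-estimates} for $0<s<1$, or the $H^s$ algebra for $s\geq1$). For $0<s<1$ the support condition \eqref{eq:support-condition} with $K\ll\Lambda\pi/h$ keeps $K\in E_h$, so the filter does not remove the mode.
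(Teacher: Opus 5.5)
Your proof of \eqref{eq:initialization-saturation-lower} agrees with the paper. So does the Baire scaffolding: your open sets $G_{j,m,n}$ are just complements of the paper's closed sets $\mathcal{A}^{\epsilon,T}_{m,N}$. The density mechanism is also the paper's: a single mode $\eta g_K$ with $g_K:=\langle K\rangle^{-s}e^{iKx}$ and $K\sim h^{-1/2}$, chosen so that the dispersive phase defect is of order one.

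The gap is the step ``the $K$-th Fourier coefficient of $e_h$ should therefore have size $\gtrsim\eta K^{-s}$.'' Write $e_h(t;f)=e_h(t;g)+\bigl(d_h(t)-d(t)\bigr)$, where $d,d_h$ are the continuum and lattice increments. Your analysis tracks only $d_h-d$, and for the base error $e_h(t;g)$ you rely on \Cref{thm:intro-polynomial}. That bound is too weak. It gives $\langle K\rangle^{\rho}|\widehat{e_h(t;g)}(K)|\leq C(g)h^{\frac{s-\rho}{2}}$, which is the same power of $h$ as your signal $\eta\langle K\rangle^{\rho-s}$. Its constant cannot be made small compared to the small $\eta$, so cancellation at mode $K$ is not excluded. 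Indeed, the proposition itself says that the base error of a generic $g$ is not $O(h^{\gamma_{s,\rho}+\epsilon})$, so it cannot be discarded. \Cref{thm:intro-polynomial} is also unavailable for large-mass focusing data, whereas the proposition places no restriction on $\mu$.

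The missing idea is to compare two errors instead of bounding one from below. Prove
\[
\|e_h(t;g+\eta g_K)-e_h(t;g)\|_{H^\rho}=\|d_h(t)-d(t)\|_{H^\rho}\geq\tfrac{\eta}{2}\langle K\rangle^{\rho-s}.
\]
By the triangle inequality, for each such $h$ either $g$ or $g+\eta g_K$ has error at least $\frac{\eta}{4}\langle K\rangle^{\rho-s}\gg n h^{\gamma_{s,\rho}+1/m}$, so one of them lies in $G_{j,m,n}$. The paper builds this into its contradiction hypothesis $B_{H^s}(f_*,r_*)\subseteq\mathcal{A}^{\epsilon,T}_{m,N}$, which bounds both errors at once.

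With this, your phase-modulation and resonance bookkeeping is unnecessary. Subtract the free evolutions $\eta s_h(K)S_{h,a}(t,0)g_K$ and $\eta S_a(t,0)g_K$. The Lipschitz difference bounds (\Cref{prop:cubic-estimates} with \Cref{lem:linear-estimates} for $0<s<1$, the Sobolev algebra for $s\geq1$) then give remainders $\lesssim t^\theta\eta\langle K\rangle^{\rho-s}$, uniformly in $K$, $h$, and the sign of $\mu$.

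Two further adjustments are needed:
\begin{enumerate}
\item Fix $t$ small, independent of $h$, so that the remainder is at most half the linear term. Only then tune $K=k_h\approx(12\pi/t)^{1/4}h^{-1/2}$ so that the defect at that fixed $t$ tends to $\pi$.
\item ``Order one at some $|t|\leq T$'' is not enough, since a defect near $2\pi$ causes cancellation. Also, \eqref{eq:symbol-pointwise} only gives an upper bound on the defect. You need the expansion $\phi_{h,k}(t)-\phi_k(t)=-\frac{h^2}{12}\int_0^t(k-a(\tau))^4d\tau+O\bigl(h^4\int_0^t|k-a(\tau)|^6d\tau\bigr)$.
\end{enumerate}
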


\begin{proof}
First, assume $s-\rho\geq4$. We show \eqref{eq:initialization-saturation-lower} follows from the initialization defect. Fix $k_0 \neq 0$ such that $\widehat f(k_0)\neq0$. By \eqref{eq:Jh-fourier}, we have
\begin{equation*}
 \widehat{e_h(0;f)}(k_0)
 =(s_h(k_0)-1)\widehat f(k_0)
   +\sum_{\ell\neq0}s_h(k_0+2M\ell)\widehat f(k_0+2M\ell).
\end{equation*}
The proof of \Cref{lem:Jh-bounded} yields
\begin{equation*}
 \left|\widehat{e_h(0;f)}(k_0)\right|
 \geq |s_h(k_0)-1||\widehat f(k_0)|
       -C_s|k_0|h^{s+1}\|f\|_{H^s},
\end{equation*}
for some $C_s>0$. Then, \eqref{eq:initialization-saturation-lower} follows by the Taylor expansion of $s_h$, and
\begin{equation*}
 \varliminf_{h \rightarrow 0}
 h^{-2}\|e_h(0;f)\|_{H^\rho}
 \geq \frac{k_0^2\langle k_0\rangle^\rho}{24}|\widehat f(k_0)|.
\end{equation*}

Assume $0<s-\rho<4$. Fix $R>0$, $\|f\|_{H^s}\leq R$,
and $0<\eta\leq1$. For $k\in E_h$, let
$g_h(x)=\langle k\rangle^{-s}e^{ikx}$. Define
\begin{equation*}
 d(\tau)=u(\tau;f+\eta g_h)-u(\tau;f),
 \qquad
 d_h(\tau)=U_h(\tau;f+\eta g_h)-U_h(\tau;f).
\end{equation*}
Then, \eqref{eq:Jh-fourier} yields
$d(0)=\eta g_h,\ d_h(0)=\eta s_h(k)g_h$.
Define the remainders by
\begin{equation}\label{eq:increment-splitting}
 r(\tau)=d(\tau)-\eta S_a(\tau,0)g_h,
 \qquad
 r_h(\tau)=d_h(\tau)-\eta s_h(k)S_{h,a}(\tau,0)g_h.
\end{equation}
We claim that there exists $T\in(0,1]$ such that
\begin{equation}\label{eq:baire-remainder-bound}
 \sup_{0\leq\tau\leq t}
 \left(\|r(\tau)\|_{H^\rho}+\|r_h(\tau)\|_{H^\rho}\right)
 \lesssim t^\theta\eta\langle k\rangle^{\rho-s},
 \qquad 0<t\leq T.
\end{equation}

For $0<s<1$, further assume $h\leq h_0$ from
\eqref{eq:phase-margin}. Applying \Cref{prop:local} and the local
well-posedness theory of periodic NLS \cite{BourgainNLS1}, there
exists $\delta=\delta(R)>0$ such that the uniform bound
\begin{equation}\label{unif_bound}
 \|u(\cdot;f)\|_{X^{s,b}([0,\delta])}
 +\|U_h(\cdot;f)\|_{X_h^{s,b}([0,\delta])}
 +\|u(\cdot;f+\eta g_h)\|_{X^{s,b}([0,\delta])}
 +\|U_h(\cdot;f+\eta g_h)\|_{X_h^{s,b}([0,\delta])}
 \lesssim1
\end{equation}
holds. Let $T\leq\min\{1,\delta\}$ and $I=[0,t]$.
The Duhamel formula yields
\begin{equation*}
 r_h(\tau)
 =-i\int_0^\tau S_{h,a}(\tau,\sigma)
 \bigl\{
 \mathcal{N}_h\bigl(U_h(\sigma;f+\eta g_h)\bigr)
 -\mathcal{N}_h\bigl(U_h(\sigma;f)\bigr)
 \bigr\}d\sigma.
\end{equation*}
By \Cref{lem:linear-estimates}, \Cref{prop:cubic-estimates},
and \eqref{unif_bound}, we have
\begin{equation*}
 \|r_h\|_{X_h^{\rho,b}(I)}
 \lesssim t^\theta
 \|\mathcal{N}_h\bigl(U_h(\cdot;f+\eta g_h)\bigr)
 -\mathcal{N}_h\bigl(U_h(\cdot;f)\bigr)\|_{X_h^{\rho,-b_0}(I)}
 \lesssim t^\theta\|d_h\|_{X_h^{\rho,b}(I)},
\end{equation*}
and furthermore by \eqref{eq:increment-splitting} and
\Cref{lem:linear-estimates}, we have
\begin{equation*}
 \|d_h\|_{X_h^{\rho,b}(I)}
 \lesssim\eta\langle k\rangle^{\rho-s}
 +t^\theta\|d_h\|_{X_h^{\rho,b}(I)}.
\end{equation*}
Shrink $T\ll1$, if necessary, to obtain
\begin{equation*}
 \|d\|_{X^{\rho,b}(I)}+\|d_h\|_{X_h^{\rho,b}(I)}
 \lesssim\eta\langle k\rangle^{\rho-s},
 \qquad
 \|r\|_{X^{\rho,b}(I)}+\|r_h\|_{X_h^{\rho,b}(I)}
 \lesssim t^\theta\eta\langle k\rangle^{\rho-s},
\end{equation*}
where the corresponding bounds for $d,r$ follow from the
continuum Duhamel and cubic estimates. Then,
\eqref{eq:baire-remainder-bound} follows from
$X^{\rho,b}(I)\hookrightarrow C(I;H^\rho)$ and its discrete analogue.

For $s\geq1$, the argument is analogous, with the Sobolev product
estimate replacing the dispersive estimates; the counterpart of
\eqref{unif_bound} follows directly from the contraction mapping
argument. The Sobolev product estimate gives
\begin{equation*}
 \bigl\||z|^2z-|w|^2w\bigr\|_{H_h^\rho}
 \lesssim_{s,\rho}
 \left(\|z\|_{H_h^s}+\|w\|_{H_h^s}\right)^2
 \|z-w\|_{H_h^\rho},
\end{equation*}
uniformly in $h$, and consequently,
\begin{equation*}
 \|r_h(\tau)\|_{H^\rho}
 =\left\|
 \int_0^\tau S_{h,a}(\tau,\sigma)\mathcal{S}_h
 \bigl\{
 \mathcal{N}\bigl(u_h(\sigma;f+\eta g_h)\bigr)
 -\mathcal{N}\bigl(u_h(\sigma;f)\bigr)
 \bigr\}d\sigma
 \right\|_{H^\rho} \lesssim\int_0^\tau\|d_h(\sigma)\|_{H^\rho}d\sigma.
\end{equation*}
Combined with \eqref{eq:increment-splitting}, this yields
\begin{equation*}
 \|d_h(\tau)\|_{H^\rho}
 \lesssim \eta\langle k\rangle^{\rho-s}
 +\int_0^\tau\|d_h(\sigma)\|_{H^\rho}d\sigma.
\end{equation*}
Applying Gronwall's inequality and substituting into the bound
for $r_h$, together with the identical continuum argument, gives
\begin{equation*}
 \sup_{0\leq\tau\leq t}
 \left(\|r(\tau)\|_{H^\rho}+\|r_h(\tau)\|_{H^\rho}\right)
 \lesssim t\eta\langle k\rangle^{\rho-s}
 \leq t^\theta\eta\langle k\rangle^{\rho-s},
\end{equation*}
since $t,\theta\leq1$.

Let $T,\epsilon > 0$ and $m,N \in \mathbb{N}$. For general $s - \rho > 0$, define
\begin{equation*}
 \mathcal{A}^{\epsilon,T}_{m,N}
 =\left\{f\in H^s:\sup_{|t|\leq T}\|e_{\frac{\pi}{N^{\prime}}}(t;f)\|_{H^\rho}
 \leq m\left(\frac{\pi}{N^{\prime}}\right)^{\gamma_{s,\rho}+\epsilon},\ 
 \forall\ N^{\prime}\geq N\right\}.
\end{equation*}

The closedness of $\mathcal{A}^{\epsilon,T}_{m,N}$ follows from the continuous dependence of the discrete and continuum flows on initial data. We claim that each of these sets has empty interior. Assuming the claim, the Baire Category Theorem implies
\begin{equation*}
 \mathcal{G}
 :=\bigcap_{j,m,N\geq1}
 \left(H^s\setminus\mathcal{A}^{\frac{1}{j},\frac{1}{j}}_{m,N}\right)
\end{equation*}
is a dense $G_\delta$ subset of $H^s$. For $f\in\mathcal{G}$ and $\epsilon,T>0$, let $j\geq1$
such that $\frac{1}{j}\leq\min\{\epsilon,T\}$.
Since $f\notin\mathcal{A}^{\frac{1}{j},\frac{1}{j}}_{m,N}$ for every $m,N\geq1$,
\begin{equation*}
 h^{-(\gamma_{s,\rho}+\epsilon)}
 \sup_{|t|\leq T}\|e_h(t;f)\|_{H^\rho}
 \geq
 h^{-(\gamma_{s,\rho}+\frac{1}{j})}
 \sup_{|t|\leq\frac{1}{j}}\|e_h(t;f)\|_{H^\rho} > m
\end{equation*}
holds infinitely often as $h\rightarrow0$, proving \eqref{eq:generic-sharpness-all-s}.

It remains to prove the claim. If $s-\rho\geq4$,
\eqref{eq:initialization-saturation-lower} implies that each
$\mathcal{A}^{\epsilon,T}_{m,N}$ is contained in the closed,
nowhere-dense subspace of constant functions, and hence the claim.

Alternatively, suppose $0 < s-\rho < 4$. Assume $B_{H^s}(f_*,r_*)\subseteq\mathcal{A}^{\epsilon,T}_{m,N}$ for some $f_{*} \in H^s$ and $r_{*}>0$, along with some parameters $\epsilon,T,m,N$. Choose $R>\|f_*\|_{H^s}+r_*$. Fix $0<\eta<\min\{1,r_*\}$ and $0<t<T$
sufficiently small and independent of $h$ such that \eqref{eq:baire-remainder-bound} yields
\begin{equation*}
\|r(t)\|_{H^\rho}+\|r_h(t)\|_{H^\rho}
\leq\frac{\eta}{2}\langle k\rangle^{\rho-s}.    
\end{equation*}
Let $k_h \in \mathbb{Z}$
such that
\begin{equation*}
 \left|k_h-\left(\frac{12\pi}{t}\right)^{\frac{1}{4}}
 h^{-\frac{1}{2}}\right|\leq\frac{1}{2}.
\end{equation*}
Note that $k_h \in E_h$ for sufficiently small $h$. Apply $f = f_{*}$ and $k = k_h$ in the preceding construction to obtain
\begin{equation}\label{error_baire}
\begin{split}
e_h(t;f_*+\eta g_h)-e_h(t;f_*) &= d_h(t)-d(t)\\
 &=\eta \bigl(s_h(k_h) S_{h,a}(t,0) g_h -S_a(t,0) g_h \bigr) + r_h(t)-r(t).    
\end{split}
\end{equation}
The Taylor expansion on phases yields
\begin{equation*}
 \phi_{h,k_h}(t)-\phi_{k_h}(t)
 =-\frac{h^2}{12}\int_0^t(k_h-a(\tau))^4d\tau
 +O\left(h^4\int_0^t|k_h-a(\tau)|^6d\tau\right)
 \xrightarrow[h \rightarrow 0]{}-\pi.
\end{equation*}
The limit $s_h(k_h)\xrightarrow[h \rightarrow 0]{} 1$ and the phase difference above yield, for all sufficiently small $h$, 
\begin{equation*}
\eta \|s_h(k_h)S_{h,a}(t,0)g_h-S_a(t,0)g_h\|_{H^\rho}
= \eta\langle k_h\rangle^{\rho-s}
 \left|s_h(k_h)e^{-i(\phi_{h,k_h}(t)-\phi_{k_h}(t))}-1\right|
 \geq \eta \langle k_h\rangle^{\rho-s}.
\end{equation*}
Apply \eqref{eq:baire-remainder-bound} to the decomposition \eqref{error_baire} by taking $t \ll 1$, and observe $f_*, f_*+\eta g_h \in B_{H^s}(f_*,r_*)$ to obtain
\begin{equation*}
 \frac{\eta}{2}\langle k_h\rangle^{\rho-s} \leq \|e_h(t;f_*+\eta g_h)-e_h(t;f_*)\|_{H^\rho} \leq 2m h^{\frac{s-\rho}{2}+\epsilon},
\end{equation*}
which is a contradiction as $h \rightarrow 0$ since $k_h \simeq h^{-\frac{1}{2}}$.
\end{proof}

\section{Conclusion}

We rigorously justify the approximation of the integrable magnetic NLS by a non-integrable Hamiltonian lattice for arbitrary defocusing and small-mass focusing data, relying on continuum stability and mesh-uniform discrete Sobolev estimates. Conversely, the large-data focusing obstruction demonstrates that continuum integrability alone is insufficient for polynomial approximation. Open future directions include developing higher-order, mesh-uniform normal-form reductions to sharpen temporal bounds, and achieving polynomial approximation below the energy space without Fourier filtering.

\bibliographystyle{abbrv}
\bibliography{ref1}

\end{document}